\documentclass[11pt]{article}
\usepackage[a4paper,margin=26mm]{geometry}
\usepackage[T1]{fontenc}
\usepackage{lmodern,amsmath,amssymb,amsthm,mathtools,microtype,enumitem}
\usepackage{graphicx,placeins}
\usepackage[hidelinks]{hyperref}
\newif\ifshownumerics
\shownumericsfalse 
\hypersetup{pdftitle={Logarithmic upper bounds and axial front selection for spatially periodic lattice Fisher--KPP equations},
 pdfauthor={Yuanyang Hu, Zhenkun Wang},
 pdfsubject={Periodic logarithmic bounds, axial front selection, arrival times, and a positive Dirichlet quotient comparison}}
\numberwithin{equation}{section}
\setlist[enumerate]{label=\textup{(\roman*)},leftmargin=*,itemsep=4pt,topsep=4pt}
\newcommand{\Z}{\mathbb Z}
\newcommand{\R}{\mathbb R}
\newcommand{\ee}{\mathrm e}
\newcommand{\dd}{\mathrm d}
\newcommand{\ii}{\mathrm i}
\newcommand{\1}{\mathbf 1}
\newcommand{\norm}[1]{\lVert #1\rVert}
\newcommand{\pheat}{\mathsf p}
\DeclareMathOperator{\supp}{supp}
\newtheorem{theorem}{Theorem}[section]
\newtheorem{lemma}[theorem]{Lemma}
\newtheorem{proposition}[theorem]{Proposition}
\newtheorem{corollary}[theorem]{Corollary}
\newtheorem{conjecture}[theorem]{Conjecture}
\theoremstyle{definition}

\theoremstyle{remark}
\newtheorem{remark}[theorem]{Remark}
\title{Logarithmic upper bounds and axial front selection\newline
for spatially periodic lattice Fisher--KPP equations}
\author{Yuanyang Hu$^{1}$\thanks{Email:
\href{mailto:yuanyhu@henu.edu.cn}{yuanyhu@henu.edu.cn}.}, Zhenkun Wang$^{2}$}
\date{}
\begin{document}
\maketitle
\begin{center}\small
$^1$ School of Mathematics and Statistics, Henan University,\\ 475004 Kaifeng, China\\[3pt]
$^2$ School of Mathematical Sciences, University of Science and Technology of China,\\
Hefei, Anhui 230026, China\\[3pt]
\end{center}
\begin{abstract}
We study the position and shape of invasion fronts for spatially periodic
Fisher--KPP equations on higher-dimensional lattices, starting from
localized initial data. Periodic spectral geometry determines a
directional logarithmic upper bound. When the linear growth rate is
independent of the propagation coordinate, we prove the matching axial
Bramson correction and identify the critical pulsating front approached
by the solution. The selected profile includes the local periodic cell
and a bounded phase whose derivative tends to zero. This description also
determines the times at which prescribed levels reach distant lattice
sites and the asymptotic time gaps between levels or neighboring cells.
The proof combines a periodic transverse heat kernel, a scalar lower
comparison, and critical-tail analysis. For growth rates varying in every
coordinate, a positive Dirichlet quotient comparison reduces the matching
lower bound to two explicit estimates on the linear seed and its
accumulated nonlinear loss.
\end{abstract}
\noindent\textbf{Keywords:} lattice Fisher--KPP equation; spatial periodicity;
Bramson correction; principal eigenvalue; pulsating front.

\section{Introduction}\label{sec:introduction}
Spatially periodic lattice equations describe spreading through discrete
habitats whose local growth conditions repeat in space. For a population
initially confined to finitely many sites, the leading spreading speed
describes how far the invaded region travels on the scale of time. A more
precise description must also determine the logarithmic displacement of
the front and the population profile near it. We study these questions
for Fisher--KPP equations on $\Z^d$.

The theory of spreading speeds provides the starting point. Liang, Yi
and Zhao~\cite{LiangYiZhao} treated periodic evolution systems; Liang
and Zhao~\cite{LiangZhao} developed an abstract monostable framework that
includes lattice equations in periodic habitats. Fang, Yu and
Zhao~\cite{FangYuZhao} extended this approach to time--space periodic
monotone systems. These theories relate the leading spreading speed to
traveling-wave speeds. Resolving the front position to bounded error
requires additional estimates at the critical speed.

For the homogeneous one-dimensional Fisher--KPP equation,
Bramson~\cite{Bramson} identified the logarithmic delay and convergence
to a translated critical wave. In periodic continuous-space media,
Hamel, Nolen, Roquejoffre and Ryzhik~\cite{HNRRPeriodic} proved the
logarithmic delay and convergence to a boundedly shifted family of
minimal fronts. Giletti~\cite{Giletti} studied convergence to minimal
pulsating fronts under suitable initial decay, with a sublinear time
shift in the periodic setting. Lubetzky, Thornett and
Zeitouni~\cite{LubetzkyThornettZeitouni} obtained phase-dependent limiting
distributions and profiles for a one-dimensional periodic branching
model. In higher dimensions, Shabani~\cite{Shabani} showed that an
observation direction and the normal direction selecting its speed are
linked by a unique minimization, and that this geometry also determines
the logarithmic coefficient. Initial decay is part of the problem:
Alfaro, Giletti and Xiao~\cite{AlfaroGilettiXiao} demonstrated how factors
in a critical exponential tail can produce a delay or an advance. We
assume finite support throughout.

On lattices, Guo and Hamel~\cite{GuoHamel} established pulsating fronts
and studied their speeds for discrete periodic monostable equations.
Guo and Wu~\cite{GuoWu} established uniqueness of periodic lattice
wave profiles and stability of certain noncritical waves. Wu, Chen and
Hsu~\cite{WuChenHsu} obtained exact asymptotics for noncritical waves.
Besse, Faye, Roquejoffre and Zhang~\cite{BFRZ} proved the logarithmic
Bramson correction and convergence to a boundedly shifted critical-front
family on the homogeneous one-dimensional lattice. At the critical speed, the dispersion relation
has a double root: a polynomial factor accompanies the exponential tail.
This factor is essential both to the logarithmic displacement and to
the profile comparison used here.

Our results connect front position, periodic profile selection, and
level-arrival times. For a general spatially periodic growth rate, we
prove a directional logarithmic upper bound and describe its supporting
geometry. When the linear growth rate depends only on the transverse
coordinates, we obtain the matching axial lower bound, including a leading-edge
estimate on a diffusive window. The nonlinear reaction may still depend
periodically on every coordinate. We then identify the critical
pulsating profile, prove its tail expansion and uniqueness within the
stated class, and derive arrival-time asymptotics.

Two features of the lattice require particular care. A moving coordinate
shift must preserve both the lattice sites and the periodic cell. Also,
the lower bound must use a positive nonlinear subsolution with the
correct residual sign. In the axial setting, a transverse principal
eigenfunction and its positive heat kernel separate the transverse
growth from a scalar longitudinal comparison. For the general periodic
problem, we derive an exact moving-boundary time change and a positive
Dirichlet quotient comparison. This work
states the two additional estimates that would complete the matching
lower bound in that setting.

The autonomous spatial setting also clarifies the relation to other
heterogeneous propagation problems. Huang and
Shen~\cite{HuangShen}, Shen~\cite{ShenVariational,ShenDiscrete}, and Cao
and Shen~\cite{CaoShenLattice} developed spreading and transition-front
theories in time-dependent or time almost periodic media.
Shen~\cite{ShenGeneralized} established generalized waves at and above the spreading speed, with
uniqueness and stability for strictly faster waves. Those theories use time-dependent growth information, whereas
our critical estimates use one finite-cell eigenvalue and its fixed
eigenvectors. Vo~\cite{VoClimate} studied persistence under a prescribed
climate shift, and Shen and Vo~\cite{ShenVo} developed spectral theory
for time-periodic nonlocal dispersal. These works provide related
spectral and comparison methods in different propagation settings.

Free-boundary models give another use of periodic propagation theory.
Du, Guo and Peng~\cite{DuGuoPeng} studied time-periodic logistic growth;
Du and Liang~\cite{DuLiang} used pulsating semi-waves to determine
spreading in spatially periodic media; and Ding, Du and
Liang~\cite{DingDuLiang} treated space--time periodic media.
Li, Liang and Shen~\cite{LiLiangShenI,LiLiangShenII} established
spreading--vanishing and semi-wave results in time almost periodic
environments. In the present whole-space problem, the moving Dirichlet
boundary is prescribed to construct a comparison. We
explain its role and the estimates needed to extend the lower bound.

Section~\ref{sec:main-results} states the main results.
Section~\ref{sec:spectral} develops the spectral geometry and the
Markov estimates, and Section~\ref{sec:upper} proves the general
periodic upper bound. Sections~\ref{sec:lower}
and~\ref{sec:amplification} establish the matching axial lower bound
when linear growth is independent of the propagation coordinate.
Sections~\ref{sec:profiles} and~\ref{sec:critical-tails} identify the
critical profile and derive the arrival-time conclusions.
\ifshownumerics
Section~\ref{sec:numerics} illustrates the logarithmic delay and the
local waveform differences within a periodic cell.
\fi
Section~\ref{sec:extension} gives a sufficient criterion for the
general periodic lower bound and discusses further periodic settings.

\section{Main results}\label{sec:main-results}
We state four main results: a logarithmic upper bound in every
direction, a matching axial lower bound, a critical-tail expansion
with uniqueness, and convergence to a pulsating profile. The axial
results assume that linear growth is independent of the propagation
coordinate; the upper bound and the tail theorem allow general
periodic growth.

Let $d\geq2$, let $L_1,\ldots,L_d$ be positive integers, and put
$\Gamma=L_1\Z\times\cdots\times L_d\Z$ and
$\mathcal C=\prod_{i=1}^d\{0,\ldots,L_i-1\}$. For $x\in\Z^d$,
$x\bmod\Gamma$ denotes the unique representative of the coset
$x+\Gamma$ that belongs to $\mathcal C$. For a real number $r$, write
$r_+=\max\{r,0\}$; for a real-valued function $g$, $g_+$ denotes its
pointwise positive part. Consider
\begin{equation}\label{eq:main}
 u_t(t,x)=\Delta_{\Z^d}u(t,x)+F(x,u(t,x)),\qquad x\in\Z^d,
\end{equation}
with $u(0,x)=u_0(x)$, where
$\Delta_{\Z^d}h(x)=\sum_{|z|_1=1}(h(x+z)-h(x))$.
Here $|\cdot|$ denotes the Euclidean norm and $|\cdot|_1$ the
$\ell^1$ norm; lattice variables in sums and suprema are integer-valued.
We assume that $F(x+\ell,s)=F(x,s)$ for $\ell\in\Gamma$, and that
\begin{equation}\label{eq:periodic-kpp}
\begin{gathered}
 F(x,\cdot)\in C^2([0,1]),\quad F(x,0)=F(x,1)=0,\\
 a(x):=F_s(x,0)>0,\quad F_s(x,1)<0,\\
 0<F(x,s)\leq a(x)s\quad(0<s<1).
\end{gathered}
\end{equation}
The initial data satisfy
\begin{equation}\label{eq:u0}
 0\leq u_0\leq1,\qquad u_0\not\equiv0,\qquad \supp(u_0)\text{ is finite}.
\end{equation}
Uniformity in $x$ of all reaction bounds follows from the finite cell.
Constants may depend on $d$, the periods, $F$ and $u_0$, unless a more
specific dependence is stated. We use $\sup\varnothing=-\infty$ for
level positions. The assumptions allow reactions that are not concave.
Nonconstant lattice periodicity requires a period greater than one in
at least one coordinate.

Finite-cell vector norms are Euclidean, and matrix norms are their induced
operator norms. For $p\in\R^d$, define the finite-cell operator
\begin{equation}\label{eq:cell}
 (B_p\phi)(x)=\sum_{|z|_1=1}
       \bigl(\ee^{-p\cdot z}\phi(x+z)-\phi(x)\bigr)+a(x)\phi(x),
 \qquad \phi(x+\ell)=\phi(x).
\end{equation}
Let $H(p)$ be the Perron eigenvalue of $B_p$: it is the simple real
eigenvalue equal to $\max\{\operatorname{Re}z:z\in\sigma(B_p)\}$. The
finite matrix $B_p$ is irreducible and has nonnegative off-diagonal entries,
so Perron--Frobenius theory \cite{Seneta} gives positive right and left
eigenvectors
$\phi_p,\phi_p^*>0$, normalized by
$\sum_{\mathcal C}\phi_p=1$ and
$\sum_{\mathcal C}\phi_p\phi_p^*=1$.
Define
\begin{equation*}
 \mathcal I(v)=\sup_{p\in\R^d}\{p\cdot v-H(p)\},\qquad
 \mathcal W=\{v:\mathcal I(v)\leq0\}.
\end{equation*}
For $e\in\mathbb S^{d-1}$ there is a unique $w(e)>0$ such that
$\mathcal I(w(e)e)=0$. Set
\begin{equation}\label{eq:pe}
 v_e=w(e)e,\qquad p_e=\nabla\mathcal I(v_e),\qquad
 \alpha_e=p_e\cdot e,\qquad \kappa_d=d/2+1.
\end{equation}
These quantities are well defined and $\alpha_e>0$; see
Section~\ref{sec:spectral}. In particular,
$\nabla H(p_e)=v_e$ and $H(p_e)=p_e\cdot v_e$. Put
$n_e=p_e/|p_e|$ and $\lambda_e=|p_e|$. Lemma~\ref{lem:geometry}
proves that $n_e$ is the unique minimizer of
$n\mapsto c(n)/(n\cdot e)$ over $n\cdot e>0$, where
$c(n)=\min_{\lambda>0}H(\lambda n)/\lambda$, and that $\lambda_e$
is the unique minimizer in the definition of $c(n_e)$. Thus $n_e$ and
$\lambda_e$ are the lattice analogues of Shabani's minimizing direction
$e'$ and exponent $\lambda^*(e')$.

The first result bounds the front position in every direction,
including its logarithmic displacement behind the linear spreading
scale.

\begin{theorem}\label{thm:upper}
Under \eqref{eq:periodic-kpp}--\eqref{eq:u0}, the following estimates
hold uniformly in $e\in\mathbb S^{d-1}$.
\begin{enumerate}
\item There is $C\geq1$ such that, with
$z_e(t,x)=p_e\cdot x-tH(p_e)+\kappa_d\log t$,
\begin{equation}\label{eq:upper-main}
 u(t,x)\leq C(1+z_e(t,x))\ee^{-z_e(t,x)}
 \qquad(t\geq4,\ z_e(t,x)\geq0).
\end{equation}
\item For each $\theta\in(0,1)$ and $\rho\geq0$, there is
$C_{\theta,\rho}$ such that $u(t,x)<\theta$ whenever
\begin{equation}\label{eq:ray-upper}
 t\geq4,\quad |x-re|\leq\rho,\quad r\geq0,\quad
 r\geq w(e)t-\frac{d+2}{2\alpha_e}\log t+C_{\theta,\rho}.
\end{equation}
\end{enumerate}
\end{theorem}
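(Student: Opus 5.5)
The plan is to compare $u$ with a linear problem killed behind a logarithmically shifted moving boundary, after a Doob transform built from the Perron data at $p_e$. Part~(ii) then follows directly from part~(i).

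\emph{Doob transform.} By \eqref{eq:periodic-kpp}, $F(x,s)\le a(x)s$, so $u$ is a subsolution of the linear equation $v_t=\Delta_{\Z^d}v+a(x)v$. Fix $e$ and write $p=p_e$, $\phi=\phi_{p}$. Set
\[
 w(t,x)=\frac{\ee^{p\cdot x-tH(p)}u(t,x)}{\phi(x)}.
\]
The eigenvalue relation $B_p\phi=H(p)\phi$ shows that $w$ is a subsolution of $w_t=\mathcal L w$. Here $\mathcal L$ is the generator of a $\Gamma$-periodic continuous-time Markov chain with rates $\ee^{p\cdot z}\phi(x+z)/\phi(x)$. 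This chain is a Markov additive process whose asymptotic mean drift is $\nabla H(p)=v_e$ (Section~\ref{sec:spectral}). Hence the scalar $Y_t=p\cdot X_t-tH(p)$ has zero asymptotic drift, because $H(p_e)=p_e\cdot v_e$. Moreover, $(X_t-v_et)$ satisfies a local CLT with nondegenerate covariance $\nabla^2H(p)$.

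\emph{Killed comparison and barrier.} The key observation is that $u\le1$ already gives, without any circularity, a barrier on the curve where the claimed bound is to hold. On the moving set $\{z_e(s,x)\le 0\}$, that is $\{p\cdot x-sH(p)\le-\kappa_d\log s\}$, we have
\[
 w\le \ee^{-\kappa_d\log s}/\min\phi=Cs^{-\kappa_d}.
\]
For $t\ge4$ I therefore represent $w(t,x)$ with $z_e(t,x)\ge0$ by a Duhamel/strong-Markov formula for the chain killed when it first enters $\{z_e\le0\}$. This formula has two terms:
\begin{enumerate}
\item an initial term, namely the killed kernel started from the finite support of $u_0$;
\item a boundary term $\int_{s_0}^t Cs^{-\kappa_d}\,\dd\mu_{t,x}(s)$, where $\mu_{t,x}$ is the law of the last boundary visit before $t$ for a path ending at $(t,x)$.
\end{enumerate}
The estimate I need is a ballot-type local limit bound for the killed chain: the probability of staying above a barrier that moves by $-\kappa_d\log s$ and ending at a point with $z_e=\zeta$ is at most
\[
 C(1+\zeta)\,(t-s)^{-3/2}\,(t-s)^{-(d-1)/2}.
\]
In this bound the $3/2$ comes from the normal direction and the $(d-1)/2$ from the transverse Gaussian, and the logarithmic drift of the barrier changes only constants. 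With this kernel, the initial term is $\le C(1+\zeta)t^{-\kappa_d}$. The boundary term is $\lesssim(1+\zeta)\int^t s^{-\kappa_d}(t-s)^{-\kappa_d}\,\dd s\lesssim(1+\zeta)t^{-\kappa_d}$ after splitting at $s=t/2$, since $\kappa_d>1$. Undoing the transform yields
\[
 u\le C(1+z_e)\ee^{-z_e},
\]
which is \eqref{eq:upper-main}.

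\emph{Uniformity and the main obstacle.} For uniformity in $e$, I will use that $p_e$, $H$, $\phi_{p_e}$ and $\nabla^2H(p_e)$ depend continuously on $e$ over the compact sphere, by the spectral results of Section~\ref{sec:spectral}. The main obstacle is the ballot/local-limit estimate for a periodic Markov additive process killed at a curved barrier, uniformly in $e$. I would first prove it for the projected coordinate by a reflection-type argument, using the martingale $Y_t+$(bounded periodic corrector) and Section~\ref{sec:spectral}'s Markov estimates. I would then combine it with the transverse local CLT via a Chapman--Kolmogorov split at times $t/3$ and $2t/3$.

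\emph{Part (ii).} Take $|x-re|\le\rho$. Then
\[
 p_e\cdot x-tH(p_e)\ge\alpha_e(r-w(e)t)-|p_e|\rho .
\]
Under \eqref{eq:ray-upper} this gives $z_e\ge\alpha_eC_{\theta,\rho}-C'\rho$, using $\kappa_d=(d+2)/2$. Choosing $C_{\theta,\rho}$ large makes $C(1+z_e)\ee^{-z_e}<\theta$. The constants are uniform in $e$ because $\alpha_e$ is bounded below and $|p_e|$ is bounded above on the sphere.
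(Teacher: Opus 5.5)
\textbf{The boundary term has a genuine gap.} You bound the law $\mu_{t,x}$ of the time at which the backward path meets the barrier by the per-site killed kernel, $\dd\mu_{t,x}(s)\lesssim(1+\zeta)(t-s)^{-\kappa_d}\,\dd s$. But the barrier $\{p_e\cdot y=sH(p_e)-\kappa_d\log s\}$ is a whole moving hyperplane, and $\mu_{t,x}$ aggregates over all of its sites. Summing the per-site bound over the $\asymp(t-s)^{(d-1)/2}$ transverse sites the path can reach cancels the transverse factor. The hitting-time density is therefore the one-dimensional first-passage density of the corrected martingale, of order $(1+\zeta)(t-s)^{-3/2}$.

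With only $u\le1$ on the barrier, your representation therefore gives a boundary term of order
\[
 (1+\zeta)\int_{s_0}^{t/2}s^{-\kappa_d}(t-s)^{-3/2}\,\dd s\asymp(1+\zeta)t^{-3/2}.
\]
This is dominated by hits at times $s=O(1)$, where the barrier value $Cs^{-\kappa_d}$ is of order one. Since $\kappa_d\ge2>3/2$ for $d\ge2$, you only obtain $u\le C(1+z_e)\ee^{-z_e}t^{(d-1)/2}$. That is the one-dimensional coefficient $3/(2\alpha_e)$ instead of $(d+2)/(2\alpha_e)$: exactly the transverse gain is lost. This is not a matter of constants. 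Those early hits occur at transverse distance of order $\sqrt t$, where $u(s,\cdot)$ is negligible, and your argument never uses that.

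\textbf{What is missing} is control of how much of the solution sits where the backward paths meet the barrier. The paper supplies this through the weighted-mass decay of Lemma~\ref{lem:mass-decay}, $M(t)=\sum_x\phi_p^*(x)\ee^{p\cdot x-ht}u(t,x)\le C(1+t)^{-1/2}$. It is proved by a dyadic argument: free transport by the reversed walk carries a fixed fraction of the mass into $\{z<0\}$. There $u\le1$ makes the density negligible, so that mass must have been removed by the nonnegative loss $\mathcal R$.

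The paper then uses the barrier only on the last half of the time interval, with the fixed shift $\kappa_d\log t$. Every exit value is then at most $Ct^{-\kappa_d}$, whatever the exit time. Paths surviving to physical time $t/2$ are controlled by $\|w(t/2)\|_\infty\le Ct^{-d/2}M(t/4)\le Ct^{-(d+1)/2}$ (whole-space kernel bound plus mass decay), times the survival probability $C(1+z_0)t^{-1/2}$ from Lemma~\ref{lem:survival}. This route needs no killed local limit bound for a curved barrier.

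If you want to keep your full-time curved barrier, you must replace $u\le1$ on the barrier by a transverse localization. One possibility is the linear supersolutions $C\phi_{p'}\ee^{-(p'\cdot x-sH(p'))}$ with $p'$ near $p_e$, together with \eqref{eq:quadratic-support-gap}. These should confine the order-one values of $u$ on the barrier to a window of width about $\sqrt{s\log s}$. That is an additional argument, on top of the curved-barrier kernel estimate you already flag as open.

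A minor point: the conjugated rates are $\ee^{-p\cdot z}\phi(x+z)/\phi(x)$, and this walk has mean drift $-v_e$, being run backward in time. Your part~(ii) is fine once part~(i) holds.
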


Since $\alpha_e=\lambda_e n_e\cdot e$, the logarithmic coefficient in
\eqref{eq:ray-upper} is
$(d+2)/(2\lambda_e n_e\cdot e)$, exactly the normal--observation
dependence appearing in \cite[Theorem~1.3]{Shabani}.
For every $\theta\in(0,1)$, the pointwise estimate also gives a constant
$C_\theta>0$ such that, for $t\geq4$,
\begin{equation}\label{eq:outer-body}
 \{x:u(t,x)\geq\theta\}\subset
 \bigcap_{e\in\mathbb S^{d-1}}
 \{x:p_e\cdot x\leq tH(p_e)-\kappa_d\log t+C_\theta\}.
\end{equation}

We next obtain a matching lower bound when linear growth is independent
of the propagation coordinate, while allowing transverse heterogeneity.
Write $x=(j,y)\in\Z\times\Z^{d-1}$ and assume
\begin{equation}\label{eq:transverse-linearization}
 F_s((j,y),0)=a_\perp(y)>0\qquad(j\in\Z,\ y\in\Z^{d-1}).
\end{equation}
The nonlinear reaction itself may still depend periodically on $j$.
Let $\mathcal C_\perp=\prod_{i=2}^d\{0,\ldots,L_i-1\}$, and let
$a_{\mathrm{eff}}$ and $\psi_\perp>0$ be the principal eigenpair of
$\mathcal A_\perp=\Delta_{\Z^{d-1}}+a_\perp(y)$ on this cell, with
\begin{equation*}
 \mathcal A_\perp\psi_\perp=a_{\mathrm{eff}}\psi_\perp,
 \qquad \sum_{y\in\mathcal C_\perp}\psi_\perp(y)^2=1.
\end{equation*}
In particular $\min a_\perp\leq a_{\mathrm{eff}}\leq\max a_\perp$.
The constant-linearization condition
\begin{equation}\label{eq:common-linearization}
 F_s(x,0)=a>0\qquad(x\in\Z^d)
\end{equation}
is a special case, with $a_{\mathrm{eff}}=a$.

\begin{theorem}\label{thm:axial}
Assume \eqref{eq:periodic-kpp}, \eqref{eq:u0} and
\eqref{eq:transverse-linearization}. Let $\lambda_*>0$ minimize
$[2(\cosh\lambda-1)+a_{\mathrm{eff}}]/\lambda$, put
$c_*=2\sinh\lambda_*$, and define
\begin{equation*}
 m_d(t)=c_*t-\frac{d+2}{2\lambda_*}\log t,\qquad t\geq2.
\end{equation*}
\begin{enumerate}
\item For each $\theta\in(0,1)$ there are $C_\theta,T_\theta$ such that
\begin{equation}\label{eq:axial-position}
 \left|\sup\{j\in\Z:u(t,j,0)\geq\theta\}-m_d(t)\right|
 \leq C_\theta\qquad(t\geq T_\theta).
\end{equation}
For these times, $u(t,j,0)\geq\theta$ whenever
$0\leq j\leq m_d(t)-C_\theta$.
\item For each $\theta\in(0,1)$ and $A\geq0$, all sufficiently large
$t$ admit $j_t\in\Z$ such that
\begin{equation}\label{eq:diffusive-section}
 |j_t-m_d(t)|\leq C_{\theta,A},\qquad
 \inf_{|y|\leq A\sqrt t}u(t,j_t,y)\geq\theta.
\end{equation}
\item For every $A\geq0$ and $\sigma>0$ there are positive constants
$b_{A,\sigma},B_{A,\sigma},T_{A,\sigma}$ such that, with
$\xi=j-m_d(t)$,
\begin{equation}\label{eq:axial-leading-edge}
 b_{A,\sigma}(1+\xi)\ee^{-\lambda_*\xi}
 \leq u(t,j,y)\leq
 B_{A,\sigma}(1+\xi)\ee^{-\lambda_*\xi}
\end{equation}
whenever $t\geq T_{A,\sigma}$, $1\leq\xi\leq\sigma\sqrt t$ and
$|y|\leq A\sqrt t$.
\end{enumerate}
\end{theorem}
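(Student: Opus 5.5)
The plan splits into an upper half and a lower half. The upper half comes directly from Theorem~\ref{thm:upper}. For the axial direction $e=e_1$, the linearization \eqref{eq:transverse-linearization} makes $B_p$ with $p=\lambda e_1$ separable: its Perron eigenvector is $\psi_\perp$, up to normalization and positivity of the transverse principal eigenfunction, and
\[
H(\lambda e_1)=2(\cosh\lambda-1)+a_{\mathrm{eff}}.
\]
I would first check that $p_{e_1}=\lambda_*e_1$, $w(e_1)=c_*$ and $\alpha_{e_1}=\lambda_*$. This follows from the symmetry $y\mapsto -y$ of the Laplacian on the transverse cell, which is not automatic for $a_\perp$. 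A cleaner route is strict convexity of $H$ together with $\partial_{p_i}H(\lambda e_1)=0$ for $i\geq2$, which holds because $\psi_\perp^2$ is the stationary weight of a reversible transverse chain. Then \eqref{eq:ray-upper} gives the upper bound in \eqref{eq:axial-position}, and \eqref{eq:upper-main} with $z=\lambda_*\xi+O(|y|^2/t)$ gives the upper bound in \eqref{eq:axial-leading-edge}. For that step I would rather use the sharper Markov estimate from Section~\ref{sec:spectral}, which carries the transverse Gaussian factor, since $|y|\le A\sqrt t$ only costs a constant.

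For the lower bound I would separate variables. Write $u\geq \psi_\perp(y)\,\cdot$(transverse heat kernel)$\,\cdot\,v(t,j)$. Here $v$ is a subsolution of the one-dimensional lattice equation $v_t=\Delta_{\Z}v+a_{\mathrm{eff}}v-Kv^{1+\delta}$ (or $-Kv^2$), using $F(x,s)\geq a_\perp(y)s-Ks^2$ from the $C^2$ bound. The transverse factor is the positive periodic heat kernel $q(t,y)$ of $\mathcal A_\perp-a_{\mathrm{eff}}$, which satisfies $q\asymp t^{-(d-1)/2}\psi_\perp(y)$ on $|y|\le A\sqrt t$ by a periodic local CLT. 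The product $\underline u=q(t,y)v(t,j)$ solves the linear part exactly. The nonlinear residual is $-Kq^2v^2$, which is dominated by $-K'q\,v^2$ because $q\le C$. So $v$ only has to absorb the factor $t^{-(d-1)/2}$ in the amplitude. After the substitution $v=t^{(d-1)/2}\tilde v$, this is exactly what converts Bramson's $\frac{3}{2\lambda_*}$ into $\frac{d+2}{2\lambda_*}$: the extra drift is $\frac{d-1}{2\lambda_*}\log t$. On the scalar side I would follow Besse--Faye--Roquejoffre--Zhang. In the moving frame $j=m_d(t)+\xi$, I would build a Dirichlet subsolution on $\xi\geq -\sqrt t$ (or on a diffusive window) of the form $\ee^{-\lambda_*\xi}\,\xi\,\ee^{-\xi^2/(Ct)}$ times $t^{-1}$ corrections. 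The sign of the residual has to be checked on the lattice, where the coordinate shift $m_d(t)$ is non-integer; I would handle that by interpolating in $\xi$ continuously and applying $\Delta_\Z$ to the continuous function. This yields $u\geq b(1+\xi)\ee^{-\lambda_*\xi}$ for $1\le\xi\le\sigma\sqrt t$ and $|y|\le A\sqrt t$, which is the lower bound in \eqref{eq:axial-leading-edge}, with the initial data compared at some time $t_0$ where $u(t_0)\geq\epsilon\1_{\text{box}}$.

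To get from small amplitude to level $\theta$, which is the content of (i) and (ii), I would use an amplification argument. The lower edge estimate places mass of order $\epsilon$ at $\xi=O(1)$ on the whole window $|y|\le A\sqrt t$. Restarting from this datum on a time interval of bounded length, the KPP hair-trigger effect pushes $u$ above $\theta$ at shift $-C$ uniformly in $y$. I would justify that step by comparison with a lower solution that is constant in $y$ over a large transverse box and uses the bounded-time continuity of the solution map. This gives (ii), and taking $y=0$ gives $\sup\{j:u\geq\theta\}\geq m_d(t)-C_\theta$. The claim that the whole interval $0\le j\le m_d-C_\theta$ is invaded follows from the spreading-speed theory, since $u\to1$ locally uniformly behind speed $c_*-\epsilon$, combined with the same restart argument applied at intermediate times.

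The main obstacle will be the scalar subsolution. Its residual must be nonpositive up to the Dirichlet boundary, the $\log t$ coefficient must be exactly $(d+2)/(2\lambda_*)$, and it has to combine with the transverse kernel. The uniform-in-$y$ lower bound also fails near the edges $|y|\approx A\sqrt t$ unless the kernel lower bound is uniform there. I would first attempt a subsolution of the form
\[
\underline u=t^{-(d-1)/2}\psi_\perp(y)\,G(y/\sqrt t)\,w(t,\xi),
\]
with $w$ the standard Bramson Dirichlet profile. If the nonlinear loss term is too large near $\xi=0$, I would cap the amplitude by $\epsilon$ and use the $(1+\xi)$ factor to keep the residual signed.
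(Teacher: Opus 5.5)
Your upper half is correct and matches the paper. In fact $z_{e_1}(t,x)=\lambda_*j-c_*\lambda_*t+\kappa_d\log t=\lambda_*\xi$ exactly, since $p_{e_1}\cdot x=\lambda_*j$. So \eqref{eq:upper-main} already gives the upper bound in \eqref{eq:axial-leading-edge} uniformly in $y$, and no transverse Gaussian refinement is needed. The architecture of your lower half is also the paper's: a transverse kernel times a scalar solution whose growth loss $-\frac{d-1}{2t}$ shifts $\frac{3}{2\lambda_*}$ to $\frac{d+2}{2\lambda_*}$. However, the product step as you wrote it does not produce this. Bounding $-Kq^2v^2\ge-K'qv^2$ through $q\le C$ leaves $v$ a subsolution of the autonomous logistic equation $v_t\le\Delta_{\Z}v+a_{\mathrm{eff}}v-K'v^2$. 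Then $v\le a_{\mathrm{eff}}/K'$, so $qv=O(t^{-(d-1)/2})$ never reaches an order-one level. The substitution $v=t^{(d-1)/2}\tilde v$ then yields the growing nonlinear coefficient $K't^{(d-1)/2}$, not a bounded one. You must instead use the decay $q\le C(1+t)^{-(d-1)/2}$ from \eqref{eq:transverse-kernel-upper}. Equivalently, normalize the transverse factor as $H_T=\beta_\perp(t+T)^{(d-1)/2}\ee^{-a(t+T)}K_\perp\le1$. Its equation \eqref{eq:transverse-factor-equation} then carries $+\frac{d-1}{2(t+T)}$, the scalar equation \eqref{eq:scalar-logistic} carries the compensating loss, and $K(qH_T)^2\le Kq^2H_T$ keeps the nonlinear coefficient bounded.

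The more serious gap is the scalar subsolution itself, which you leave as the main obstacle; the remedies you suggest cannot close it. In the frame moving at $c_*$, the critical operator $-c_*\partial_r-\Delta_1-a_{\mathrm{eff}}$ annihilates both $\ee^{-\lambda_*r}$ and $r\ee^{-\lambda_*r}$, because of the double root. So an ansatz built on $\xi\ee^{-\lambda_*\xi}$, with a Gaussian factor and $t^{-1}$ corrections, has no order-one linear slack near its Dirichlet edge. There, the logarithmic drift contributes a positive residual of order $\varepsilon/t$. The quadratic loss contributes a positive residual whose size relative to the profile is of order $\varepsilon$ \emph{uniformly in time}. Neither capping the amplitude nor using the $(1+\xi)$ factor creates a negative term to absorb these.

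The ingredient the paper supplies is an off-root correction. With $P_\mu(r)=r-\omega_\mu+\omega_\mu\ee^{-\mu r}$, the pulse $\varepsilon\ee^{-\lambda_*r}P_\mu(r)$ has the strictly negative residual \eqref{eq:pulse-linear-residual}. Here $\sigma_\mu>0$ by strict convexity of $k\mapsto2(\cosh k-1)+a_{\mathrm{eff}}-c_*k$. This residual absorbs both $K\Phi_\varepsilon^2$ and the $O(1/\tau)$ terms, so the pulse is a subsolution on the whole line (Lemma~\ref{lem:delayed-pulse}).

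The diffusive information enters separately. The pulse is ordered below $q$ only on a moving half-line. Its right collar at $r\approx\sigma\sqrt\tau$ is controlled by the window bound $Q\ge c_0t^{-3/2}s\ee^{-\lambda_*s}$ for the autonomous logistic solution (Lemma~\ref{lem:autonomous-leading-lower}). That bound is proved via the positive part of the dipole kernel, log-concavity, and an integrable damping factor. The $3/2$ in $b_\beta=(\beta+3/2)/\lambda_*$ enters through this collar estimate. An alternative would be to use the upper bound to turn the quadratic loss into a localized linear potential and treat the linear Dirichlet problem perturbatively, but that is a substantial argument absent from your sketch.

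Finally, your amplification step must be in the persistent form of Lemma~\ref{lem:amplification}: $u(t_0+s,z)\ge\theta$ for all $s\ge S$. This comes from a monotone Dirichlet box solution started at a strict stationary subsolution, not from bounded-time continuity. With persistence, the seeds produced at every time cover the whole inner interval by the integer-arrival argument, and no appeal to spreading-speed theory is needed.
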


The constants in \eqref{eq:axial-leading-edge} may depend on the size
of the diffusive window. The same conclusions hold on the negative
$j$-axis by applying the theorem to
$\widetilde F((j,y),s)=F((-j,y),s)$. If the linear growth rate is
independent of another coordinate, that coordinate can serve as the
propagation axis. In particular, \eqref{eq:common-linearization}
gives the conclusions on every signed coordinate axis.

Estimate \eqref{eq:axial-leading-edge} is the counterpart, in this
transverse-periodic lattice class, of the leading-edge estimate in
\cite[Lemma~6.1]{HNRRPeriodic}. Its lower bound is proved on a diffusive
longitudinal window, not only at bounded distance from the front.

For example, $F((j,y),s)=a_\perp(y)s(1-s)$ is covered even when
$a_\perp$ is nonconstant. More generally,
$F((j,y),s)=a_\perp(y)s(1-s)(1+\varepsilon(j,y)s)$ is covered when
$0\leq\varepsilon\leq\varepsilon_0<1$ is periodic. Neither averaging
$a_\perp$ nor replacing the positive eigenfunction $\psi_\perp$ by a
constant gives the transverse factor used below; see Section~\ref{sec:lower}.

We now turn from front position to front shape. A pulsating profile
in direction $n$ and with speed $c>0$ is a function
$\Phi:\R\times\Z^d\to(0,1)$, periodic in its second variable, such that
$V(t,x)=\Phi(x\cdot n-ct,x)$ solves \eqref{eq:main} and
$\Phi(-\infty,x)=1$, $\Phi(+\infty,x)=0$, uniformly in $x$. We call
such a front critical when $c=c(n)$, with $c(n)$ defined above.

The next theorem determines the first two terms of the critical
profile in its leading edge and proves uniqueness up to translation. It allows
a nonconstant periodic linearization.
\begin{theorem}\label{thm:critical-uniqueness}
Assume \eqref{eq:periodic-kpp}. Fix $n\in\mathbb S^{d-1}$, let
$\lambda>0$ minimize $H(\eta n)/\eta$, and put
$c=H(\lambda n)/\lambda$, $p=\lambda n$,
$D_n=\tfrac12n^{\mathsf T}\nabla^2H(p)n$ and
$\dot\phi_p=\left.\partial_\eta\phi_{\eta n}\right|_{\eta=\lambda}$.
Let $\Phi$ be a $\Gamma$-periodic pulsating profile in direction $n$
with speed $c$ and end states $1,0$, satisfying
\begin{equation}\label{eq:front-critical-upper}
 0<\Phi(s,x)\leq C(1+s)\ee^{-\lambda s}
             \qquad(s\geq0,\ x\in\mathcal C).
\end{equation}
Then the following statements hold.
\begin{enumerate}
\item There are $A_\Phi>0$, $B_\Phi\in\R$ and $\varepsilon>0$ such
that, uniformly in the cell as $s\to+\infty$,
\begin{equation}\label{eq:critical-tail-expansion}
 \Phi(s,x)=\ee^{-\lambda s}
 \left[(A_\Phi s+B_\Phi)\phi_p(x)-A_\Phi\dot\phi_p(x)
                                  +O(\ee^{-\varepsilon s})\right],
\end{equation}
where
\begin{equation}\label{eq:critical-amplitude}
 A_\Phi=\frac1{D_n}\int_{\R}\ee^{\lambda s}
       \sum_{x\in\mathcal C}\phi_p^*(x)
           \bigl(a(x)\Phi(s,x)-F(x,\Phi(s,x))\bigr)\,\dd s.
\end{equation}
\item Every such profile is strictly decreasing in its first variable,
and any two such profiles differ only by a translation of that variable.
\end{enumerate}
\end{theorem}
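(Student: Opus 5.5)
Set $\Phi(s,x)=\ee^{-\lambda s}\Psi(s,x)$ with $x$ ranging over the periodic cell, and work with a Laplace/Fourier transform in $s$ combined with the Bloch operators $B_{\eta n}$. Substituting $V(t,x)=\Phi(x\cdot n-ct,x)$ into \eqref{eq:main} gives the profile equation
\begin{equation*}
 -c\,\partial_s\Phi(s,x)=\sum_{|z|_1=1}\bigl(\Phi(s+z\cdot n,x+z)-\Phi(s,x)\bigr)+a(x)\Phi(s,x)-g(s,x),
\end{equation*}
where $g=a\Phi-F(x,\Phi)\geq0$. By \eqref{eq:periodic-kpp} and $C^2$ regularity we have $0\leq g\leq C\Phi^2$, and by \eqref{eq:front-critical-upper} this gives $g=O((1+s)^2\ee^{-2\lambda s})$ as $s\to+\infty$. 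As $s\to-\infty$, $\Phi\to1$ and $g$ stays bounded.

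For $\operatorname{Re}\mu$ in a strip $(0,\lambda)$, define the transform $\widehat\Phi(\mu,x)=\int_\R\ee^{\mu s}\Phi(s,x)\,\dd s$. Its lower half-strip converges because of the left end state $1$; I will handle this by cutting off at $s\leq0$ and moving the boundary terms into a source that is analytic in a larger region. The equation becomes
\begin{equation*}
 \bigl(B_{\mu n}-c\mu\bigr)\widehat\Phi(\mu,\cdot)=\widehat g(\mu,\cdot),
\end{equation*}
where $B_{\mu n}$ is \eqref{eq:cell} analytically continued to complex $\mu$. The source $\widehat g$ is analytic for $\operatorname{Re}\mu<2\lambda$, and the same holds for $\widehat\Phi$ restricted to $s\geq0$ as long as $\operatorname{Re}\mu<\lambda$.

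The key spectral input comes from Section~\ref{sec:spectral}. Near $\mu=\lambda$ the Perron eigenvalue $H(\mu n)$ is analytic. Since $\lambda$ minimizes $H(\eta n)/\eta$ and $c=H(\lambda n)/\lambda$, the function $H(\mu n)-c\mu$ has a double zero at $\mu=\lambda$, with second derivative $2D_n>0$. I also need that no other eigenvalue of $B_{\mu n}$ equals $c\mu$ on the line $\operatorname{Re}\mu=\lambda$ or in $\lambda\leq\operatorname{Re}\mu\leq\lambda+\varepsilon$, apart from $\mu=\lambda$. This follows from Perron--Frobenius domination: for $\mu=\lambda+\ii\tau$ with $\tau\neq0$, every eigenvalue of $B_{\mu n}$ has real part strictly below $H(\lambda n)$, by strict inequality for an irreducible matrix with oscillating phases. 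I expect this step to be the main obstacle. When $n$ is rational, the phases $\ee^{-\ii\tau z\cdot n}$ can realign periodically, and the argument must use irreducibility together with the fact that no diagonal unitary conjugation absorbs a common phase. A uniform resolvent bound as $|\tau|\to\infty$ is also needed, and I expect to get it from the $-c\mu$ term dominating.

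Given this, decompose the resolvent near $\lambda$ as
\begin{equation*}
 (B_{\mu n}-c\mu)^{-1}=\frac{\phi_{\mu n}\langle\phi^*_{\mu n},\cdot\rangle}{H(\mu n)-c\mu}+\text{(analytic)}.
\end{equation*}
Shift the inverse transform contour past $\operatorname{Re}\mu=\lambda$ to $\operatorname{Re}\mu=\lambda+\varepsilon$. The double pole produces residue terms $\ee^{-\lambda s}[(As+B)\phi_p-A\dot\phi_p]$, where the $\dot\phi_p$ term arises from differentiating $\phi_{\mu n}$ at the pole; the remaining integral is $O(\ee^{-(\lambda+\varepsilon)s})$. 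The leading coefficient is
\begin{equation*}
 A=\frac{\langle\phi_p^*,\widehat g(\lambda)\rangle}{D_n},
\end{equation*}
which is exactly \eqref{eq:critical-amplitude}, using the normalization $\sum\phi_p\phi_p^*=1$. Its positivity follows from $g\geq0$ and $g\not\equiv0$.

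Part~(ii) follows from the sliding method. From the expansion with $A>0$, the profile $\Phi$ is decreasing for large $s$ and close to $1$ for very negative $s$. Strict monotonicity then comes from comparing $\Phi(\cdot+h,\cdot)$ with $\Phi$ and applying the strong maximum principle for the cooperative lattice system. For uniqueness, given two profiles $\Phi_1,\Phi_2$ with amplitudes $A_1,A_2$, translate so that $A_1=A_2$; a shift by $h$ multiplies $A$ by $\ee^{-\lambda h}$. Then compare the $B$ coefficients and slide until $\Phi_1(\cdot-h)\geq\Phi_2$. At the minimal $h^*$, equality of the tails, using $F_s(x,1)<0$ near the back end and the strong maximum principle in the middle, forces $h^*$ to match the leading-order shift. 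The first-order correction then equates the two profiles.
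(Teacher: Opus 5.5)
Your overall route is the paper's: with $\mu=\lambda-z$, your $B_{\mu n}-c\mu$ is $\mathcal M(z)$, and the contour shift, the double zero, the residue $(As+B)\phi_p-A\dot\phi_p$ and the amplitude formula all match Lemmas~\ref{lem:tail-source}--\ref{lem:critical-tail-expansion}. However, the spectral step you flagged fails as stated. You claim that every eigenvalue of $B_{(\lambda+\ii\tau)n}$ has real part strictly below $H(\lambda n)$ for $\tau\ne0$. This is false whenever $\tau\, n\cdot\ell\in2\pi\Z$ for all $\ell\in\Gamma$. In that case multiplication by $\ee^{\ii\tau n\cdot x}$ is $\Gamma$-periodic and conjugates $B_{(\lambda+\ii\tau)n}$ to $B_{\lambda n}$, so $H(\lambda n)$ is itself an eigenvalue on the critical line. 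For $n=e_1$ this happens at $\tau=2\pi/L_1$, which is exactly the axial direction used in Theorem~\ref{thm:selection}. There a diagonal unitary does absorb the phases, so no irreducibility argument can give strict domination.

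What rules out a second pole is the term $-c\mu$, not Perron--Frobenius domination. Conjugate by $\phi_p$ and apply the maximum-modulus argument. An eigenvalue of $B_{(\lambda+\ii\tau)n}$ with real part $H(\lambda n)$ forces $\ee^{-\ii\tau n\cdot z}\eta(x+z)=\eta(x)$ on every edge. The jump sum then vanishes, so the eigenvalue equals the real number $H(\lambda n)=c\lambda$. But $c\mu=c\lambda+\ii c\tau$ has nonzero imaginary part because $c>0$. This is the paper's argument around \eqref{eq:imaginary-axis-equation}, where the leftover term $\ii c\tau\eta$ forces $\tau=0$. With that repair, widening to a strip by continuity and using the Neumann series at large $|\tau|$ go through as you describe.

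Two smaller points. First, the inversion and contour shift also need decay of $\widehat g$ in $\operatorname{Im}\mu$; the paper gets $O(\tau^{-2})$ from bounds on $g_s$ and $g_{ss}$. The $O(1/|\tau|)$ resolvent bound alone neither makes the integrand absolutely integrable nor gives a pointwise $O(\ee^{-(\lambda+\varepsilon)s})$ remainder. Second, in the uniqueness argument the $B$ coefficients play no role. After matching amplitudes, sliding gives $h^*=0$, i.e.\ $\Phi_1\ge\Phi_2$, and equality follows by interchanging the two profiles rather than from a first-order correction.
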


In the axial setting, the solution approaches a single critical
profile with a bounded, slowly varying phase. The approximation
retains the local shape differences imposed by the periodic medium.

\begin{theorem}\label{thm:selection}
Under the assumptions of Theorem~\ref{thm:axial}, there is a unique
critical pulsating profile $\Phi_*$ in direction $e_1$ with speed $c_*$
in the class \eqref{eq:front-critical-upper}, with $\lambda=\lambda_*$
and normalization $\Phi_*(0,0)=1/2$.
\begin{enumerate}
\item There is a bounded function $\zeta\in C^\infty([2,\infty))$
with $\zeta'(t)\to0$ such that, for each fixed $B\geq0$,
\begin{equation}\label{eq:profile-selection}
 \sup_{\substack{j\geq0\\ |y|\leq B}}
 |u(t,j,y)-\Phi_*(j-m_d(t)+\zeta(t),(j,y))|\to0.
\end{equation}
\item The first time derivative satisfies
\begin{equation}\label{eq:profile-time-derivative}
 \sup_{\substack{j\geq0\\ |y|\leq B}}
 \left|u_t(t,j,y)+c_*\partial_1\Phi_*
       (j-m_d(t)+\zeta(t),(j,y))\right|\to0
\end{equation}
for each fixed $B\geq0$.
\end{enumerate}
\end{theorem}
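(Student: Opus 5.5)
We argue in three stages: existence and uniqueness of $\Phi_*$, convergence along time sequences via compactness and a Liouville-type identification, and finally construction of a smooth phase $\zeta$.

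\emph{Existence and uniqueness of $\Phi_*$.} In the axial setting \eqref{eq:transverse-linearization}, $B_p$ for $p=\eta e_1$ separates. The Perron eigenvalue is $H(\eta e_1)=2(\cosh\eta-1)+a_{\mathrm{eff}}$, with eigenvector proportional to $\psi_\perp$ (constant in $j$). Hence $c(e_1)=c_*$ and the minimizing exponent is $\lambda_*$. Existence of a critical pulsating front in direction $e_1$ follows from the lattice theory of Guo and Hamel~\cite{GuoHamel}, or alternatively as a limit of $u(t_k+\cdot,\cdot+m_d(t_k)e_1)$ described below. The bound \eqref{eq:front-critical-upper} for such a front follows from the upper bound of Theorem~\ref{thm:upper}(i) with $e=e_1$, which gives $p_{e_1}=\lambda_*e_1$. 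Uniqueness up to translation in this class is Theorem~\ref{thm:critical-uniqueness}(ii). Since that theorem also gives strict monotonicity in $s$, the normalization $\Phi_*(0,0)=1/2$ fixes the translate.

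\emph{Convergence along time sequences.} Take $t_k\to\infty$ and consider the shifted solutions
\[
u_k(\tau,j,y)=u\bigl(t_k+\tau,\,j+J_k,\,y+Y_k\bigr),
\]
where $J_k\in L_1\Z$ lies within $L_1$ of $m_d(t_k)$ and $Y_k\in\Gamma_\perp$ (for the fixed-$B$ statement, $Y_k=0$). Shifting by lattice periods preserves the equation, so by Arzel\`a--Ascoli a subsequence converges locally uniformly to an entire solution $U$ of \eqref{eq:main} on $\R\times\Z^d$. Theorem~\ref{thm:axial}(i),(iii) give two-sided bounds on $U$ along the axis. The upper bound \eqref{eq:upper-main} and the lower bound \eqref{eq:axial-leading-edge}, applied on windows with $|y|\le A\sqrt t$, pass to the limit as
\[
b(1+\xi)\ee^{-\lambda_*\xi}\le U(\tau,j,y)\le B(1+\xi)\ee^{-\lambda_*\xi},\qquad \xi=j-c_*\tau\ge1,
\]
for all $\tau\in\R$ and all $y$; here we use that $m_d(t_k+\tau)-m_d(t_k)\to c_*\tau$. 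In addition, $U\ge\theta$ for $\xi\le-C_\theta$. The key step is a Liouville-type statement: an entire solution trapped between two translates of critical fronts with matching $(1+\xi)\ee^{-\lambda_*\xi}$ tails is itself a translate $\Phi_*(j-c_*\tau+\zeta_\infty,(j,y))$. We would prove this as in Berestycki--Hamel / HNRR. Define $\zeta^\pm$ as the extremal shifts with $\Phi_*(\cdot+\zeta^-)\le U\le\Phi_*(\cdot+\zeta^+)$; these exist by the tail bounds, the tail expansion \eqref{eq:critical-tail-expansion}, and the monotonicity of $\Phi_*$. Then apply the strong maximum principle for the lattice equation, together with a sliding argument backward in time, to show $\zeta^-=\zeta^+$. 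Near $\xi\to+\infty$ we use the exact two-term tail, since the polynomial factor $A_\Phi s$ dominates.

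\emph{Construction of $\zeta$.} The limit shift $\zeta_\infty$ may depend on the subsequence. So we define $\tilde\zeta(t)$ as the unique $\zeta$ solving the level condition $\Phi_*(j_0-m_d(t)+\zeta,(j_0,0))=1/2$ at a suitable site near the front. This is well defined by strict monotonicity and bounded by Theorem~\ref{thm:axial}(i). The convergence along every subsequence, combined with uniqueness of the limit given its phase, yields \eqref{eq:profile-selection} with $\tilde\zeta$ in place of $\zeta$. Applying the compactness argument at $t_k$ and $t_k+h$ shows $\tilde\zeta(t+h)-\tilde\zeta(t)\to0$ for each fixed $h$, locally uniformly in $h$. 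Mollifying, $\zeta=\tilde\zeta*\rho$ with a unit-width kernel, gives a $C^\infty$ function with $\zeta'\to0$ and $\zeta-\tilde\zeta\to0$. The derivative statement \eqref{eq:profile-time-derivative} follows by the same compactness applied to $u_t$: parabolic-type estimates, here simply ODE bounds since the Laplacian is bounded, give convergence of $u_t$ to $U_\tau=-c_*\partial_1\Phi_*$. The uniform supremum over all $j\ge0$ is obtained by handling three regions separately. Far ahead, both terms are small by \eqref{eq:upper-main}. Far behind, both are close to $1$ and $u_t\to0$ by Theorem~\ref{thm:axial}(i) and stability of the state $1$. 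The remaining bounded window is covered by the compactness argument.

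The main obstacle is the Liouville identification of $U$. The upper and lower envelopes share the $(1+\xi)\ee^{-\lambda_*\xi}$ rate but with different constants, so closing the gap $\zeta^-<\zeta^+$ requires the precise tail \eqref{eq:critical-tail-expansion} and a careful comparison in the periodic lattice leading edge.
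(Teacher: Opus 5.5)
Your overall architecture matches the paper's: compactness at the front, identification of every local limit with a translate of $\Phi_*$, a phase obtained by inverting the profile at a reference site, slow variation, mollification, and the derivative from the equation. The identification step, however, has a genuine gap, and it is the one you flag yourself.

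\textbf{The Liouville step does not close.} Sliding translates $\Phi_*(\cdot+h)$ against a limit $U$ works only if the extremal shift gives contact at an actual space--time point. You know only $b(1+\xi)\ee^{-\lambda_*\xi}\le U\le B(1+\xi)\ee^{-\lambda_*\xi}$ with $b<B$, so the contact can escape to $\xi\to+\infty$, or to $|t|\to\infty$ at bounded $\xi$. In either case the strong maximum principle only identifies a limiting object with the comparison function, and this contradicts nothing.
\begin{itemize}
\item At $\xi\to+\infty$, the limiting object is a normalized linear limit touching some $C\psi_\perp\ee^{-\lambda_*(z_1-c_*s)}$ from one side.
\item At $|t|\to\infty$, it is a limit of time translates of $U$.
\end{itemize}
The exact tail of $\Phi_*$ cannot rule this out. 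A priori, the leading-edge amplitude of $U$ may oscillate in time within $[b,B]$, and excluding that is the whole content of your Liouville claim.

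\textbf{Existence has a related circularity.} Theorem~\ref{thm:upper} bounds the Cauchy solution $u$, not a front supplied by Guo--Hamel. It therefore does not place such a front in the class \eqref{eq:front-critical-upper}. If instead $\Phi_*$ is taken as a limit of $u$, you must already know that this limit is a pulsating front.

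\textbf{The missing idea.} Compare $U$ with its own space--time shifts $U(t+\tau,x+\ell)$, $\ell\in\Gamma$, rather than with $\Phi_*$. This is Theorem~\ref{thm:rigidity}, applied in Proposition~\ref{prop:profile-limits}. A contact at infinity then gives a normalized linear limit with $Z(s,z)=Z(s+\tau_*,z+\ell)$. Iterating yields $b\le Z(0,0)\le B\ee^{-m\lambda\sigma_*}$ for every $m$, a contradiction when $\sigma_*=n\cdot\ell-c\tau_*>0$. A contact at bounded distance is excluded in the same way by iterating the identity for the limiting translate. This proves that $U$ is a pulsating front. Only then does Lemma~\ref{lem:critical-tail-expansion} give an exact expansion for $U$ itself. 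Theorem~\ref{thm:critical-uniqueness} then matches $U$ to $\Phi_*$ through the tail ratio $(A_\Psi/A_\Phi)\ee^{-\lambda h}$. The same route produces $\Phi_*$ in the required class without appealing to Guo--Hamel.

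\textbf{Two smaller corrections.}
\begin{itemize}
\item The phase must invert the observed value, $\Phi_*(j_0-m_d(t)+\zeta,(j_0,0))=u(t,j_0,0)$. Your condition with $1/2$ does not involve $u$.
\item Theorem~\ref{thm:axial}(i) gives the invaded-region limit only at $y=0$, but you need it uniformly in $y$. It follows from the leading-edge lower bound near $\xi=1$, which is uniform in $y$ after passage to the limit, together with Lemma~\ref{lem:amplification}.
\end{itemize}
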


%

\section{Periodic spectral geometry and random-walk estimates}\label{sec:spectral}

The finite-cell eigenvalue $H$ determines the spreading geometry.
Its positive eigenvectors also convert the linearized equation into
two Markov evolutions, whose drift and fluctuation estimates will
give the logarithmic upper bound. We first establish comparison
and regularity for the nonlinear equation, and then derive these
spectral and probabilistic estimates. The background tools are
Perron--Frobenius theory \cite{Seneta}, analytic perturbation
\cite[Chapters~II--III]{Kato}, Floquet--Bloch decomposition
\cite{Kuchment}, and transform kernels for Markov additive
processes \cite{NeyNummelin}.

Since the cell is finite and $F(x,\cdot)\in C^2([0,1])$, the number
\begin{equation*}
 K_F:=\frac12\max_{x\in\mathcal C}\|F_{ss}(x,\cdot)\|_\infty
\end{equation*}
is finite, and Taylor's formula together with the KPP upper bound gives
\begin{equation}\label{eq:quadratic-kpp-loss}
 0\le a(x)s-F(x,s)\le K_Fs^2
 \qquad(x\in\Z^d,\ 0\le s\le1).
\end{equation}
The quadratic remainder will control the nonlinear loss in the lower
comparison.

To construct the solution on $\ell^\infty(\Z^d)$, extend the reaction by
\begin{equation*}
 \widetilde F(x,s)=
 \begin{cases}
   a(x)s, & s<0,\\
   F(x,s), & 0\le s\le1,\\
   F_s(x,1)(s-1), & s>1.
 \end{cases}
\end{equation*}
The values and first derivatives agree at both endpoints.
Since the periodic cell is finite, $\widetilde F(x,\cdot)\in C^1(\R)$ and
$$
 L_F:=\max_{x\in\mathcal C}\max_{0\le s\le1}|F_s(x,s)|
       =\sup_{x\in\Z^d,\,s\in\R}|\widetilde F_s(x,s)|<\infty.
$$
Moreover, $\widetilde F(x,s)\le a(x)s$ for $s\ge0$:
the KPP inequality applies on $[0,1]$, and
$F_s(x,1)(s-1)<0$ for $s>1$.

On $\ell^\infty(\Z^d)$, the map
$\mathcal F(U)=\Delta_{\Z^d}U+\widetilde F(\cdot,U)$ is globally
Lipschitz with constant at most $4d+L_F$. The map
$U\mapsto u_0+\int_0^\cdot\mathcal F(U(s))\,\dd s$
is a contraction on $C([0,T];\ell^\infty)$ when
$T(4d+L_F)<1$. Repeating this construction gives a unique global
$C^1$ solution. The following comparison lemma shows that it stays
in $[0,1]$.

We use comparison for bounded jump rates and time-dependent reactions.
Let $D\subset\Z^m$ be fixed, where $m\ge1$, and let
$c(t,x,z)\ge0$, $|z|_1=1$, be measurable rates with
$q_0:=\sup_{t,x}\sum_{|z|_1=1}c(t,x,z)<\infty$.
Set
$\mathcal L_t f(x)=\sum_{|z|_1=1}c(t,x,z)(f(x+z)-f(x))$.
Prescribe the exterior values on $\Z^m\setminus D$.
Let $G(t,x,s)$ be measurable
in $t$ and uniformly Lipschitz in $s$ on an interval containing the
ranges in question. A subsolution $v$ is bounded on each finite
time slab, is locally absolutely continuous in time at each vertex
of $D$, and satisfies
\begin{equation}\label{eq:subsolution-definition}
 \partial_t v(t,x)\le \mathcal L_t v(t,x)+G(t,x,v(t,x)),
\end{equation}
for almost every $t$ and every $x\in D$. A supersolution satisfies
the reverse inequality. The lattice is countable, so a common
exceptional null set may be used.

\begin{lemma}\label{lem:fixed-domain-comparison}
Let $v$ and $w$ be a subsolution and a supersolution in this sense on
$[t_0,t_1]\times D$. Suppose that $v(t_0,x)\le w(t_0,x)$ for $x\in D$
and that $v(t,x)\le w(t,x)$ at every exterior vertex adjacent to $D$
at every time.
Then $v(t,x)\le w(t,x)$ throughout $[t_0,t_1]\times D$.
\end{lemma}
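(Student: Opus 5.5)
We argue by an exponential-weight and first-violation argument applied to the difference $\delta=v-w$. Let $\Lambda$ be the uniform Lipschitz constant of $G(t,x,\cdot)$ on the relevant interval, and set $M=\Lambda+1$ and $\delta_\varepsilon(t,x)=\ee^{-Mt}(v-w)(t,x)-\varepsilon(1+t)$ for small $\varepsilon>0$. The aim is to show $\delta_\varepsilon<0$ on $[t_0,t_1]\times D$ for every $\varepsilon>0$ and then to let $\varepsilon\to0$. If $D$ is finite, a maximum is attained directly. In general $D$ may be infinite, and the supremum over $x$ of a bounded function need not be attained. So we do not work pointwise at a maximizer; instead we study the scalar function $S(t)=\sup_{x\in D}(v-w)_+(t,x)$.

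The plan for $S$ is a Grönwall estimate. At each vertex $x\in D$ and for almost every $t$, subtracting the inequalities gives
\[
\partial_t(v-w)(t,x)\le \sum_{|z|_1=1}c(t,x,z)\bigl((v-w)(t,x+z)-(v-w)(t,x)\bigr)+\Lambda|v-w|(t,x).
\]
At a vertex where $(v-w)(t,x)>0$, each neighbor term is bounded by $S(t)-(v-w)(t,x)$ when $x+z\in D$. When $x+z$ is exterior, the term is $\le -(v-w)(t,x)\le 0$, because $v\le w$ there by hypothesis. Hence
\[
\partial_t(v-w)_+(t,x)\le q_0\bigl(S(t)-(v-w)_+(t,x)\bigr)+\Lambda (v-w)_+(t,x)\le (q_0+\Lambda)S(t)
\]
almost everywhere. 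We then integrate in time at each fixed $x$, using local absolute continuity and the fact that the positive part of an absolutely continuous function is absolutely continuous with derivative $\1_{\{v>w\}}\partial_t(v-w)$ a.e. This gives $(v-w)_+(t,x)\le (v-w)_+(t_0,x)+(q_0+\Lambda)\int_{t_0}^t S(s)\,\dd s$.

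Taking the supremum over $x$ yields $S(t)\le S(t_0)+(q_0+\Lambda)\int_{t_0}^tS$. Here $S(t_0)=0$, and $S$ is bounded on the slab because $v$ and $w$ are bounded there. Grönwall's inequality then gives $S\equiv0$, which is the claim. Measurability of $S$ is not actually needed: we can apply the argument to the upper integral, or first bound $\sup_x$ of the right-hand side, which is monotone in $t$ and therefore measurable. This makes the $\varepsilon$-perturbation above unnecessary, and we keep it only as a fallback.

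The main point needing care is the Lipschitz step $G(t,x,v)-G(t,x,w)\le\Lambda|v-w|$. It requires both $v$ and $w$ to take values in the interval on which $G$ is uniformly Lipschitz, which is exactly the hypothesis that $G$ is Lipschitz "on an interval containing the ranges in question." A second subtlety is the exceptional null set. The inequality for the positive part holds a.e. separately at each vertex, and since the lattice is countable we may take a common null set, as the paper remarks. The derivative bound only enters through the time integrals at each fixed $x$, so no uniformity in $x$ of the null sets is needed beyond this.
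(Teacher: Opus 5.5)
Your argument is correct and is essentially the paper's proof: take the positive part of $v-w$, use the nonnegative rates and the exterior ordering to bound its time derivative at each vertex by $(q_0+\Lambda)\sup_{x\in D}(v-w)_+$, integrate at a fixed vertex, take the supremum over $x$, and conclude by Gronwall. The opening $\varepsilon$-perturbation is superfluous, and so is the upper-integral workaround: since $D$ is countable, $S$ is a supremum of countably many continuous functions of $t$, hence lower semicontinuous and measurable.
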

\begin{proof}
Let $L$ be a Lipschitz constant for $G$, and set $h=(v-w)_+$ in $D$,
with zero exterior values. The scalar positive-part chain rule and
the nonnegative neighbor coefficients give
$h_t\le\mathcal L_t h+Lh$ almost everywhere, including almost every
time at which $v=w$. The measurable function
$E(t)=\sup_{x\in D}h(t,x)$ is bounded on the time interval. Dropping
the nonpositive diagonal term and integrating at a fixed vertex gives
$$
 h(t,x)\le h(t_0,x)+(q_0+L)\int_{t_0}^tE(s)\,\dd s.
$$
Taking the supremum yields
$E(t)\le(q_0+L)\int_{t_0}^tE(s)\,\dd s$.
Gronwall's inequality gives $E=0$.
\end{proof}
For later semigroup formulas, a time-independent operator of this
form, with zero exterior values, has norm at most $2q_0$ on
$\ell^\infty(D)$. The operator $\mathcal L_D+q_0I$ preserves
nonnegative functions. Its exponential series therefore proves that
$e^{t\mathcal L_D}=e^{-q_0t}e^{t(\mathcal L_D+q_0I)}$ is positive.
Comparison with constants gives its contraction bound. Bounded
potentials are treated by adding their multiplication operator.
This applies to $\Delta_{\Z^m}$ and to the generators $Q_p$ and
$\widehat Q_p$ defined below. Decreasing domains, in which vertices
are deleted over time, are treated in Section~\ref{sec:extension}.

Applying Lemma~\ref{lem:fixed-domain-comparison} to the constant solutions $0$ and $1$ of
$u_t=\Delta u+\widetilde F(x,u)$ shows that $0\le u(t,x)\le1$ for all
$t\ge0$. Thus the solution never leaves the interval on which
$\widetilde F=F$, and it solves the original equation \eqref{eq:main}.
Moreover, since $F(x,s)\ge0$ and $F(x,s)\le a(x)s$ on $0\le s\le1$,
the heat solution and the linearized solution are respectively a subsolution
and a supersolution in the sense of \eqref{eq:subsolution-definition}. Hence
\begin{equation}\label{eq:heat-comparison}
 \ee^{t\Delta_{\Z^d}}u_0\leq u(t)\leq
 \ee^{t(\Delta_{\Z^d}+a(\cdot))}u_0.
\end{equation}
The continuous-time simple random-walk kernel is strictly positive,
because any two sites are connected by a finite nearest-neighbor path
that the walk can follow before time $t$ with positive probability.
Thus the left inequality in
\eqref{eq:heat-comparison} gives $u(t,x)>0$ for $t>0$.
For the strict upper bound put $v=1-u$. Since
$F(x,1)=0$ and $F(x,u)\le L_F(1-u)$, one has
$v_t\ge\Delta_{\Z^d}v-L_Fv$. Lemma~\ref{lem:fixed-domain-comparison}
gives
$1-u(t)\ge e^{-L_Ft}e^{t\Delta_{\Z^d}}(1-u_0)>0$.
Here $1-u_0\not\equiv0$ because $u_0$ has finite support.

An exponential bound will justify the weighted sums used below. Set
$a_{\max}=\max_{x\in\mathcal C}a(x)$. For $p\in\R^d$ define
\begin{equation*}
 U_p(t,x)=A_p\exp\left(\left[2\sum_{i=1}^d(\cosh p_i-1)+a_{\max}\right]t-p\cdot x\right),
 \qquad
 A_p=\max_{x\in\supp u_0}u_0(x)\ee^{p\cdot x}.
\end{equation*}
A direct substitution gives
$(U_p)_t-\Delta_{\Z^d}U_p-a(x)U_p=(a_{\max}-a(x))U_p\ge0$.
To apply bounded comparison, set
$\overline U_p(t,x)=\min\{1,U_p(t,x)\}$. This function is bounded and
locally absolutely continuous in time at every vertex. Where $U_p<1$,
its own value and derivative are unchanged, while every neighbor has
only been decreased by truncation. Hence
$$
 (\overline U_p)_t-\Delta_{\Z^d}\overline U_p
             -F(x,\overline U_p)
 \ge (U_p)_t-\Delta_{\Z^d}U_p-a(x)U_p\ge0.
$$
Where $U_p>1$, the left side is
$-\Delta_{\Z^d}\overline U_p\ge0$ since $\overline U_p=1$ and
all neighbors are at most one. The same inequality holds almost
everywhere on the set $U_p=1$ by the scalar truncation chain rule.
Finally $\overline U_p(0)\ge u_0$. The bounded comparison lemma
therefore gives $u\le\overline U_p\le U_p$. Consequently, for every $\varepsilon>0$ there are
constants $C_\varepsilon,c_\varepsilon>0$ such that
\begin{equation}\label{eq:global-exponential-bound}
 u(t,x)\le C_\varepsilon\exp(c_\varepsilon t-\varepsilon|x|_1),
 \qquad t\ge0,\ x\in\Z^d,
\end{equation}
by choosing $p=\varepsilon\sigma$ with
$\sigma_i=\operatorname{sgn}(x_i)$ (and either sign when $x_i=0$).

We also record compactness of translated solutions. Put
$F_0=\max_{x\in\mathcal C,\,0\le s\le1}|F(x,s)|$ and
$F_1=\max_{x\in\mathcal C,\,0\le s\le1}|F_s(x,s)|$. Since
$0\le u\le1$, one has $|\Delta_{\Z^d}u|\le2d$, hence
$|u_t|\le2d+F_0$. Differentiating the lattice ODE gives
$u_{tt}=\Delta_{\Z^d}u_t+F_s(x,u)u_t$; therefore
$|u_{tt}|\le(4d+F_1)(2d+F_0)$. These bounds are uniform in $t>0$ and
$x\in\Z^d$. Thus, for any sequences $t_k\to\infty$ and
$x_k\in\Z^d$ with $x_k\bmod\Gamma$ fixed, the translates
$u_k(s,z)=u(t_k+s,x_k+z)$ have a subsequence converging in $C^1$ in $s$ on
every compact time interval and uniformly for $z$ in every finite subset of
$\Z^d$. This follows by Arzel\`a--Ascoli on each coordinate, using the
uniform bounds on $u_k,(u_k)_s,(u_k)_{ss}$, followed by a diagonal extraction
over integer time windows and finite lattice boxes. This is the
translation compactness used below.

Set $\pi_p(x)=\phi_p(x)\phi_p^*(x)$. Its restriction to $\mathcal C$
is a probability distribution by our normalization; on $\Z^d$ we
use its periodic extension. Define
\begin{align}
 Q_ph(x)&=\sum_{|z|_1=1}q_p(x,z)(h(x+z)-h(x)),
 &q_p(x,z)&=\ee^{-p\cdot z}\frac{\phi_p(x+z)}{\phi_p(x)},
 \label{eq:Q}\\
 \widehat Q_ph(x)&=\sum_{|z|_1=1}\widehat q_p(x,z)(h(x+z)-h(x)),
 &\widehat q_p(x,z)&=\ee^{p\cdot z}\frac{\phi_p^*(x+z)}{\phi_p^*(x)}.
 \nonumber 
\end{align}
The following lemma relates these walks to the spectral function $H$.

\begin{lemma}\label{lem:spectral}
\begin{enumerate}[label=\textup{(\roman*)}]
\item The function $H$ is real analytic, even, and superlinear, with
positive definite Hessian. For $H_0(p)=2\sum_i(\cosh p_i-1)$,
\begin{equation}\label{eq:H-comparison}
 H_0(p)+\min a\leq H(p)\leq H_0(p)+\max a.
\end{equation}
\item The operators $Q_p$ and $\widehat Q_p$ are Markov generators
related by time reversal with respect to $\pi_p$. Their mean
displacements, averaged over the stationary cell distribution,
are $-\nabla H(p)$ and $\nabla H(p)$, respectively. Both have
asymptotic covariance $\Sigma_p=\nabla^2H(p)$.
\end{enumerate}
\end{lemma}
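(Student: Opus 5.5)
For part (i), I would start from the observation that $B_p$ is the Bloch reduction of $\Delta_{\Z^d}+a$ acting on $\ee^{-p\cdot x}\phi(x)$. Since $B_p$ is an irreducible Metzler matrix, $H(p)$ is a simple eigenvalue. Its entries are entire in $p$, so analytic perturbation theory \cite{Kato} makes $H$ real analytic, and $\phi_p$, $\phi_p^*$ can be chosen analytic once normalized.

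For evenness, note that $B_{-p}$ is the transpose of $B_p$ under the identification $\phi\mapsto\phi$. Indeed, the coefficient of $\phi(x+z)$ in $B_p$ is $\ee^{-p\cdot z}$, and in the transpose the coefficient from $x+z$ to $x$ becomes $\ee^{p\cdot z}$. Transposes share their Perron eigenvalue, so $H(-p)=H(p)$.

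For \eqref{eq:H-comparison}, I would use the Collatz--Wielandt characterization $H(p)=\max_{\phi>0}\min_x (B_p\phi)(x)/\phi(x)$ together with its min--max dual. Testing with the constant $\phi\equiv1$ gives $(B_p1)(x)=H_0(p)+a(x)$, which yields both inequalities. Superlinearity then follows from the lower bound, since $H_0$ grows exponentially.

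For convexity, I would use Kingman's theorem: the Perron root of a matrix whose entries are log-convex in $p$ is log-convex. This needs a shift $B_p+cI$ so that the diagonal entries are positive constants, and then $p\mapsto\log(H(p)+c)$ is convex. That gives convexity of $H$ itself. Strict positivity of the Hessian is the main obstacle. My plan is to identify $v^{\mathsf T}\nabla^2H(p)v$ with the asymptotic variance of the additive functional $v\cdot X_t$ under the walk $Q_p$, and to show this variance is positive because jumps in direction $\pm e_i$ have positive rates. Concretely, if the variance vanished, $v\cdot X_t-tv\cdot\mu$ would be a bounded cocycle $g(X_t)-g(X_0)$ for a periodic $g$. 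Comparing a jump $+e_i$ with a return path around a period, for instance $L_ie_i$ steps versus periodicity of $g$, would force $v=0$.

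For part (ii), I would first check the Markov property directly. The rates $q_p(x,z)$ are positive, and the diagonal part works out because $\sum_z q_p(x,z)=(B_p\phi_p-a\phi_p+2d\phi_p)/\phi_p=H(p)-a(x)+2d$. So $Q_p h=\phi_p^{-1}(B_p-H(p))(\phi_p h)$ restricted to $\ee^{-p\cdot x}$-twisted functions, i.e. $Q_p$ is the Doob transform of $\Delta+a-H(p)$ conjugated by $\ee^{-p\cdot x}\phi_p$. The same holds for $\widehat Q_p$ with $\phi_p^*$ and $\ee^{p\cdot x}$.

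Time reversal with respect to $\pi_p$ is the identity $\pi_p(x)q_p(x,z)=\pi_p(x+z)\widehat q_p(x+z,-z)$. Both sides equal $\ee^{-p\cdot z}\phi_p(x+z)\phi_p^*(x)$, so the identity can be checked directly. Stationarity of $\pi_p$ follows from the left eigenvector equation.

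For the drift, I would differentiate the eigen-equation $B_p\phi_p=H(p)\phi_p$ in $p$ and pair with $\phi_p^*$. This gives $\nabla H(p)=\sum_x\phi_p^*(x)\sum_z(-z)\ee^{-p\cdot z}\phi_p(x+z)$, which is $\sum_x\pi_p(x)\sum_z q_p(x,z)(-z)$ up to sign, i.e. mean displacement $-\nabla H(p)$. Reversal flips the sign for $\widehat Q_p$.

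For the covariance, I would use the tilted generator $Q_p^{(\eta)}h(x)=\sum_z q_p(x,z)(\ee^{\eta\cdot z}h(x+z)-h(x))$. Its Perron eigenvalue is $H(p-\eta)-H(p)$ by the conjugation above. By Markov additive process theory \cite{NeyNummelin}, the cumulant generating function of $X_t$ is asymptotically $t(H(p-\eta)-H(p))$, so the covariance is $\nabla^2H(p)$. The same argument applies to $\widehat Q_p$ with $H(p+\eta)-H(p)$. This variance identification also supplies the positivity argument used in part (i).
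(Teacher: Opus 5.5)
Your proposal is correct and follows essentially the paper's route. Both use Perron--Frobenius and Kato for analyticity, $B_p^{\mathsf T}=B_{-p}$ for evenness, and Collatz--Wielandt with the constant vector for \eqref{eq:H-comparison}; then the edge identity \eqref{eq:reverse-edge} for time reversal, the tilted generator with principal exponent $H(p-k)-H(p)$ for drift and covariance, and a zero-variance coboundary argument summed over a period for positive definiteness. The Kingman log-convexity step is superfluous, since the variance identity gives $\nabla^2H(p)>0$ directly; and your claim that zero asymptotic variance forces an exact cocycle should be justified as the paper does, via the corrector \eqref{eq:corrector-vector} and the edge-sum formula \eqref{eq:variance-formula}.
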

\begin{proof}
The finite matrix $B_p$ is irreducible and has nonnegative off-diagonal
entries. Adding a sufficiently large scalar multiple of the identity and
applying Perron--Frobenius theory gives a simple principal eigenvalue and
positive eigenvectors; see \cite[Chapters~1--2]{Seneta} for the general
nonnegative-matrix result. Because the entries of $B_p$ are real analytic in
$p$ and the principal eigenvalue is simple, analytic perturbation theory gives
real-analytic $H$, $\phi_p$, and $\phi_p^*$ under the chosen normalizations;
see \cite[Chapter~II]{Kato}. Index reversal on the cell gives
$B_p^{\mathsf T}=B_{-p}$, whence $H(p)=H(-p)$. Testing the positive constant
vector in the Collatz--Wielandt inequalities \cite{Seneta} gives
\eqref{eq:H-comparison}, including superlinearity.

The eigenvector equation gives
$Q_p=\phi_p^{-1}(B_p-H(p))\phi_p$ on periodic functions.
It also gives the exit-rate terms in \eqref{eq:Q}.
For every oriented nearest-neighbor edge $(x,x+z)$,
$$
 \pi_p(x)q_p(x,z)
 =\phi_p^*(x)\ee^{-p\cdot z}\phi_p(x+z)
 =\pi_p(x+z)\widehat q_p(x+z,-z).
$$
Hence
\begin{equation}\label{eq:reverse-edge}
 \pi_p(x)q_p(x,z)=\pi_p(x+z)\widehat q_p(x+z,-z).
\end{equation}
The right and left eigenvector equations also give
$$
 \sum_zq_p(x,z)=H(p)+2d-a(x)=\sum_z\widehat q_p(x,z).
$$
Together with \eqref{eq:reverse-edge}, this equality proves
invariance of $\pi_p$ and identifies $\widehat Q_p$ as the time
reversal of $Q_p$; see \cite[Chapter~3]{Norris}.

These walks can be constructed with a rate-$\Lambda$ Poisson clock,
where $\Lambda$ exceeds all exit rates. At each clock event an
independent mark chooses a jump $z$ with probability
$q_p(x,z)/\Lambda$, or leaves the site unchanged. Use
$\widehat q_p$ for the reversed walk. The rates are periodic, so a
common $\Lambda$ works on every compact set of $p$ values. The number
of jumps is bounded by the Poisson count, which proves nonexplosion.
We write $\mathbb P_x,\mathbb E_x$ for the law and expectation with
$X_0=x$, and use the completed, right-continuous clock-and-mark
filtration. Subtracting conditional jump means gives the generator
martingale. For an affine coordinate plus a bounded periodic
corrector, its path supremum on a bounded time interval is bounded
by a constant times one plus the Poisson count, and has finite
second moment. These bounds justify the bounded stopping times
used below.

For the walk generated by $Q_p$, multiplying each jump $z$ by
$\ee^{k\cdot z}$ gives, on periodic functions, the matrix
$$
 \mathcal L_{p,k}
 =\phi_p^{-1}\bigl(B_{p-k}-H(p)I\bigr)\phi_p.
$$
Its principal eigenvalue is therefore $H(p-k)-H(p)$. This is the
finite-state continuous-time analogue of the transform-kernel construction
for Markov additive processes in \cite{NeyNummelin}. Exponential moments
over time $t$ are
a positive principal-eigenvector factor times
$\exp(t[H(p-k)-H(p)])$, plus an exponentially smaller term locally
in $k$. Differentiation at zero gives the mean displacement
$-\nabla H(p)$ and limiting covariance $\nabla^2H(p)$.
The reversed generator has exponent $H(p+k)-H(p)$ because $H$ is even.

To prove positive definiteness of the covariance, fix $\eta$ and write
$b(x)=\sum_zq_p(x,z)z$ and $\bar b=\sum_{\mathcal C}\pi_p(x)b(x)$.
The centered finite-cell equation
\begin{equation}\label{eq:corrector-vector}
 Q_p\chi_\eta(x)=\eta\cdot(\bar b-b(x)),\qquad
 \sum_{\mathcal C}\pi_p\chi_\eta=0
\end{equation}
has a unique solution: constants are the kernel of $Q_p$ and the
right-hand side has invariant mean zero. The process
$\eta\cdot(X_t-X_0-\bar b t)+\chi_\eta(X_t)-\chi_\eta(X_0)$
is a martingale. Under the stationary cell distribution its quadratic
variation has mean $t$ times
\begin{equation}\label{eq:variance-formula}
 \sum_{x\in\mathcal C}\pi_p(x)\sum_zq_p(x,z)
 \bigl(\eta\cdot z+\chi_\eta(x+z)-\chi_\eta(x)\bigr)^2.
\end{equation}
By Cauchy--Schwarz, adding
or subtracting its bounded corrector changes the variance by at most
$O(\sqrt t)+O(1)$. After division by $t$ this error vanishes, so
\eqref{eq:variance-formula} equals
$\eta^{\mathsf T}\nabla^2H(p)\eta$. If it were zero, every edge
increment in parentheses would vanish. Summing along a path from
$x$ to $x+L_i e_i$ gives $L_i\eta_i=0$ for every $i$.
Thus $\eta=0$, proving positive definiteness. Continuity and
compactness make the eigenvector bounds, corrector bounds,
jump-rate bounds, and covariance lower bounds uniform on compact
sets of $p$ values.
\end{proof}

For $n\in\mathbb S^{d-1}$ define the critical spectral speed
\begin{equation*}
 c(n)=\min_{\lambda>0}\frac{H(\lambda n)}{\lambda},
\end{equation*}
and let $\lambda(n)$ denote its minimizing exponential rate.

\begin{lemma}\label{lem:geometry}
\begin{enumerate}[label=\textup{(\roman*)}]
\item The set $\mathcal W$ is a compact strictly convex body with
smooth boundary and zero in its interior. The quantities in
\eqref{eq:pe} depend continuously on $e$, and $\inf_e\alpha_e>0$.
\item The minimum defining $c(n)$ is attained at a unique
$\lambda(n)>0$. The normal speed and radial speed satisfy
\begin{equation}\label{eq:normal-ray}
 c(n)=\max_{v\in\mathcal W}n\cdot v,\qquad
 w(e)=\min_{n\cdot e>0}\frac{c(n)}{n\cdot e}.
\end{equation}
The second minimum is attained uniquely at $n_e=p_e/|p_e|$, with
$\lambda(n_e)=|p_e|$.
\end{enumerate}
\end{lemma}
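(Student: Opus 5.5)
The whole argument rests on convex duality between $H$ and $\mathcal I$, using Lemma~\ref{lem:spectral}(i): $H$ is real analytic, even, strictly convex with positive definite Hessian, and superlinear by \eqref{eq:H-comparison}. Then $\nabla H:\R^d\to\R^d$ is a real-analytic diffeomorphism. Its inverse is $\nabla\mathcal I$, and $\mathcal I$ is finite, smooth, strictly convex, with positive definite Hessian $(\nabla^2H)^{-1}\circ\nabla\mathcal I$.

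For (i), the plan has three steps.
\begin{itemize}
\item At the origin, $\mathcal I(0)=-\min_pH(p)=-H(0)$, since $H$ is even and convex and so minimized at $0$. The left inequality in \eqref{eq:H-comparison} gives $H(0)\ge\min a>0$, so $\mathcal I(0)<0$ and $0\in\operatorname{int}\mathcal W$.
\item Superlinearity of $H$ makes $\mathcal I$ finite and coercive, indeed $\mathcal I(v)\ge R|v|-\max_{|p|=R}H(p)$. Hence $\mathcal W$ is compact, and it is convex as a sublevel set.
\item On $\partial\mathcal W$, where $\mathcal I=0$, we have $\nabla\mathcal I\neq0$, since otherwise $v$ would be a minimizer with $\mathcal I(v)<0$. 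The implicit function theorem then gives a smooth boundary, and the positive definite Hessian gives strict convexity.
\end{itemize}
Along each ray, $r\mapsto\mathcal I(re)$ is strictly convex, negative at $0$ and tends to $+\infty$, so it has a unique zero $w(e)$. At that zero the derivative is positive, i.e.\ $\alpha_e=\nabla\mathcal I(v_e)\cdot e>0$. The implicit function theorem gives continuity of $w$, $v_e$ and $p_e=\nabla\mathcal I(v_e)$, and compactness of the sphere gives $\inf_e\alpha_e>0$. Duality yields $\nabla H(p_e)=v_e$ and $H(p_e)=p_e\cdot v_e-\mathcal I(v_e)=p_e\cdot v_e$.

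For (ii), I work with $g(\lambda)=H(\lambda n)/\lambda$. Its derivative has the sign of $k(\lambda)=\lambda n\cdot\nabla H(\lambda n)-H(\lambda n)$, and $k'(\lambda)=\lambda n^{\mathsf T}\nabla^2H(\lambda n)n>0$. Since $k(0)=-H(0)<0$, and $k\to+\infty$ by convexity and superlinearity (indeed $k(\lambda)\ge H(\lambda n)-2H(\lambda n/2)$-type bounds, or simply $g\to\infty$ at both ends), $k$ has a unique zero. That zero $\lambda(n)$ is the unique minimizer. At $p=\lambda(n)n$ we have $H(p)=p\cdot\nabla H(p)$, so $\mathcal I(\nabla H(p))=0$ and $v:=\nabla H(p)\in\partial\mathcal W$ with $c(n)=n\cdot v$.

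For the first identity in \eqref{eq:normal-ray}, any $v'\in\mathcal W$ satisfies $\lambda n\cdot v'\le H(\lambda n)+\mathcal I(v')\le H(\lambda n)$ for every $\lambda>0$. Hence $n\cdot v'\le c(n)$, with equality at $v$, which proves $c(n)=\max_{\mathcal W}n\cdot v$.

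For the second identity, take $n\cdot e>0$. Since $v_e=w(e)e\in\mathcal W$, we get $c(n)\ge w(e)\,n\cdot e$, with equality exactly when $n$ is an outward normal to $\mathcal W$ at $v_e$. By smoothness of the boundary, the outward normal is unique and equals $\nabla\mathcal I(v_e)/|\nabla\mathcal I(v_e)|=n_e$. This gives both uniqueness of the minimizer and $n_e\cdot e=\alpha_e/|p_e|>0$. Finally, at $p_e$ we have $k(|p_e|)=p_e\cdot v_e-H(p_e)=0$ for direction $n_e$, so $\lambda(n_e)=|p_e|$.

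The step needing the most care is the equality case in the second identity. There one must check that a supporting hyperplane $\{n\cdot v=c(n)\}$ touching $\mathcal W$ at $v_e$ forces $n$ to be the boundary normal. This follows because $c(n)$ is the support function and $\partial\mathcal W$ is $C^1$. The remaining steps are standard convex-analysis bookkeeping.
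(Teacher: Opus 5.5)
Your proposal is correct and follows essentially the same route as the paper. Both use Legendre duality through the bijective gradient map $\nabla H$, the coercivity bound $\mathcal I(v)\ge R|v|-\max_{|p|=R}H(p)$, nonvanishing of $\nabla\mathcal I$ on $\{\mathcal I=0\}$, the strictly increasing numerator $\lambda n\cdot\nabla H(\lambda n)-H(\lambda n)$, Fenchel's inequality to identify $c(n)$ as the support function of $\mathcal W$, and uniqueness of the outward normal at $v_e$. Your observation that $g\to\infty$ at both ends already gives existence of the zero of $k$, so the hedged lower bound for $k$ is unnecessary.
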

\begin{proof}
Strict convexity and superlinearity imply that $\nabla H$ maps $\R^d$
bijectively onto $\R^d$: the minimum of $H(p)-p\cdot v$ exists and
is unique for each $v$, and its Hessian is positive definite.
The inverse function theorem gives a smooth inverse and
$\nabla^2\mathcal I(v)=(\nabla^2H(\nabla\mathcal I(v)))^{-1}$.
Since $H$ is even, $\mathcal I$ is even and
$\mathcal I(0)=-H(0)<0$. It is coercive: for any $R>0$,
$\mathcal I(v)\geq R|v|-\max_{|p|=R}H(p)$.
Its gradient vanishes only at zero. Hence $\mathcal I$ is strictly convex,
$\mathcal W=\{\mathcal I\leq0\}$ is compact, and $\mathcal I(0)<0$ places
zero in the interior of $\mathcal W$. Since $\nabla\mathcal I$ does not vanish
on $\partial\mathcal W$, that boundary is smooth. Strict convexity implies
that each positive ray meets $\partial\mathcal W$ at exactly one point. At
$v_e=w(e)e$ the outward normal is $p_e=\nabla\mathcal I(v_e)$, so
$\alpha_e=p_e\cdot e>0$. Continuity on the sphere then gives
$\inf_e\alpha_e>0$.

The derivative of $H(\lambda n)/\lambda$ has numerator
$g_n(\lambda)=\lambda n\cdot\nabla H(\lambda n)-H(\lambda n)$.
Here $g_n(0)=-H(0)<0$ and
$g_n'(\lambda)=\lambda n^{\mathsf T}\nabla^2H(\lambda n)n>0$.
If this numerator stayed nonpositive, $H(\lambda n)/\lambda$
would be nonincreasing, contradicting superlinearity. Hence it has
exactly one zero. Fenchel's inequality gives
$n\cdot v\leq H(\lambda n)/\lambda$ on $\mathcal W$.
At the minimizing $\lambda$, equality is attained by
$v=\nabla H(\lambda n)$, for which $\mathcal I(v)=0$.
Thus $c(n)=\max_{v\in\mathcal W}n\cdot v$. Intersecting the half-spaces
$\{v:n\cdot v\leq c(n)\}$ with the ray $\{re:r\geq0\}$ gives the second
formula in \eqref{eq:normal-ray}; the minimizing normal is unique because
$\partial\mathcal W$ has a unique outward normal at $v_e$. Finally,
$H(p_e)=p_e\cdot v_e$ is Fenchel equality at $\mathcal I(v_e)=0$.
\end{proof}

The half-spaces in \eqref{eq:outer-body} determine a comparison set
whose radial location can be described more precisely. The next result
quantifies the relation between its normal and observation directions.
For $n\in\mathbb S^{d-1}$, let $\lambda(n)$ minimize
$H(\lambda n)/\lambda$, and set $p(n)=\lambda(n)n$ and
$v(n)=\nabla H(p(n))$.

\begin{theorem}\label{thm:quantitative-geometry}
\begin{enumerate}
\item The map $E(n)=v(n)/|v(n)|$ is a smooth diffeomorphism of the
sphere, with inverse $e\mapsto n_e=p_e/|p_e|$. There are $b_0,b_1>0$
such that
\begin{equation}\label{eq:quadratic-support-gap}
 b_0|n-n_e|^2\leq H(p(n))-p(n)\cdot v_e
                \leq b_1|n-n_e|^2
 \qquad(e,n\in\mathbb S^{d-1}).
\end{equation}
\item For fixed $C\in\R$, let
\begin{equation*}
 \mathcal O_t(C)=\bigcap_{n\in\mathbb S^{d-1}}
 \{x\in\R^d:p(n)\cdot x\leq tH(p(n))-\kappa_d\log t+C\}.
\end{equation*}
For all sufficiently large $t$, this set contains zero in its interior.
Its radial function $r_t(e)=\sup\{r\geq0:re\in\mathcal O_t(C)\}$
satisfies, uniformly in $e$,
\begin{equation}\label{eq:radial-support-expansion}
 r_t(e)=w(e)t-\frac{\kappa_d}{\alpha_e}\log t
        +\frac C{\alpha_e}+O_C\!\left(\frac{(\log t)^2}{t}\right).
\end{equation}
Every normal attaining the radial minimum differs from $n_e$ by
$O_C(\log t/t)$.
\end{enumerate}
\end{theorem}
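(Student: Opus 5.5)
Part (i) reduces to the Legendre duality of Lemma~\ref{lem:geometry}. The map $n\mapsto p(n)$ is smooth, because $\lambda(n)$ is the unique zero of $g_n(\lambda)=\lambda n\cdot\nabla H(\lambda n)-H(\lambda n)$ and $\partial_\lambda g_n>0$. Hence $v(n)=\nabla H(p(n))$ is smooth. Since $g_n(\lambda(n))=0$, Fenchel equality gives $\mathcal I(v(n))=p(n)\cdot v(n)-H(p(n))=0$, so $v(n)\in\partial\mathcal W$. The outward normal there is $\nabla\mathcal I(v(n))=p(n)$, which is parallel to $n$. Conversely, at $v_e$ the normal direction is $n_e$, and $\lambda(n_e)=|p_e|$ by Lemma~\ref{lem:geometry}. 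Therefore $E(n_e)=e$ and $n_{E(n)}=n$. Both maps are compositions of smooth maps (the radial projection, the Gauss map of the smooth strictly convex boundary, and its inverse), so $E$ is a diffeomorphism.

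For \eqref{eq:quadratic-support-gap} I set $f_e(n)=H(p(n))-p(n)\cdot v_e=\lambda(n)\bigl(c(n)-n\cdot v_e\bigr)$. By \eqref{eq:normal-ray}, $f_e(n)\ge0$, with equality exactly when $v_e$ maximizes $n\cdot v$ over $\mathcal W$, that is, when $n=n_e$. Near $n_e$ I will Taylor expand $c(n)=h_{\mathcal W}(n)$, the support function of $\mathcal W$. The Hessian of $h_{\mathcal W}$ restricted to the tangent space of the sphere equals the inverse curvature of $\partial\mathcal W$ at $v_e$, which is positive definite because $\nabla^2\mathcal I>0$. This gives the quadratic two-sided bound locally, uniformly in $e$ by compactness. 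Away from the diagonal, continuity and positivity of $f_e(n)/|n-n_e|^2$ on the compact set $\{|n-n_e|\ge\delta\}$ supply the constants. Here $\lambda(n)$ is bounded above and below, so the factor $\lambda(n)$ is harmless.

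For part (ii), $re\in\mathcal O_t(C)$ holds if and only if $r\,p(n)\cdot e\le tH(p(n))-\kappa_d\log t+C$ for all $n$. For $n$ with $p(n)\cdot e\le0$ the constraint holds automatically once $t$ is large, since $H\ge\min H_0+\min a>0$. For the remaining $n$, $r_t(e)=\inf_n[tH(p(n))-\kappa_d\log t+C]/(p(n)\cdot e)$. This also shows that $0$ is interior: the infimum is bounded below by $ct$ with $c>0$.

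I write the objective as $t\,G(n)+K/(p(n)\cdot e)$, where $G(n)=H(p(n))/(p(n)\cdot e)$ and $K=C-\kappa_d\log t$. Using $H(p(n))=p(n)\cdot v_e+f_e(n)$ gives $G(n)=w(e)+f_e(n)/(p(n)\cdot e)$. So $G$ attains its minimum $w(e)$ uniquely at $n_e$ and is quadratically nondegenerate there by part (i). The perturbation $K/(p(n)\cdot e)$ has size $O(\log t)$ and gradient $O(\log t)$ near $n_e$. Comparing with the value at $n_e$ forces any minimizer to satisfy $tb_0|n-n_e|^2\lesssim\log t\,|n-n_e|+O(1)$, which gives $|n-n_e|=O(\log t/t)$; this is the final claim. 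Evaluating at $n_e$ gives the upper bound $w(e)t+K/\alpha_e$. For the lower bound, at the minimizer the quadratic gain is nonnegative and the perturbation changes by $O(\log t\,|n-n_e|)=O((\log t)^2/t)$. This yields \eqref{eq:radial-support-expansion}.

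The main obstacle is uniformity in $e$: for $n$ far from $n_e$ one has $p(n)\cdot e\to0^+$, where the quotient degenerates. To handle this, I use the crude bound $G(n)\ge w(e)+b_0|n-n_e|^2/(\lambda_{\max}\,n\cdot e)$ to confine candidate minimizers to a compact region $\{n\cdot e\ge\delta\}$, with $\delta$ independent of $e$ and $t$.
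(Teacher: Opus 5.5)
Your proposal follows the paper's proof closely. Both use the same function $\lambda(n)\bigl(c(n)-n\cdot v_e\bigr)$, the same split into a neighbourhood of $n_e$ (controlled by the spherical Hessian) and a compact complement, the same exclusion of normals with small $n\cdot e$ by comparison with the value at $n_e$, and the same localize-then-substitute argument for the radial function. The differences in part (i) are cosmetic. The paper computes $Dv(n)|_{n^\perp}$ by implicit differentiation and gets positivity from strict Cauchy--Schwarz in the $\nabla^2H$ inner product; this is the same matrix as your inverse curvature. The paper then derives the diffeomorphism from this nondegenerate derivative. You instead use that $E$ and $e\mapsto n_e$ are smooth mutual inverses. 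For the latter you should say explicitly that $w(e)$ is smooth by the implicit function theorem, since $\partial_r\mathcal I(re)=\alpha_e>0$ at $r=w(e)$.

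One intermediate inequality in part (ii) is wrong as written. There is no additive $O(1)$ in the localization step. At a minimizer $n$ with $n\cdot e\ge\delta$,
\[
 t\,\frac{f_e(n)}{p(n)\cdot e}
 \le K\Bigl(\frac1{\alpha_e}-\frac1{p(n)\cdot e}\Bigr)
 \le L\,(\kappa_d\log t+|C|)\,|n-n_e|,
\]
where $L$ is a Lipschitz constant for $n\mapsto 1/(p(n)\cdot e)$ on the whole region $\{n\cdot e\ge\delta\}$. You need it on the whole region, not only near $n_e$, because nearness to $n_e$ is what is being proved. The constant $C$ enters only through $K$, multiplied by a difference that vanishes at $n_e$.

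If you keep the $+O(1)$, the inequality $tb_0|n-n_e|^2\lesssim\log t\,|n-n_e|+1$ only gives $|n-n_e|=O(t^{-1/2})$. That in turn gives a remainder $O(\log t/\sqrt t)$ in \eqref{eq:radial-support-expansion}. With the corrected inequality you get $|n-n_e|=O_C(\log t/t)$, and the rest of your argument yields the stated $O_C((\log t)^2/t)$.
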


\begin{proof}
At the critical pair the function
$G(\lambda,n)=\lambda n\cdot\nabla H(\lambda n)-H(\lambda n)$
has derivative $\partial_\lambda G=\lambda n^{\mathsf T}\Sigma n>0$,
where $\Sigma=\nabla^2H(\lambda n)$. The implicit function theorem
makes $\lambda(n)$ smooth. For $\tau\perp n$, differentiation gives
\begin{align}
 D\lambda(n)[\tau]
   &=-\lambda\frac{n^{\mathsf T}\Sigma\tau}{n^{\mathsf T}\Sigma n},
       \nonumber 
\\
 Dv(n)[\tau]
   &=\lambda\left(\Sigma\tau-
       \frac{n^{\mathsf T}\Sigma\tau}{n^{\mathsf T}\Sigma n}\Sigma n\right).
       \label{eq:normal-velocity-derivative}
\end{align}
In particular $n\cdot Dv(n)[\tau]=0$. On the tangent space the
quadratic form of this derivative is
$$
 \lambda\left(\tau^{\mathsf T}\Sigma\tau
       -\frac{(n^{\mathsf T}\Sigma\tau)^2}{n^{\mathsf T}\Sigma n}\right)>0
       \qquad(\tau\ne0).
$$
This is strict Cauchy--Schwarz in the $\Sigma$ inner product; equality
would make $\tau$ parallel to $n$. Consider the compact set
$\mathcal K=\{(n,\tau):n\in\mathbb S^{d-1},\ \tau\perp n,\ |\tau|=1\}$.
The displayed quadratic form is continuous and strictly positive on
$\mathcal K$, because $\lambda(n)$ and $\nabla^2H(\lambda(n)n)$ are
continuous. Its minimum and maximum on $\mathcal K$ are therefore numbers
$0<c_0\le C_0<\infty$, which gives the uniform tangent-space bounds. Differentiating $c(n)=n\cdot v(n)$
therefore gives
\begin{equation*}
 v(n)=c(n)n+\nabla_{\mathbb S^{d-1}}c(n),\qquad
 \nabla^2_{\mathbb S^{d-1}}c+cI=Dv|_{n^\perp}.
\end{equation*}
Here $\nabla_{\mathbb S^{d-1}}$ and $\nabla^2_{\mathbb S^{d-1}}$ denote
the gradient and Hessian for the unit sphere, and $I$ is the identity on
$n^\perp$. The first identity uses $n\cdot Dv=0$; the second follows by
taking the tangential part of its derivative.

Lemma~\ref{lem:geometry} already gives a bijection between the radial
and normal parametrizations of $\partial\mathcal W$. Formula
\eqref{eq:normal-velocity-derivative} makes the normal parametrization
a local diffeomorphism. Radial projection is also a local
diffeomorphism on this boundary, since $n\cdot v(n)=c(n)>0$.
Thus $E$ is a smooth diffeomorphism. In particular all quantities
$e\mapsto p_e$, $\alpha_e$, and $w(e)$ are smooth.

Fix $e$ and write $n_0=n_e$. The nonnegative function
$f_e(n)=c(n)-n\cdot v_e$ vanishes only at $n_0$. Its first derivative
there is zero and its spherical Hessian there equals
$\nabla^2_{\mathbb S^{d-1}}c(n_0)+c(n_0)I$.
By the preceding tangent-space estimate, the spherical Hessian of $f_e$ at
$n_e$ has eigenvalues in a fixed interval $[c_0,C_0]$, independently of $e$.
Continuity of this Hessian in $(e,n)$ provides $\delta>0$ and constants
$0<c_1<C_1$ such that
$c_1|n-n_e|^2\le f_e(n)\le C_1|n-n_e|^2$ whenever
$|n-n_e|<\delta$. On the compact set
$\{(e,n)\in\mathbb S^{d-1}\times\mathbb S^{d-1}:|n-n_e|\ge\delta\}$,
$f_e(n)>0$ because $n_e$ is the unique supporting normal; hence its minimum
there is positive. Since $\lambda(n)$ is continuous and has positive minimum
and finite maximum on the sphere, multiplying these estimates by
$\lambda(n)$ proves \eqref{eq:quadratic-support-gap}.

Since $\min_n H(p(n))>0$, the right-hand sides defining
$\mathcal O_t(C)$ are positive for large $t$. Its radial function is
\begin{equation}\label{eq:radial-minimum}
 r_t(e)=\min_{n\cdot e>0}
 \left\{t\frac{c(n)}{n\cdot e}
       -\frac{\kappa_d\log t-C}{\lambda(n)n\cdot e}\right\}.
\end{equation}
There is $c_0>0$, independent of $e$ and all sufficiently large $t$,
such that no minimizer in \eqref{eq:radial-minimum} satisfies
$0<n\cdot e<c_0$: on that region the quotient is larger than the value
obtained by testing $n_e$. On the compact set $n\cdot e\ge c_0$, put
$L_t=\kappa_d\log t-C$ and $d_e(n)=1/(\lambda(n)n\cdot e)$.
The difference from the value at $n_e$ in \eqref{eq:radial-minimum}
is
$$
 t\left(\frac{c(n)}{n\cdot e}-w(e)\right)
       -L_t\bigl(d_e(n)-d_e(n_e)\bigr).
$$
The first parenthesis is bounded below by a positive constant times
$|n-n_e|^2$, by \eqref{eq:quadratic-support-gap}; the second has
absolute value at most $C|n-n_e|$. At a minimizing normal the
difference is nonpositive, hence $|n-n_e|\leq C_C\log t/t$.
Substituting $|n-n_e|\leq C_C\log t/t$ and using the quadratic upper
bound obtained from \eqref{eq:quadratic-support-gap} on the compact set
$n\cdot e\geq c_0$ changes the value in \eqref{eq:radial-minimum} by at most
$C_C(\log t)^2/t$. The value at $n_e$ is exactly
the three displayed terms in \eqref{eq:radial-support-expansion}.
This proves the theorem.
\end{proof}
Expansion \eqref{eq:radial-support-expansion} describes the geometrically
defined set $\mathcal O_t(C)$. Its role for the nonlinear equation is
the containment of level sets in $\mathcal O_t(C_\theta)$ provided by
Theorem~\ref{thm:upper}.

Let $\mathcal P=\{p_e:e\in\mathbb S^{d-1}\}$.
This set is compact and bounded away from zero. Write
$v_p=\nabla H(p)$ and $h_p=H(p)=p\cdot v_p$ for $p\in\mathcal P$.
We add periodic functions $\chi_p^-$ and $\chi_p^+$ to the scalar
coordinate $p\cdot x$ so that its drift is constant:
\begin{equation}\label{eq:scalar-correctors}
 Q_p(p\cdot x+\chi_p^-(x))=-h_p,\qquad
 \widehat Q_p(p\cdot x+\chi_p^+(x))=h_p.
\end{equation}
These functions, called correctors, follow from
\eqref{eq:corrector-vector} and its reversed version. We normalize
their invariant means to zero. Let
$\langle f\rangle_{\pi_p}=\sum_{x\in\mathcal C}\pi_p(x)f(x)$ and
$\partial_p=p\cdot\nabla_p$, with $p$ fixed when taking this directional
derivative. Then
\begin{equation*}
 \begin{split}
 \chi_p^-&=-\partial_p\log\phi_p
             +\langle\partial_p\log\phi_p\rangle_{\pi_p},\\
 \chi_p^+&=\partial_p\log\phi_p^*
             -\langle\partial_p\log\phi_p^*\rangle_{\pi_p}.
 \end{split}
\end{equation*}
Indeed, differentiating $B_p\phi_p=H(p)\phi_p$ gives
$Q_p(\partial_p\log\phi_p)=h_p+\sum_zq_p(x,z)p\cdot z$.
Differentiating the transposed eigenvalue equation gives
$\widehat Q_p(\partial_p\log\phi_p^*)
=h_p-\sum_z\widehat q_p(x,z)p\cdot z$.
These identities prove \eqref{eq:scalar-correctors}. Analytic
dependence of the eigenvectors and compactness of
$\mathcal P\times\mathcal C$ make these correctors uniformly bounded.

For either walk started at $x$, its centered scalar displacement is
$$
 Y_t=
 \begin{cases}
 p\cdot(X_t-x)+\chi_p^-(X_t)-\chi_p^-(x)+h_pt,
       &\text{for }Q_p,\\
 p\cdot(X_t-x)+\chi_p^+(X_t)-\chi_p^+(x)-h_pt,
       &\text{for }\widehat Q_p.
 \end{cases}
$$
By \eqref{eq:scalar-correctors}, $Y$ is a martingale with $Y_0=0$.

\begin{lemma}\label{lem:Markov-estimates}
The following estimates hold uniformly for $p\in\mathcal P$ and
the starting site.
\begin{enumerate}[label=\textup{(\roman*)}]
\item The transition probabilities of $Q_p$ and $\widehat Q_p$
are bounded by $C(1+t)^{-d/2}$.
\item For each fixed $D\geq0$ and all sufficiently large $t$,
\begin{equation}\label{eq:crossing}
 \mathbb P_x(Y_t\leq-D-r)\geq\frac25-
                  \frac{C_D(1+r_+)}{\sqrt t},\qquad r\in\R,
\end{equation}
and
$$
 \mathbb P_x(Y_t\ge D+r)\ge\frac25-
                  \frac{C_D(1+r_+)}{\sqrt t},\qquad r\in\R.
$$
\end{enumerate}
\end{lemma}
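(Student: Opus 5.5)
For part (i) I will use the Floquet--Bloch decomposition, in the form of the transform-kernel identity from the proof of Lemma~\ref{lem:spectral}. Write $P_t(x,x')$ for the transition probability of $Q_p$. For $k\in[-\pi,\pi]^d$, the periodized Fourier sum
\begin{equation*}
 \sum_{\ell\in\Gamma}P_t(x,x'+\ell)\ee^{\ii k\cdot(x'+\ell-x)}
\end{equation*}
is an entry of $\exp(t\mathcal L_{p,\ii k})$. Here $\mathcal L_{p,\ii k}=\phi_p^{-1}(B_{p-\ii k}-H(p))\phi_p$ is a finite matrix, and analytic perturbation extends it to complex $k$. Inverting over the dual torus $\prod_i[-\pi/L_i,\pi/L_i]$, together with the finite coset sum, expresses $P_t(x,x')$ as an integral of these matrix entries.

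At $k=0$ the spectrum of $\mathcal L_{p,0}=Q_p$ consists of the simple eigenvalue $0$ and a remainder with real part at most $-\gamma<0$. For $k\neq0$ modulo the dual lattice, the spectral radius of $\exp(t\mathcal L_{p,\ii k})$ is strictly less than $1$. This follows from aperiodicity of the walk on the quotient graph: a jump chain that returns with phase $\ee^{\ii k\cdot\ell}$ equal to one along every cycle forces $k$ into the dual lattice, and cycles of $\pm e_i$ steps generate $\Z^d$.

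Near $k=0$, the perturbed principal eigenvalue is $H(p+\ii k)-H(p)$. By analyticity and the covariance formula this equals $-\ii k\cdot\nabla H(p)-\tfrac12k^{\mathsf T}\Sigma_pk+O(|k|^3)$. Its real part is therefore at most $-c|k|^2$ on a small ball. Away from zero there is a uniform spectral gap, obtained by compactness in $(p,k)\in\mathcal P\times$torus. Integrating the bound $\ee^{-ct|k|^2}$ gives $C(1+t)^{-d/2}$. Uniformity in $p$ comes from compactness of $\mathcal P$ and continuity of all spectral data. The reversed walk is handled the same way, with exponent $H(p+k)-H(p)$. The main technical point here is the uniform gap away from $k=0$, and I would obtain it by continuity plus compactness once the pointwise strict inequality is established.

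For part (ii), $Y$ is a martingale with bounded jumps, and its quadratic-variation rate is a bounded periodic function of $X$. I will apply a Berry--Esseen type bound for martingales, or more simply an ergodic estimate plus a CLT. First, the predictable quadratic variation $\langle Y\rangle_t$ equals $\int_0^tg(X_s)\,\dd s$ with $\langle g\rangle_{\pi_p}=p^{\mathsf T}\Sigma_pp=:\sigma_p^2\ge c>0$. Second, the Poisson equation corrector gives $\mathbb E|\langle Y\rangle_t-\sigma_p^2t|^2=O(t)$. A martingale central limit theorem with explicit rate (Heyde--Brown, rate $t^{-1/4}$ or better suffices only if uniform) would then give $\mathbb P(Y_t\le-D-r)\to\Phi(-(D+r)/(\sigma_p\sqrt t))$.

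The stated bound is weaker than such a CLT. The target is $2/5<1/2$ minus an error linear in $(1+r_+)/\sqrt t$, which I will get directly from the Gaussian limit: $\Phi(-s)\ge1/2-s/\sqrt{2\pi}$, and the $1/2-2/5$ margin absorbs the CLT error for $t$ large. The difficulty is uniformity in $p$ and in the start site. I plan to settle it by a quantitative moment argument instead of a CLT, using the following steps.
\begin{enumerate}
\item Write $\mathbb P(Y_t\le -s)\ge\mathbb P(Y_t\le0)-\mathbb P(-s<Y_t\le0)$.
\item Bound the window probability $\mathbb P(-s<Y_t\le0)\le C(1+s)/\sqrt t$ using the local bound from (i). The event confines $p\cdot X_t$ to an interval of length $s+O(1)$, and the $d$-dimensional kernel bound, summed over the remaining $d-1$ directions within a range $O(\sqrt t)$ (controlled by second moments), gives mass $\lesssim(1+s)t^{-1/2}$.
\item Bound $\mathbb P(Y_t\le0)\ge2/5$ from below using the Paley--Zygmund inequality applied to $Y_t^-$. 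Here $\mathbb E Y_t=0$, $\mathbb E Y_t^2\asymp t$ and $\mathbb E Y_t^4\le Ct^2$ by Burkholder. This yields only a constant $c_0>0$, not $2/5$.
\end{enumerate}
Since Paley--Zygmund gives only $c_0$, for the sharp constant $2/5$ I would use the uniform CLT for the additive functional, obtained from the spectral expansion of the characteristic function in (i). It converges uniformly in $p$ and $x$ because the expansion is uniform. With $D$ fixed, this yields the claimed bound for large $t$.
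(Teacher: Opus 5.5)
Part (i) is the paper's Bloch argument. There are three small fixes. The principal exponent of $\mathcal L_{p,\ii k}$ is $H(p-\ii k)-H(p)$, which is the function you actually expanded. Loops in the quotient graph lift to displacements in $\Gamma$, not $\mathbb Z^d$. And since the matrices $A_p(k)$ are not normal, the uniform spectral gap has to be converted into a uniform norm bound on $\ee^{tA_p(k)}$, for example by a Dunford integral over fixed contours, as the paper does.

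Part (ii) follows a different route from the paper and can be made to work, but step 2 claims more than your argument gives. The pointwise bound $Ct^{-d/2}$ has no transverse decay. Summing it over the lattice points of the slab $\{z:p\cdot z\in I\}$, $|I|\le s+C$, within distance $R\sqrt t$ of the mean gives $C(1+s)R^{d-1}t^{-1/2}$. Second moments bound the remaining mass only by $CR^{-2}$. No choice of $R$ yields $C(1+s)t^{-1/2}$; even Gaussian tails would leave a factor $(\log t)^{(d-1)/2}$.

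What you do get, for fixed $R$, is $C_R(1+s)t^{-1/2}+CR^{-2}$, and that suffices. The uniform CLT gives $\mathbb P_x(Y_t\le-D)\ge\frac12-o(1)$, so choosing $CR^{-2}\le\frac1{20}$ absorbs the loss into the margin $\frac12-\frac25$.

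Your first, Gaussian-limit idea is simpler and needs no window estimate at all, once the CLT is stated uniformly in the threshold. The limits $N(0,\sigma_p^2)$ have continuous distribution functions, and $\sigma_p$ ranges over a compact subset of $(0,\infty)$. P\'olya's theorem plus the compactness argument (extract $p_k\to p$ and a fixed residue of $x_k$, then apply L\'evy's theorem) gives
$\epsilon(t):=\sup_{p,x,y}|\mathbb P_x(Y_t\le y)-\Phi(y/(\sigma_p\sqrt t))|\to0$, with $\Phi$ the standard normal distribution function. For this, note that the bounded corrector is negligible at scale $\sqrt t$, and that the linear term $-\ii\theta p\cdot\nabla H(p)=-\ii\theta h_p$ of the principal exponent cancels the centering. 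Using $\Phi(-u)\ge\frac12-u/\sqrt{2\pi}$ for $u\ge0$, you get
\[
 \mathbb P_x(Y_t\le-D-r)\ge\frac12-\epsilon(t)-\frac{(D+1)(1+r_+)}{\sigma_{\min}\sqrt{2\pi t}}\qquad(r\in\mathbb R).
\]
This gives the claim once $\epsilon(t)\le\frac1{10}$, and the upper tail is symmetric.

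The paper instead uses the CLT only at the fixed threshold $-D$, to get $9/20$. It then proves the sharp window bound $\sup_J\mathbb P_x(Y_t\in J)\le C(1+|J|)t^{-1/2}$ directly for the scalar $Y_t$. The cell matrix with jump phases $\ee^{\ii\theta p\cdot z}$ and corrector phases at the endpoints gives $|\mathbb E_x\ee^{\ii\theta Y_t}|\le C\ee^{-ct\theta^2}+C\ee^{-ct}$ for $|\theta|\le\delta$. A nonnegative interval majorant with Fourier transform supported in $[-\delta,\delta]$ turns this into the bound, even though $p\cdot\mathbb Z^d$ may be dense. That concentration estimate is more information than the lemma requires. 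Your Gaussian-limit route is shorter; its only extra ingredient is the threshold-uniform (Kolmogorov-distance) form of the CLT.
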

\begin{proof}
We use a finite-cell Bloch representation \cite{Kuchment} and keep
the estimates uniform on $\mathcal P$.
Take the Bloch parameter in
$\mathcal B=\prod_i[-\pi/L_i,\pi/L_i]$, with opposite faces
identified. For a vector $f$ indexed by $\mathcal C$, define the Bloch matrix
$$
 (A_p(k)f)(r)=\sum_{|z|_1=1}q_p(r,z)
       \bigl(e^{\ii k\cdot z}f(r+z)-f(r)\bigr),
$$
where the argument of $f$ is reduced modulo $\Gamma$. The transition
kernel is given by the finite-cell inverse Fourier formula
\begin{equation}\label{eq:bloch-kernel-formula}
 P_t^{Q_p}(x,y)=\frac{|\mathcal C|}{(2\pi)^d}
   \int_{\mathcal B} e^{\ii k\cdot(x-y)}
       [e^{tA_p(k)}]_{r_x,r_y}\,\dd k,
 \qquad r_x=x\bmod\Gamma,\quad r_y=y\bmod\Gamma.
\end{equation}
Indeed the integral at $t=0$ is $\1_{\{x=y\}}$ by Fourier orthogonality
on $\Gamma$, and applying $Q_p$ in $x$ differentiates the matrix exponential.
The resulting bounded solution is unique by
Lemma~\ref{lem:fixed-domain-comparison}. Near zero, the principal eigenvalue
of $A_p(k)$ is
$$
 H(p-\ii k)-H(p)=-\ii v_p\cdot k-\tfrac12 k^{\mathsf T}\Sigma_p k
                         +O(|k|^3).
$$
Its eigenprojection is analytic and uniformly bounded; all the other
eigenvalues have real parts bounded above by a negative constant near
zero. Away from zero on this torus the whole spectrum has negative real
part. Indeed, if an eigenvalue had real part zero, choose a component
of an eigenvector with maximal modulus. Equality in the real-part
inequality forces every Bloch-weighted neighboring value to equal
that component. Irreducibility propagates this equality along all
edges. A path with net displacement $L_i e_i$ then forces
$\ee^{\ii k_iL_i}=1$ for all $i$. This is the zero point of the torus.
We choose the uniform spectral gaps in two stages. At $k=0$, the
nonprincipal spectrum is bounded to the left of $-4\eta$ for some
$\eta>0$, uniformly in $p\in\mathcal P$. Fix $\delta>0$ small enough
that, for $|k|\leq\delta$, this spectrum stays to the left of $-2\eta$
and the principal eigenvalue has real part greater than $-\eta/2$
and at most $-c|k|^2$, as follows from the uniform expansion above.
If the cell has one site, the complementary subspace is empty and
this part is omitted. On the remaining compact set $|k|\geq\delta$
in the Bloch torus, the full spectral bound is at most $-\eta_1<0$.

The matrices have uniformly bounded norm. Near zero, choose a fixed
rectangular contour $\mathcal G$ enclosing only the complementary
spectrum, with right edge $\operatorname{Re}z=-\eta$ and the other
edges outside a common bound for the spectra. If $P^\perp_p(k)$ is
the complementary spectral projection, the Dunford formula gives
$$
 e^{tA_p(k)}P^\perp_p(k)=\frac1{2\pi\ii}
 \int_{\mathcal G}e^{tz}(zI-A_p(k))^{-1}P^\perp_p(k)\,dz.
$$
The resolvent is uniformly bounded on the compact parameter and
contour sets, so this term is bounded by $Ce^{-\eta t}$. Away from
zero, a second fixed contour with right edge
$\operatorname{Re}z=-\eta_1/2$ gives
$\|e^{tA_p(k)}\|\leq Ce^{-\eta_1t/2}$ in the same way.
For the principal mode near zero, the real part of the exponent is
at most $-c|k|^2$, so
$\int_{|k|\le\delta}e^{-ct|k|^2}dk\le Ct^{-d/2}$. This proves the kernel
bound for $Q_p$. Since $B_{-p}=B_p^{\mathsf T}$, the positive eigenvector of
$B_{-p}$ is proportional to $\phi_p^*$ and hence
$\widehat Q_p=Q_{-p}$; because $-p\in\mathcal P$, the identical estimates
hold for $\widehat Q_p$.

For the scalar projection, use the finite-cell matrix with jump factors
$\ee^{\ii\theta p\cdot z}$. Multiplication at the endpoints by
$\ee^{\ii\theta\chi_p^\pm}$ includes the bounded corrector.
The preceding expansion implies, for some uniform $\delta,c,C>0$,
\begin{equation}\label{eq:char-small}
 |\mathbb E_x\ee^{\ii\theta Y_t}|
       \leq C\ee^{-ct\theta^2}+C\ee^{-ct}
       \qquad(|\theta|\leq\delta).
\end{equation}
Fix $\theta$ in a bounded set and insert $\theta/\sqrt t$ into the
finite-cell characteristic matrix. The principal exponent equals
$-\sigma_p^2\theta^2/2+O(|\theta|^3/\sqrt t)$ and the remaining spectral
modes are $O(e^{-ct})$, uniformly in $p\in\mathcal P$ and the initial cell,
where $\sigma_p^2=p^{\mathsf T}\Sigma_pp$ is bounded above and away from
zero. Hence
$\mathbb E_x e^{\ii\theta Y_t/\sqrt t}\to
 e^{-\sigma_p^2\theta^2/2}$ uniformly on bounded $\theta$-sets. L\'evy's
continuity theorem \cite[Section~26]{Billingsley} then gives convergence of
$Y_t/\sqrt t$ to the centered normal law with variance $\sigma_p^2$.
Uniformity follows by contradiction: from any violating sequence one may
extract $p_k\to p$ and a fixed initial residue, after which the preceding
characteristic-function convergence and L\'evy's theorem give the claimed
limit.

We also need a concentration estimate for intervals. There is a nonnegative
integrable function $g$, with Fourier transform supported in $[-\delta,\delta]$,
which is bounded below by a positive constant on $[-1,1]$ after a
fixed rescaling; for example take a rescaled square of
$\sin s/s$. Applying its translate and rescaling to an interval $J$
of length $\ell$ and using Fourier inversion and
\eqref{eq:char-small} gives
\begin{equation}\label{eq:concentration}
 \sup_J\mathbb P_x(Y_t\in J)\leq C(1+\ell)t^{-1/2},\qquad t\geq1.
\end{equation}
For lengths below a fixed constant, enlarge the interval to that
constant; for larger lengths rescale $g$, so its Fourier support stays
inside $[-\delta,\delta]$. The Fourier integral is bounded by
$C(1+\ell)\int_{-\delta}^{\delta}(\ee^{-ct\theta^2}+\ee^{-ct})\dd\theta$.
This proves \eqref{eq:concentration} for all $p\in\mathcal P$.
The limiting normal variances stay in a fixed positive compact
interval. Uniform convergence of the distribution functions near
zero, together with $-D/\sqrt t\to0$, therefore gives
$\mathbb P_x(Y_t\le-D)\ge9/20$ for all sufficiently large $t$.
If $r>0$, subtracting the concentration
bound \eqref{eq:concentration} for the interval $[-D-r,-D]$ gives
\eqref{eq:crossing}; if $r\le0$, the event
$\{Y_t\le-D\}$ is contained in $\{Y_t\le-D-r\}$. Replacing the
characteristic parameter $\theta$ by $-\theta$ in the preceding
characteristic-function and concentration calculations gives the stated
upper-tail estimate.
\end{proof}

\section{The logarithmic upper bound}\label{sec:upper}
The upper bound combines three effects: the whole-space kernel decays
like $t^{-d/2}$, the weighted mass decays like $t^{-1/2}$, and the
probability of staying ahead of a moving boundary contributes another
$t^{-1/2}$. The resulting power $t^{-d/2-1}$ gives the logarithmic
coefficient $\kappa_d$. The moment argument is related to the
multipliers in \cite[Sections~3.2--3.3]{HNRRPeriodic}; here it also
controls the nonlinear loss.
All constants in this section are uniform for $p\in\mathcal P$.
Write $h=H(p)=p\cdot v_p$, and suppress $p$ on the eigenvectors and
correctors. Set
$\mathcal N(t,x)=a(x)u(t,x)-F(x,u(t,x))$. By
\eqref{eq:periodic-kpp}, $0\leq\mathcal N(t,x)\leq(\max a)u(t,x)$.
Define the exponentially weighted unknown $w$ and density $W$ by
\begin{equation*}
 w(t,x)=\frac{\ee^{p\cdot x-ht}}{\phi(x)}u(t,x),\qquad
 W(t,x)=\pi(x)w(t,x)=\phi^*(x)\ee^{p\cdot x-ht}u(t,x).
\end{equation*}
Direct substitution and \eqref{eq:reverse-edge} give
\begin{equation}\label{eq:weighted-evolutions}
 w_t=Q_pw-\frac{\ee^{p\cdot x-ht}}{\phi(x)}\mathcal N,\qquad
 W_t=\widehat Q_p^{\mathsf T}W-\mathcal R,\qquad
 \mathcal R=\phi^*(x)\ee^{p\cdot x-ht}\mathcal N.
\end{equation}
Here the transpose acts on densities with respect to counting measure,
and $0\leq\mathcal R\leq(\max a)W$. Thus $w$ evolves under $Q_p$,
whereas the mass density $W$ is transported by the reversed walk.

\begin{lemma}\label{lem:mass-decay}
The mass $M(t)=\sum_xW(t,x)$ satisfies $M(t)\leq C(1+t)^{-1/2}$.
\end{lemma}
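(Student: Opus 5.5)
The plan is to combine three facts: monotonicity of $M$, the a priori bound $u\le1$, and a ballot-type estimate for the scalar martingale $Y$ of the reversed walk generated by $\widehat Q_p$. First, $M$ is finite for every $t$: by \eqref{eq:global-exponential-bound} with $\varepsilon>\max_{\mathcal P}|p|_\infty$, the weight $\ee^{p\cdot x}$ is summable against $u(t,\cdot)$. Summing \eqref{eq:weighted-evolutions} over $x$ kills the conservative term, since $\widehat Q_p$ annihilates constants. The summation is justified by the exponential bound, and it gives $M'(t)=-\sum_x\mathcal R(t,x)\le0$, so $M$ is nonincreasing. It therefore suffices to prove $M(t)\le C t^{-1/2}$ for large $t$.

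Next I write the mass in dual form. Since $W$ is a density transported by $\widehat Q_p$ with a nonnegative sink, I compare it with the kernel of the $\widehat Q_p$-walk. The weighted function $w$ evolves under $Q_p$ with a sink, so Lemma~\ref{lem:fixed-domain-comparison} applies to it. Fix $D\ge0$ and the time-dependent level set $\{\zeta_s\le -D\}$, where
$$\zeta_s(x)=p\cdot x+\chi^+(x)-hs.$$
There the bound $u\le1$ gives $W(s,x)\le C\ee^{-D}\phi^*(x)\ee^{\zeta_s(x)+D}$, and in particular $w(s,x)\le C\ee^{-D}$. I then apply comparison to $w$ on the space--time region $\{\zeta>-D\}$, with this boundary value imposed on the moving exterior. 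This bounds $w(t,x)$ by two terms:
\begin{enumerate}
\item an interior term $\mathbb E_x[w(0,X_t);\ \tau>t]$, where $\tau$ is the exit time of the walk from the region;
\item a boundary term $C\ee^{-D}\,\mathbb P_x(\tau\le t)$.
\end{enumerate}
Pairing with $\pi$ and using the time reversal \eqref{eq:reverse-edge} converts the interior term into
$$\sum_y W(0,y)\,\mathbb P_y\bigl(\text{the reversed martingale }Y\text{ stays above }-D-\zeta_0(y)\text{ on }[0,t]\bigr).$$
Here $W(0,\cdot)$ has finite support and $\zeta_0$ is bounded on that support.

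The main estimate is the ballot bound for the interior term:
$$\mathbb P_y\Bigl(\inf_{s\le t}Y_s>-D-r\Bigr)\le \frac{C_D(1+r_+)}{\sqrt t}.$$
I plan to prove it by applying the strong Markov property at time $t/2$ and combining it with \eqref{eq:crossing}. Survival to $t/2$ followed by survival on $[t/2,t]$ has conditional probability at most $1-\frac25+O((1+Y_{t/2}+D)/\sqrt t)$. Iterating on dyadic scales alone gives only a geometric factor, so instead I will use optional stopping for the martingale $Y$, which has bounded jumps, at $\tau\wedge t$. This yields $\mathbb E[(Y_{t}+D+r);\tau>t]\le C(1+r_+ +D)$. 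I then combine this with the concentration bound \eqref{eq:concentration}: on $\{\tau>t\}$, the event $Y_t\le \sqrt t/K$ has probability $O(K^{-1})$, while $\{Y_t>\sqrt t/K\}$ costs $O(K(1+r_+)/\sqrt t)$ by Markov's inequality. A better route is the standard reflection-type argument: with probability at least $\frac25$, the walk started at level $\sqrt t$ crosses below $-D-r$ within time $t$, by \eqref{eq:crossing}. I expect the uniformity of the bounded-jump and corrector estimates in $p\in\mathcal P$ (Lemma~\ref{lem:spectral}) to make all constants uniform.

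The step I expect to be the main obstacle is the boundary term. The bound $w\le C\ee^{-D}$ holds on an unbounded region that is infinite in the transverse directions, so pairing with $\pi$ over all $x$ naively diverges. My first attempt is to estimate the boundary contribution not pointwise but through the loss itself. Mass can enter $\{\zeta>-D\}$ only from sites where $u\le1$, and on the transverse scale $u$ is localized by \eqref{eq:global-exponential-bound}. Instead of a hard level, I would use a corrected barrier $\ee^{-D}$ times an exponentially decaying transverse factor. If that fails, the fallback is to run the comparison on the linear supersolution $\ee^{t(\Delta+a)}u_0$ truncated at $1$, which is explicitly localized. The transverse kernel bound from Lemma~\ref{lem:Markov-estimates}(i) then makes the transverse sum converge uniformly. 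Finally I choose $D$ as a fixed constant, which gives $M(t)\le C(1+t)^{-1/2}$.
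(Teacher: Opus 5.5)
Your treatment of the interior term is sound. The gap is exactly where you expected it, and neither repair you suggest closes it. With the barrier at a fixed level $\zeta=-D$, the boundary term is the $W$-mass that enters $\{\zeta>-D\}$ through the slab $\{-D-B<\zeta_s\le-D\}$ at some time $s\le t$ and then survives to time $t$; here $B$ bounds one corrected jump. No a priori information available at this point makes this term small.
\begin{enumerate}
\item With $u\le1$ and \eqref{eq:global-exponential-bound}, the slab contains of order $s^{d-1}$ sites where $W$ can be as large as $C\ee^{-D}$. Localization on scale $t$ is therefore useless.
\item The truncated linear supersolution does no better. For large $s$, near $\zeta=-D$ one has $\ee^{s(\Delta+a)}u_0\le C\ee^{D}s^{-d/2}<1$, so the truncation is inactive and your boundary data are the linearized solution itself. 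For the linear equation your decomposition is an identity. Its weighted mass is conserved, and by the central limit theorem about half of it lies in $\{\zeta>-D\}$ at time $t$, while the interior term is $O(t^{-1/2})$. Hence the boundary term is of order one. Equivalently, the flux through the slab is of order $s^{-1/2}$, and $\int_0^ts^{-1/2}(t-s)^{-1/2}\,\dd s=\pi$.
\end{enumerate}
So the scheme yields only $M(t)=O(1)$. The true flux is small only because $u$ is of order $t^{-\kappa_d}\log t$ near that level. That is the conclusion of Theorem~\ref{thm:upper}, whose proof uses this lemma.

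The missing idea is to move the barrier back logarithmically. The paper uses $z=p\cdot x-ht+\chi^+(x)+L\log(1+t)$ with $L=d+3$, and combines $u\le1$ with the unweighted bound \eqref{eq:unweighted-polynomial}, $\sum_xu(t,x)\le C(1+t)^d$. Since $W=\phi^*\ee^{z-\chi^+}(1+t)^{-L}u$, all weighted mass behind the barrier is $O(t^{-3})$, even with the factor $1+|z|$; this is \eqref{eq:negative-moment}. The price is that starting heights are now of order $\log t$, so a ballot bound alone gives only $\log t/\sqrt t$.

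The paper removes this logarithm with the first moment $J_+(t)=\sum_xz_+W$. The corrector cancels the drift, so $J'\le LM/(1+t)+Ct^{-3}$. Instead of a killed representation, the paper runs a dyadic mass-loss recursion. Free transport from $t$ to $2t$ carries at least $\frac25M(t)-C(M+J_+)/\sqrt t$ behind the barrier, where at most $Ct^{-3}$ can remain, so the rest must have been destroyed by $\mathcal R$. This gives \eqref{eq:mass-dyadic}. A first pass gives $M\le C\log t/\sqrt t$, hence $\int M/(1+t)\,\dd t<\infty$ and $J_+$ is bounded. A second pass then gives $M\le C(1+t)^{-1/2}$.

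Separately, your remark that dyadic iteration gives only a geometric factor is mistaken. Your optional-stopping bound combined with \eqref{eq:crossing} gives $\mathbb P(\tau>2s)\le\frac35\mathbb P(\tau>s)+C(1+r+B)/\sqrt s$. Since $3\sqrt2/5<1$, this already yields the ballot bound, as in Lemma~\ref{lem:survival}. The Markov--concentration split you describe gives only $t^{-1/4}$ after optimizing in $K$.
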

\begin{proof}
All weighted moments used below are finite on bounded time intervals.
Indeed, choose $\varepsilon>\sup_{p\in\mathcal P}|p|_\infty+1$ in
\eqref{eq:global-exponential-bound}. Then every sum obtained below by
multiplying $u(t,x)$ by a fixed polynomial in $|x|$ and by
$\ee^{p\cdot x}$, $p\in\mathcal P$, is absolutely summable uniformly on
bounded time intervals. The same domination applies to $u_t$, since
$|u_t(t,x)|\le\sum_{|z|_1=1}u(t,x+z)+(2d+L_F)u(t,x)$.
These bounds justify termwise differentiation of the weighted sums
and give a $C^1$ weighted density in $\ell^1$ on bounded intervals.
The operator $\widehat Q_p^{\mathsf T}$ is bounded on $\ell^1$,
with norm at most twice the maximal exit rate, so
variation of constants also holds there. Equation
\eqref{eq:weighted-evolutions} gives
\begin{equation}\label{eq:M-derivative}
 M'(t)=-\sum_x\mathcal R(t,x)\leq0.
\end{equation}
There is also an unweighted bound
\begin{equation}\label{eq:unweighted-polynomial}
 \sum_xu(t,x)\leq C(1+t)^d.
\end{equation}
Fix $\varepsilon>0$. Estimate \eqref{eq:global-exponential-bound} gives
$u(t,x)\le C_\varepsilon\exp(c_\varepsilon t-\varepsilon|x|_1)$.
Split the sum into $|x|_1\le R(1+t)$ and its complement, with
$R>c_\varepsilon/\varepsilon+1$. The first set contains
$O((1+t)^d)$ lattice points and $u\le1$ there. On the complement,
$c_\varepsilon t-\varepsilon|x|_1\le-\varepsilon|x|_1/2$ after increasing
$R$ if necessary, and the lattice exponential tail is summable uniformly in
$t$. This proves \eqref{eq:unweighted-polynomial}.

To measure how much mass lies ahead of the moving plane, put $L=d+3$ and
$$
\begin{gathered}
 z(t,x)=p\cdot x-ht+\chi^+(x)+L\log(1+t),\\
 J(t)=\sum_xz(t,x)W(t,x),\qquad J_+(t)=\sum_xz(t,x)_+W(t,x).
\end{gathered}
$$
Since $\phi_p^*$ is uniformly bounded above and $\chi_p^+$ is uniformly
bounded for $p\in\mathcal P$ and $x\in\mathcal C$,
\eqref{eq:unweighted-polynomial} implies
\begin{equation}\label{eq:negative-moment}
 \sum_{z(t,x)<0}(1+|z(t,x)|)W(t,x)\leq C(1+t)^{-3}.
\end{equation}
Indeed, $W=\phi^*\ee^{z-\chi^+}(1+t)^{-L}u$, and
$(1+|z|)\ee^z\leq C$ for $z<0$.
The corrector identity \eqref{eq:scalar-correctors} cancels the drift
in the first moment, so
\begin{equation}\label{eq:J-derivative}
 J'(t)=\frac{L}{1+t}M(t)-\sum_xz(t,x)\mathcal R(t,x)
       \leq\frac{L}{1+t}M(t)+C(1+t)^{-3}.
\end{equation}
As $M$ is bounded, this first gives
$J_+(t)\leq C(1+\log(1+t))$.

For a walk with generator $\widehat Q_p$ started at $x$ at time $t$,
its corrected coordinate at time $2t$ is
$z(t,x)+Y_t+L\log((1+2t)/(1+t))$. Since the last term is at most
$L\log2$, Lemma~\ref{lem:Markov-estimates} shows that the free transport
of $W(t)$ places at least
$$
 \frac25M(t)-\frac{C}{\sqrt t}\bigl(M(t)+J_+(t)\bigr)
$$
units of mass in $\{z(2t,\cdot)<0\}$, for sufficiently large $t$.
For the strict crossing, apply \eqref{eq:crossing} with
$D=L\log2+1$. Put
$P_p^+(s)=e^{s\widehat Q_p}$. The density equation gives the identity
in $\ell^1$
\begin{equation*}
 \begin{split}
 W(2t)&=P_p^+(t)^{\mathsf T}W(t)-\mathcal D_t,\\
 \mathcal D_t&=\int_t^{2t}P_p^+(2t-s)^{\mathsf T}\mathcal R(s)\,\dd s
                \ge0.
 \end{split}
\end{equation*}
Because $P_p^+$ is stochastic, Tonelli's theorem and
\eqref{eq:M-derivative} give
$\sum_x\mathcal D_t(x)=\int_t^{2t}\sum_x\mathcal R(s,x)\,\dd s
=M(t)-M(2t)$.
For $A_t=\{x:z(2t,x)<0\}$ this implies
$$
 \sum_{x\in A_t}\bigl(P_p^+(t)^{\mathsf T}W(t)\bigr)(x)
 \le \sum_{x\in A_t}W(2t,x)+M(t)-M(2t).
$$
The mass remaining on $A_t$ is at most $Ct^{-3}$ by
\eqref{eq:negative-moment}. Comparing it with the free mass
transported into $A_t$ yields
\begin{equation}\label{eq:mass-dyadic}
 M(2t)\leq\frac35M(t)+\frac{C}{\sqrt t}
                       \bigl(M(t)+J_+(t)\bigr)+Ct^{-3}.
\end{equation}
Choose $t_0$ large and set $t_n=2^nt_0$ and
$b_n=\sqrt{t_n}M(t_n)$. Since $3\sqrt2/5<1$, the term
$CM(t_n)/\sqrt{t_n}$ can be absorbed into a fixed contraction factor
$q<1$ after this rescaling. Thus $b_{n+1}\leq qb_n+C(1+n)$, which
implies $b_n\leq C(1+n)$ by summing the geometric series.
Monotonicity of $M$ interpolates between consecutive dyadic times:
$M(t)\leq C\log(2+t)/\sqrt{1+t}$.

In particular, $\int_0^\infty M(t)/(1+t)\,\dd t<\infty$.
Returning to \eqref{eq:J-derivative} and \eqref{eq:negative-moment}
now gives $J_+(t)\leq C$ uniformly. Substituting this uniform bound back into
\eqref{eq:mass-dyadic} at $t=t_n$ and multiplying by $\sqrt{2t_n}$ gives
$$
 b_{n+1}\le \left(\frac{3\sqrt2}{5}+\frac{C}{\sqrt{t_n}}\right)b_n+C.
$$
Choose $t_0$ larger if necessary so that the coefficient in parentheses is at
most some fixed $q<1$ for every $n$. Then $b_{n+1}\le qb_n+C$. Iteration yields $\sup_n b_n<\infty$, and monotonicity
of $M$ between $t_n$ and $t_{n+1}$ gives
$M(t)\le C(1+t)^{-1/2}$.
\end{proof}

\begin{lemma}\label{lem:survival}
For either corrected scalar martingale $Y$ in
Lemma~\ref{lem:Markov-estimates}, with $Y_0=0$, fix a starting height
$r>0$ and set
$\tau=\inf\{s\geq0:r+Y_s\leq0\}$. Then
\begin{equation}\label{eq:survival}
 \mathbb P_x(\tau>t)\leq
       \min\{1,C(1+r)/\sqrt{1+t}\}.
\end{equation}
\end{lemma}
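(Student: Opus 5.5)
The plan is a two-barrier optional stopping argument for the bounded-jump martingale $Y$: an upper barrier at height $R\asymp\sqrt t$, plus a second-moment identity that controls the time spent between the two barriers. Since $\min\{1,\cdot\}$ is trivial, we may assume $t\ge1$.

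\textbf{Setup.} The jumps of $Y$ have the form $p\cdot z+\chi_p^\pm(x+z)-\chi_p^\pm(x)$ with $|z|_1=1$. They are therefore bounded by a constant $C_0$, uniformly for $p\in\mathcal P$, because the correctors are uniformly bounded. Put $R=\sqrt{1+t}$ and let $\tau_R=\inf\{s:r+Y_s\ge R\}$. If $r\ge R$ the bound is trivial after enlarging $C$, so assume $r<R$. Define $\sigma=\tau\wedge\tau_R\wedge t$, a bounded stopping time. By the path-supremum bounds recorded in the proof of Lemma~\ref{lem:spectral}, optional stopping applies. Because jumps are bounded, we have $r+Y_\sigma\ge -C_0$ at $\tau$ and $r+Y_\sigma\ge R$ at $\tau_R$. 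On $\{\sigma=t<\tau\wedge\tau_R\}$ we have $r+Y_\sigma\in(0,R)$.

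\textbf{Step 1 (exit at the top).} From $\mathbb E_x[r+Y_\sigma]=r$ we get $R\,\mathbb P_x(\tau_R\le\tau\wedge t)\le r+C_0$. Hence $\mathbb P_x(\tau_R\le \tau\wedge t)\le (r+C_0)/R$.

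\textbf{Step 2 (time spent in the strip).} Let $A_s=\int_0^s\gamma(X_u)\,\dd u$ be the compensator of $Y^2$, where
\[
\gamma(x)=\sum_z q(x,z)\bigl(p\cdot z+\chi(x+z)-\chi(x)\bigr)^2 .
\]
Then $(r+Y_s)^2-A_s$ is a martingale. Before $\sigma$ we have $|r+Y|\le R+C_0$, so optional stopping gives
\[
\mathbb E_xA_\sigma=\mathbb E_x(r+Y_\sigma)^2-r^2\le (R+C_0)\,\mathbb E_x|r+Y_\sigma|.
\]
Moreover $\mathbb E_x|r+Y_\sigma|\le \mathbb E_x(r+Y_\sigma)+2C_0=r+2C_0$. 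Thus $\mathbb E_xA_\sigma\le C(1+R)(1+r)$.

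\textbf{Step 3 (growth of the compensator).} We need a uniform linear lower bound: there are $T_0,c>0$ such that
\[
\mathbb E_y\bigl[A_{s+T_0}-A_s\,\big|\,\mathcal F_s\bigr]\ge c
\]
for all starting sites and all $p\in\mathcal P$. The function $\gamma$ is periodic and nonnegative. Its $\pi_p$-average equals $\sigma_p^2=p^{\mathsf T}\Sigma_pp$, which is bounded below by positive definiteness in Lemma~\ref{lem:spectral}. By irreducibility of the finite-cell chain and uniformity on the compact set $\mathcal P$, the walk started anywhere visits a site with $\gamma\ge c_1>0$ and stays there for a unit time, within time $T_0$, with probability bounded below. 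This gives the claim. By the Markov property, summing over blocks,
\[
\mathbb E_xA_\sigma\ge c\,T_0^{-1}\,\mathbb E_x\sigma-C.
\]
Hence $\mathbb E_x\sigma\le C(1+R)(1+r)$, and Markov's inequality yields
\[
\mathbb P_x(\sigma=t<\tau\wedge\tau_R)\le \frac{C(1+R)(1+r)}{t}.
\]

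\textbf{Conclusion.} We have $\{\tau>t\}\subset\{\tau_R\le\tau\wedge t\}\cup\{\sigma=t<\tau\wedge\tau_R\}$. With $R=\sqrt{1+t}$, both bounds are at most $C(1+r)/\sqrt{1+t}$, which proves \eqref{eq:survival}.

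The main obstacle is Step~3. The pointwise variance rate $\gamma$ may vanish at some sites, so the lower bound on the compensator has to come from a block or ergodic argument, uniform in $p$ and in the starting residue. If the visiting argument becomes awkward, I would instead use Lemma~\ref{lem:Markov-estimates}(ii) on blocks of length $s_0$, together with the concentration bound \eqref{eq:concentration}. These give $\mathbb E_y Y_{s_0}^2\ge c s_0$ for all starting sites once $s_0$ is large. Since $\mathbb E_y A_{s_0}=\mathbb E_yY_{s_0}^2$, this is exactly the required block lower bound.
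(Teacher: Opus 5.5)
Your argument is correct, but it takes a genuinely different route from the paper's.

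\textbf{The paper's route.} It uses only the first-moment consequence of optional stopping, $\mathbb E_x[(r+Y_s)\1_{\{\tau>s\}}]\le r+B$. It then conditions on the surviving walk and its cell position at time $s$, and applies the CLT-based crossing estimate of Lemma~\ref{lem:Markov-estimates}(ii) with $D=0$. This bounds the probability of still being above zero at time $2s$, giving the dyadic recursion $h(2s)\le\frac35h(s)+C(1+r+B)/\sqrt s$. That recursion is iterated exactly as in Lemma~\ref{lem:mass-decay}.

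\textbf{Your route.} You run the classical two-barrier gambler's-ruin argument. The first-moment identity controls exit through the top barrier $R=\sqrt{1+t}$. The second-moment identity for $(r+Y)^2-A$ controls $\mathbb E_x\sigma$, and hence the probability of lingering in the strip.

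\textbf{What each buys.} Your route needs no limit theorem or Bloch analysis. It uses only bounded jumps (for overshoot) and a uniform block lower bound on the predictable quadratic variation. It therefore applies to any bounded-jump martingale with nondegenerate conditional variance. For the block bound, the simplest justification is ergodicity of the finite residue chain: $\mathbb E_y\gamma(X_u)\to p^{\mathsf T}\Sigma_pp$ uniformly in $p\in\mathcal P$ and in the starting residue. The paper's route is shorter because the crossing estimate was already in hand, and it never needs the compensator.

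\textbf{One detail in Step~3.} Make the block summation precise as follows. Since $\{\sigma>kT_0\}\in\mathcal F_{kT_0}$, the sum $\sum_k\1_{\{\sigma>kT_0\}}(A_{(k+1)T_0}-A_{kT_0})$ has expectation at least $c\,\mathbb E_x\sigma/T_0$. Its value is at most $A_{\sigma+T_0}\le A_\sigma+T_0\|\gamma\|_\infty$. This gives $\mathbb E_xA_\sigma\ge cT_0^{-1}\mathbb E_x\sigma-C$ as you claim.
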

\begin{proof}
The corrected jump sizes and the continuous drift between jumps are
uniformly bounded. A continuous exit has height zero, and a jump exit
overshoots by at most one corrected jump, so
$r+Y_\tau\in[-B,0]$ on $\{\tau<\infty\}$ for a uniform $B$.
The Poisson construction gives
$\mathbb E_x\sup_{0\le v\le s+1}|Y_v|^2<\infty$.
Bounded optional stopping therefore gives
$\mathbb E_xY_{\tau\wedge s}=0$: one may approximate the stopping
time from above on dyadic grids and pass to the limit by
right-continuity and the integrable path supremum. Splitting
according to $\{\tau>s\}$ yields
$$
 \mathbb E_x[(r+Y_s)\1_{\{\tau>s\}}]\leq r+B.
$$
At time $s$, condition on the surviving walk and its cell position.
The crossing estimate with $D=0$ bounds the probability of ending
above zero another $s$ units of time later by
$3/5+C(1+r+Y_s)/\sqrt s$. Survival is a subset of this event.
With $h(s)=\mathbb P_x(\tau>s)$, the preceding conditional estimate gives
$$
 h(2s)\leq\frac35h(s)+\frac{C(1+r+B)}{\sqrt s}.
$$
Fix $s_0$ large enough for the displayed recurrence and set
$s_n=2^ns_0$ and $b_n=\sqrt{s_n}\,h(s_n)$. Multiplying the recurrence by
$\sqrt{2s_n}$ yields
$$
 b_{n+1}\le \frac{3\sqrt2}{5}b_n+C(1+r+B).
$$
Since $3\sqrt2/5<1$, iteration gives
$b_n\le C(1+r+B)$ uniformly in $n$. Monotonicity of $h$ then interpolates
between consecutive $s_n$ and proves \eqref{eq:survival}; increasing $C$
covers $0<t<s_0$.
The conditioning uses the Markov property of the pair consisting
of the height and the cell position.
\end{proof}

The same survival estimate gives a bound for propagation with a
moving absorbing boundary. Fix $p\in\mathcal P$, put $h=H(p)$,
and fix $b\in\R$. The signed coordinate of this boundary is
\begin{equation}\label{eq:normal-coordinate}
 d_{p,b}(r,z):=p\cdot z-hr+b.
\end{equation}
For $0\le s<t$, let $(X_\rho)_{\rho\ge0}$ be the continuous-time Markov
chain with generator $Q_p$, started at $x$, and put $T=t-s$. Define
$$
 \tau_{s,t}^{x}:=\inf\{0\le\rho\le T:
        d_{p,b}(t-\rho,X_\rho)\le0\},\qquad\inf\varnothing=+\infty,
$$
and
$$
 \mathsf K_p(s,t;x,y)
 :=\mathbb P_x\{X_T=y,\ \tau_{s,t}^{x}>T\}.
$$
Thus $\mathsf K_p(s,t;x,y)$ is the zero-exterior evolution kernel, acting
on data at time $s$, for the $Q_p$-equation in
$\{(r,z):d_{p,b}(r,z)>0\}$. For a periodic $\chi$ with
$\|\chi\|_\infty\le C_0$, put
$d^\chi_{p,b}(r,z)=d_{p,b}(r,z)+\chi(z)$ and define
$\mathsf K_{p,\chi}$ by killing when $d^\chi_{p,b}\le0$.

\begin{lemma}\label{lem:killed-kernel-upper}
The following bounds are uniform for $p\in\mathcal P$ and $b\in\R$.
\begin{enumerate}[label=\textup{(\roman*)}]
\item If $d_{p,b}(t,x)>0$ and $d_{p,b}(s,y)>0$, then
\begin{equation}\label{eq:killed-kernel-upper}
 \mathsf K_p(s,t;x,y)
 \le C\frac{(1+d_{p,b}(t,x))_+(1+d_{p,b}(s,y))_+}
              {(1+t-s)^{d/2+1}}.
\end{equation}
\item If $d^\chi_{p,b}(t,x)>0$ and $d^\chi_{p,b}(s,y)>0$, then
\begin{equation}\label{eq:corrected-killed-kernel-upper}
 \mathsf K_{p,\chi}(s,t;x,y)
 \le C(C_0)\frac{(1+d^\chi_{p,b}(t,x))(1+d^\chi_{p,b}(s,y))}
 {(1+t-s)^{d/2+1}}.
\end{equation}
\end{enumerate}
\end{lemma}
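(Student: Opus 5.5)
The plan is the standard three-piece splitting of the killed kernel at the intermediate time, combined with the survival estimate of Lemma~\ref{lem:survival}, the transition bound of Lemma~\ref{lem:Markov-estimates}(i), and time reversal for the second endpoint. Write $T=t-s$. For $T\le T_0$ the bound is trivial, since $\mathsf K_p\le1$ and the right side is bounded below by a constant. So assume $T$ is large and split the path at $\rho=T/3$ and $\rho=2T/3$. By the Markov property,
\begin{equation*}
 \mathsf K_p(s,t;x,y)\le\sum_{z,z'}\mathbb P_x\{\tau>T/3,\,X_{T/3}=z\}\,
 P^{Q_p}_{T/3}(z,z')\,\mathbb P_{z'}\{X_{T/3}=y,\ \text{no killing on the last third}\}.
\end{equation*}
The middle factor is bounded by $C(1+T)^{-d/2}$ uniformly, by Lemma~\ref{lem:Markov-estimates}(i). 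The first sum over $z$ is then at most $\mathbb P_x(\tau>T/3)$.

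To bound the first factor, I express the killing in terms of the corrected martingale $Y$ for $Q_p$. Along the walk,
\begin{equation*}
 d_{p,b}(t-\rho,X_\rho)=d_{p,b}(t,x)+p\cdot(X_\rho-x)+h\rho=d_{p,b}(t,x)+Y_\rho-\chi^-(X_\rho)+\chi^-(x).
\end{equation*}
Survival therefore implies $Y$ has not crossed below $-(d_{p,b}(t,x)+2\|\chi^-\|_\infty)$. Lemma~\ref{lem:survival} with starting height $r=d_{p,b}(t,x)+C$ gives a bound $C(1+d_{p,b}(t,x))/\sqrt{1+T}$.

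For the last third, sum over $z'$ first, which gives $\sum_{z'}\mathbb P_{z'}\{X_{T/3}=y,\ldots\}$. By the reversal identity \eqref{eq:reverse-edge}, $\pi(z')P^{Q_p}(z',y)=\pi(y)P^{\widehat Q_p}(y,z')$ path by path. Since $\pi$ is bounded above and below, this sum is at most $C$ times the probability that the reversed walk started at $y$ survives for time $T/3$ in the time-reversed domain. Running backward from time $s$ forward, the reversed walk sees $d_{p,b}(s+\rho,\widehat X_\rho)=d_{p,b}(s,y)+p\cdot(\widehat X_\rho-y)-h\rho$, which is exactly the $\widehat Q_p$ martingale up to the bounded corrector $\chi^+$. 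Lemma~\ref{lem:survival} then gives $C(1+d_{p,b}(s,y))/\sqrt{1+T}$.

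One subtlety in the splitting is that I must bound the middle factor by its supremum while keeping both survival events. The order is: bound $P_{T/3}(z,z')\le C T^{-d/2}$, then the expression factorizes into $\bigl(\sum_z\mathbb P_x\{\tau>T/3,X_{T/3}=z\}\bigr)\bigl(\sum_{z'}\ldots\bigr)$, and dropping the middle killing only enlarges things. Multiplying the three factors gives $T^{-1/2}\cdot T^{-d/2}\cdot T^{-1/2}=T^{-(d/2+1)}$, which is \eqref{eq:killed-kernel-upper}.

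Part (ii) is identical. The periodic $\chi$ only shifts the effective starting heights by at most $C_0$ and moves the absorbing level by a bounded amount, which is absorbed into the choice of $r$ in Lemma~\ref{lem:survival}. Uniformity in $p\in\mathcal P$ and $b$ comes from the uniformity in the cited lemmas, since $b$ only translates the heights.

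The main obstacle I expect is the reversal step. It requires checking carefully that the killed kernel of $Q_p$ on the space-time domain, read backward, equals (up to the $\pi$ factors) the killed kernel of $\widehat Q_p$ on the reflected moving domain. This means verifying the path-by-path reversal through the Poisson construction, including the killing times, so that the second survival bound applies with height $d_{p,b}(s,y)$.
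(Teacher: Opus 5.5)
Your proposal is correct and follows the paper's proof essentially step for step. The paper uses the same three-piece split with the whole-space bound of Lemma~\ref{lem:Markov-estimates} on the middle third, Lemma~\ref{lem:survival} for the $Q_p$-corrected height on the first third, and path reversal to the $\widehat Q_p$-walk on the last third; it treats (i) and (ii) together with $\chi$, taking $\chi=0$ for (i). The reversal step you flag is settled there by the path-density formula, with \eqref{eq:reverse-edge} handling the jump products and the equal total exit rates \eqref{eq:equal-exit-rates} handling the holding factors, which yields $\pi_p(x)\mathsf K_{p,\chi}(s,t;x,y)=\pi_p(y)\widehat{\mathsf K}^{\,\to}_{p,\chi}(s,t;y,x)$.
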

The second estimate uses the endpoint heights in the corrected
domain, including points whose uncorrected heights are negative.
\begin{proof}
It is enough to consider $T=t-s\ge3$; bounded $T$ is absorbed by
increasing $C$. Divide the backward time interval into three pieces of
length $T/3$. Killing can
only decrease the kernel. On the middle piece Lemma~\ref{lem:Markov-estimates}
gives the whole-space bound $CT^{-d/2}$.

We prove both assertions at once, taking $\chi=0$ for the first.
For the first piece, start the $Q_p$-walk at $x$ at physical time $t$.
Write $a_x=d^\chi_{p,b}(t,x)>0$ and
$C_-=\|\chi-\chi_p^-\|_\infty$. Along backward time $\rho$,
$$
 d^\chi_{p,b}(t-\rho,X_\rho)
 =a_x+Y_\rho+
    (\chi-\chi_p^-)(X_\rho)-(\chi-\chi_p^-)(x).
$$
Thus survival of this path implies $a_x+2C_-+Y_\rho>0$ at every
earlier time. Lemma~\ref{lem:survival} gives
$$
 \sum_z\mathsf K_{p,\chi}(t-T/3,t;x,z)
 \le C(1+a_x)T^{-1/2}.
$$

To bound the last piece, reverse the paths. The total exit rates
agree at each vertex:
\begin{equation}\label{eq:equal-exit-rates}
 \Lambda_p(z):=\sum_{|b|_1=1}q_p(z,b)
       =h+2d-a(z)=\sum_{|b|_1=1}\widehat q_p(z,b).
\end{equation}
Both equalities follow from the right and left cell eigenvector
equations. A path through $z_0,\ldots,z_m$ at ordered jump times
$0=\rho_0<\rho_1<\cdots<\rho_m<\rho_{m+1}=T$ has density
$$
 \exp\!\left[-\sum_{i=0}^m
        \Lambda_p(z_i)(\rho_{i+1}-\rho_i)\right]
       \prod_{i=1}^m q_p(z_{i-1},z_i-z_{i-1}).
$$
Reversal preserves its holding factor by \eqref{eq:equal-exit-rates};
\eqref{eq:reverse-edge} makes the ratio of the jump products
$\pi_p(z_m)/\pi_p(z_0)$. Summing over paths, including the path with no
jump, therefore gives
$$
 \pi_p(x)\mathsf K_{p,\chi}(s,t;x,y)
   =\pi_p(y)\widehat{\mathsf K}^{\,\to}_{p,\chi}(s,t;y,x),
$$
where $\widehat{\mathsf K}^{\,\to}_{p,\chi}$ is the kernel of the
$\widehat Q_p$-walk run from physical time $s$ to $t$, killed when
$d^\chi_{p,b}(s+\rho,X_\rho)\le0$.
Reversal preserves the space--time survival condition. The number
of jumps is almost surely finite, and a jump occurs at any prescribed
boundary-deletion time with probability zero.

At the earlier endpoint put $a_y=d^\chi_{p,b}(s,y)>0$ and
$C_+=\|\chi-\chi_p^+\|_\infty$. The forward corrected height is
$a_y+Y_\rho+(\chi-\chi_p^+)(X_\rho)-(\chi-\chi_p^+)(y)$,
where the centered martingale now has drift term $-h\rho$.
Its survival is bounded by survival from height $a_y+2C_+$.
Since $\pi_p$ is uniformly bounded above and below,
Lemma~\ref{lem:survival} and the reversal identity give
$$
 \sum_z\mathsf K_{p,\chi}(s,s+T/3;z,y)
 \le C(1+a_y)T^{-1/2}.
$$
The Chapman--Kolmogorov decomposition has two intermediate site sums.
Bounding its middle kernel by $CT^{-d/2}$ leaves exactly the first row
sum and the last column sum estimated above. Their product proves
\eqref{eq:corrected-killed-kernel-upper}. Taking $\chi=0$ proves
\eqref{eq:killed-kernel-upper}. All constants depend on $\chi$ only
through $C_0$ and the already uniform corrector bounds.
\end{proof}

\begin{proof}[Proof of Theorem~\ref{thm:upper}]
Lemma~\ref{lem:mass-decay} supplies the weighted-mass decay used below.
Fix a sufficiently large observation time $t$ and a site $x$ with
$z_0=p\cdot x-ht+\kappa_d\log t\geq0$.
Choose $C_0=1+\sup_{p,x}|\chi_p^-(x)|$. Run the walk with generator
$Q_p$ backward from $(t,x)$, and stop it at time $t/2$ or at the first
exit from
\begin{equation}\label{eq:stopping-domain}
 p\cdot X_s-h(t-s)+\chi^-(X_s)+\kappa_d\log t+C_0>0.
\end{equation}
The expression on the left is $r+Y_s$, with
$r=z_0+\chi^-(x)+C_0\in[1,z_0+2C_0]$.
The term $\log t$ is fixed during this stopped evolution.

The exponential estimate \eqref{eq:global-exponential-bound} first
places $w$ in $\ell^\infty$ on each bounded time interval. Its equation,
whose generator has bounded rates, and
Lemma~\ref{lem:fixed-domain-comparison} give
$w(s)\le P_s^{Q_p}w(0)$ and hence
$\|w(s)\|_\infty\le\|w(0)\|_\infty$.
For the space--time function $f(\rho,z)=w(t-\rho,z)$,
$$
 (\partial_\rho+Q_p)f(\rho,z)
       =-w_t(t-\rho,z)+Q_pw(t-\rho,z)\ge0.
$$
The Poisson construction makes $w(t-\rho,X_\rho)$ a bounded
submartingale. Apply bounded optional stopping at the exit time
truncated by $t/2$.
At an exit from \eqref{eq:stopping-domain}, the corrected
height is between $-B$ and zero. Consequently
$p\cdot X_s-h(t-s)\leq-\kappa_d\log t+C$, and $u\leq1$ gives
$w(t-s,X_s)\leq Ct^{-\kappa_d}$.
On survival to $t/2$, the endpoint value is bounded by
$\|w(t/2)\|_\infty$. Since $w_t\le Q_pw$,
$w(t/2)\le P_{t/4}^{Q_p}w(t/4)$. The kernel estimate in
Lemma~\ref{lem:Markov-estimates}, the uniform lower bound on $\pi_p$,
and Lemma~\ref{lem:mass-decay} give
$$
 \|w(t/2)\|_\infty
 \leq Ct^{-d/2}\sum_yw(t/4,y)
 \leq Ct^{-d/2}M(t/4)\leq Ct^{-(d+1)/2}.
$$
Combining these exit and endpoint estimates with
Lemma~\ref{lem:survival} yields
$$
 w(t,x)\leq Ct^{-\kappa_d}
       +Ct^{-(d+1)/2}\min\{1,C(1+z_0)t^{-1/2}\}
       \leq C(1+z_0)t^{-\kappa_d}.
$$
Undoing the exponential weight proves \eqref{eq:upper-main}.
For $4\le t\le t_0$, the inequality
$w(t)\le P_t^{Q_p}w(0)$ gives
$\|w(t)\|_\infty\le\|w(0)\|_\infty$, uniformly for
$p\in\mathcal P$ because the initial support is finite and the cell
eigenvectors are uniformly bounded above and below on the compact set
$\mathcal P$. Increasing $C$ therefore extends \eqref{eq:upper-main} to
this bounded interval of times.

If $|x-re|\leq\rho$, then
$z_e(t,x)\geq\alpha_e(r-w(e)t)+\kappa_d\log t-C\rho$.
The uniform bounds on $p_e$ and $\alpha_e$, together with
$(1+z)\ee^{-z}\to0$, prove \eqref{eq:ray-upper}.
Fix $x$ with $u(t,x)\ge\theta$. For each $e$, if
$p_e\cdot x-tH(p_e)+\kappa_d\log t$ were larger than a sufficiently large
constant $C_\theta$, \eqref{eq:upper-main} would give $u(t,x)<\theta$.
Hence
$p_e\cdot x\le tH(p_e)-\kappa_d\log t+C_\theta$ for every $e$; taking the
intersection over $e\in\mathbb S^{d-1}$ gives \eqref{eq:outer-body}.
\end{proof}

\section{A lower bound with transverse periodic growth}\label{sec:lower}
Under \eqref{eq:transverse-linearization}, the linear growth rate may
vary across the transverse lattice but is independent of the
longitudinal coordinate. We construct a lower comparison as the
product of a scalar longitudinal solution and a positive transverse
factor. The transverse factor accounts for the periodic growth,
while the scalar equation includes the loss caused by transverse
spreading.

Write $n=d-1$ and $\beta=n/2$. In the scalar longitudinal formulas
of this section, abbreviate $a_{\mathrm{eff}}$ to $a$; the local
transverse growth rate remains $a_\perp(y)$. Choose
$$
 K=\max\{\max a_\perp,K_F\}.
$$
Then $a/K\leq1$, and \eqref{eq:quadratic-kpp-loss} gives
$F((j,y),s)\geq a_\perp(y)s-Ks^2$ for $0\leq s\leq1$.
The critical pair satisfies
\begin{equation}\label{eq:critical-system}
 c_*\lambda_*=2(\cosh\lambda_*-1)+a,\qquad
 c_*=2\sinh\lambda_*,\qquad D_*=\cosh\lambda_*,\quad
 \delta_*=\frac{c_*}{4D_*}\in(0,1/2).
\end{equation}
We first establish the kernel estimates and scalar comparisons needed
for this construction.

Let $G(t,j)$ be the distribution of the difference of independent
Poisson variables with means $\ee^{\lambda_*}t$ and
$\ee^{-\lambda_*}t$, and put $w_1(t,j)=G(t,j-1)-G(t,j+1)$.
\begin{lemma}\label{lem:longitudinal}
\begin{enumerate}
\item The heat kernel $\pheat_n(t,y)$ of $\Delta_{\Z^n}$ is positive
for $t>0$ and satisfies $\pheat_n(t,y)\leq C(1+t)^{-n/2}$. For each
fixed $A\geq0$, as $t\to\infty$,
\begin{equation}\label{eq:heat-local}
 \pheat_n(t,y)=(4\pi t)^{-n/2}\ee^{-|y|^2/(4t)}
                 +O_A(t^{-(n+1)/2}),\qquad |y|\leq A\sqrt t.
\end{equation}
\item For each fixed $A\geq0$, with $s=j-c_*t$,
\begin{equation}\label{eq:signed-kernel}
 w_1(t,j)=\frac{(s+\delta_*)\ee^{-s^2/(4D_*t)}}
                  {\sqrt{4\pi}D_*^{3/2}t^{3/2}}
              +O_A((1+|s|)t^{-2}),\qquad |s|\leq A\sqrt t,
\end{equation}
as $t\to\infty$. Moreover,
$Z(t,j)=\ee^{-\lambda_*(j-c_*t)}w_1(t,j)$ solves
$Z_t=\Delta_{\Z}Z+aZ$.
\end{enumerate}
\end{lemma}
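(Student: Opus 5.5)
For part (i), I would use the Fourier representation
$\pheat_n(t,y)=(2\pi)^{-n}\int_{[-\pi,\pi]^n}\ee^{\ii k\cdot y}\ee^{-t\sum_i 2(1-\cos k_i)}\,\dd k$.
Positivity follows from the factorization $\pheat_n(t,y)=\prod_i \ee^{-2t}I_{y_i}(2t)$ with modified Bessel functions, or from the positivity argument for the random-walk kernel given earlier. The uniform bound $C(1+t)^{-n/2}$ comes from $1-\cos k_i\ge 2k_i^2/\pi^2$, which gives
$\int \ee^{-ct|k|^2}\dd k\le Ct^{-n/2}$, together with the trivial bound $1$ for small $t$.

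For the local expansion \eqref{eq:heat-local}, I would split the integral at $|k|=t^{-1/2+\epsilon}$; the outer region contributes $O(\ee^{-ct^{2\epsilon}})$. On the inner region, write $2(1-\cos k_i)=k_i^2-k_i^4/12+O(k_i^6)$ and substitute $k=\kappa/\sqrt t$. The quartic correction is $O(|\kappa|^4/t)$. After extending back to all of $\R^n$, the Gaussian integral produces $(4\pi t)^{-n/2}\ee^{-|y|^2/(4t)}$ with a relative error of $O(t^{-1})$, which is stronger than the required $O(t^{-(n+1)/2})$. The restriction $|y|\le A\sqrt t$ is not even needed here, since the phase is unimodular.

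For part (ii), I would first check the PDE claim. The kernel $G$ is the law of a continuous-time walk jumping $+1$ at rate $\ee^{\lambda_*}$ and $-1$ at rate $\ee^{-\lambda_*}$. Hence
$G_t(t,j)=\ee^{\lambda_*}G(t,j-1)+\ee^{-\lambda_*}G(t,j+1)-2\cosh\lambda_*\,G(t,j)$,
and $w_1$, being a difference of shifts, satisfies the same equation. Setting $Z=\ee^{-\lambda_*(j-c_*t)}w_1$ conjugates the jump weights to $1$. The result is
$Z_t=Z(j+1)+Z(j-1)-2\cosh\lambda_* Z+c_*\lambda_* Z=\Delta_\Z Z+(c_*\lambda_*-2\cosh\lambda_*+2)Z$,
and the coefficient equals $a$ by \eqref{eq:critical-system}.

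For the asymptotics, I would use
$w_1(t,j)=\frac1{2\pi}\int_{-\pi}^{\pi}\ee^{-\ii k j}(\ee^{\ii k}-\ee^{-\ii k})\ee^{t\psi(k)}\dd k$, where
$\psi(k)=\ee^{\lambda_*}(\ee^{\ii k}-1)+\ee^{-\lambda_*}(\ee^{-\ii k}-1)$. Its expansion is
$\psi(k)=\ii c_*k-D_*k^2-\ii\frac{c_*}{6}k^3+O(k^4)$, using $\operatorname{Re}\psi=-2D_*(1-\cos k)$ and $\operatorname{Im}\psi=c_*\sin k$. The prefactor is $2\ii\sin k=2\ii k+O(k^3)$.

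After removing the phase $\ee^{-\ii k c_*t}$ and writing $s=j-c_*t$, the leading term is
$\frac1{2\pi}\int 2\ii k\,\ee^{-\ii ks-D_*tk^2}\dd k$. This equals $-2\partial_s$ of the Gaussian $(4\pi D_*t)^{-1/2}\ee^{-s^2/(4D_*t)}$, which gives $\frac{s\,\ee^{-s^2/(4D_*t)}}{\sqrt{4\pi}D_*^{3/2}t^{3/2}}$. The $\delta_*$ shift comes from the cubic term: expanding $\ee^{-\ii tc_*k^3/6}\approx1-\ii tc_*k^3/6$ contributes
$\frac1{2\pi}\int 2\ii k\cdot(-\ii tc_*k^3/6)\ee^{-\ii ks-D_*tk^2}\dd k$.
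I would evaluate this via Hermite-type moments. At $s=0$ it has size $t\cdot t^{-5/2}$, producing a term of the form $\frac{c_*}{4D_*}\cdot\frac{\ee^{-s^2/(4D_*t)}}{\sqrt{4\pi}D_*^{3/2}t^{3/2}}$ plus pieces of size $(1+|s|)t^{-2}$ when $|s|\le A\sqrt t$.

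The main obstacle is this bookkeeping. The $k^4$ correction to $\operatorname{Re}\psi$ and the cubic correction to the prefactor each give $O((1+|s|)t^{-2})$. The cubic phase term, however, gives a contribution proportional to $t\,\partial_s^4$ of the Gaussian, whose $s$-dependence is a polynomial $(s^4/t^2-6s^2/t+3)$-type factor rather than a constant. I would therefore compute it carefully and check whether its non-constant part can be absorbed into the error (it cannot be, at size $t^{-3/2}$), or whether the combination with the exact identity $w_1=G(\cdot,j-1)-G(\cdot,j+1)$ recenters it.

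If this fails, the fallback is to apply the Edgeworth expansion of $G$ to first order, including the skewness correction, and then take the discrete difference $G(j-1)-G(j+1)=-2\partial_jG+O(\partial^3G)$. I would then compare the order-$t^{-3/2}$ terms and absorb whatever is $O(|s|t^{-2})$ uniformly on $|s|\le A\sqrt t$. I expect the constant $\delta_*=c_*/(4D_*)$ to be what remains of the skewness correction evaluated near the center, with the remaining $s$-dependent parts falling within the allowed error on the diffusive window.
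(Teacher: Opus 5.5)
Your route is the paper's: Fourier inversion, the expansion $\psi(k)=\ii c_*k-D_*k^2-\ii\frac{c_*}{6}k^3+O(k^4)$, the leading term $-2\partial_sg_t$ with $g_t(s)=(4\pi D_*t)^{-1/2}\ee^{-s^2/(4D_*t)}$, and the cubic-phase term $\frac{c_*t}{3}\partial_s^4g_t$. The paper writes the frequency with the opposite sign, which changes nothing. Part (i) and your check of $Z_t=\Delta_{\Z}Z+aZ$ are fine. One wording point in (i): the $O(t^{-(n+2)/2})$ error there is absolute and uniform in $y$, not relative.

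The obstacle you raise in (ii) is not real, and the parenthetical ``it cannot be, at size $t^{-3/2}$'' is wrong. Explicitly,
\[
 \frac{c_*t}{3}\partial_s^4g_t(s)=\frac{g_t(s)}{D_*t}\Bigl(\frac{c_*}{4D_*}-\frac{c_*s^2}{4D_*^2t}+\frac{c_*s^4}{48D_*^3t^2}\Bigr),
 \qquad \frac{g_t(s)}{D_*t}\le Ct^{-3/2}.
\]
The non-constant terms vanish to second order at $s=0$. On $|s|\le A\sqrt t$ one has $s^2/t\le A|s|t^{-1/2}$ and $s^4/t^2\le A^3|s|t^{-1/2}$. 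These terms are therefore at most $C(A+A^3)|s|t^{-2}$, which is exactly the allowed form $O_A((1+|s|)t^{-2})$. The allowed error reaches size $t^{-3/2}$ only where $|s|\sim\sqrt t$, which is the only place these terms are that large.

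The constant part $\frac{c_*}{4D_*}\cdot\frac{g_t}{D_*t}$ is the $\delta_*$ shift. So your first claim, that the cubic term equals the $\delta_*$ term plus $O((1+|s|)t^{-2})$, was correct, and no Edgeworth fallback is needed.

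To finish the bookkeeping, add one remainder to your list: the second-order term $\frac12(tc_*k^3/6)^2$ of the cubic phase. After the prefactor $2\ii\sin k$ it gives $t^2|k|^7\ee^{-ctk^2}$, whose integral is again $O(t^{-2})$. Take the inner cut so that $t|k|^3\to0$, for example $|k|\le t^{-2/5}$ as in the paper, so that the expansion of this phase is controlled. The rest of the torus then contributes an exponentially small error.
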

\begin{proof}
The Fourier representation of the first kernel has symbol
$2\sum_i(\cos\xi_i-1)$. On a small neighborhood of zero this is
$-|\xi|^2+O(|\xi|^4)$, and off that neighborhood its real part is
strictly negative. Choose $\delta>0$ so that the symbol is at most $-c|\xi|^2$ on
$|\xi|\le\delta$ and at most $-c_\delta<0$ outside that ball. The inverse
Fourier integral over $|\xi|\le\delta$ is bounded by
$C\int_{\R^n}e^{-ct|\xi|^2}\,d\xi=Ct^{-n/2}$, while the complementary
integral is $O(e^{-c_\delta t})$. For $0<t\leq1$, the probability
bound $\pheat_n(t,y)\leq1$ completes the uniform kernel estimate.
For \eqref{eq:heat-local}, write the inverse Fourier integral on
$|\xi|\le\delta$, set $\xi=t^{-1/2}\eta$, and use
$2\sum_i(\cos(t^{-1/2}\eta_i)-1)=-t^{-1}|\eta|^2+O(t^{-2}|\eta|^4)$.
Writing $a_n(\xi)=2\sum_i(\cos\xi_i-1)$, decrease $\delta$ so that
both $a_n(\xi)$ and $-|\xi|^2$ are at most $-c|\xi|^2$. The real
mean-value formula for the exponential gives
$$
 |e^{ta_n(\xi)}-e^{-t|\xi|^2}|
       \le Ct|\xi|^4e^{-ct|\xi|^2}\qquad(|\xi|\le\delta).
$$
Its integral is $O(t^{-(n+2)/2})$. The omitted Gaussian tail and
the integral over $|\xi|>\delta$ are exponentially small. This
proves the stated, weaker error in \eqref{eq:heat-local}, uniformly
in $y$, as well as its Gaussian leading term. Strict positivity follows from the following path event: in
each coordinate the walk is the difference of two independent Poisson counting
processes, and the event that those counts take any prescribed nonnegative
values has positive probability.

The characteristic symbol for $G$ is
$\ee^{\lambda_*}(\ee^{-\ii\xi}-1)
 +\ee^{-\lambda_*}(\ee^{\ii\xi}-1)$.
After removal of its linear term it is
$-D_*\xi^2+\ii c_*\xi^3/6+O(\xi^4)$.
The multiplier defining $w_1$ is $-2\ii\sin\xi$.
In Fourier inversion, the first two contributions are therefore
$-2\partial_s g_t(s)+(c_*t/3)\partial_s^4g_t(s)$, where
$g_t(s)=(4\pi D_*t)^{-1/2}\ee^{-s^2/(4D_*t)}$.
More explicitly, these equal
$$
 \frac{g_t(s)}{D_*t}
 \left(s+\frac{c_*}{4D_*}
       -\frac{c_*s^2}{4D_*^2t}
       +\frac{c_*s^4}{48D_*^3t^2}\right).
$$
The last two terms are bounded by $C_A(1+|s|)t^{-2}$ in the specified
region. To bound the Fourier terms not displayed above, split at
$|\xi|=t^{-2/5}$. On the smaller interval, the Taylor remainders are
bounded, after multiplication by $\ee^{-c t\xi^2}$, by a constant
times $|\xi|^3+t|\xi|^5+t^2|\xi|^7$; their integrals are $O(t^{-2})$.
On $t^{-2/5}\leq|\xi|\leq\xi_0$, the
Gaussian bound is exponentially small in $t^{1/5}$; the rest of the
torus is exponentially small in $t$. This proves
\eqref{eq:signed-kernel}, including at bounded $s$.
Finally, conjugation by $\ee^{-\lambda_*(j-c_*t)}$ and
\eqref{eq:critical-system} proves the equation for $Z$.
\end{proof}

The scalar lower comparison will be used to the left of a moving
right endpoint. Its boundary condition must include the first lattice
site outside that interval.

\begin{lemma}\label{lem:moving-halfline-comparison}
Let $B\in C^1([t_0,t_1])$ be strictly increasing. Suppose that
bounded functions $q,\psi$ are locally absolutely continuous in time
at each site and satisfy, almost everywhere,
$$
 q_t=\Delta_{\Z}q+R(t,j,q),\qquad
 \psi_t\leq\Delta_{\Z}\psi+R(t,j,\psi),
$$
where $R$ is measurable in time and uniformly Lipschitz in its last
argument on an interval containing both ranges. If $\psi\leq q$
initially on $j\leq B(t_0)$ and on the right collar
$B(t)<j\leq B(t)+1$ for every $t\in[t_0,t_1]$, then
$\psi(t,j)\leq q(t,j)$ whenever $j\leq B(t)$.
\end{lemma}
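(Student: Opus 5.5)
The plan is to mimic the proof of Lemma~\ref{lem:fixed-domain-comparison}, with the fixed domain replaced by the time-dependent half-line $\{j\le B(t)\}$. The key point is that a strictly increasing boundary only \emph{adds} vertices over time, so no vertex ever leaves the region where the inequality must be proved.

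Set $h(t,j)=(\psi(t,j)-q(t,j))_+$ for all $j\le B(t)+1$. By hypothesis $h$ vanishes on the collar $B(t)<j\le B(t)+1$. Let $L$ be the Lipschitz constant of $R$.

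First, at any fixed site $j$, define the entry time $s_j=\inf\{t\ge t_0: j\le B(t)\}$; this is $t_0$ if $j\le B(t_0)$. Since $B$ is continuous and strictly increasing, the site $j$ belongs to the closed collar at time $s_j$ whenever $s_j>t_0$. Indeed, $B(s_j)=j$ if $j>B(t_0)$, and otherwise $j$ sits in the collar just before entering. So in every case $h(s_j,j)=0$.

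For $t>s_j$ the site stays in $\{j\le B(t)\}$. On that interval the positive-part chain rule gives, almost everywhere,
\[
 h_t(t,j)\le \Delta_{\Z}h(t,j)+Lh(t,j).
\]
This uses the fact that the neighbors $j\pm1$ satisfy $j+1\le B(t)+1$ and $j-1\le B(t)$. At the right neighbor, either $j+1\le B(t)$ or $j+1$ is in the collar, where $h=0\le$ the true difference. In both cases the neighbor term $\psi-q$ is bounded above by $h$, which is exactly what the nonnegative coefficients require.

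Next, let
\[
 E(t)=\sup_{j\le B(t)+1}h(t,j),
\]
which is finite by boundedness and measurable because the lattice is countable. Dropping the diagonal term $-2h$ and integrating from $s_j$ gives
\[
 h(t,j)\le (2+L)\int_{s_j}^t E(s)\,\dd s\le (2+L)\int_{t_0}^t E(s)\,\dd s,
\]
where $h(t_0,j)=0$ is used for the sites present at $t_0$. Taking the supremum over sites gives $E(t)\le(2+L)\int_{t_0}^tE$, and Gronwall's inequality yields $E\equiv0$.

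The main obstacle is the bookkeeping at the moving boundary. One must check that the neighbor term at the edge always lands in the region where the comparison is known, namely the collar of width one. One must also check that newly entering sites start with $h=0$. This second check is where strict monotonicity and continuity of $B$ matter: a decreasing boundary would delete vertices and could break the argument. Measurability of $E$, and the use of a common exceptional null set over the countably many sites, follow as in Lemma~\ref{lem:fixed-domain-comparison}.
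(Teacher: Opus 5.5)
Your argument is correct and uses the same two observations as the paper: the only exterior neighbor of $\{j\le B(t)\}$ lies in the ordered collar, and a vertex entering the half-line was in the collar just before, so continuity in time orders it at entry.

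The organization differs. The paper cuts $[t_0,t_1]$ at the finitely many times when $B$ crosses an integer. Between crossings the interior is a fixed half-line, so Lemma~\ref{lem:fixed-domain-comparison} applies as a black box. The ordering is then carried across each crossing by continuity, and the step is iterated. You instead rerun the positive-part/Gronwall computation once on the moving domain, using each vertex's entry time $s_j$ to get $h(t,j)\le(2+L)\int_{t_0}^tE$ uniformly in $j$. Your route avoids the iteration and the crossing count, and it makes the treatment of the neighbor $j+1$ explicit. The paper's route is shorter because it reuses the fixed-domain lemma.

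Two small points of precision. First, when $s_j>t_0$ one has $B(s_j)=j$, so $j$ is not in the half-open collar at that instant. The equality $h(s_j,j)=0$ holds because $j$ lies in the collar on some interval $(s_j-\epsilon,s_j)$ and $\psi-q$ is continuous in time; say this explicitly rather than appealing to a ``closed collar''. Second, your argument uses only continuity and monotonicity of $B$, not strict monotonicity.
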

\begin{proof}
Between consecutive integer crossings of $B$, the interior set is
fixed. Its only exterior neighbor is the first integer to the right of
$B(t)$, which lies in the ordered collar. Lemma~\ref{lem:fixed-domain-comparison}
applies on each such interval. At a crossing, the entering vertex was
in the collar immediately beforehand, so continuity supplies its initial
ordering. There are finitely many crossings on a compact time interval;
iteration proves the assertion, including the crossing times. The fixed-domain
lemma already treats the infinite left end and coordinatewise
almost-everywhere differential inequalities.
\end{proof}

Let $H_\perp(q)$ be the principal eigenvalue of the tilted transverse
operator with potential $a_\perp$, and set
$\Sigma_\perp=\nabla^2H_\perp(0)$.
\begin{lemma}\label{lem:transverse-kernel}
\begin{enumerate}
\item The matrix $\Sigma_\perp$ is positive definite, and
\begin{equation}\label{eq:transverse-factorization}
 H(p_1,p')=2(\cosh p_1-1)+H_\perp(p'),\qquad H_\perp(0)=a.
\end{equation}
\item The kernel $K_\perp(s;y,z)$ of $\ee^{s\mathcal A_\perp}$ on
$\Z^n$ is positive for $s>0$ and satisfies
\begin{equation}\label{eq:transverse-kernel-upper}
 \ee^{-as}K_\perp(s;y,z)\leq C(1+s)^{-n/2}.
\end{equation}
For every fixed $A\geq0$, uniformly for $|y-z|\leq A\sqrt s$ and
$s\geq1$,
\begin{equation}\label{eq:transverse-kernel-local}
\begin{split}
 \ee^{-as}K_\perp(s;y,z)
 &=\frac{|\mathcal C_\perp|\psi_\perp(y)\psi_\perp(z)}
         {(2\pi s)^{n/2}\sqrt{\det\Sigma_\perp}}\,
   \exp\!\left(-\frac{(y-z)^{\mathsf T}\Sigma_\perp^{-1}(y-z)}{2s}\right)\\
 &\quad+O_A(s^{-(n+1)/2}).
\end{split}
\end{equation}
The constants are uniform in the two cell residues.
\end{enumerate}
\end{lemma}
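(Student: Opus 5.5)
The plan is to reduce everything to the periodic spectral machinery of Section~\ref{sec:spectral}, applied in dimension $n=d-1$ with zero tilt, after a ground-state (Doob) transform by $\psi_\perp$.

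\textbf{Step 1: the factorization \eqref{eq:transverse-factorization}.} Under \eqref{eq:transverse-linearization}, the cell operator $B_p$ of \eqref{eq:cell} acts on a function $\phi(j,y)=\phi_\perp(y)$ that does not depend on $j$ as
$$
 B_p\phi=(\ee^{-p_1}+\ee^{p_1}-2)\phi_\perp+B^\perp_{p'}\phi_\perp,
$$
where $B^\perp_{p'}$ is the tilted transverse cell operator with potential $a_\perp$. Take $\phi_\perp$ to be the positive Perron eigenvector of $B^\perp_{p'}$. Then $\phi$ is a positive eigenvector of $B_p$, and by Perron--Frobenius uniqueness its eigenvalue is $H(p)$. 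This gives $H(p)=2(\cosh p_1-1)+H_\perp(p')$. At $p'=0$ the operator $B^\perp_0$ is $\mathcal A_\perp$ on the cell, so $H_\perp(0)=a_{\mathrm{eff}}=a$.

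\textbf{Step 2: $\Sigma_\perp$ is positive definite.} The factorization gives $\nabla^2H(0)=\operatorname{diag}(2,\Sigma_\perp)$. Lemma~\ref{lem:spectral}(i) says $\nabla^2H$ is positive definite, so $\Sigma_\perp$ is too. Alternatively, the proof of Lemma~\ref{lem:spectral} never uses $d\ge2$, so it can be rerun directly in dimension $n$.

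\textbf{Step 3: reduction of the kernel to a Markov kernel.} Positivity of $K_\perp$ follows from the same Poisson path event used for $\pheat_n$, together with $K_\perp\geq \ee^{s\min a_\perp}\pheat_n$. Since $\mathcal A_\perp$ is symmetric, its left and right Perron vectors coincide and both equal $\psi_\perp$. Set
$$
 Q_\perp=\psi_\perp^{-1}(\mathcal A_\perp-a)\psi_\perp .
$$
This is the generator $Q_0$ of Lemma~\ref{lem:spectral} in dimension $n$. It is a Markov generator with invariant cell law $\pi=\psi_\perp^2$, which is already normalized, and
$$
 \ee^{-as}K_\perp(s;y,z)=\psi_\perp(y)\,P^{Q_\perp}_s(y,z)\,\psi_\perp(z)^{-1}.
$$
Because $\psi_\perp$ is bounded above and below, \eqref{eq:transverse-kernel-upper} follows from the kernel bound of Lemma~\ref{lem:Markov-estimates}(i). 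The proof of that bound works verbatim at $p=0$ in dimension $n$; compactness of $\mathcal P$ is irrelevant for a single $p$.

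\textbf{Step 4: the local expansion \eqref{eq:transverse-kernel-local}.} Start from the Bloch formula \eqref{eq:bloch-kernel-formula}. The nonprincipal modes near $k=0$ and all modes away from $k=0$ contribute $O(\ee^{-cs})$, exactly as before. For the principal mode, the eigenvalue is $H_\perp(-\ii k)-a=-\tfrac12k^{\mathsf T}\Sigma_\perp k+O(|k|^3)$; there is no linear term because $H_\perp$ is even. The spectral projection is analytic, and at $k=0$ it equals $\mathbf 1\otimes\pi$, so its $(r_y,r_z)$ entry is $\pi(r_z)+O(|k|)$.

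Replace the projection by its value at $k=0$, and replace $\ee^{s(\cdot)}$ by $\ee^{-sk^{\mathsf T}\Sigma_\perp k/2}$. The errors are bounded by $(|k|+s|k|^3)\ee^{-cs|k|^2}$, whose integral is $O(s^{-(n+1)/2})$. Extending the Gaussian integral to all of $\R^n$ costs an exponentially small amount, and the resulting Fourier integral equals
$$
 |\mathcal C_\perp|\,\pi(z)\,(2\pi s)^{-n/2}(\det\Sigma_\perp)^{-1/2}\exp\!\left(-\frac{(y-z)^{\mathsf T}\Sigma_\perp^{-1}(y-z)}{2s}\right).
$$
Multiplying by $\psi_\perp(y)/\psi_\perp(z)$ turns $\pi(z)=\psi_\perp(z)^2$ into $\psi_\perp(y)\psi_\perp(z)$, which gives \eqref{eq:transverse-kernel-local}.

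The error bound is uniform in $y,z$, so the restriction $|y-z|\le A\sqrt s$ is not even needed. Uniformity in the cell residues holds because there are finitely many residues.

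The main obstacle is Step 4: keeping the first-order correction of the eigenprojection and the cubic eigenvalue term at the $s^{-(n+1)/2}$ level. The plan is to bound both crudely by powers of $|k|$ under the Gaussian weight, as above, which suffices for the stated error.
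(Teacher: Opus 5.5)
Your proposal is correct and follows essentially the same route as the paper: the factorization via the transverse Perron vector extended constantly in $j$, the ground-state transform $\ee^{-as}K_\perp(s;y,z)=\psi_\perp(y)P_\perp(s;y,z)/\psi_\perp(z)$, and the zero-tilt Bloch expansion. In that expansion the principal projection has entries $\psi_\perp(z)^2$ and the error is $\int(|k|+s|k|^3)\ee^{-cs|k|^2}\,\dd k=O(s^{-(n+1)/2})$. Your two small deviations are valid shortcuts: you get positive definiteness of $\Sigma_\perp$ from the block form $\nabla^2H(0)=\operatorname{diag}(2,\Sigma_\perp)$, where the paper reruns Lemma~\ref{lem:spectral} on the transverse cell, and you get positivity from $K_\perp\ge\ee^{s\min a_\perp}\pheat_n(s,y-z)$, where the paper uses a path argument for $Q_\perp$.
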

\begin{proof}
A transverse principal eigenvector, extended constantly in the
longitudinal coordinate, is a positive eigenvector of the full tilted
cell matrix. Its eigenvalue is the right-hand side of
\eqref{eq:transverse-factorization}, so Perron--Frobenius theory
identifies this value with $H(p_1,p')$. At $p'=0$ the transverse
eigenvalue is $a$. Applying the proof of Lemma~\ref{lem:spectral}
to the transverse cell gives the evenness of $H_\perp$ and the
positive definiteness of $\Sigma_\perp$.

Conjugating $\mathcal A_\perp-a$ by $\psi_\perp$ gives the Markov
generator
$$
 Q_\perp h(y)=\sum_{|b|_1=1}
   \frac{\psi_\perp(y+b)}{\psi_\perp(y)}(h(y+b)-h(y)).
$$
It is reversible with cell probability $\psi_\perp^2$, and its
stationary mean displacement is zero. If $P_\perp(s;y,z)$ is its
transition kernel, then
$\ee^{-as}K_\perp(s;y,z)=\psi_\perp(y)P_\perp(s;y,z)/\psi_\perp(z)$.
The Bloch decomposition in the proof of
Lemma~\ref{lem:Markov-estimates} applies to this zero-drift generator
in dimension $n\geq1$ and gives \eqref{eq:transverse-kernel-upper}.
For background on the decomposition, see \cite{Kuchment}.
We spell out its leading coefficient because the normalization of
$\psi_\perp$ matters here.
Near zero the principal exponent is
$-\tfrac12 k^{\mathsf T}\Sigma_\perp k+O(|k|^3)$, and the
principal projection at zero has entries $\psi_\perp(z)^2$,
independent of the starting residue. All other modes have an
exponential gap. The transverse version of \eqref{eq:bloch-kernel-formula} has
prefactor $|\mathcal C_\perp|/(2\pi)^n$. Consequently its leading
term is the Gaussian density times
$|\mathcal C_\perp|\psi_\perp(z)^2$. The error near zero is bounded
in absolute value by the integral of
$C(|k|+s|k|^3)\ee^{-cs|k|^2}$, which is $O(s^{-(n+1)/2})$;
a smaller neighborhood and the complementary spectral gap justify
this bound uniformly. The Bloch phases from the finitely many
residues contribute only $O(|k|)$ to the projection error.
Multiplication by $\psi_\perp(y)/\psi_\perp(z)$ proves
\eqref{eq:transverse-kernel-local}. To see strict positivity, fix one finite
nearest-neighbor path from $y$ to $z$. Every jump rate of $Q_\perp$ along this
path is positive, and the exponential holding-time density assigns positive
probability to completing exactly these jumps before time $s$ and making no
additional jump before $s$. Hence $P_\perp(s;y,z)>0$, and therefore
$K_\perp(s;y,z)>0$.

For constant $a_\perp$, the normalization gives
$\psi_\perp=|\mathcal C_\perp|^{-1/2}$ and $\Sigma_\perp=2I$.
Thus the cell-volume factor in \eqref{eq:transverse-kernel-local}
recovers the homogeneous heat kernel.
\end{proof}

The next lemma combines the transverse kernel with a scalar equation.
Its time-dependent growth loss is exactly canceled by the derivative
of the transverse factor.

For $T\geq1$ and $y_0\in\Z^n$, define
\begin{equation}\label{eq:transverse-factor}
 H_T(t,y;y_0)=\beta_\perp(t+T)^{n/2}\ee^{-a(t+T)}
                 K_\perp(t+T;y,y_0),
\end{equation}
and consider the scalar equation
\begin{equation}\label{eq:scalar-logistic}
 q_t=\Delta_{\Z}q+\left(a-\frac n{2(t+T)}\right)q-Kq^2,
 \qquad 0\leq q(0,\cdot)\leq a/K.
\end{equation}
\begin{lemma}\label{lem:product-subsolution}
One can choose $\beta_\perp>0$, independently of $T$ and $y_0$, so
that $0<H_T\leq1$ for $t\geq0$ and the following hold.
\begin{enumerate}
\item Every solution of \eqref{eq:scalar-logistic} gives a subsolution
$q(t,j)H_T(t,y;y_0)$ of \eqref{eq:main}.
\item If $u_0(j_0,y_0)>0$, there are $T\geq1$ and
$\varepsilon>0$ such that the solution with
$q(0,j)=\varepsilon\1_{\{j_0\}}(j)$ satisfies
\begin{equation}\label{eq:product-comparison}
 u(T+t,j,y)\geq q(t,j)H_T(t,y;y_0)\qquad(t\geq0).
\end{equation}
\end{enumerate}
\end{lemma}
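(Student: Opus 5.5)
The plan is a direct computation for (i) and a heat-kernel comparison at one fixed time for (ii). Throughout, write $s=t+T$ and use that $K_\perp(s;\cdot,y_0)$ solves $\partial_sK_\perp=\Delta_{\Z^n}K_\perp+a_\perp(y)K_\perp$.

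\emph{Choice of $\beta_\perp$.} By \eqref{eq:transverse-kernel-upper},
\[
 (t+T)^{n/2}\ee^{-a(t+T)}K_\perp(t+T;y,y_0)\le C(t+T)^{n/2}(1+t+T)^{-n/2}\le C.
\]
Hence $\beta_\perp=1/C$ gives $H_T\le1$, independently of $T$ and $y_0$. Strict positivity of $H_T$ is the positivity of $K_\perp$ from Lemma~\ref{lem:transverse-kernel}.

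\emph{Part (i).} First, global existence for \eqref{eq:scalar-logistic} follows from the same $\ell^\infty$ contraction used for \eqref{eq:main}. Lemma~\ref{lem:fixed-domain-comparison} with the constant solution $0$ and the constant supersolution $a/K$ then gives $0\le q\le a/K\le1$ for all $t$. The supersolution property of $a/K$ holds because $a-n/(2(t+T))\le a$.

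Next, put $V=qH_T$. Since $q$ depends only on $j$ and $H_T$ only on $y$, we have $\Delta_{\Z^d}V=H_T\Delta_{\Z}q+q\Delta_{\Z^n}H_T$. Differentiating \eqref{eq:transverse-factor} gives
\[
 \partial_tH_T=\Bigl(\frac n{2(t+T)}-a\Bigr)H_T+\Delta_{\Z^n}H_T+a_\perp(y)H_T .
\]
Adding this to the equation for $q$, the terms $\pm\bigl(a-n/(2(t+T))\bigr)V$ cancel exactly, which leaves
\[
 V_t=\Delta_{\Z^d}V+a_\perp(y)V-Kq^2H_T .
\]
Since $0\le V\le1$, the bound \eqref{eq:quadratic-kpp-loss} together with $K\ge K_F$ gives $F((j,y),V)\ge a_\perp(y)V-KV^2$. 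Because $H_T\le1$, we have $KV^2=Kq^2H_T^2\le Kq^2H_T$. Therefore $V_t\le\Delta_{\Z^d}V+F(x,V)$, so $V$ is a subsolution.

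\emph{Part (ii).} I would take $T=1$; any fixed $T\ge1$ works. By Lemma~\ref{lem:fixed-domain-comparison} on all of $\Z^d$, together with (i), it suffices to check the initial ordering
\[
 u(T,j_0,y)\ge\varepsilon H_T(0,y;y_0)\quad\text{for every }y\in\Z^n .
\]
There is nothing to check for $j\ne j_0$, since $q(0,j)=0$ there. Comparison is legitimate because the reaction $\widetilde F$ is Lipschitz and both functions are bounded.

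The only real obstacle is that this ordering must hold for all $y$, not just on a finite set. I would handle it by comparing both sides with the transverse heat kernel.

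For the lower side, \eqref{eq:heat-comparison} gives
\[
 u(T,j_0,y)\ge u_0(j_0,y_0)\,\pheat_1(T,0)\,\pheat_n(T,y-y_0).
\]
For the upper side, $a_\perp\le\max a_\perp$, so comparing $\ee^{s\mathcal A_\perp}$ with $\ee^{s(\Delta+\max a_\perp)}$ (positivity of the semigroups and Lemma~\ref{lem:fixed-domain-comparison}) gives
\[
 \ee^{-aT}K_\perp(T;y,y_0)\le\ee^{(\max a_\perp-a)T}\pheat_n(T,y-y_0).
\]
Hence it suffices to choose
\[
 \varepsilon=\frac{u_0(j_0,y_0)\,\pheat_1(T,0)\,\ee^{-(\max a_\perp-a)T}}{\beta_\perp T^{n/2}}.
\]
This is positive by positivity of the heat kernel in Lemma~\ref{lem:longitudinal}, and with it the initial ordering holds for every $y$. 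Lemma~\ref{lem:fixed-domain-comparison} then yields \eqref{eq:product-comparison}.
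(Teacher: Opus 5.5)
Your proof follows the paper's argument step for step: you choose $\beta_\perp$ from \eqref{eq:transverse-kernel-upper}, use the same cancellation in $(qH_T)_t$, and order the initial data with the same two heat-kernel comparisons at time $T$. The one thing to add is the constraint $\varepsilon\le a/K$, which the paper imposes explicitly: \eqref{eq:scalar-logistic} requires $q(0,\cdot)\le a/K$, your part (i) relies on it to get $0\le q\le a/K\le1$, and your explicit formula for $\varepsilon$ need not satisfy it (for instance when $K_F$ is large). Replacing $\varepsilon$ by its minimum with $a/K$ fixes this at no cost, since shrinking $\varepsilon$ preserves the initial ordering.
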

\begin{proof}
The bound \eqref{eq:transverse-kernel-upper} permits a choice of
$\beta_\perp$ independent of $T\geq1$ for which $0<H_T\leq1$.
Differentiating \eqref{eq:transverse-factor} gives
\begin{equation}\label{eq:transverse-factor-equation}
 (H_T)_t=\Delta_\perp H_T+
       \left(a_\perp(y)-a+\frac n{2(t+T)}\right)H_T.
\end{equation}
The constants $0$ and $a/K$ are respectively a subsolution and a
supersolution of \eqref{eq:scalar-logistic}, since the right-hand
side at $q=a/K$ is $-an/[2K(t+T)]\leq0$. Comparison therefore gives
$0\leq q\leq a/K\leq1$. Writing $H=H_T(t,y;y_0)$ and using the
separation of the lattice Laplacian in \eqref{eq:transverse-factor-equation}
and \eqref{eq:scalar-logistic}, we obtain
\begin{equation}\label{eq:product-subsolution-residual}
 (\partial_t-\Delta_{\Z^d})(qH)
 =a_\perp(y)qH-Kq^2H
 \leq a_\perp(y)qH-K(qH)^2\leq F((j,y),qH).
\end{equation}
Here $qH\in[0,1]$ and $H\leq1$, so both inequalities have the stated
sign. This proves the subsolution property with the full local
potential $a_\perp(y)$.

Put $\eta_0=u_0(j_0,y_0)>0$ and fix any $T\geq1$. The heat
comparison \eqref{eq:heat-comparison}, together with factorization
of the lattice heat kernel, gives
$$
 u(T,j,y)\geq\eta_0\pheat_1(T,j-j_0)\pheat_n(T,y-y_0).
$$
Let $a_{\max}=\max a_\perp$. Comparing the transverse linear equation
with the constant-potential equation yields
$$
 K_\perp(T;y,y_0)\leq\ee^{a_{\max}T}\pheat_n(T,y-y_0).
$$
Choose $0<\varepsilon\leq a/K$ so small that
$$
 \varepsilon\beta_\perp T^{n/2}\ee^{(a_{\max}-a)T}
          \leq\eta_0\pheat_1(T,0).
$$
At $j=j_0$ this restriction gives
$q(0,j_0)H_T(0,y;y_0)\leq u(T,j_0,y)$; at other longitudinal
sites $q(0,j)=0$. The initial ordering therefore holds everywhere.
Lemma~\ref{lem:fixed-domain-comparison}, applied to the subsolution
in \eqref{eq:product-subsolution-residual}, proves
\eqref{eq:product-comparison}. The two endpoints of $K_\perp$ are
retained throughout, since its value can depend on their individual
cell residues.
\end{proof}

\begin{lemma}\label{lem:logconcavity}
The tilted kernel $G(t,j)$ from Lemma~\ref{lem:longitudinal} is
strictly positive and log-concave as a sequence in $j$ for every $t>0$:
$$
    G(t,j)^2\geq G(t,j-1)G(t,j+1)\qquad(j\in\Z).
$$
Consequently, $G(t,j+1)/G(t,j-1)$ is nonincreasing in $j$.
\end{lemma}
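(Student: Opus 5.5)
The plan is to write $G(t,\cdot)$ as a convolution of two Poisson laws and use the fact that log-concavity of sequences is preserved under convolution. Let $N_1\sim\mathrm{Poisson}(\ee^{\lambda_*}t)$ and $N_2\sim\mathrm{Poisson}(\ee^{-\lambda_*}t)$ be independent, so that
$$
 G(t,j)=\sum_{k\ge\max\{0,-j\}}\mathbb P(N_1=j+k)\,\mathbb P(N_2=k).
$$

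For strict positivity, I will take any $j\in\Z$ and the index $k=\max\{0,-j\}$. Then both Poisson weights in that term are positive for $t>0$, so $G(t,j)>0$. Positivity also makes the ratio statement meaningful.

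For log-concavity, each Poisson mass function $\pi_\mu(m)=\ee^{-\mu}\mu^m/m!$, extended by zero to negative $m$, is log-concave on $\Z$. The ratio $\pi_\mu(m)^2/(\pi_\mu(m-1)\pi_\mu(m+1))$ equals $(m+1)/m\ge1$ for $m\ge1$. At $m=0$ the inequality holds because the left neighbor vanishes, and for $m<0$ both sides are zero. Moreover, the support of $\pi_\mu$ is an interval of integers, so there are no internal zeros. The law of $-N_2$ is the reflection of a log-concave sequence and is therefore log-concave. The key step is the classical theorem that the convolution of two log-concave sequences without internal zeros is log-concave (Hoggar's theorem, or the Pólya frequency of order two property). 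One standard proof writes $G(j)^2-G(j-1)G(j+1)$ as a double sum and pairs the terms $(k,l)$ and $(l,k)$. An alternative is the total positivity argument: log-concavity of $f$ without internal zeros is equivalent to nonnegativity of the $2\times2$ minors of the Toeplitz matrix $[f(i-j)]$. The product of two such TP$_2$ matrices is TP$_2$ by Cauchy--Binet, and the product matrix is the Toeplitz matrix of the convolution. I would cite this result rather than reprove it. If a self-contained argument is preferred, I will write out the Cauchy--Binet step for $2\times2$ minors, which takes one line.

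The consequence follows directly. Log-concavity together with positivity gives that $G(t,j+1)/G(t,j)$ is nonincreasing in $j$. Then
$$
 \frac{G(t,j+1)}{G(t,j-1)}=\frac{G(t,j+1)}{G(t,j)}\cdot\frac{G(t,j)}{G(t,j-1)}
$$
is a product of two positive nonincreasing sequences in $j$, and is therefore nonincreasing.

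The only point needing care is the convolution preservation, including the absence of internal zeros. Both factors have interval supports, $[0,\infty)$ and $(-\infty,0]$, so the hypothesis holds.

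An alternative that avoids citations uses the explicit formula $G(t,j)=\ee^{-2t\cosh\lambda_*}\ee^{\lambda_* j}I_{|j|}(2t)$. Log-concavity of the sequence $\nu\mapsto I_\nu(x)$ is known (Turán-type inequalities), and the factor $\ee^{\lambda_*j}$ is log-linear, so it does not affect log-concavity. I would keep the convolution argument as the main route, since it needs no special-function input.
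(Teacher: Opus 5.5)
Your proposal is correct and follows essentially the same route as the paper. Both arguments use Poisson log-concavity with interval support, reflection, preservation of log-concavity under convolution via nonnegative $2\times2$ Toeplitz minors and Cauchy--Binet, positivity from the single term with counts $(\max\{j,0\},\max\{-j,0\})$, and the monotonicity of $G(t,j+1)/G(t,j-1)$ as a product of two adjacent positive nonincreasing ratios. The one difference is that the paper proves the convolution step rather than citing it, applying Cauchy--Binet on finite truncations $[-N,N]$ and passing to the limit; this is the only care your infinite sums need.
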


\begin{proof}
We use preservation of log-concavity under convolution and verify it
for the sequences in this argument. A nonnegative
summable sequence $a$ with interval support and
$a_k^2\geq a_{k-1}a_{k+1}$ has a Toeplitz matrix
$T(a)_{ij}=a_{i-j}$ whose $2\times2$ minors are nonnegative. Indeed,
for $i<j$ and $r<s$, the two indices in
$a_{i-r}a_{j-s}$ lie between the two indices in
$a_{i-s}a_{j-r}$, and their sums are equal. The monotonicity of
successive ratios on the positive support gives
$$
    a_{i-r}a_{j-s}\geq a_{i-s}a_{j-r}.
$$
If the right-hand product is positive, interval support justifies all
successive ratios between its two indices and the displayed inequality follows
from their monotonicity. If the right-hand product is zero, the inequality follows directly
from nonnegativity.

If both $a$ and $b$ have this property, the finite Cauchy--Binet formula
shows that every $2\times2$ minor of
$T(a)_{\{i,j\},[-N,N]}T(b)_{[-N,N],\{r,s\}}$ is nonnegative.
Its entries converge to those of $T(a*b)_{\{i,j\},\{r,s\}}$ as
$N\to\infty$, since the convolution sums converge absolutely.
Thus $a*b$ is log-concave as well.

The distribution of a Poisson random variable has interval support and
successive ratio $\rho/(k+1)$ on its positive support, where $\rho>0$
is its parameter. It is therefore log-concave. Reflection preserves
log-concavity. The kernel $G(t,\cdot)$ is the convolution of the
Poisson distribution with parameter $\ee^{\lambda_*}t$ and the
reflected Poisson distribution with parameter $\ee^{-\lambda_*}t$.
The convolution property proves the asserted inequality. Moreover,
$G(t,j)>0$ for every $j\in\Z$ because the two independent Poisson variables
can realize, for example, the pair $(j,0)$ when $j\ge0$ and $(0,-j)$ when
$j<0$. Its successive ratios are
positive and nonincreasing, so the product of two adjacent ratios,
$G(t,j+1)/G(t,j-1)$, is nonincreasing.
\end{proof}

\begin{lemma}\label{lem:autonomous-leading-lower}
Let $Q$ solve $Q_t=\Delta_{\Z}Q+aQ-KQ^2$, $K>0$, with nonzero finitely
supported data $0\leq Q(0,\cdot)\leq a/K$. For every $A>0$, there
exist $c_0>0$ and $L_0,t_0\geq1$ such that
\begin{equation}\label{eq:autonomous-leading-lower}
 Q(t,j)\geq c_0t^{-3/2}s\ee^{-\lambda_*s},\qquad s=j-c_*t,
\end{equation}
for $t\geq t_0$ and $L_0\leq s\leq A\sqrt t$.
\end{lemma}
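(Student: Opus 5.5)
The plan is to build an explicit subsolution from the linear solution $Z(t,j)=\ee^{-\lambda_*(j-c_*t)}w_1(t,j)$ of Lemma~\ref{lem:longitudinal}. We keep only its positive part and damp it by a time-integrable factor that absorbs the quadratic loss. Since $Q\le a/K$, the reaction $aQ-KQ^2$ is nonnegative, so $Q_t\geq\Delta_{\Z}Q$. Lemma~\ref{lem:fixed-domain-comparison} then gives $Q(t)\geq\ee^{t\Delta_{\Z}}Q(0)$, and hence $Q(t_1,j)\geq c(t_1)\,\pheat_1(t_1,j-j_0)$ at any fixed time $t_1\ge1$. No moving boundary is needed, because the positive part can be handled directly.

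\emph{Step 1: $Z_+$ is a linear subsolution.} By Lemma~\ref{lem:logconcavity}, the ratio $G(t,j+1)/G(t,j-1)$ is nonincreasing in $j$. Hence $w_1(t,\cdot)=G(t,\cdot-1)-G(t,\cdot+1)$ is negative to the left of a single sign change and nonnegative to the right of it. At a site where $Z>0$, replacing the negative neighbours by $0$ only increases $\Delta_{\Z}$. Therefore $(Z_+)_t\le\Delta_{\Z}Z_++aZ_+$ almost everywhere, and at sites where $Z\le0$ the inequality holds trivially, since there $(Z_+)_t=0\le\Delta_{\Z}Z_+$.

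\emph{Step 2: sup bound and damping.} I will show $\sup_jZ_+(t,j)\le C(1+t)^{-3/2}$ for $t\ge1$, argued region by region:
\begin{enumerate}
\item On $-C\le s\le A\sqrt t$ (with $s=j-c_*t$) this follows from \eqref{eq:signed-kernel} together with $w_1\le Ct^{-1}$.
\item For $s\le -C$ it follows because $w_1<0$ there: the sign change sits at $s\approx-\delta_*$ by \eqref{eq:signed-kernel}, and I expect a direct check on $G$ to confirm that it does not drift far to the left.
\item For $s\ge A\sqrt t$ the weight $\ee^{-\lambda_*s}$ kills any polynomial factor.
\end{enumerate}
I will then set $\psi(t,j)=\varepsilon\,Z_+(t-t_1+\tau,j)\exp\bigl(-K\varepsilon C\int_{0}^{t}(1+r)^{-3/2}\dd r\bigr)$. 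The exponent is bounded, so $\psi\ge c\varepsilon Z_+$, and $K\psi\le K\varepsilon C(1+t-t_1+\tau)^{-3/2}$. Consequently $\psi_t\le\Delta_{\Z}\psi+a\psi-K\psi^2$, i.e.\ $\psi$ is a subsolution of the nonlinear equation. Choosing $\varepsilon\le a/K$ small keeps $\psi$ inside the Lipschitz range.

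\emph{Step 3: initial ordering.} At time $t_1$ I need $\psi(t_1,\cdot)\le Q(t_1,\cdot)$ on all of $\Z$. This is the step I expect to be the main obstacle, because $Z_+(\tau,\cdot)$ has infinite support.
\begin{enumerate}
\item To the left of the sign change, $\psi=0$ and there is nothing to check.
\item To the right, Poisson tails give $G(\tau,j)\le C(\ee^{\lambda_*}\tau)^{j}/j!$ for $j\ge0$. Hence $Z(\tau,j)\le C_\tau\,\tau^{j}/j!$, because the tilt $\ee^{-\lambda_*j}$ cancels the factor $\ee^{\lambda_*j}$.
\item On the other side, $Q(t_1,j)\ge c\,\pheat_1(t_1,j-j_0)\ge c'\,\ee^{-2t_1}t_1^{|j-j_0|}/|j-j_0|!$.
\end{enumerate}
Choosing $\tau\le t_1/2$, say, makes the ratio $Z(\tau,j)/Q(t_1,j)$ bounded uniformly in $j$, including for negative $j$ in the finite stretch before the sign change. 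A sufficiently small $\varepsilon$ then gives the ordering. Lemma~\ref{lem:fixed-domain-comparison} (on $D=\Z$) yields $Q\ge\psi$ for all $t\ge t_1$.

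\emph{Step 4: reading off the bound.} For $t$ large, let $s'=j-c_*(t-t_1+\tau)=s+c_*(t_1-\tau)$. In the range $L_0\le s\le A\sqrt t$, \eqref{eq:signed-kernel} gives $w_1\ge c\,(s'+\delta_*)t^{-3/2}-C_A(1+s')t^{-2}\ge c\,s\,t^{-3/2}$ once $t_0$ is large. This yields $\psi\ge c_0t^{-3/2}s\,\ee^{-\lambda_*s}$, with the bounded shift $c_*(t_1-\tau)$ absorbed into $c_0$ and $L_0$, which proves \eqref{eq:autonomous-leading-lower}.
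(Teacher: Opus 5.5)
Your proposal is correct and follows the paper's proof in substance. The shared steps are: the positive part of the critically tilted signed kernel $Z$; log-concavity of $G$, together with the negative sign of \eqref{eq:signed-kernel} at one site with $s\approx-2$, to confine positivity to $s\geq-3$ (this is all the ``direct check'' you anticipate requires); the $t^{-3/2}$ sup bound; an integrable damping factor that absorbs $KQ^2$; comparison; and the expansion on the diffusive window. The one real difference is your Step~3: the paper starts the comparison at time $0$ with the spatial translate $Z_+(t,j-j_0+1)$, whose initial value is the point mass $\ee^{-\lambda_*}\1_{\{j_0\}}$, so the initial ordering is immediate and your Poisson--Bessel tail comparison at time $t_1$ becomes unnecessary (it does work, as does your exponential damping once its constant absorbs the bounded ratio $(1+t)/(1+t-t_1+\tau)$).
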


\begin{proof}
Take $g=\1_{\{1\}}-\1_{\{-1\}}$ and put
$$
    w(t,j)=G(t,j-1)-G(t,j+1),\qquad
    Z(t,j)=\ee^{-\lambda_*(j-c_*t)}w(t,j),
    \qquad W=Z_+.
$$
By critical conjugation, $Z$ solves $Z_t=\Delta_{\Z}Z+aZ$ with
initial datum
$\ee^{-\lambda_*}\1_{\{1\}}-\ee^{\lambda_*}\1_{\{-1\}}$.
The positive part satisfies
\begin{equation}\label{eq:positive-part-inequality}
    W_t\leq\Delta_{\Z}W+aW,
\end{equation}
for almost every time at each site. To check this, at a site where $Z>0$
the derivative is unchanged and replacing the neighboring values by their
positive parts can only increase the Laplacian. Where $Z<0$, the left
side is zero and the neighboring positive parts are nonnegative. The
derivative is zero almost everywhere on the time set where $Z=0$.
Countability of the lattice gives a common exceptional null set.

We claim that
\begin{equation}\label{eq:integrable-positive-mode}
    \sup_{j\in\Z}W(t,j)\leq C_0(1+t)^{-3/2}\qquad(t\geq0),
\end{equation}
for a constant $C_0$. Let $j_{\rm cut}(t)=\lfloor c_*t\rfloor-2$.
Then $-3\leq j_{\rm cut}(t)-c_*t\leq-2$.
Lemma~\ref{lem:longitudinal}, together with $0<\delta_*<1/2$, shows
that $w(t,j_{\rm cut}(t))<0$ for every sufficiently large $t$.
Lemma~\ref{lem:logconcavity} now implies $w(t,j)\leq0$ for
$j\leq j_{\rm cut}(t)$: the ratio $G(t,j+1)/G(t,j-1)$ is at least
its value at $j_{\rm cut}(t)$, which exceeds
one. Therefore $W(t,j)>0$ is possible only when $j-c_*t\geq-3$.
For $-3\leq s:=j-c_*t\leq\sqrt t$, the expansion
\eqref{eq:signed-kernel} in Lemma~\ref{lem:longitudinal}, used with a fixed
diffusive constant, gives
$$
    W(t,j)\leq Ct^{-3/2}(1+|s|)\ee^{-\lambda_*s}
             \leq C't^{-3/2}.
$$
For $s>\sqrt t$, use $|w(t,j)|\leq2$ to obtain
$W(t,j)\leq2\ee^{-\lambda_*\sqrt t}\leq C't^{-3/2}$.
On any remaining bounded time interval, the heat representation for
$Z$ gives
$\norm{Z(t,\cdot)}_\infty\leq
 \ee^{at}(\ee^{-\lambda_*}+\ee^{\lambda_*})$.
Increasing $C_0$ proves \eqref{eq:integrable-positive-mode}.

Choose a site $j_0$ with $Q(0,j_0)>0$ and let
$\widetilde W(t,j)=W(t,j-j_0+1)$. Define
$$
    \chi(t)=\left(\chi_0^{-1}
                 +KC_0\int_0^t(1+s)^{-3/2}\dd s\right)^{-1},
$$
where $\chi_0>0$ is chosen so that
$\chi_0\ee^{-\lambda_*}\leq Q(0,j_0)$ and
$\chi_0C_0\leq a/K$. Then
$$
    0<(\chi_0^{-1}+2KC_0)^{-1}\leq\chi(t)\leq\chi_0.
$$
Using \eqref{eq:positive-part-inequality} and
\eqref{eq:integrable-positive-mode}, we obtain
$$
\begin{aligned}
    &(\partial_t-\Delta_{\Z})(\chi\widetilde W)
               -a\chi\widetilde W+K\chi^2\widetilde W^2\\
    &\qquad\leq
       \chi^2\widetilde W\bigl[-KC_0(1+t)^{-3/2}
                                      +K\widetilde W\bigr]\leq0.
\end{aligned}
$$
The initial positive part is supported only at $j_0$, and the choice
of $\chi_0$ gives $\chi(0)\widetilde W(0,j)\leq Q(0,j)$ at every
site. The residual above and Lemma~\ref{lem:fixed-domain-comparison}
therefore give $Q\geq\chi\widetilde W$.

Finally, fix $A>0$ and choose $L_0$ so large that, for
$L_0\leq s=j-c_*t\leq A\sqrt t$, one has
$s/2\leq s-j_0+1\leq(A+1)\sqrt t$ for all sufficiently large $t$.
Use Lemma~\ref{lem:longitudinal} on this fixed diffusive window.
The Gaussian factor there has a positive lower bound depending on $A$,
whereas the remainder divided by the positive leading term tends to
zero uniformly. After the inverse exponential weight, the fixed spatial
translation changes only a constant factor. The positive lower bound
for $\chi$ proves \eqref{eq:autonomous-leading-lower}.
\end{proof}

The factor $\chi$ compensates for the quadratic loss by the mechanism
of \cite[Section~2.1.2, (2.5)--(2.7)]{HNRRPeriodic}. The decay in
\eqref{eq:integrable-positive-mode} makes that loss integrable, so
$\chi$ remains bounded away from zero. This supplies the longitudinal
lower bound needed to order a pulse at a moving right boundary.

To construct a logarithmically delayed pulse, fix $\beta\geq0$,
$K>0$, $T\geq1$ and $0<\mu<\lambda_*$. Put
\begin{equation*}
\begin{gathered}
    b_\beta=\frac{\beta+3/2}{\lambda_*},\qquad
    \omega_\mu:=\frac1{2(\ee^\mu-1)},\qquad
    P_\mu(r):=r-\omega_\mu+\omega_\mu\ee^{-\mu r},\\
    r(t,j)=j-c_*t+b_\beta\log(t+T).
\end{gathered}
\end{equation*}
For $\varepsilon>0$, define
\begin{equation*}
    \Phi_\varepsilon(r)=
    \begin{cases}
      \varepsilon\ee^{-\lambda_*r}P_\mu(r),&r>0,\\
      0,&r\leq0.
    \end{cases}
\end{equation*}
\begin{lemma}\label{lem:delayed-pulse}
There exist $\varepsilon_*,\tau_*>0$, depending only on
$a,K,\beta,\mu$, such that, for $0<\varepsilon\leq\varepsilon_*$,
$\Psi(t,j)=\Phi_\varepsilon(r(t,j))$ takes values in $[0,a/K]$ and
satisfies
\begin{equation}\label{eq:delayed-pulse-subsolution}
    \Psi_t\leq\Delta_{\Z}\Psi+
               \left(a-\frac\beta{t+T}\right)\Psi-K\Psi^2,
\end{equation}
for $t\geq0$, $t+T\geq\tau_*$, almost everywhere at each site.
\end{lemma}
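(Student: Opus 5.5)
The plan is a direct residual computation in the moving variable $r=r(t,j)$. I will treat the region $r>0$, where $\Psi$ is given by the smooth formula, separately from the region $r\le0$, where $\Psi=0$. The point $r=0$ is crossed at only countably many times at each site, so almost-everywhere differentiability in $t$ holds. I first record elementary facts about $P_\mu$:
\begin{itemize}
\item $P_\mu(0)=0$ and $P_\mu''>0$;
\item $P_\mu'(0)=1-\mu/(2(\ee^\mu-1))\ge1/2$, so $P_\mu$ is increasing with $P_\mu'\in[1/2,1]$;
\item $P_\mu>0$ on $(0,\infty)$ and $P_\mu<0$ on $(-\infty,0)$.
\end{itemize}
Since $\sup_{r>0}\ee^{-\lambda_*r}P_\mu(r)<\infty$, choosing $\varepsilon_*$ small gives $0\le\Psi\le a/K$.

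On $\{r\le0\}$ the right side of \eqref{eq:delayed-pulse-subsolution} equals $\Delta_{\Z}\Psi\ge0=\Psi_t$, so there is nothing to prove. On $\{r>0\}$, let $\widetilde\Phi(r)=\varepsilon\ee^{-\lambda_*r}P_\mu(r)$ for all real $r$. Since $\widetilde\Phi<0$ for $r<0$, the truncation $\Phi_\varepsilon\ge\widetilde\Phi$ everywhere, with equality at the site itself. Hence $\Delta_{\Z}\Psi\ge\widetilde\Phi(r+1)+\widetilde\Phi(r-1)-2\widetilde\Phi(r)$, and it suffices to check the inequality with $\widetilde\Phi$ in place of $\Phi_\varepsilon$.

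Using $\partial_tr=-c_*+b_\beta/(t+T)$, I split the residual
\[
\mathcal E=\Psi_t-\Delta_{\Z}\Psi-\Bigl(a-\tfrac{\beta}{t+T}\Bigr)\Psi+K\Psi^2
\]
into three parts.
\begin{itemize}
\item The autonomous linear part is $\mathcal L\widetilde\Phi:=-c_*\widetilde\Phi'-[\widetilde\Phi(r+1)+\widetilde\Phi(r-1)-2\widetilde\Phi]-a\widetilde\Phi$.
\item The delay part is $(t+T)^{-1}(b_\beta\widetilde\Phi'+\beta\widetilde\Phi)$.
\item The quadratic term is $K\widetilde\Phi^2$.
\end{itemize}

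For the linear part, note that $\mathcal L\ee^{-kr}=g(k)\ee^{-kr}$ with $g(k)=c_*k-2(\cosh k-1)-a$. By \eqref{eq:critical-system}, $g(\lambda_*)=g'(\lambda_*)=0$, and $g$ is strictly concave. Therefore $\mathcal L$ annihilates both $\ee^{-\lambda_*r}$ and $r\ee^{-\lambda_*r}$, and
\[
\mathcal L\widetilde\Phi=\varepsilon\omega_\mu g(\lambda_*+\mu)\ee^{-(\lambda_*+\mu)r}=-\gamma\varepsilon\ee^{-(\lambda_*+\mu)r},\qquad \gamma>0.
\]
For the delay part, $b_\beta\lambda_*=\beta+3/2$ gives
\[
b_\beta\widetilde\Phi'+\beta\widetilde\Phi=\varepsilon\ee^{-\lambda_*r}\bigl[b_\beta P_\mu'(r)-\tfrac32P_\mu(r)\bigr].
\]
This is negative once $r\ge R_0:=\tfrac23b_\beta+\omega_\mu+1$. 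For $0<r<R_0$ it is at most $C\varepsilon\ee^{-\lambda_*r}$. On that bounded range the negative margin satisfies $\gamma\varepsilon\ee^{-(\lambda_*+\mu)r}\ge\gamma\ee^{-\mu R_0}\varepsilon\ee^{-\lambda_*r}$, so choosing $\tau_*$ large makes $C/(t+T)\le\gamma\ee^{-\mu R_0}/2$.

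The main obstacle is the quadratic term $K\widetilde\Phi^2$, which must be absorbed by the leftover margin $-\tfrac{\gamma}{2}\varepsilon\ee^{-(\lambda_*+\mu)r}$. The required inequality is $K\varepsilon P_\mu(r)^2\ee^{-\lambda_*r}\le\tfrac\gamma2\ee^{-\mu r}$. Because $\mu<\lambda_*$ and $P_\mu(r)^2\ee^{-(\lambda_*-\mu)r}$ is bounded on $(0,\infty)$, this holds after shrinking $\varepsilon_*$, with dependence only on $a,K,\beta,\mu$. I will check that for $r\ge R_0$ the delay part is nonpositive, so the whole margin $\gamma\varepsilon\ee^{-(\lambda_*+\mu)r}$ is available there; this is where the hypothesis $\mu<\lambda_*$ is essential. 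Combining the three bounds gives $\mathcal E\le0$ almost everywhere on $\{r>0\}$, which proves \eqref{eq:delayed-pulse-subsolution}.
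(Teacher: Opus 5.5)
Your plan follows the paper's proof step for step. The paper uses the same three ingredients:
\begin{enumerate}
\item the exact double-root computation, in which the critical operator annihilates $\ee^{-\lambda_* r}$ and $r\ee^{-\lambda_* r}$ and leaves the margin $-\varepsilon\omega_\mu\sigma_\mu\ee^{-(\lambda_*+\mu)r}$;
\item the comparison of truncated and untruncated second differences at sites with $r>0$;
\item the same split of that margin: one half absorbs $K\Psi^2$ for small $\varepsilon$, using $\mu<\lambda_*$, and the other half absorbs the delay term on $0<r<R_0$ once $\tau_*$ is large.
\end{enumerate}
Your $R_0=\tfrac23 b_\beta+\omega_\mu+1$, obtained from $P_\mu(r)\ge r-\omega_\mu$, works as well as the paper's $1+4b_\beta/3$.

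One stated fact is false and needs repair. $P_\mu$ is not negative on all of $(-\infty,0)$: the term $\omega_\mu\ee^{-\mu r}$ gives $P_\mu(r)\to+\infty$ as $r\to-\infty$. Likewise, $P_\mu'(r)=1-\omega_\mu\mu\ee^{-\mu r}$ falls below $1/2$, and even below $0$, for sufficiently negative $r$. Hence $\Phi_\varepsilon\ge\widetilde\Phi$ fails far to the left.

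The Laplacian step only needs $\widetilde\Phi(r-1)\le 0$ for $0<r<1$, that is, $P_\mu\le 0$ on $[-1,0]$. The properties you listed ($P_\mu(0)=0$, convexity, $P_\mu'(0)>1/2$) do not imply this on the whole interval. It follows from convexity together with
\[
P_\mu(-1)=-1+\omega_\mu(\ee^\mu-1)=-\tfrac12 .
\]
This is exactly where the choice $\omega_\mu=1/(2(\ee^\mu-1))$ is used, and it is the paper's argument.

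Two smaller points. The hypothesis $\mu<\lambda_*$ enters in the quadratic absorption, not in the delay step where your closing sentence places it. You should also note that $\Phi_\varepsilon$ is globally Lipschitz, since $P_\mu(0)=0$, so $\Psi$ is locally absolutely continuous in $t$ at each site. The later moving-boundary comparison requires this, not just almost-everywhere differentiability. With these corrections the proof is complete.
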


\begin{proof}
Put $\sigma_\mu=2(\cosh(\lambda_*+\mu)-1)+a
                   -c_*(\lambda_*+\mu)$.
The function
$k\mapsto2(\cosh k-1)+a-c_*k$ is strictly convex. It and its first
derivative vanish at $k=\lambda_*$, so $\sigma_\mu>0$.
Moreover, $\omega_\mu\mu<1/2$ and
$$
    \frac r2\leq P_\mu(r)\leq r,\qquad
    \frac12<P_\mu'(r)\leq1\qquad(r\geq0).
$$
On $[-1,0]$, convexity of $P_\mu$ and the values
$P_\mu(-1)=-1/2$, $P_\mu(0)=0$ give $P_\mu\leq0$.
Thus truncating the profile to zero on $r\leq0$ improves the
Laplacian inequality at the left edge.

Temporarily define
$F_\varepsilon(r)=\varepsilon\ee^{-\lambda_*r}P_\mu(r)$
on the whole real line. For a function of a real argument, write
$\Delta_1F(r)=F(r-1)-2F(r)+F(r+1)$.
The critical identities \eqref{eq:critical-system} give the exact
calculation
\begin{equation}\label{eq:pulse-linear-residual}
    -c_*F_\varepsilon'(r)-\Delta_1F_\varepsilon(r)
                        -aF_\varepsilon(r)
       =-\varepsilon\omega_\mu\sigma_\mu
                                     \ee^{-(\lambda_*+\mu)r}.
\end{equation}
Indeed, the operator on the left annihilates both
$\ee^{-\lambda_*r}$ and $r\ee^{-\lambda_*r}$; applying it to
$\varepsilon\omega_\mu\ee^{-(\lambda_*+\mu)r}$ gives exactly the
right-hand side of \eqref{eq:pulse-linear-residual}.

For $r>0$, the only possible difference between
$\Delta_1\Phi_\varepsilon(r)$ and $\Delta_1F_\varepsilon(r)$
is the value at $r-1$ when $0<r<1$. That value of
$F_\varepsilon$ is nonpositive, while its truncated value is zero.
Consequently
$\Delta_1\Phi_\varepsilon(r)\geq\Delta_1F_\varepsilon(r)$.
Set $M_\mu:=\sup_{r\geq0}r^2\ee^{-(\lambda_*-\mu)r}
=4/(\ee^2(\lambda_*-\mu)^2)$. Choose
$$
    0<\varepsilon_*\leq
      \min\left\{\frac{a\ee\lambda_*}{K},
                  \frac{\omega_\mu\sigma_\mu}{2KM_\mu}\right\}.
$$
The first restriction gives $\Phi_\varepsilon\leq a/K$, and the
second, together with $P_\mu(r)\leq r$, implies
\begin{equation}\label{eq:pulse-nonlinear-residual}
    -c_*\Phi_\varepsilon'(r)-\Delta_1\Phi_\varepsilon(r)
                     -a\Phi_\varepsilon(r)+K\Phi_\varepsilon(r)^2
       \leq-\frac{\varepsilon\omega_\mu\sigma_\mu}{2}
                                  \ee^{-(\lambda_*+\mu)r}
       \qquad(r>0).
\end{equation}

Put $\tau=t+T$. The extra terms produced by the moving argument and
the time-dependent growth rate are
\begin{equation}\label{eq:pulse-temporal-residual}
\begin{split}
    \frac1\tau\bigl(b_\beta\Phi_\varepsilon'(r)
                                      +\beta\Phi_\varepsilon(r)\bigr)
      &=\frac{\varepsilon\ee^{-\lambda_*r}}\tau
          \left[b_\beta P_\mu'(r)-\frac32P_\mu(r)\right]\\
      &\leq\frac{\varepsilon\ee^{-\lambda_*r}}\tau
                       \left(b_\beta-\frac34r\right).
\end{split}
\end{equation}
For $r\geq R_0:=1+4b_\beta/3$, this quantity is nonpositive.
For $0<r<R_0$, it is at most
$\varepsilon b_\beta\tau^{-1}\ee^{-\lambda_*r}$, which is
absorbed by \eqref{eq:pulse-nonlinear-residual} once
$$
    \tau\geq\tau_*:=
       \max\left\{1,
          \frac{2b_\beta\ee^{\mu R_0}}{\omega_\mu\sigma_\mu}\right\}.
$$
For $r<0$, the pulse value and its time derivative are zero, while
neighboring values are nonnegative. Its residual is therefore
$-\Delta_{\Z}\Psi\leq0$. The profile $\Phi_\varepsilon$ is globally
Lipschitz and smooth away from zero, so its value at each lattice site
is locally absolutely continuous in time. The argument
$j-c_*t+b_\beta\log(t+T)$ has only isolated zeros; these times do
not affect the almost-everywhere inequality. Combining
\eqref{eq:pulse-nonlinear-residual} and
\eqref{eq:pulse-temporal-residual} proves
\eqref{eq:delayed-pulse-subsolution}.
\end{proof}

Keep these parameters and let $q$ solve
\begin{equation}\label{eq:scalar-growth-loss}
 q_t=\Delta_{\Z}q+\left(a-\frac\beta{t+T}\right)q-Kq^2,
 \qquad q(0,\cdot)=q_0,
\end{equation}
where $q_0$ is nonzero, finitely supported, and satisfies
$0\leq q_0\leq a/K$.
\begin{proposition}\label{prop:scalar-logarithmic-lower}
For every $\sigma>0$, there exist $\varepsilon,\eta>0$ and
$t_0\geq1$ such that, for all $t\geq t_0$,
\begin{equation}\label{eq:scalar-profile-lower}
 q(t,j)\geq\Phi_\varepsilon(r(t,j))
 \qquad\text{if }r(t,j)\leq\sigma\sqrt{t+T}.
\end{equation}
In particular,
\begin{equation}\label{eq:scalar-positive-seed}
 q\left(t,
       \left\lceil c_*t-b_\beta\log(t+T)+1\right\rceil\right)
       \geq\eta.
\end{equation}
\end{proposition}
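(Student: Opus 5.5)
The plan is to compare $q$ with the delayed pulse $\Psi(t,j)=\Phi_\varepsilon(r(t,j))$ on a moving half-line by means of Lemma~\ref{lem:moving-halfline-comparison}. I will take the right endpoint
$$
 B(t)=c_*t-b_\beta\log(t+T)+\sigma\sqrt{t+T},
$$
so that $j\le B(t)$ is exactly the region $r(t,j)\le\sigma\sqrt{t+T}$. This $B$ is $C^1$ and strictly increasing once $t$ is large. By Lemma~\ref{lem:delayed-pulse}, for $\varepsilon\le\varepsilon_*$ and $t+T\ge\tau_*$, $\Psi$ is a subsolution of \eqref{eq:scalar-growth-loss} with values in $[0,a/K]$. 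Comparing $q$ with the constants $0$ and $a/K$ gives $0\le q\le a/K$. Hence the reaction $R(t,j,s)=(a-\beta/(t+T))s-Ks^2$ is uniformly Lipschitz on the relevant range. Two orderings remain to be checked: the initial ordering at a starting time $t_1$, and the ordering on the right collar $B(t)<j\le B(t)+1$ for all $t\ge t_1$.

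For the collar I need a lower bound on $q$ in the diffusive window. Let $Q$ solve the autonomous equation $Q_t=\Delta_{\Z}Q+aQ-KQ^2$ with $Q(0)=q_0$, and set $g(t)=(T/(t+T))^\beta\le1$. A direct computation gives
$$
 (\partial_t-\Delta_{\Z})(gQ)-\Bigl(a-\frac\beta{t+T}\Bigr)gQ+K(gQ)^2=-K(g-g^2)Q^2\le0 .
$$
So $gQ$ is a subsolution of \eqref{eq:scalar-growth-loss} with the same initial data. Lemma~\ref{lem:fixed-domain-comparison} then gives $q\ge gQ$. Lemma~\ref{lem:autonomous-leading-lower}, applied with $A=\sigma+2$, yields
$$
 q(t,j)\ge c\,t^{-\beta-3/2}\,s\,\ee^{-\lambda_*s},\qquad s=j-c_*t\in[L_0,A\sqrt t],\ t\ge t_0 .
$$
On the collar, $s=\sigma\sqrt{t+T}-b_\beta\log(t+T)+O(1)$, which lies in this window for large $t$. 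There $r=s+b_\beta\log(t+T)\le2s$. Since $\lambda_*b_\beta=\beta+3/2$, we get
$$
 \Phi_\varepsilon(r)\le\varepsilon r\ee^{-\lambda_*r}\le2\varepsilon\,s\,\ee^{-\lambda_*s}(t+T)^{-\beta-3/2}.
$$
The polynomial prefactors in $t$ therefore match exactly, and choosing $\varepsilon\le c/4$ (up to the harmless factor relating $t$ and $t+T$) gives $q\ge\Psi$ on the collar for all $t\ge t_1$. This matching of the $t^{-\beta-3/2}$ factors, which is what forces $b_\beta=(\beta+3/2)/\lambda_*$, is the step I expect to be the crux. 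The one technical point is to verify that the collar stays inside the window of Lemma~\ref{lem:autonomous-leading-lower} uniformly in time.

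For the initial ordering, I fix $t_1\ge t_0$ large enough that $t_1+T\ge\tau_*$, that $B$ is increasing on $[t_1,\infty)$, and that the collar estimate holds from $t_1$ on. At time $t_1$, $\Psi$ vanishes where $r\le0$. Only finitely many sites have $0<r\le\sigma\sqrt{t_1+T}$, and $q(t_1,\cdot)>0$ there by the heat comparison and positivity of the kernel. Shrinking $\varepsilon$ further, which does not affect the collar inequality, gives $\Psi(t_1)\le q(t_1)$ on $j\le B(t_1)$. Lemma~\ref{lem:moving-halfline-comparison} then yields \eqref{eq:scalar-profile-lower} for $t\ge t_1$.

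For the seed, take $j=\lceil c_*t-b_\beta\log(t+T)+1\rceil$. Then $r\in[1,2)$, which lies below $\sigma\sqrt{t+T}$ for large $t$. Using $P_\mu(r)\ge r/2\ge1/2$, we obtain $\Phi_\varepsilon(r)\ge\tfrac{\varepsilon}{2}\ee^{-2\lambda_*}=:\eta$, which gives \eqref{eq:scalar-positive-seed}.
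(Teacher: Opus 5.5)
Your proposal is correct and follows essentially the same route as the paper's proof. Both arguments use the subsolution $(T/(t+T))^\beta Q$ together with Lemma~\ref{lem:autonomous-leading-lower} on a diffusive window to order the right collar, with the identity $\lambda_*b_\beta=\beta+3/2$ making the powers of $t$ match. Both then shrink $\varepsilon$ for the initial ordering and apply Lemma~\ref{lem:moving-halfline-comparison}. The remaining differences are cosmetic: you use the window constant $A=\sigma+2$ instead of $2\sigma+1$, and the explicit seed value $\eta=\tfrac{\varepsilon}{2}\ee^{-2\lambda_*}$ instead of $\min_{1\le r\le 2}\Phi_\varepsilon(r)$.
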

\begin{proof}
Comparison with the constants $0$ and $a/K$ gives
$0\leq q\leq a/K$: at the latter constant, the right-hand side of
\eqref{eq:scalar-growth-loss} equals
$-a\beta/[K(t+T)]\leq0$. This bound also extends the solution to
all positive times.
Since $aq-Kq^2\geq0$ on this interval,
$q(t,j)\geq\ee^{-\beta t/T}(\ee^{t\Delta_{\Z}}q_0)(j)>0$ for $t>0$.
Let $Q$ be the autonomous logistic solution with initial datum $q_0$,
and put $\rho(t)=(T/(t+T))^\beta$. Direct substitution gives
$$
\begin{aligned}
    &(\partial_t-\Delta_{\Z})(\rho Q)
         -\left(a-\frac\beta{t+T}\right)\rho Q+K\rho^2Q^2\\
    &\hspace{4em}=-K\rho(1-\rho)Q^2\leq0.
\end{aligned}
$$
Since $\rho(0)=1$, comparison gives $q\geq\rho Q$.
Lemma~\ref{lem:autonomous-leading-lower}, applied with $A=2\sigma+1$,
consequently gives
\begin{equation}\label{eq:nonautonomous-leading-lower}
    q(t,j)\geq c_0T^\beta(t+T)^{-\beta}t^{-3/2}
                     (j-c_*t)\ee^{-\lambda_*(j-c_*t)},
\end{equation}
for large $t$ and $L_0\leq j-c_*t\leq(2\sigma+1)\sqrt t$.

Write $\tau=t+T$ and set
$B(t):=c_*t-b_\beta\log\tau+\sigma\sqrt\tau$.
This right endpoint is strictly increasing for large $t$.
At a collar vertex $B(t)<j\leq B(t)+1$, one has
$\sigma\sqrt\tau<r:=r(t,j)\leq\sigma\sqrt\tau+1$ and
$s:=j-c_*t=r-b_\beta\log\tau$.
For all sufficiently large $t$, uniformly over this collar,
$$
    L_0\leq s\leq(2\sigma+1)\sqrt t,\qquad s\geq r/2.
$$
Since $\lambda_*b_\beta=\beta+3/2$, estimate
\eqref{eq:nonautonomous-leading-lower} becomes
$$
\begin{aligned}
    q(t,j)
      &\geq c_0T^\beta\left(\frac\tau t\right)^{3/2}
                                  s\ee^{-\lambda_*r}\\
      &\geq c_1r\ee^{-\lambda_*r},
       \qquad c_1:=c_0T^\beta/2>0.
\end{aligned}
$$
On the other hand,
$\Phi_\varepsilon(r)\leq\varepsilon r\ee^{-\lambda_*r}$.
Thus the full right-collar ordering holds if $\varepsilon\leq c_1$.

Choose $t_0$ large enough for this collar estimate, the monotonicity of
$B$, the inequality $t_0+T\geq\tau_*$ in
Lemma~\ref{lem:delayed-pulse}, and $\sigma\sqrt{t_0+T}\geq2$.
The constants $c_1$ and $t_0$ may depend on the equation, $q_0$, $\mu$
and $\sigma$, but not on the amplitude subsequently chosen in
$(0,\varepsilon_*]$.
At time $t_0$, only finitely many interior sites have
$0<r(t_0,j)\leq\sigma\sqrt{t_0+T}$, and $q(t_0,j)>0$ at each
one. We may therefore choose
$0<\varepsilon\leq\min\{\varepsilon_*,c_1\}$ still smaller so that
$$
    \Phi_\varepsilon(r(t_0,j))\leq q(t_0,j)
                 \qquad(j\leq B(t_0)).
$$
The remaining interior sites satisfy the initial ordering because
$\Phi_\varepsilon(r)=0$ for $r\leq0$.

Lemma~\ref{lem:delayed-pulse} supplies the differential inequality on
the whole lattice. The right-collar estimate and the initial ordering
therefore permit the use of Lemma~\ref{lem:moving-halfline-comparison}
on $[t_0,t_1]$ for every finite $t_1>t_0$. This proves
\eqref{eq:scalar-profile-lower} for all $t\geq t_0$, including the
integer-crossing times of the moving endpoint.
At the integer in \eqref{eq:scalar-positive-seed}, $1\leq r<2$.
The positive minimum
$\eta:=\min_{1\leq r\leq2}\Phi_\varepsilon(r)$ therefore proves the
last assertion.
\end{proof}

\section{Axial localization from the lower bound}\label{sec:amplification}
The product comparison gives a fixed positive value near the
logarithmically shifted front. We next show that such a value grows
to any prescribed level below one after a uniformly bounded time.
This converts the lower comparison into level-set localization.

\begin{lemma}\label{lem:amplification}
Under \eqref{eq:periodic-kpp}, for each $\eta,\theta\in(0,1)$ there
is $S>0$ such that any $[0,1]$-valued solution with
$u(t_0,z)\geq\eta$ satisfies
$u(t_0+s,z)\geq\theta$ for every $s\geq S$.
The constant is uniform in $z$, $t_0$ and the solution.
\end{lemma}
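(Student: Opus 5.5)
The plan is a compactly supported stationary subsolution, a monotone-in-time comparison solution, and a Liouville-type argument identifying its limit. I first reduce to finitely many cases. Since $F\ge0$ on $[0,1]$, every $[0,1]$-valued solution satisfies $u_t\ge\Delta_{\Z^d}u$. Lemma~\ref{lem:fixed-domain-comparison} then gives
$$u(t_0+1,x)\ge\eta\,\pheat_d(1,x-z),$$
where $\pheat_d$ is the lattice heat kernel of Lemma~\ref{lem:longitudinal}. Hence for every fixed $R$ there is $\eta_R>0$, depending only on $\eta$, $R$ and $d$, with $u(t_0+1,\cdot)\ge\eta_R$ on the box $\{|x-z|_\infty\le R\}$.

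Next I build a stationary subsolution on that box. Let $\mu_R(r)$ and $\varphi_R>0$, with $\max\varphi_R=1$, be the Dirichlet principal eigenpair of $\Delta_{\Z^d}+a(\cdot)$ on the box of radius $R$ centred at a point of residue $r=z\bmod\Gamma$. Testing the Rayleigh quotient with a discrete sine product gives $\mu_R(r)\ge\min a-C/R^2>0$ for $R\ge R_1$. By \eqref{eq:quadratic-kpp-loss}, $F(x,s)\ge a(x)s-K_Fs^2$. Therefore $\delta\varphi_R$, extended by zero, satisfies
$$\Delta\,\delta\varphi_R+F(x,\delta\varphi_R)\ge\delta\varphi_R\bigl(\mu_R-K_F\delta\bigr)\ge0$$
once $\delta\le\mu_R/K_F$; outside the box the zero extension only helps. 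Choose $\delta\le\eta_R$ as well, so that $\delta\varphi_R(\cdot-z+r)\le u(t_0+1)$. Let $v_{R,r}$ be the solution started from $\delta\varphi_R$. Because the initial datum is a stationary subsolution, comparison applied to $v(\cdot+h)$ and $v$ shows $v_{R,r}$ is nondecreasing in time. It therefore converges pointwise to a stationary solution $V_{R,r}\ge\delta\varphi_R$. Periodicity of $F$ and comparison give $u(t_0+1+s,z)\ge v_{R,r}(s,r)$ for all $s\ge0$.

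The key claim is that $V_{R,r}(r)>\theta$ for some large $R$ and every one of the finitely many residues $r$. Granting this, monotone convergence yields $S_r$ with $v_{R,r}(s,r)\ge\theta$ for $s\ge S_r$, and monotonicity keeps it there. So $S=1+\max_rS_r$ works uniformly in $z$, $t_0$ and the solution.

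To prove the claim I argue by contradiction, and I expect this to be the main obstacle. Suppose $V_{R_k,r}(r)\le\theta$ along $R_k\to\infty$. Then I need a lower bound on $V_{R_k,r}$ at fixed distance from the centre that does not degenerate with $k$; the amplitude $\delta=\delta(R)$ shrinks, so the bound cannot come directly from $\delta\varphi_R$. I would get it from the ordering of stationary solutions. A stationary solution dominating $\delta\varphi_R$ also dominates the maximal solution of the Dirichlet problem on the box lying above $\delta\varphi_R$. Moreover $\delta'\varphi_{R'}$, for a fixed smaller radius $R'$ and fixed small $\delta'$, is dominated by the time-limit of the flow from $\delta\varphi_R$. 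The reason is the Harnack property on a fixed box, which follows from the positive heat kernel and finitely many bounded rates. Passing to a pointwise limit along a subsequence then gives a stationary solution $V_\infty$ on $\Z^d$ with $V_\infty\le1$, $V_\infty(r)\le\theta$, and a uniform positive lower bound $c$ near every point. That lower bound is obtained by translating the fixed-size subsolution along the lattice periods inside $B_{R_k}$.

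Finally I apply a Liouville step. Put $m=\inf V_\infty\in[c,\theta]$ and take sites $x_k$ with $V_\infty(x_k)\to m$ and fixed residue. Compactness gives a stationary limit $W\ge m$ with $W(x_*)=m$. At $x_*$ we have $\Delta W(x_*)\ge0$, hence $F(x_*,m)\le0$. This contradicts $F>0$ on $(0,1)$, which proves the claim.
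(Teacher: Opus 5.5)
Your outline follows the paper's proof: a compactly supported stationary subsolution, a time-monotone flow converging to an equilibrium, a lower bound from translates of one fixed-size subsolution, a full-lattice limit, the Liouville argument at the infimum, and finitely many residues for uniformity. There is, however, a genuine gap exactly at the step you flagged.

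\textbf{The gap.} You need the time limit of the flow from $\delta\varphi_R$ to lie above $\delta'\varphi_{R'}$ and its translates, with $R'$ and $\delta'>0$ independent of $R$. You attribute this to ``the Harnack property on a fixed box''. Harnack-type bounds, whether from the positive heat kernel or from the stationary equation (which gives $E(x+z)\le 2dE(x)$), only compare values of one positive solution at nearby sites. They are invariant under scaling. So they turn a lower bound of order $\delta=\delta(R)\to0$ into another bound of order $\delta$, never into one of order $\delta'$. For the linear problem $\Delta V+\mu V=0$ on a box at its Dirichlet eigenvalue, every multiple of the eigenvector is an equilibrium and satisfies the same Harnack bounds.

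What excludes small-amplitude equilibria is nonlinear and must be used explicitly: $\sigma\varphi_{R'}$ is a \emph{strict} subsolution for every $\sigma\in(0,\delta']$, say with $\delta'<\mu_{R'}/K_F$. Without this, your constant $c$ may tend to zero along $R_k$. Then $\inf V_\infty$ may vanish and the Liouville step gives no contradiction. Harnack can enter a valid argument only alongside such an input. One example is a blow-down: positive Dirichlet equilibria on the fixed box with sup norm tending to zero would yield a nonnegative Dirichlet eigenvector of $\Delta+a$ with eigenvalue $0$, contradicting $\mu_{R'}>0$.

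\textbf{How the paper closes it.} The paper slides. Take its cosine eigenfunction $\varphi_{K_0}$ of $-\Delta$, with $\mu_{K_0}<\min a$ and $F(x,s)>\mu_{K_0}s$ for $0<s\le b$. Increase $\sigma$ from $0$ while keeping $\sigma\varphi_{K_0}(\cdot-z)\le q$; this starts because $q>0$ on the box. At a first contact site $x$ with $\sigma<b$,
$$0\le\Delta(q-\sigma\varphi_{K_0}(\cdot-z))(x)=\mu_{K_0}\sigma\varphi_{K_0}(x-z)-F(x,\sigma\varphi_{K_0}(x-z))<0,$$
which is impossible. Hence $q(z)\ge b$.

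\textbf{A parabolic variant in your notation.} With $\mu=\mu_{R'}$, the function $\sigma\ee^{\mu t/2}\varphi_{R'}$, extended by zero, is a subsolution as long as its amplitude is at most $\mu/(2K_F)$. So a solution lying above any positive multiple of $\varphi_{R'}$ lies above $\frac{\mu}{2K_F}\varphi_{R'}$ after a finite time, and stays there because that function is a stationary subsolution.

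\textbf{Further remarks.} Your $v_{R,r}$ is a whole-lattice solution, so $V_{R,r}>0$ at every site. With either fix the amplitude-independent bound holds at every translated box. The Liouville step then applies to $V_{R,r}$ itself and gives $V_{R,r}\equiv1$ for any fixed $R\ge R_1$, so the contradiction along $R_k\to\infty$ is unnecessary. Also, a stationary solution above $\delta\varphi_R$ dominates the \emph{minimal}, not the maximal, Dirichlet equilibrium above $\delta\varphi_R$.
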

\begin{proof}
Let $Q_L=\{-L,\ldots,L\}^d$ and define
$$
 \varphi_L(x)=\prod_{i=1}^d\cos\frac{\pi x_i}{2L+2}
 \quad(x\in Q_L),\qquad \varphi_L=0\quad\text{outside }Q_L.
$$
On $Q_L$ this function satisfies
$-\Delta\varphi_L=\mu_L\varphi_L$, where
$\mu_L=2d(1-\cos(\pi/(2L+2)))\to0$.
Choose $L$ with $\mu_L<\min_{x\in\mathcal C}a(x)$.
The finite-cell limit $F(x,s)/s\to a(x)$ as $s\downarrow0$
then gives $\varepsilon_L>0$ such that
$F(x,s)>\mu_Ls$ for every lattice site and
$0<s\leq\varepsilon_L$. Thus
$\varepsilon\varphi_L$ is a strict stationary subsolution of the
Dirichlet problem on $Q_L$ whenever
$0<\varepsilon\leq\varepsilon_L$. The value one inside the box,
with zero exterior values, is a supersolution on $Q_L$.
Comparison and the order-preserving flow show that the Dirichlet
solution starting from $\varepsilon\varphi_L$ increases and remains
below one. It therefore converges to a positive equilibrium $q_L$.
The limit is stationary: continuity of the finite-dimensional vector
field makes each component derivative converge, and a nonzero
limiting derivative would contradict convergence of that component.

We show that every such choice of positive box equilibria converges
locally to one as $L\to\infty$. Fix $K_0$ with
$\mu_{K_0}<\min a$, and choose $b\in(0,1)$ such that
$F(x,s)>\mu_{K_0}s$ uniformly in $x$ for $0<s\leq b$.
For each translated box $z+Q_{K_0}\subset Q_L$, increase
$\sigma\varphi_{K_0}(\cdot-z)$ from zero while keeping it below
$q_L$. A first contact at $0<\sigma<b$ would occur inside the
smaller box and give
$$
 0\leq\Delta(q_L-\sigma\varphi_{K_0}(\cdot-z))(x)
 =\mu_{K_0}\sigma\varphi_{K_0}(x-z)
       -F(x,\sigma\varphi_{K_0}(x-z))<0.
$$
The first inequality follows because the difference is zero at the
contact site and nonnegative at all its neighbors, including those
outside the smaller box. Consequently the profile can be increased
to $\sigma=b$, and $q_L(z)\geq b$ whenever
$z+Q_{K_0}\subset Q_L$. This bound depends only on the fixed small
box and the reaction, not on the initial amplitude or the choice of
positive box equilibrium.

Any diagonal subsequential limit on the full lattice now satisfies
$b\leq q\leq1$ and $\Delta q+F(x,q)=0$.
Let $m=\inf q$ and choose $x_k$ with $q(x_k)\to m$. Then
$$
 -F(x_k,q(x_k))=\Delta q(x_k)
       \geq-2d(q(x_k)-m).
$$
Along a fixed-residue subsequence, the left-hand reaction tends to
$-F(r_0,m)$. Since $m\geq b>0$ and $F(r_0,m)>0$ for $m<1$,
the inequality forces $m=1$ and hence $q\equiv1$.
A failure of local convergence to one would produce a diagonal
limit below one at some fixed site, contradicting this conclusion.
The same argument applies to every translate of the reaction, and
there are only $|\mathcal C|$ such phases.

Start the full equation from $\eta\1_{\{z\}}$.
At time one its value at $x$ is at least
$\eta\pheat_d(1,x-z)>0$. Choose one sufficiently large box size
for all reaction phases so that every positive box equilibrium
obtained above has center value greater than $\theta$.
Inside the box centered at $z$, choose
$\varepsilon>0$ with
$\varepsilon\varphi_L(\cdot-z)\leq
\eta\pheat_d(1,\cdot-z)$ and $\varepsilon\leq\varepsilon_L$.
The amplitude can also be chosen uniformly over the finitely many
phases. The corresponding increasing Dirichlet solution lies below the
full solution and eventually exceeds $\theta$ at its center;
monotonicity keeps it above $\theta$ thereafter. For each of the
finitely many reaction phases this takes a finite time. Their
maximum, including the initial unit time, gives a common $S$.

Finally, if another solution satisfies $u(t_0,z)\geq\eta$, its
state at $t_0$ lies above $\eta\1_{\{z\}}$. The comparison principle
therefore gives $u(t_0+s,z)\geq\theta$ for every $s\geq S$.
The argument uses only the state at $t_0$, so it also applies to
an entire solution restarted at any finite time.
\end{proof}

\begin{proof}[Proof of Theorem~\ref{thm:axial}]
Factorization \eqref{eq:transverse-factorization} and the evenness
of $H_\perp$ give
$\nabla H(\lambda_*e_1)=c_*e_1$ and
$H(\lambda_*e_1)=c_*\lambda_*$. Thus
$p_{e_1}=\lambda_*e_1$ and $w(e_1)=c_*$.
Theorem~\ref{thm:upper} gives the positive-axis upper bound
$\sup\{j:u(t,j,y)\geq\theta\}\leq m_d(t)+C_\theta$,
uniformly for $y\in\Z^{d-1}$. Its application to $-e_1$
gives the negative-axis upper bound. The critical exponent here is
determined by the transverse principal eigenvalue
$a_{\mathrm{eff}}$.

For the lower bound, use Lemma~\ref{lem:product-subsolution} and
Proposition~\ref{prop:scalar-logarithmic-lower} with
$\beta=(d-1)/2$, $\sigma=1$ and the chosen $K$. Then
$b_\beta=(d+2)/(2\lambda_*)$.
In the original time variable $s=T+t$, define
$$
 j_{\rm seed}(s)=\left\lceil m_d(s)-c_*T+1\right\rceil.
$$
The scalar pulse argument at this site is
$j_{\rm seed}(s)-m_d(s)+c_*T\in[1,2)$, so
$q(s-T,j_{\rm seed}(s))\geq\eta>0$ for all large $s$.
For each finite $A$, the local kernel estimate
\eqref{eq:transverse-kernel-local}, the positive minimum of
$\psi_\perp$ and the fixed value of $y_0$ give $h_A>0$ with
$$
 H_T(s-T,y;y_0)\geq h_A\qquad(|y|\leq A\sqrt s),
$$
for all sufficiently large $s$. The product comparison therefore yields
\begin{equation}\label{eq:section-seed}
 \inf_{|y|\leq A\sqrt s}u(s,j_{\rm seed}(s),y)\geq\eta_A>0.
\end{equation}

First take $A=0$ and choose the amplification time $S$ for the
axis seed level and the target $\theta$. For large $s$, the
function $a_0(s)=m_d(s)-c_*T+1$ is strictly increasing and tends
to infinity. Every sufficiently large integer $j$ is therefore
reached at a time $s_j$ with $a_0(s_j)=j$, and then
$j_{\rm seed}(s_j)=j$. The seed estimate and
Lemma~\ref{lem:amplification} give
$u(t,j,0)\geq\theta$ for every $t\geq s_j+S$.
Consequently the same bound holds at every sufficiently large
integer $j\leq a_0(t-S)$. Since
$$
 m_d(t)-m_d(t-S)
 =c_*S-\frac{d+2}{2\lambda_*}\log\frac{t}{t-S}
 \leq c_*S,
$$
a fixed $C_\theta$ ensures that this range contains every sufficiently
large integer below $m_d(t)-C_\theta$.
The remaining nonnegative integers form a finite set. At one fixed
positive time, \eqref{eq:heat-comparison} gives a positive minimum
of $u$ on that set, and a further application of
Lemma~\ref{lem:amplification} raises all its values to $\theta$
after one common time. This proves the inner axial interval.
Taking the integer part of its right endpoint gives the lower level
position, and hence \eqref{eq:axial-position}.

For \eqref{eq:diffusive-section}, apply \eqref{eq:section-seed}
with $2A+1$ in place of $A$, and let $S_A$ be the associated
amplification time. At $s=t-S_A$, every site with
$|y|\leq A\sqrt t$ belongs to the seed section for large $t$.
The uniformity of the amplification lemma in the lattice site gives
$$
 u(t,j_{\rm seed}(t-S_A),y)\geq\theta
       \qquad(|y|\leq A\sqrt t).
$$
The integer $j_{\rm seed}(t-S_A)$ differs from $m_d(t)$ by a bounded
amount, as required.

For \eqref{eq:axial-leading-edge}, fix $A\geq0$ and $\sigma>0$, and
apply Proposition~\ref{prop:scalar-logarithmic-lower} with width
$\sigma+1$. In original time $s=T+t$, its argument is
$$
 r=j-m_d(s)+c_*T=\xi+c_*T.
$$
When $1\leq\xi\leq\sigma\sqrt s$, we have
$0<r\leq(\sigma+1)\sqrt s$ for large $s$. Since
$P_\mu(r)\geq r/2\geq(1+\xi)/4$, the product lower comparison gives
$$
 u(s,j,y)\geq
 \frac{\varepsilon h_A\ee^{-\lambda_*c_*T}}4
       (1+\xi)\ee^{-\lambda_*\xi}
       \qquad(|y|\leq A\sqrt s).
$$
For the upper bound, $p_{e_1}=\lambda_*e_1$ and
$H(p_{e_1})=c_*\lambda_*$ imply
$z_{e_1}(s,j,y)=\lambda_*\xi$. Substitution into
\eqref{eq:upper-main} gives
$u(s,j,y)\leq C(1+\lambda_*\xi)\ee^{-\lambda_*\xi}$,
which has the required form after changing the constant.
Both constants are independent of the sites in the specified window.

For the negative axis, set $\widetilde j=-j$ and
$\widetilde F((\widetilde j,y),s)=F((-\widetilde j,y),s)$.
Its linearization is still $a_\perp(y)$, and the constants
$K_F$, $a_{\mathrm{eff}}$, $\lambda_*$ and $c_*$, as well as the
transverse kernel estimates, are unchanged. Applying the positive-axis
argument to the reflected equation gives all negative-axis conclusions.
If the linear growth rate is independent of another coordinate $x_i$,
a permutation makes that coordinate the longitudinal one, and the
same proof applies with the corresponding transverse operator.
These transformations preserve the assumptions even when the nonlinear
reaction depends on the longitudinal coordinate.
\end{proof}

\begin{remark}\label{rem:fixed-transverse-rear}
The inner estimate also holds uniformly over every fixed finite set
of transverse sites. For each $B\geq0$ and $\theta\in(0,1)$,
there are $C$ and $T$ such that
$u(t,j,y)\geq\theta$ whenever
$t\geq T$, $0\leq j\leq m_d(t)-C$ and $|y|\leq B$.
Indeed, use \eqref{eq:section-seed} with $A=1$; for large seed
times every such transverse site lies in its diffusive section.
The integer-arrival argument in the preceding proof and the uniform
amplification time then cover all sufficiently large $j$.
For the finitely many remaining pairs $(j,y)$,
\eqref{eq:heat-comparison} supplies a positive minimum at time one,
and Lemma~\ref{lem:amplification} raises that minimum to $\theta$
after a single additional time.
\end{remark}

\section{Periodic covariance and front limits}\label{sec:profiles}

We first show that an entire solution with the invaded-region limit
and the critical tail is a pulsating front. The proof compares the
solution with its shifts in time and by periods of the medium, following
the continuous-space argument of \cite[Lemma~6.2]{HNRRPeriodic}.
The comparison uses the full lattice equation. It then identifies the
local limits obtained near the axial front.

\begin{lemma}\label{lem:entire-comparison}
Let $V,W$ be entire $[0,1]$-valued solutions for the same periodic
reaction. Choose $\delta,\nu>0$ such that
$F_s(x,s)\leq-\nu$ for $1-\delta\leq s\leq1$ and every $x$.
\begin{enumerate}[label=\textup{(\roman*)}]
\item If $V\geq1-\delta$ on a set $E\subset\R\times\Z^d$ and
$W\leq V$ on its complement, then $W\leq V$ everywhere.
\item If $W\leq V$ and equality holds at one space--time point,
then $W\equiv V$.
\end{enumerate}
\end{lemma}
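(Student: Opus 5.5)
For (i), I will argue by contradiction on the set where $W$ exceeds $V$, which by hypothesis is contained in $E$, where $V\ge1-\delta$. Put $h=(W-V)_+$ and suppose $m:=\sup_{\R\times\Z^d}h>0$. At any point with $h>0$ we have $W>V\ge1-\delta$, so both values lie in $[1-\delta,1]$. The mean value theorem gives
\[
F(x,W)-F(x,V)=F_s(x,\xi)(W-V)\le-\nu(W-V),
\]
since $\xi$ lies between $V$ and $W$. Hence wherever $h>0$, in the a.e. sense of the positive-part chain rule,
\[
h_t\le\Delta_{\Z^d}h-\nu h,
\]
where the neighbors enter through their positive parts as in the proof of Lemma~\ref{lem:fixed-domain-comparison}. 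Where $h=0$ the inequality holds trivially, because $h_t=0$ a.e. and $\Delta h\ge0$ there. So $h$ is a bounded subsolution of $h_t=\Delta h-\nu h$ on all of $\R\times\Z^d$. For any $t_0<t$, Lemma~\ref{lem:fixed-domain-comparison} with $D=\Z^d$ on $[t_0,t]$, compared against the constant-in-space supersolution $m\ee^{-\nu(s-t_0)}$, gives $h(t,x)\le m\ee^{-\nu(t-t_0)}$. Letting $t_0\to-\infty$ yields $h\equiv0$, i.e.\ $W\le V$. Entireness is essential here, and boundedness by $1$ keeps the comparison legitimate on every finite slab.

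For (ii), set $\omega=V-W\ge0$. It satisfies the linear equation
\[
\omega_t=\Delta_{\Z^d}\omega+c(t,x)\,\omega,\qquad c=\int_0^1F_s\bigl(x,W+\tau(V-W)\bigr)\,\dd\tau,
\]
with $|c|\le L_F$. Then $\ee^{L_F t}\omega$ is a nonnegative supersolution of the pure heat equation. By Lemma~\ref{lem:fixed-domain-comparison} it therefore dominates the heat evolution of its own data at any earlier time $t_0$:
\[
\omega(t_1,x_1)\ge\ee^{-2L_F(t_1-t_0)}\sum_y\pheat_d(t_1-t_0,x_1-y)\,\omega(t_0,y).
\]
Strict positivity of the lattice heat kernel (Lemma~\ref{lem:longitudinal}) shows that $\omega(t_1,x_1)=0$ forces $\omega(t_0,\cdot)\equiv0$ for every $t_0<t_1$. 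Then $V$ and $W$ agree at time $t_0$, and uniqueness of the bounded Lipschitz Cauchy problem gives $V=W$ for all $t\ge t_0$. Letting $t_0\to-\infty$ covers every time, so $W\equiv V$.

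The only delicate point is the sign bookkeeping in (i). The one-sided bound $F_s\le-\nu$ must be applied with $\xi$ trapped in $[1-\delta,1]$, which holds precisely because $h>0$ happens only inside $E$. Lemma~\ref{lem:fixed-domain-comparison} also needs a Lipschitz reaction, so I would apply it to the linear inequality for $h$ rather than to the nonlinear equation.
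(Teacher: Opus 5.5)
Your proposal is correct and follows the paper's proof essentially step by step. In (i) the paper also derives $D_t\le\Delta D-\nu D$ for $D=(W-V)_+$ and bounds $D(t)\le\ee^{-\nu h}\ee^{h\Delta}D(t-h)\le\ee^{-\nu h}$ before letting $h\to\infty$, which is equivalent to your constant supersolution. In (ii) it uses the same backward heat-kernel comparison followed by forward uniqueness; your factor $\ee^{-2L_F(t_1-t_0)}$ is harmless, since $\ee^{-L_F(t_1-t_0)}$ already suffices. The paper additionally extends the zero-contact argument to linear solutions in the weighted spaces $\sup_x\ee^{-\rho|x|_1}|f(x)|<\infty$. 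That extension is used later in Theorem~\ref{thm:rigidity} but is not part of the stated lemma.
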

\begin{proof}
Put $D=(W-V)_+$. Wherever $D>0$, the assumed ordering outside $E$
places both solution values in $[1-\delta,1]$. The reaction difference
is therefore at most $-\nu D$. The discrete positive-part inequality
gives, almost everywhere in time,
$$
 D_t\leq\Delta D-\nu D.
$$
At a site where $D=0$, this follows because the positive part has zero
derivative almost everywhere and $\Delta D\geq0$. Since $0\leq D\leq1$,
comparison from time $t-h$ yields
$D(t)\leq\ee^{-\nu h}\ee^{h\Delta}D(t-h)\leq\ee^{-\nu h}$.
Letting $h\to\infty$ proves the first assertion.

For the second, the nonnegative difference $D=V-W$ satisfies
$D_t=\Delta D+b(t,x)D$ with $b$ uniformly bounded. If
$D(t_0,x_0)=0$ and $K\geq\|b\|_\infty$, comparison with the heat
equation gives, for every $h>0$,
$$
 0\geq\ee^{-Kh}
 \sum_{y\in\Z^d}\pheat_d(h,x_0-y)D(t_0-h,y).
$$
Every summand is nonnegative and the heat kernel is strictly positive.
Thus $D$ vanishes at every time before $t_0$, and uniqueness for the
linear lattice equation gives the same conclusion afterwards.

We also need this zero-contact argument for the linear limits below,
which can grow exponentially in space. For $\rho>0$, define
$$
 X_\rho=\left\{f:\|f\|_\rho=
       \sup_{x\in\Z^d}\ee^{-\rho|x|_1}|f(x)|<\infty\right\}.
$$
Neighbor shifts have norm at most $\ee^\rho$ on $X_\rho$, so
$\|\Delta\|\leq2d(\ee^\rho+1)$ there. Multiplication by bounded $b$
is also bounded. A solution bounded in this norm on compact time
intervals satisfies the integral equation in $X_\rho$, where Gronwall's
inequality gives uniqueness and the same heat comparison. The heat-kernel
sum converges because the Poisson jump count has finite exponential
moments. Consequently zero contact also forces equality in this class.
\end{proof}

For $|n|=1$ and $c>0$, put
$r=x\cdot n-ct$. We refer to $r\le-R$ as the invaded region and to
$r\ge R$ as the leading-edge region; these phrases always mean estimates
uniform in the remaining space--time variables as $R\to\infty$.

The following criterion applies to the general periodic equation. It
identifies an entire solution as a pulsating front from its invaded
region and leading-edge bounds.
\begin{theorem}\label{thm:rigidity}
Let $V:\R\times\Z^d\to(0,1)$ be an entire solution of
\eqref{eq:main}, and let $r=x\cdot n-ct$, where $|n|=1$ and $c>0$.
All limits and bounds below are uniform over $t\in\R$ and
$x\in\Z^d$ in the indicated regions. Suppose that, for some
$\lambda,b,B,R_0>0$,
\begin{equation}\label{eq:rigidity-hyp}
 \lim_{R\to\infty}\inf_{r\leq-R}V(t,x)=1,\qquad
 b(1+r)\ee^{-\lambda r}\leq V(t,x)\leq B(1+r)\ee^{-\lambda r}
 \quad(r\geq R_0).
\end{equation}
Then $V$ is a pulsating front, strictly increasing in time, and
\begin{equation}\label{eq:covariance}
 V(t+n\cdot\ell/c,x+\ell)=V(t,x)\qquad(\ell\in\Gamma).
\end{equation}
\end{theorem}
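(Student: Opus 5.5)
We follow the sliding argument of \cite[Lemma~6.2]{HNRRPeriodic}, using Lemma~\ref{lem:entire-comparison} in place of the continuous maximum principle. For $\ell\in\Gamma$ and $\tau\in\R$ consider the shifted entire solution
$$
 V^{\tau,\ell}(t,x)=V(t+\tau+n\cdot\ell/c,\,x+\ell),
$$
which solves the same equation by $\Gamma$-periodicity of $F$. Its variable $r$ is shifted by $-c\tau$. We aim to prove three things: $V^{\tau,\ell}\geq V$ for every $\tau\geq0$ and every $\ell\in\Gamma$; $V$ is strictly increasing in time; and $V^{0,\ell}=V$, which is \eqref{eq:covariance}. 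Applying the first claim with $\ell$ and $-\ell$ at $\tau=0$ gives $V^{0,\ell}\geq V$ and $V\geq V^{0,\ell}$, hence \eqref{eq:covariance}. Applying it with $\ell=0$ gives monotonicity in time, and the zero-contact part (ii) of Lemma~\ref{lem:entire-comparison} makes that monotonicity strict. Once covariance holds, $\Phi(s,x):=V((x\cdot n-s)/c,x)$ is $\Gamma$-periodic in $x$, and \eqref{eq:rigidity-hyp} gives $\Phi(-\infty,\cdot)=1$ and $\Phi(+\infty,\cdot)=0$ uniformly.

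\textbf{Step 1: large $\tau$.} We show $V^{\tau,\ell}\geq V$ for all $\tau\geq\tau_0$, uniformly in $\ell$. Choose $R_1$ with $V\geq1-\delta$ on $\{r\leq-R_1\}$. In the leading edge $r\geq R_0$, the hypothesis gives
$$
 V^{\tau,\ell}\geq b(1+r-c\tau)\ee^{-\lambda(r-c\tau)}\quad\text{and}\quad V\leq B(1+r)\ee^{-\lambda r}.
$$
These terms carry the same exponential rate. Once $c\tau$ is large, the extra factor $\ee^{\lambda c\tau}$ dominates the polynomial ratio $(1+r)/(1+r-c\tau)$ wherever $r-c\tau\geq R_0$.

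In the remaining region $r\leq R_0+c\tau$ we want $V^{\tau,\ell}\geq1-\delta$, so that part (i) applies with $E$ equal to that region and the roles of $V,W$ taken by $V^{\tau,\ell}$ and $V$. The difficulty is the middle strip $-R_1<r-c\tau<R_0$, where neither the invaded-region hypothesis nor the leading-edge bound controls $V^{\tau,\ell}$.

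I expect this step to be the main obstacle. My plan is to take $E=\{r-c\tau\leq-R_1\}$ and prove $V^{\tau,\ell}\geq V$ on its complement by a separate argument. On $\{r-c\tau\geq R_0\}$ the ordering comes from the edge bounds. On the bounded strip I would use a lower bound $\inf_{|r|\leq M}V\geq\kappa_M>0$, which follows from the edge lower bound combined with Lemma~\ref{lem:amplification}. The point is that $V$ itself is small there for large $\tau$, since $V\leq B(1+r)\ee^{-\lambda r}$ with $r\geq c\tau-R_1$, while $V^{\tau,\ell}\geq\kappa$. This closes Step 1.

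\textbf{Step 2: sliding down.} Let $\tau^*=\inf\{\tau_1:V^{\tau,\ell}\geq V\ \text{for all}\ \tau\geq\tau_1,\ \text{all}\ \ell\}$, and suppose $\tau^*>0$. By continuity, $V^{\tau^*,\ell}\geq V$. Set
$$
 \epsilon=\inf_{\ell}\ \inf_{\{-R_1\leq r\leq R'\}}\bigl(V^{\tau^*,\ell}-V\bigr).
$$
If $\epsilon>0$, the ordering persists for $\tau$ slightly below $\tau^*$, because $V_t$ is uniformly bounded. Outside that region, the invaded side is handled by part (i). On the far edge, the strict gain $\ee^{\lambda c\tau^*}>1$ in the tail comparison gives a margin that survives small decreases of $\tau$ when $R'$ is large. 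This contradicts the minimality of $\tau^*$.

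If instead $\epsilon=0$, take near-minimizing sequences, reduce $\ell$ and $x$ modulo $\Gamma$ and translate in time, and pass to a limit using the translation compactness recorded in Section~\ref{sec:spectral}. The limit gives entire solutions $\widetilde V^{\tau^*}\geq\widetilde V$ with a touching point. Part (ii) of Lemma~\ref{lem:entire-comparison} then forces identity, hence $\widetilde V(t+\tau^*,x)=\widetilde V(t,x)$ after composing with the appropriate period shift. That is incompatible with the limits $1$ and $0$ along $r$, because $\tau^*>0$ moves $r$ by $c\tau^*\neq0$. The limiting solution inherits \eqref{eq:rigidity-hyp} by uniformity, and iterating the time-period relation contradicts those limits. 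Therefore $\tau^*=0$, which yields all three claims and the pulsating-front conclusion.
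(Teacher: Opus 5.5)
Your overall strategy is the paper's: slide the combined time–period shift, use Lemma~\ref{lem:entire-comparison}(i) on the invaded side, and get a contradiction at the critical shift by compactness and zero contact. Step~1 is fine.

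\textbf{The gap is in Step~2, in the far leading edge.} You claim that for $r\ge R'$ the ``strict gain $\ee^{\lambda c\tau^*}>1$'' gives a margin that survives small decreases of $\tau$. The hypothesis only gives two-sided bounds with possibly different constants $b<B$, so the tail comparison yields
\[
 \frac{V^{\tau,\ell}}{V}\geq\frac bB\,\ee^{\lambda c\tau}\,\frac{1+r-c\tau}{1+r}.
\]
This exceeds one only when $\tau$ is larger than roughly $\log(B/b)/(\lambda c)$. For small $\tau^*>0$ there is no margin. Both $V^{\tau^*,\ell}$ and $V$ tend to zero as $r\to\infty$, and nothing prevents $(V^{\tau^*,\ell}-V)/V\to0$ there. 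So the ordering may fail for every $\tau<\tau^*$ at points with $r\to+\infty$.

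Your $\epsilon$ on the strip $[-R_1,R']$ does not see these points. A local limit of $V$ at them is identically zero, so the touching argument in your case $\epsilon=0$ yields nothing. Sharp asymptotics $V\sim A(1+r)\ee^{-\lambda r}$, which would give a genuine margin, are not available here. They are derived later (Lemma~\ref{lem:critical-tail-expansion}) only for pulsating profiles, that is, after the covariance you are proving.

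\textbf{The missing idea is the paper's second case.} Take failure points $(t_k,x_k)$ for shifts $\tau_k\to\tau^*$; by Lemma~\ref{lem:entire-comparison}(i) they satisfy $r_k\ge-A$. If $r_k\to\infty$, normalize $Z_k=A_k^{-1}V(t_k+\cdot,x_k+\cdot)$ with $A_k=(1+r_k)\ee^{-\lambda r_k}$. Local limits then solve $Z_s=\Delta Z+a(r_0+z)Z$ with $b\ee^{-\lambda(n\cdot z-cs)}\le Z\le B\ee^{-\lambda(n\cdot z-cs)}$. The ordering at $\tau^*$ and the contact at $(0,0)$ pass to the limit. The weighted zero-contact principle in $X_\rho$ (last part of Lemma~\ref{lem:entire-comparison}) makes $Z$ equal to its shift. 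Iterating that shift contradicts the two exponential bounds.

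\textbf{Two smaller points.}
\begin{itemize}
\item Run the sliding for each fixed $\ell$ rather than uniformly in $\ell$. With unbounded near-minimizing $\ell_k$, the two local limits are no longer related by a fixed shift, so the iteration you invoke is unavailable.
\item Get the strip bound $\inf_{|r|\le M}V>0$ from the tail lower bound together with $V_t\ge-2dV$, which follows from $F\ge0$. The amplification lemma only controls times at least $S$ after a seed and does not by itself cover a bounded strip next to the tail.
\end{itemize}
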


\begin{proof}
Write $r=x\cdot n-ct$. The solution has a positive lower bound on every
strip $-A\leq r\leq A$. To see this, choose a fixed $h>0$ that moves
the earlier coordinate $r+ch$ into a compact interval in the positive
tail. The tail lower bound and $V_t\geq-2dV$ then give a uniform
positive lower bound at the current point.

Fix $\ell\in\Gamma$ and set
$W_\tau(t,x)=V(t+\tau,x+\ell)$ and
$\sigma(\tau)=n\cdot\ell-c\tau$.
Choose $A$ so that $V\geq1-\delta$ when $r<-A$, with $\delta$ as in
Lemma~\ref{lem:entire-comparison}. For large positive $\sigma$, the
lower bound for $V$ on each fixed strip and the tail upper bound for
$W_\tau$ give $W_\tau\leq V$ on that strip. Farther to the right,
the ratio of the upper bound for $W_\tau$ to the lower bound for $V$
is at most $C(1+\sigma)\ee^{-\lambda\sigma}<1$. The comparison lemma
extends the ordering to $r<-A$. Hence $W_\tau\leq V$ for all
sufficiently negative $\tau$.

Let $\tau_*$ be the supremum of the times up to which the ordering
holds for every earlier shift. This value is finite, since at a fixed
point $W_\tau\to1>V$ as $\tau\to\infty$. Continuity in $\tau$ gives
$W_{\tau_*}\leq V$. We claim that
$\sigma_*=n\cdot\ell-c\tau_*\leq0$.
If $\sigma_*>0$, there are $\tau_k\downarrow\tau_*$ at which the
ordering fails. By Lemma~\ref{lem:entire-comparison}, we can choose
failure points $(t_k,x_k)$ with $r_k=x_k\cdot n-ct_k\geq-A$.
After a subsequence, $x_k\bmod\Gamma=r_0$ is fixed.

Suppose first that $r_k$ is bounded, and choose a subsequence with
$r_k\to r_\infty$. Translation compactness from
Section~\ref{sec:spectral} gives a local $C^1$ limit
$V_\infty(s,z)$ of $V(t_k+s,x_k+z)$. It solves the equation with
reaction $F(r_0+z,\cdot)$ and inherits the invaded-region limit and
the two-sided tail, with coordinate $r_\infty+n\cdot z-cs$.
The ordered difference
$V_\infty(s,z)-V_\infty(s+\tau_*,z+\ell)$ is nonnegative and
vanishes at $(0,0)$, so the comparison lemma makes it identically
zero. The limit lies strictly between zero and one: attaining either
endpoint would make it the corresponding constant solution by the
same lemma, contrary to its end-state bounds. Iteration now gives
$V_\infty(0,0)=V_\infty(m\tau_*,m\ell)$, whose right-hand side tends
to zero as $m\to\infty$ because the front coordinate increases by
$m\sigma_*$. This is a contradiction.

If instead $r_k\to\infty$, put
$A_k=(1+r_k)\ee^{-\lambda r_k}$ and
$Z_k(s,z)=A_k^{-1}V(t_k+s,x_k+z)$.
On each compact time interval and finite set of sites, the tail bounds
give positive upper and lower bounds for $Z_k$. The equation also
bounds its first and second time derivatives there. Since $A_k\to0$,
Taylor's formula gives, for any local limit,
\begin{equation}\label{eq:normalized-linear-limit}
 Z_s=\Delta Z+a(r_0+z)Z,\qquad
 b\ee^{-\lambda(n\cdot z-cs)}\leq Z(s,z)
                  \leq B\ee^{-\lambda(n\cdot z-cs)}.
\end{equation}
The original constants $b,B$ remain valid because
$(1+r_k+n\cdot z-cs)/(1+r_k)\to1$.
The ordering at $\tau_*$ passes to the limit. The failed inequalities
at $\tau_k$, together with the local derivative bound for $Z_k$,
give equality at $(0,0)$. Thus
$Z(s,z)-Z(s+\tau_*,z+\ell)$ is a nonnegative solution of the same
linear equation with zero contact. Its growth satisfies the weighted
comparison class in Lemma~\ref{lem:entire-comparison}, so it vanishes
identically. Iterating this identity and using
\eqref{eq:normalized-linear-limit} yields
$0<Z(0,0)\leq B\ee^{-m\lambda\sigma_*}$ for every $m$, again a
contradiction.

We have proved $\tau_*\geq n\cdot\ell/c$ and therefore
$V(t+n\cdot\ell/c,x+\ell)\leq V(t,x)$. Applying this result to
$-\ell$ and evaluating it at $(t+n\cdot\ell/c,x+\ell)$ gives the
reverse inequality. This proves \eqref{eq:covariance}.
With $\ell=0$, the same sliding argument gives
$V(t+\tau,x)\leq V(t,x)$ for every $\tau\leq0$, hence $V_t\geq0$.
The function $V_t$ satisfies a bounded-coefficient linear equation.
Zero contact would make it vanish identically. A time-independent
solution cannot satisfy both moving-region bounds in
\eqref{eq:rigidity-hyp} when $c>0$, so $V_t>0$.

Define $\Phi(s,x)=V((n\cdot x-s)/c,x)$. Periodic covariance makes
$\Phi$ $\Gamma$-periodic in $x$, and $V_t>0$ makes it strictly
decreasing in $s$. The assumed bounds give its uniform end states
$1,0$. Substitution into the lattice equation gives
\begin{equation}\label{eq:pulsating-profile-equation}
 -c\Phi_s(s,x)=\sum_{|z|_1=1}
   \bigl(\Phi(s+n\cdot z,x+z)-\Phi(s,x)\bigr)+F(x,\Phi(s,x)),
\end{equation}
which completes the proof.
\end{proof}

For $r_0\in\mathcal C$, define the translated periodic reaction
$$
 F^{[r_0]}(z,s):=F(r_0+z,s),\qquad z\in\Z^d,\quad 0\le s\le1.
$$
It has the same period lattice $\Gamma$ and satisfies
\eqref{eq:periodic-kpp}.

\begin{proposition}\label{prop:profile-limits}
Under the assumptions of Theorem~\ref{thm:axial}, let
$t_k\to\infty$ and $x_k=(j_k,y_k)$ satisfy
$|j_k-m_d(t_k)|\leq C$ and $|y_k|\leq A\sqrt{t_k}$ for fixed
$C,A\geq0$. After passage to a subsequence, $x_k\bmod\Gamma=r_0\in\mathcal C$ is
constant and $u(t_k+s,x_k+z)$ converges uniformly for $s$ in each compact
time interval and $z$ in each finite subset of $\Z^d$ to a critical
pulsating front for $F^{[r_0]}$. Its direction is $e_1$ and its speed is
$c_*$.
\end{proposition}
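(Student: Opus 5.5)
The plan is to obtain the limit by translation compactness, and then verify the hypotheses of Theorem~\ref{thm:rigidity} for it with $n=e_1$, $c=c_*$, $\lambda=\lambda_*$.

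\emph{Step 1: extraction.} Pass to a subsequence on which $x_k\bmod\Gamma=r_0$ is constant and $\xi_k:=j_k-m_d(t_k)\to\xi_\infty\in[-C,C]$. The translation compactness of Section~\ref{sec:spectral} then gives a local $C^1$ limit $V(s,z)$ of $u(t_k+s,x_k+z)$. It is defined for all $s\in\R$, because $t_k\to\infty$. Since $x_k\equiv r_0$ modulo $\Gamma$, $V$ is an entire $[0,1]$-valued solution with reaction $F^{[r_0]}$. In the shifted coordinates, the front coordinate of the site $x_k+z$ at time $t_k+s$ is
$$
 j_k+z_1-m_d(t_k+s)=\xi_k+z_1-c_*s+O\!\left(\frac{1+|s|}{t_k}\right),
$$
which tends to $r:=z_1-c_*s+\xi_\infty$. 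The transverse coordinate satisfies $|y_k+z'|\leq(A+1)\sqrt{t_k+s}$ for large $k$.

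\emph{Step 2: the bounds in \eqref{eq:rigidity-hyp}, transferred to $V$.} The upper tail $V\leq B(1+r)\ee^{-\lambda_*r}$ for $r\geq R_0$ follows from \eqref{eq:upper-main} with $e=e_1$, where $z_{e_1}=\lambda_*\xi$. This bound holds for all $\xi\geq0$ and all $y$, and so passes to the limit uniformly in $(s,z)$. The lower tail is the delicate one. \eqref{eq:axial-leading-edge} gives $u\geq b_{A',\sigma}(1+\xi)\ee^{-\lambda_*\xi}$ only for $1\leq\xi\leq\sigma\sqrt t$ and $|y|\leq A'\sqrt t$. For a fixed $(s,z)$ the limiting $\xi$ is finite, so the window condition holds for large $k$ with $A'=A+1$ and any fixed $\sigma$. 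The constant $b$ is independent of $(s,z)$, which gives the uniform lower tail for $r\geq1$. The invaded-region limit $\inf_{r\leq-R}V\to1$ needs $u(t,j,y)\geq\theta$ for $j\leq m_d(t)-C_\theta$ with $|y|\leq(A+1)\sqrt t$, not just on the axis. I would get this by running the integer-arrival argument of the proof of Theorem~\ref{thm:axial} with the diffusive seed \eqref{eq:section-seed} for $2A+3$ in place of the axial seed $A=0$. That seed gives a positive value at each large $j$ on a $\sqrt s$-section. Amplification via Lemma~\ref{lem:amplification} then gives level $\theta$ for all $j\leq m_d(t)-C_{\theta,A}$ with $j$ large and $|y|\leq(A+1)\sqrt t$. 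Because $t_k\to\infty$, the finitely many small $j$ are irrelevant for the limit at fixed $z$. This yields $V\geq\theta$ whenever $r\leq-C_{\theta,A}-1$, uniformly, and so $\lim_{R\to\infty}\inf_{r\leq-R}V=1$.

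\emph{Step 3: conclusion.} Since $V$ has values in $(0,1)$ (strict positivity and $V<1$ follow from the tail bounds and zero contact, Lemma~\ref{lem:entire-comparison}), Theorem~\ref{thm:rigidity} applies with direction $e_1$ and speed $c_*$. It shows that $V$ is a pulsating front $\Phi(x_1-c_*t,x)$ for $F^{[r_0]}$. This front is critical because $c(e_1)=c_*$: by the factorization \eqref{eq:transverse-factorization}, $H(\lambda e_1)/\lambda$ is minimized at $\lambda_*$. I expect the main obstacle to be Step 2, specifically ensuring that every bound is uniform over all $(s,z)\in\R\times\Z^d$ as \eqref{eq:rigidity-hyp} demands, rather than over compact sets only. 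Each estimate must hold on a region of the original solution that grows with $k$, and this is exactly what the diffusive windows in Theorem~\ref{thm:axial} and the diffusive inner-bound extension provide.
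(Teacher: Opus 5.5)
Your proposal is correct and is essentially the paper's argument. The paper also uses translation compactness, the upper tail from \eqref{eq:upper-main} with $z_{e_1}=\lambda_*\xi$, a lower tail from the product/pulse comparison, zero contact, and then Theorem~\ref{thm:rigidity}. It differs from yours only in two places. It passes the pulse bound itself to the limit, obtaining $V(s,z)\ge h_\eta\Phi_\varepsilon(z_1-c_*s+\delta+c_*T)$ on all of space--time, where you quote \eqref{eq:axial-leading-edge}. It then gets the invaded-region limit by applying Lemma~\ref{lem:amplification} to $V$ from the seed at $z_1-c_*s_0+\delta+c_*T=1$, where you amplify $u$ before the limit via the sections \eqref{eq:section-seed}.

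One small overstatement: your intermediate claim that $u\ge\theta$ for all large $j\le m_d(t)-C_{\theta,A}$ and $|y|\le(A+1)\sqrt t$ does not follow from that seed argument when $j$ is far behind the front. The seed section at the arrival time $s_j$ has width $(2A+3)\sqrt{s_j}$, which is smaller than $(A+1)\sqrt t$ once $s_j\ll t$. This does not affect the result: at fixed $(s,z)$ only sites with $m_d(t)-j=O(1)$ enter, for which $t-s_j=O(1)$, so the conclusion $V\ge\theta$ on $\{r\le-C\}$ still holds.
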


\begin{proof}
After a subsequence, let $x_k\bmod\Gamma=r_0$,
$j_k-m_d(t_k)\to\delta$, and $y_k/\sqrt{t_k}\to\eta$.
Translation compactness from Section~\ref{sec:spectral} gives a local
$C^1$ limit $V(s,z)$ of $u(t_k+s,x_k+z)$, satisfying
$V_s=\Delta_{\Z^d}V+F(r_0+z,V)$.
For $r=z_1-c_*s$, Theorem~\ref{thm:upper}, with
$p_{e_1}=\lambda_*e_1$, gives
$V(s,z)\leq B(1+r)\ee^{-\lambda_*r}$ for all sufficiently large $r$.

The pointwise comparisons \eqref{eq:product-comparison} and
\eqref{eq:scalar-profile-lower} supply the matching lower tail.
At $(t_k+s,x_k+z)$, the pulse argument tends to
$r+\delta+c_*T$ and eventually lies below the diffusive cutoff in
Proposition~\ref{prop:scalar-logarithmic-lower}, used with $\sigma=1$.
Writing $z=(z_1,z')$ and $r_0=(r_{0,1},r_{0,\perp})$, the transverse
factor converges by \eqref{eq:transverse-kernel-local} to
$$
 \frac{\beta_\perp|\mathcal C_\perp|
        \psi_\perp(r_{0,\perp}+z')\psi_\perp(y_0)}
      {(2\pi)^{(d-1)/2}\sqrt{\det\Sigma_\perp}}
       \exp\!\left(-\tfrac12\eta^{\mathsf T}\Sigma_\perp^{-1}\eta\right).
$$
Its minimum over the transverse cell is a number $h_\eta>0$.
The diagonal limit therefore gives
\begin{equation}\label{eq:limit-pulse-bound}
 V(s,z)\geq h_\eta\Phi_\varepsilon(r+\delta+c_*T),
\end{equation}
for every $(s,z)$, with the same constant. This yields the required
positive critical lower tail. The two tail bounds exclude the constant
solutions zero and one; zero contact in
Lemma~\ref{lem:entire-comparison} then gives $0<V<1$ everywhere.

At a fixed site $z$, choose $s_0$ so that
$z_1-c_*s_0+\delta+c_*T=1$. Equation~\eqref{eq:limit-pulse-bound}
gives a positive seed at $(s_0,z)$ independent of $z$.
As $z_1-c_*s\to-\infty$, the elapsed time $s-s_0$ tends uniformly
to infinity. Lemma~\ref{lem:amplification} consequently gives the
uniform invaded-region limit $V\to1$.
Theorem~\ref{thm:rigidity} applies to the translated reaction and
identifies $V$ as a critical pulsating front. Translating one such
limit back by $r_0$ also gives a critical front for the original medium.
\end{proof}

\section{Critical tails and uniqueness}\label{sec:critical-tails}

To compare fronts obtained from different subsequences, we determine
their leading-edge expansions. The critical spectral relation has a
double root. In the Laplace transform, this produces a double pole
and hence the linear factor multiplying the exponential tail.
The calculation also determines the correction that depends on the
periodic cell. It applies to nonconstant periodic linearizations whenever
a critical profile satisfies \eqref{eq:front-critical-upper}.

Fix $n\in\mathbb S^{d-1}$ and let $\lambda=\lambda(n)$,
$c=c(n)$ be as in Lemma~\ref{lem:geometry}. Put $p=\lambda n$ and
$D_n=\tfrac12n^{\mathsf T}\nabla^2H(p)n>0$.
Write $\dot\phi_p=\left.\partial_\eta\phi_{\eta n}\right|_{\eta=\lambda}$,
using the normalization after \eqref{eq:cell}.
For a profile $\Phi$, define the cell-valued functions
\begin{equation*}
 v(s,x)=\ee^{\lambda s}\Phi(s,x),\qquad
 g(s,x)=\ee^{\lambda s}
               \bigl(a(x)\Phi(s,x)-F(x,\Phi(s,x))\bigr).
\end{equation*}
Their componentwise bilateral Laplace transforms are defined by
\begin{equation*}
 \widehat h(z)=\int_{\R}\ee^{-zs}h(s)\,\dd s,
\end{equation*}
where the integral converges; see \cite{Doetsch} for the general
transform framework. Kato's perturbation theorem for a simple isolated
eigenvalue \cite[Chapter~II]{Kato} gives analytic continuations of the
cell eigenvalue, eigenvectors and projection near $p$.
At complex parameters the left eigenvectors are paired by transpose,
which preserves this analyticity.

Set $\mathcal M(z)=B_{p-zn}-c(\lambda-z)I$, where the matrix
$B_q$ in \eqref{eq:cell} is extended to $q\in\mathbb C^d$.
\begin{lemma}\label{lem:tail-source}
Let $\Phi$ be a pulsating profile with end states $1,0$ satisfying
\eqref{eq:front-critical-upper}. Then $g\geq0$, $g\not\equiv0$, and
\begin{enumerate}
\item For $k=0,1,2$,
\begin{equation}\label{eq:tail-source-derivatives}
 \max_{x\in\mathcal C}|\partial_s^kg(s,x)|\leq
 \begin{cases}
 C\ee^{\lambda s},&s\leq0,\\
 C(1+s)^2\ee^{-\lambda s},&s\geq0.
 \end{cases}
\end{equation}
\item The transform $\widehat g$ is analytic for
$|\operatorname{Re}z|<\lambda$. For every $0<\delta<\lambda$,
\begin{equation}\label{eq:tail-source-transform}
 \sup_{|\sigma|\leq\delta}
 \norm{\widehat g(\sigma+\ii\tau)}
       \leq\frac{C_\delta}{1+\tau^2}\qquad(\tau\in\R).
\end{equation}
\item The transform $\widehat v$ is analytic for
$0<\operatorname{Re}z<\lambda$ and satisfies
\begin{equation}\label{eq:tail-transform-equation}
 \mathcal M(z)\widehat v(z)=\widehat g(z).
\end{equation}
\end{enumerate}
\end{lemma}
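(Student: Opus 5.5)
The plan is to read everything off the profile equation \eqref{eq:pulsating-profile-equation}, the tail bound \eqref{eq:front-critical-upper}, and the KPP bounds \eqref{eq:periodic-kpp}, \eqref{eq:quadratic-kpp-loss}.

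\emph{Nonnegativity and size of $g$.} Since $0<F(x,s)\le a(x)s$, we have $g\ge0$. If $g\equiv0$, then $F(x,\Phi)=a\Phi$ would hold along the whole range of $\Phi$. As $\Phi$ runs continuously from $1$ to $0$ and $F(x,1)=0<a(x)$, this is impossible, so $g\not\equiv0$. For $s\ge0$, \eqref{eq:quadratic-kpp-loss} gives $0\le g\le K_F\ee^{\lambda s}\Phi^2\le C(1+s)^2\ee^{-\lambda s}$. For $s\le0$, the crude bound $g\le(\max a)\ee^{\lambda s}$ suffices.

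\emph{Derivative bounds.} The profile equation expresses $\Phi_s$ through neighbouring values $\Phi(s+n\cdot z,x+z)$ and $F(x,\Phi)$, so $|\Phi_s|$ is bounded. Differentiating the equation once bounds $\Phi_{ss}$. In the tail, these neighbour values are shifted by at most one unit in $s$, so $|\Phi_s|,|\Phi_{ss}|\le C(1+s)\ee^{-\lambda s}$ for $s\ge0$. Since $g=\ee^{\lambda s}G(x,\Phi)$ with $G=as-F$ of class $C^2$, the chain rule gives
\[
\partial_s G=G_s\Phi_s,\qquad \partial_s^2 G=G_{ss}\Phi_s^2+G_s\Phi_{ss}.
\]
Here $|G_s(x,\Phi)|\le 2K_F\Phi$ vanishes linearly at $0$. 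Every term is therefore quadratic in the tail size, which yields \eqref{eq:tail-source-derivatives} for $k=0,1,2$. For $s\le0$, everything is bounded times $\ee^{\lambda s}$.

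\emph{Transform of $g$.} Analyticity of $\widehat g$ on $|\operatorname{Re}z|<\lambda$ follows from \eqref{eq:tail-source-derivatives} by dominated convergence. Integrating by parts twice, with boundary terms vanishing by the same bounds, gives
\[
z^2\widehat g(z)=\widehat{\partial_s^2g}(z).
\]
The right side is bounded uniformly on $|\sigma|\le\delta$, and $\widehat g$ itself is bounded there. Combining the two gives the factor $(1+\tau^2)^{-1}$ in \eqref{eq:tail-source-transform}.

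\emph{Transform of $v$ and equation \eqref{eq:tail-transform-equation}.} Since $v\le C(1+s)$ for $s\ge0$ and $v\le\ee^{\lambda s}$ for $s\le0$, the transform $\widehat v$ converges and is analytic on $0<\operatorname{Re}z<\lambda$. Write $F=a\Phi-\ee^{-\lambda s}g$ and multiply the profile equation by $\ee^{\lambda s}$, using
\[
\ee^{\lambda s}\Phi(s+n\cdot z',x+z')=\ee^{-\lambda n\cdot z'}v(s+n\cdot z',x+z').
\]
This gives
\[
-c(v_s-\lambda v)=\sum_{z'}\bigl(\ee^{-\lambda n\cdot z'}v(s+n\cdot z',x+z')-v\bigr)+av-g.
\]
Transforming, a shift by $n\cdot z'$ produces the factor $\ee^{zn\cdot z'}$ and $v_s$ produces $z\widehat v$. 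The boundary terms of $v$ vanish because $\ee^{-zs}v\to0$ at $\pm\infty$ in the strip. The result is $B_{p-zn}\widehat v-c(\lambda-z)\widehat v=\widehat g$, with the periodic cell identification, which is exactly $\mathcal M(z)\widehat v=\widehat g$.

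The main obstacle is the bookkeeping for the second derivative of $g$ in the tail, specifically keeping the quadratic factor $(1+s)^2$ and not just $(1+s)$. That factor is what makes the decay bound hold uniformly up to $|\operatorname{Re}z|=\delta$ for every $\delta<\lambda$.
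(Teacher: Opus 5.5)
Your proposal is correct and follows essentially the same route as the paper's proof. Both use the same ingredients: bounds on $\Phi_s,\Phi_{ss}$ from the profile equation and its derivative, Taylor control of $a(x)r-F(x,r)$ and its first two $r$-derivatives, two integrations by parts giving $z^2\widehat g=\widehat{g_{ss}}$, and the Laplace transform of the $\ee^{\lambda s}$-weighted profile equation. You left implicit the extra Leibniz terms that the factor $\ee^{\lambda s}$ produces in $g_s$ and $g_{ss}$ (multiples of $G(x,\Phi)$ and $G_s(x,\Phi)\Phi_s$), but these are also quadratic in the tail and change nothing.
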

\begin{proof}
The profile equation \eqref{eq:pulsating-profile-equation} and the
critical upper bound give
$|\Phi_s(s,x)|\leq C(1+s)\ee^{-\lambda s}$ for $s\geq1$.
Differentiating the equation once gives the same bound for $\Phi_{ss}$,
since there are finitely many shifts and $F_s$ is bounded.
Both derivatives are also bounded on the whole cylinder: for the
associated entire solution,
$V_t=\Delta V+F(x,V)$ and
$V_{tt}=\Delta V_t+F_s(x,V)V_t$.

Set $N_x(r)=a(x)r-F(x,r)$. The KPP condition and Taylor's formula give
$0\leq N_x(r)\leq Cr^2$, $|N_x'(r)|\leq Cr$, and
$|N_x''(r)|\leq C$ on $[0,1]$, uniformly in the cell.
Thus
$$
 \begin{aligned}
 g_s&=\ee^{\lambda s}
       \bigl(\lambda N_x(\Phi)+N_x'(\Phi)\Phi_s\bigr),\\
 g_{ss}&=\ee^{\lambda s}\bigl(\lambda^2N_x(\Phi)
       +2\lambda N_x'(\Phi)\Phi_s
       +N_x''(\Phi)\Phi_s^2+N_x'(\Phi)\Phi_{ss}\bigr).
 \end{aligned}
$$
The profile bounds prove \eqref{eq:tail-source-derivatives} for $s\geq0$;
boundedness proves it for $s\leq0$. This uses only $C^2$ regularity of
$F$ in its second variable. Since $N_x(\Phi(s,x))\to a(x)>0$ as
$s\to-\infty$, the source is not zero.

The estimates give analyticity of $\widehat g$ by dominated
differentiation on smaller strips. Integrating by parts twice, with
vanishing boundary terms, gives
$z^2\widehat g(z)=\int\ee^{-zs}g_{ss}(s)\,\dd s$.
This proves \eqref{eq:tail-source-transform} for $|\tau|\geq1$;
the direct integral bound covers the remaining values.
Since $v=O(1+s)$ at $+\infty$ and $v=O(\ee^{\lambda s})$ at
$-\infty$, its transform is analytic in the stated half-strip.

Multiplication of the profile equation by $\ee^{\lambda s}$ gives
$$
 c v_s(s,x)+\sum_{|b|_1=1}\ee^{-p\cdot b}v(s+n\cdot b,x+b)
       +(a(x)-2d-c\lambda)v(s,x)=g(s,x).
$$
Using $\widehat{v_s}(z)=z\widehat v(z)$ and
$\widehat{v(\cdot+\delta)}(z)=\ee^{z\delta}\widehat v(z)$
proves \eqref{eq:tail-transform-equation}.
\end{proof}

\begin{lemma}\label{lem:critical-transform-gap}
For some $\delta_0\in(0,\lambda)$, $\mathcal M(z)$ is invertible on
$|\operatorname{Re}z|\leq\delta_0$, $z\ne0$.
For sufficiently large $|\operatorname{Im}z|$ in this strip,
\begin{equation}\label{eq:critical-resolvent-large}
 \norm{\mathcal M(z)^{-1}}
       \leq\frac{C}{1+|\operatorname{Im}z|}.
\end{equation}
Near zero,
\begin{equation}\label{eq:critical-resolvent-local}
 \mathcal M(z)^{-1}
 =\frac{\phi_{p-zn}(\phi^*_{p-zn})^{\mathsf T}}{k(z)}
       +\mathcal R(z),\qquad
 k(z)=H(p-zn)-c(\lambda-z)=D_nz^2+O(z^3),
\end{equation}
where $\mathcal R$ is analytic.
\end{lemma}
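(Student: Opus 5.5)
The plan is to read $\mathcal M(z)=B_{p-zn}-c(\lambda-z)I$ as a shift of the cell matrix $B_q$ at the complex parameter $q=p-zn$. Invertibility of $\mathcal M(z)$ is then the statement that $c(\lambda-z)$ is not an eigenvalue of $B_{p-zn}$. Write $z=\sigma+\ii\tau$ with $|\sigma|\le\delta_0$, and treat three regimes separately: $z$ near zero, $|\tau|$ large, and a compact intermediate region. The near-zero regime is handled by Kato's analytic perturbation of the simple Perron eigenvalue, exactly as in the Bloch argument of Lemma~\ref{lem:Markov-estimates}.

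\emph{Local expansion.} For $|z|$ small, the Perron eigenvalue $H(p-zn)$ of $B_{p-zn}$ remains simple and isolated. Its right and left eigenvectors $\phi_{p-zn}$, $\phi^*_{p-zn}$, continued analytically with the pairing $\sum\phi\phi^*=1$, give the rank-one spectral projection $P(z)=\phi_{p-zn}(\phi^*_{p-zn})^{\mathsf T}$. The identity
\[
 \mathcal M(z)^{-1}=\frac{P(z)}{k(z)}+\bigl(B_{p-zn}-c(\lambda-z)\bigr)^{-1}(I-P(z))
\]
holds whenever the complementary spectrum stays away from $c(\lambda-z)$. This separation holds at $z=0$ by the Perron gap and persists by continuity, so the second term $\mathcal R(z)$ is analytic near zero.

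The function $k(z)=H(p-zn)-c\lambda+cz$ vanishes at $0$ since $H(p)=c\lambda$. Its derivative at zero is $k'(0)=-n\cdot\nabla H(p)+c$, which vanishes because $\lambda$ minimizes $H(\eta n)/\eta$; this is the first-order condition $g_n(\lambda)=0$ from Lemma~\ref{lem:geometry}. The second derivative gives $k''(0)/2=\tfrac12 n^{\mathsf T}\nabla^2H(p)n=D_n>0$. Hence $k(z)=D_nz^2+O(z^3)$, and $k$ has no other zeros in a small disc.

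\emph{Strip away from zero.} At $\sigma=0$, take $z=\ii\tau$ with $\tau\ne0$. The matrix $B_{p-\ii\tau n}$ has entries $\ee^{-p\cdot b}\ee^{\ii\tau n\cdot b}$ on the neighbor terms. Its spectral abscissa is at most $H(p)$, with equality only if all phases $\ee^{\ii\tau n\cdot b}$ are compatible with a unimodular gauge on the cell. This is the maximal-modulus argument used in the proof of Lemma~\ref{lem:Markov-estimates}.

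The main obstacle is that here the phases involve real shifts $\tau n\cdot b$, not Bloch phases. So one must show that the eigenvalue $c\lambda-\ii c\tau$ is impossible: equality of real parts forces $\operatorname{Re}=H(p)$, and then the phases would have to be a gauge. I would first try to exclude this directly. In the equality case the eigenvector modulus is $\phi_p$, and the eigenvalue is $H(p)+\ii\theta$ with $\theta$ determined by the gauge. Matching $\theta=-c\tau$ against the gauge relations along closed cell loops (displacements $L_ie_i$) should force $\tau n\cdot L_ie_i\in2\pi\Z$ together with a consistency condition, which gives a contradiction for $\tau\ne0$. If this direct argument fails for isolated $\tau$, I would fall back on a Fredholm-type alternative using the positive-definite Hessian.

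Once nonvanishing of $\det\mathcal M(\ii\tau)$ is established for $\tau\ne0$, the rest follows. Continuity and compactness on $\{\delta_1\le|\tau|\le R\}$ give invertibility for $|\sigma|\le\delta_0$ with $\delta_0$ small. The annulus $\{0<|z|<\delta_1\}$ is covered by the local expansion.

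\emph{Large $|\tau|$.} Write $\mathcal M(z)=-c\lambda I+c(\sigma+\ii\tau)I+B_{p-zn}$. For $|\sigma|\le\delta_0$, the last term is bounded uniformly in $\tau$, because its entries are $\ee^{-(p-zn)\cdot b}$ with bounded modulus. Hence
\[
 \mathcal M(z)=\ii c\tau\bigl(I+O(1/|\tau|)\bigr),
\]
and a Neumann series gives $\norm{\mathcal M(z)^{-1}}\le C/(1+|\tau|)$ for large $|\tau|$, which is \eqref{eq:critical-resolvent-large}.
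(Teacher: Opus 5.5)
Your treatment of three of the regimes matches the paper and is fine: the Kato splitting near $z=0$ with $k(0)=k'(0)=0$ and $k''(0)=2D_n$, the Neumann series $\mathcal M(z)=\ii c\tau I+O(1)$ for large $|\tau|$, and continuity from the compact part of the imaginary axis to a thin strip. The gap is the step you flag as the main obstacle, excluding $\det\mathcal M(\ii\tau)=0$ for $\tau\ne0$. The mechanism you propose for it does not work.

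Periodicity of the gauge cannot give the contradiction. Take $n=e_1$ and $\tau=2\pi/L_1$. Then $\eta(x)=\ee^{-\ii\tau x_1}$ is $\Gamma$-periodic and satisfies $\ee^{\ii\tau n\cdot b}\eta(x+b)=\eta(x)$ on every edge, and $\phi_p\eta$ is a genuine eigenvector of $B_{p-\ii\tau n}$. So every loop condition $\tau n\cdot L_ie_i\in2\pi\Z$ holds with $\tau\ne0$, and the spectral abscissa of $B_{p-\ii\tau n}$ really equals $H(p)$. The Hessian fallback cannot help either: $D_n>0$ controls only a neighborhood of $\tau=0$, not isolated zeros farther up the axis. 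Your premise that the equality-case eigenvalue is $H(p)+\ii\theta$ with $\theta$ "determined by the gauge" is also off. The diagonal of $B_q$ is $a(x)-2d$, real and independent of $q$, so no phase can be absorbed there. In the example above the eigenvalue is exactly $H(p)$.

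The missing step is to make this precise, as the paper does. Writing $\xi=\phi_p\eta$, the equation $\mathcal M(\ii\tau)\xi=0$ becomes
\[
 \sum_{|b|_1=1}q_p(x,b)\bigl(\ee^{\ii\tau n\cdot b}\eta(x+b)-\eta(x)\bigr)+\ii c\tau\eta(x)=0 .
\]
At a vertex of maximal $|\eta|$, multiply by $\overline{\eta(x)}$ and take real parts. The term $\ii c\tau|\eta(x)|^2$ drops out, and positivity of the rates forces $\ee^{\ii\tau n\cdot b}\eta(x+b)=\eta(x)$ on every edge at $x$. The neighbors then also have maximal modulus, so irreducibility propagates this to every vertex. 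The jump sum therefore vanishes identically, and what remains is $\ii c\tau\eta\equiv0$. Since $c>0$ and $\eta\ne0$, this gives $\tau=0$. The contradiction comes from the imaginary part $-c\tau$ of the required eigenvalue, not from closing loops in the cell.
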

\begin{proof}
The zero eigenvalue of $B_p-H(p)I$ is simple. Analytic perturbation
\cite[Chapter~II]{Kato} gives its spectral projection
$\phi_{p-zn}(\phi^*_{p-zn})^{\mathsf T}$, under the normalization
$(\phi^*_{p-zn})^{\mathsf T}\phi_{p-zn}=1$, and an analytic inverse on
the complementary subspace. This proves
\eqref{eq:critical-resolvent-local}.
The identities $H(p)=c\lambda$ and $n\cdot\nabla H(p)=c$ cancel
the constant and linear terms in $k$. Since its quadratic coefficient
is $D_n>0$, zero is the only singularity in a small disk.

To exclude other singularities on the imaginary axis, suppose
$\mathcal M(\ii\tau)\xi=0$ with $\xi\ne0$, and write
$\xi(x)=\phi_p(x)\eta(x)$. Conjugation gives
\begin{equation}\label{eq:imaginary-axis-equation}
 \sum_{|z|_1=1}q_p(x,z)
      \bigl(\ee^{\ii\tau n\cdot z}\eta(x+z)-\eta(x)\bigr)
                +\ii c\tau\eta(x)=0.
\end{equation}
At a vertex of maximal nonzero $|\eta|$, multiply by its conjugate
and take real parts. Every jump term is nonpositive and their sum
is zero. Positivity of each rate therefore gives
$\ee^{\ii\tau n\cdot z}\eta(x+z)=\eta(x)$ on every outgoing edge.
Irreducibility propagates the maximal modulus and these identities
through the cell, including edges with identical endpoint residues.
The jump sum in \eqref{eq:imaginary-axis-equation} consequently
vanishes everywhere, so
$c\tau\eta=0$. Hence $\tau=0$.

For $|\operatorname{Re}z|\leq1$, the matrix
$B_{p-zn}-c\lambda I$ is uniformly bounded, because its oscillatory
factors have modulus one. Thus $\mathcal M(z)=czI+O(1)$ as
$|\operatorname{Im}z|\to\infty$, and a Neumann series proves
\eqref{eq:critical-resolvent-large}. The local expansion excludes
singularities near zero and this bound excludes them at large imaginary
part. On the remaining compact part of the imaginary axis,
$\det\mathcal M$ is nonzero. Continuity permits widening it to a
sufficiently small strip, proving the lemma for every direction $n$.
\end{proof}

\begin{lemma}\label{lem:critical-tail-expansion}
Every profile in Theorem~\ref{thm:critical-uniqueness} satisfies
\eqref{eq:critical-tail-expansion} and \eqref{eq:critical-amplitude}.
In particular, its upper critical bound implies a positive two-sided
critical bound.
\end{lemma}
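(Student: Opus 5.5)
We prove the expansion by Laplace inversion of \eqref{eq:tail-transform-equation}, with the contour shifted across the double pole at $z=0$. By Lemma~\ref{lem:tail-source}, $\widehat v$ is analytic on the half-strip $0<\operatorname{Re}z<\lambda$, and there $\widehat v(z)=\mathcal M(z)^{-1}\widehat g(z)$. Lemma~\ref{lem:critical-transform-gap} extends the right-hand side meromorphically to $|\operatorname{Re}z|\le\delta_0$, with zero as its only singularity. For $0<\sigma_0<\delta_0$, the bilateral inversion formula gives
\begin{equation*}
 v(s)=\frac1{2\pi\ii}\int_{\sigma_0-\ii\infty}^{\sigma_0+\ii\infty}\ee^{zs}\mathcal M(z)^{-1}\widehat g(z)\,\dd z .
\end{equation*}
To justify this, first smooth $v$ or apply the inversion to $v_{ss}$. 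The bounds $\norm{\mathcal M^{-1}}\le C/(1+|\tau|)$ and $\norm{\widehat g}\le C/(1+\tau^2)$ make the integrand absolutely integrable. Because $v$ is continuous, pointwise inversion then holds.

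Next we move the line of integration to $\operatorname{Re}z=-\delta_0$. The horizontal connecting segments vanish as $|\tau|\to\infty$ by the same decay. The shifted integral is bounded by $C\ee^{-\delta_0 s}$, which gives the $O(\ee^{-\varepsilon s})$ term in $\Phi=\ee^{-\lambda s}v$. The residue at $0$ comes from the principal part in \eqref{eq:critical-resolvent-local}, since $\mathcal R$ is analytic. Expanding to first order,
\begin{equation*}
 \frac{\ee^{zs}\phi_{p-zn}(\phi^*_{p-zn})^{\mathsf T}\widehat g(z)}{D_nz^2(1+O(z))}.
\end{equation*}
Its residue is $A s\,\phi_p+B'\phi_p-A\dot\phi_p$, where
\begin{equation*}
 A=D_n^{-1}(\phi_p^*)^{\mathsf T}\widehat g(0).
\end{equation*}
The term $-A\dot\phi_p$ arises because $\partial_z\phi_{p-zn}|_{0}=-\dot\phi_p$. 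All remaining first-order contributions (the derivative of $\phi^*$, the derivative of $\widehat g$, and the cubic coefficient of $k$) are scalar multiples of $\phi_p$, so they can be absorbed into $B_\Phi$. Because $\widehat g(0)=\int\sum\ldots$, the formula for $A$ is exactly \eqref{eq:critical-amplitude} once we check the sign conventions for $D_n$ and for the orientation of the residue. A sign slip here is a risk, so the orientation will be verified against the requirement that $v$ grows like $+As$.

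It remains to show $A_\Phi>0$. Since $g\ge0$, $g\not\equiv0$ by Lemma~\ref{lem:tail-source}, and $\phi_p^*>0$, we get $(\phi_p^*)^{\mathsf T}\widehat g(0)>0$. Together with $D_n>0$ this gives $A_\Phi>0$. The expansion then shows that $v(s)\sim A_\Phi s\,\phi_p$ uniformly in the cell. Combined with positivity of $\Phi$ on compact $s$-ranges, this yields the two-sided bound $b(1+s)\ee^{-\lambda s}\le\Phi\le C(1+s)\ee^{-\lambda s}$ for $s\ge0$, which is the "in particular" statement.

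The main obstacle is rigorously justifying the inversion and the contour shift for a bilateral transform whose strip of convergence is only $0<\operatorname{Re}z<\lambda$. The plan is to use the identity $\mathcal M\widehat v=\widehat g$ on the half-strip, together with uniqueness of Laplace transforms, to define the continuation. We then prove pointwise inversion on $\operatorname{Re}z=\sigma_0$ by applying Fourier inversion to $\ee^{-\sigma_0 s}v(s)$: this function is integrable, and its transform is $O(|\tau|^{-3})$ by the two resolvent/source bounds, so the inversion holds directly.
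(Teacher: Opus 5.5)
Your proposal is correct and follows essentially the same route as the paper. The paper likewise uses Fourier inversion of $\ee^{-\sigma_0 s}v$ on a vertical line inside $0<\operatorname{Re}z<\delta_0$, shifts the contour to the left line using the resolvent and source bounds, takes the residue of the rank-one principal part to get $A_\Phi=\gamma(0)/D_n$ with $\gamma(z)=(\phi^*_{p-zn})^{\mathsf T}\widehat g(z)$, and obtains $A_\Phi>0$ from $g\geq0$, $g\not\equiv0$ and $\phi_p^*>0$. The orientation you were unsure about is the standard counterclockwise one (right line minus left line), so no sign issue arises. You should also add the one-line remark that $B_\Phi\in\R$, since all the Laurent coefficients are real on the real axis.
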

\begin{proof}
Choose $0<\delta<\delta_0$. On $\operatorname{Re}z=\delta$,
\eqref{eq:tail-transform-equation} gives
$\widehat v(z)=\mathcal M(z)^{-1}\widehat g(z)$.
Lemmas~\ref{lem:tail-source} and~\ref{lem:critical-transform-gap}
bound the right-hand side by $C(1+|\operatorname{Im}z|)^{-3}$,
also on the line $\operatorname{Re}z=-\delta$.
For clarity, the inversion can be justified directly.
The function $f_\delta(s)=\ee^{-\delta s}v(s)$ is continuous and
integrable, and its Fourier transform is integrable. Multiplying that
transform by $\ee^{-\epsilon\tau^2}$ and applying Fubini's theorem
gives the convolution of $f_\delta$ with
$(4\pi\epsilon)^{-1/2}\ee^{-s^2/(4\epsilon)}$.
As $\epsilon\downarrow0$, the convolution converges pointwise to
$f_\delta$, while dominated convergence applies to the inverse integral.
It follows that
\begin{equation}\label{eq:tail-inversion}
 v(s)=\frac{1}{2\pi\ii}
       \int_{\delta-\ii\infty}^{\delta+\ii\infty}
       \ee^{zs}\mathcal M(z)^{-1}\widehat g(z)\,\dd z.
\end{equation}

Move this contour to $\operatorname{Re}z=-\delta$.
Lemma~\ref{lem:critical-transform-gap} leaves only the pole at zero.
On the horizontal sides $z=\sigma\pm\ii R$, $|\sigma|\leq\delta$,
the same two lemmas bound the integrand without $\ee^{zs}$ by
$CR^{-3}$. For each fixed $s$, the horizontal integrals therefore
vanish as $R\to\infty$. The residue theorem gives
$$
 v(s)=\operatorname*{Res}_{z=0}
       \bigl(\ee^{zs}\mathcal M(z)^{-1}\widehat g(z)\bigr)
 +\frac{1}{2\pi\ii}
   \int_{-\delta-\ii\infty}^{-\delta+\ii\infty}
      \ee^{zs}\mathcal M(z)^{-1}\widehat g(z)\,\dd z.
$$
Absolute integrability on the new line bounds the last term by
$C\ee^{-\delta s}$ for $s\geq0$.

Set $\gamma(z)=(\phi^*_{p-zn})^{\mathsf T}\widehat g(z)$.
The singular part is $\phi_{p-zn}\gamma(z)/k(z)$.
The normalized eigenvectors and the spectral relation give
$$
 \phi_{p-zn}=\phi_p-z\dot\phi_p+O(z^2),\qquad
 k(z)=D_nz^2-\tfrac16H_n'''z^3+O(z^4),
$$
where $H_n'''=\left.\partial_\eta^3H(\eta n)\right|_{\eta=\lambda}$.
Consequently, with
$B_\Phi=\gamma'(0)/D_n+H_n'''\gamma(0)/(6D_n^2)$,
\begin{equation*}
 \mathcal M(z)^{-1}\widehat g(z)
  =\frac{A_\Phi\phi_p}{z^2}
      +\frac{B_\Phi\phi_p-A_\Phi\dot\phi_p}{z}+O(1),\qquad
 A_\Phi=\frac{\gamma(0)}{D_n}.
\end{equation*}
The coefficients are real on the real axis, so $B_\Phi\in\R$.
After multiplication by $\ee^{zs}$, the residue in
\eqref{eq:tail-inversion} is
$(A_\Phi s+B_\Phi)\phi_p-A_\Phi\dot\phi_p$.
The integral for $\gamma(0)$ gives \eqref{eq:critical-amplitude}.
Its integrand is nonnegative and is positive for sufficiently negative
$s$, and $\phi_p^*>0$, so $A_\Phi>0$.
Dividing the contour formula by $\ee^{\lambda s}$ proves
\eqref{eq:critical-tail-expansion}, uniformly over the finite cell.
The positivity of $A_\Phi$ and $\phi_p$ supplies the lower tail.
\end{proof}

\begin{proof}[Proof of Theorem~\ref{thm:critical-uniqueness}]
The preceding lemma proves the expansion and the amplitude identity.
Its two-sided tail allows application of Theorem~\ref{thm:rigidity}
to the entire solution associated with each profile. Thus each profile
is strictly decreasing in its first variable.

For two profiles $\Phi,\Psi$, put
\begin{equation*}
 h_0=\frac1\lambda\log\frac{A_\Psi}{A_\Phi}.
\end{equation*}
Their expansions imply
\begin{equation}\label{eq:tail-profile-ratio}
 \frac{\Psi(s+h,x)}{\Phi(s,x)}
       \longrightarrow\frac{A_\Psi}{A_\Phi}\ee^{-\lambda h}
       \qquad(s\to+\infty),
\end{equation}
uniformly in the cell and for $h$ in compact intervals.
We show that the shift $h_0$ matches the whole profiles.

For sufficiently large positive $h$, one has
$\Psi(s+h,x)\leq\Phi(s,x)$ for all $s,x$.
Choose $R$ so that $\Phi\geq1-\delta$ on $s<-R$.
On every fixed strip to its right, the positive minimum of $\Phi$
and the leading-edge limit of $\Psi$ give the ordering for large $h$.
For $s\geq R_1$ with a fixed sufficiently large $R_1$, the two-sided
tails give the uniform ratio bound $C(1+h)\ee^{-\lambda h}<1$.
Writing the profiles as entire solutions, Lemma~\ref{lem:entire-comparison}
extends the ordering to $s<-R$.

Strict decrease of $\Psi$ makes the ordered shifts an upper interval.
Its lower endpoint $h_*$ is finite: the interval is nonempty, and
$\Psi(s+h,x)\to1>\Phi(s,x)$ at a fixed point as $h\to-\infty$.
Continuity gives $\Psi(s+h_*,x)\leq\Phi(s,x)$ everywhere, and
\eqref{eq:tail-profile-ratio} gives $h_*\geq h_0$.
If $h_*>h_0$, choose $\varepsilon>0$ with
$h_*-\varepsilon>h_0$. The uniform ratio limit gives a fixed strict
ordering for all $s\geq R_1$ and
$h\in[h_*-\varepsilon,h_*]$.
At $h_*$, contact in the compact strip $[-R,R_1]\times\mathcal C$
would make the entire solutions identical by
Lemma~\ref{lem:entire-comparison}; matching the tail amplitudes would
then give $h_*=h_0$. Thus the ordered difference has a positive minimum
on that strip. Decreasing $h$ slightly preserves the ordering there and
in the right tail, and the comparison lemma supplies it on $s<-R$.
This contradicts the definition of $h_*$. Hence $h_*=h_0$ and
$\Psi(\cdot+h_0,\cdot)\leq\Phi$.

Interchanging the profiles gives
$\Phi(\cdot-h_0,\cdot)\leq\Psi$. After changing the first variable,
this is the reverse inequality, so
$\Psi(s+h_0,x)=\Phi(s,x)$ everywhere.
\end{proof}

\begin{remark}\label{rem:constant-linearization-tail}
Under \eqref{eq:common-linearization}, the normalized cell eigenvector
is $\phi_p\equiv|\mathcal C|^{-1}$ for every $p$, so $\dot\phi_p=0$.
In the axial direction, the expansion takes the simpler form
$$
 \Phi_*(s,x)=\ee^{-\lambda_*s}
           \bigl(A_*s+B_*+O(\ee^{-\delta s})\bigr),\qquad s\to+\infty,
$$
where $A_*>0$ and $B_*$ are independent of the cell variable.
Under \eqref{eq:transverse-linearization}, the axial eigenvector is
proportional to $\psi_\perp(y)$ and is still independent of $\lambda$.
Thus $\dot\phi_{\lambda e_1}=0$, while the first two tail terms carry
the transverse factor $\psi_\perp$.
In both cases the full profile may depend on the cell variable.
For a general periodic linearization, the additional term
$-A_\Phi\dot\phi_p(x)$ in \eqref{eq:critical-tail-expansion} is part
of the constant-order tail correction.
\end{remark}

Proposition~\ref{prop:profile-limits} and
Theorem~\ref{thm:critical-uniqueness} identify every local front limit
as a translate of one normalized critical profile. We now use one
reference site near the front to align these limits and obtain an
approximation along each fixed-width half-tube.

\begin{proof}[Proof of Theorem~\ref{thm:selection}]
Choose a limit from Proposition~\ref{prop:profile-limits} and translate
it back to the original medium. Its critical upper bound permits
application of Theorem~\ref{thm:critical-uniqueness}.
Strict decrease and the end states give a unique translate $\Phi_*$
with $\Phi_*(0,0)=1/2$, unique in the stated class.
Set $\ell(t)=\lfloor m_d(t)\rfloor$.
The values $u(t,\ell(t),0)$ stay in a compact subinterval of $(0,1)$
for large $t$: a sequence approaching an endpoint would have a profile
limit taking that endpoint at a finite point, contrary to the proposition.
We can therefore define a bounded phase by
\begin{equation}\label{eq:phase-definition}
 \zeta_0(t)=\bigl(\Phi_*(\cdot,(\ell(t),0))\bigr)^{-1}
                        (u(t,\ell(t),0))+m_d(t)-\ell(t).
\end{equation}
The inverses range over finitely many residues. Choose $T_1\geq2$
after which this definition is valid and extend $\zeta_0$ constantly
to $t<T_1$. This gives a bounded measurable function on $\R$.

We first prove the approximation with phase $\zeta_0$ on a fixed
window $|j-m_d(t)|\leq R$, $|y|\leq B$.
If it failed, there would be times $t_k\to\infty$ and sites in that
window with error bounded away from zero. Translate at the reference
sites $(\ell(t_k),0)$ and pass to a subsequence with fixed residue
$r_0$, fixed integer offset of the selected site, and
$\delta_k=m_d(t_k)-\ell(t_k)\to\delta$,
$\zeta_0(t_k)\to\zeta_\infty$.
Translation compactness and Proposition~\ref{prop:profile-limits},
followed by uniqueness, identify the local limit as
$\Phi_*(z_1-c_*s+\xi,z+r_0)$.
The reference-site identity \eqref{eq:phase-definition} implies
$\xi=\zeta_\infty-\delta$. At the selected offset, the first argument
of the approximating profile is
$z_1-\delta_k+\zeta_0(t_k)\to z_1+\xi$.
The approximation error consequently tends to zero, a contradiction.
To extend this convergence to $j\geq0$, use the right envelope in
Theorem~\ref{thm:upper} and the uniform left limit in
Remark~\ref{rem:fixed-transverse-rear}.
The end states of $\Phi_*$ are uniform in its finite cell and
$\zeta_0$ is bounded, so increasing $R$ proves the half-tube statement.

The phase changes slowly even when the reference integer $\ell(t)$
changes. Indeed, take $t_k\to\infty$ and $s_k\to s$ with
$|s_k|\leq S$. After a subsequence, the first reference residue and
$q=\ell(t_k+s_k)-\ell(t_k)$ are fixed. The local limit at that
reference site is a front with phase $\xi$ and speed $c_*$, while
$m_d(t_k+s_k)-m_d(t_k)\to c_*s$.
Writing $\delta_k'=m_d(t_k+s_k)-\ell(t_k+s_k)$ gives
$\delta_k'-\delta_k\to c_*s-q$.
At the second reference site, the inverse profile value in
\eqref{eq:phase-definition} tends to $q-c_*s+\xi$.
Its phase therefore tends to
$q-c_*s+\xi+\delta+c_*s-q=\xi+\delta$, the same limit as at the
first site. A subsequence contradiction now gives
$\sup_{|s|\leq S}|\zeta_0(t+s)-\zeta_0(t)|\to0$.

Choose $\varrho\in C_c^\infty((-1,1))$, $\varrho\geq0$,
$\int\varrho=1$, and define
\begin{equation*}
 \zeta(t)=\int_{\R}\varrho(s)\zeta_0(t-s)\,\dd s.
\end{equation*}
Slow variation gives $\zeta(t)-\zeta_0(t)\to0$. Since
$\int\varrho'=0$,
$$
 |\zeta'(t)|\leq\|\varrho'\|_{L^1}
       \sup_{|s|\leq1}|\zeta_0(t-s)-\zeta_0(t)|\to0.
$$
The convolution is smooth and bounded. The profile equation bounds
$\partial_1\Phi_*$ globally, so replacement of $\zeta_0$ by $\zeta$
preserves \eqref{eq:profile-selection}.
This construction supplies slow phase variation; it does not require
the phase to converge to a constant.

Finally, set $P(t,j,y)=\Phi_*(j-m_d(t)+\zeta(t),(j,y))$.
At each fixed time, the profile equation gives
$\Delta P+F(x,P)=-c_*\partial_1\Phi_*(j-m_d(t)+\zeta(t),x)$.
For $j\geq1$, the approximation on $j\geq0$, $|y|\leq B+1$ and
the uniform Lipschitz bound for $F$ therefore give
\eqref{eq:profile-time-derivative}.
At $j=0$, there are finitely many sites for fixed $B$.
Positivity and Lemma~\ref{lem:amplification} give $u\to1$ at these
sites and their neighbors, hence $u_t\to0$ there.
The uniform invaded-region limit and the profile equation likewise give
$\partial_1\Phi_*(s,x)\to0$ as $s\to-\infty$.
Since $m_d(t)\to\infty$ and $\zeta$ is bounded, this handles the
remaining sites. Also $m_d'(t)\to c_*$ and $\zeta'(t)\to0$, so
$u_t-P_t\to0$ on each fixed-width half-tube.
\end{proof}

To compare temporal and spatial phase conventions, put
$U_*(s,x)=\Phi_*(x_1-c_*s,x)$. The approximating expression in
\eqref{eq:profile-selection} is exactly
\begin{equation*}
 U_*\!\left(t-\frac{d+2}{2c_*\lambda_*}\log t
                       -\frac{\zeta(t)}{c_*},x\right).
\end{equation*}
This is the time-shift convention of \cite[Theorem~1.2]{HNRRPeriodic},
with the dimension-dependent coefficient here. The bounded temporal phase is $-\zeta/c_*$.

The following local limit also applies in a diffusive transverse
window. It follows from the classification of local limits and
critical-profile uniqueness.

\begin{corollary}\label{cor:residue-limit}
Under the assumptions of Theorem~\ref{thm:axial}, let $t_k\to\infty$
and $x_k=(j_k,y_k)$ satisfy $|j_k-m_d(t_k)|\leq C$ and
$|y_k|\leq A\sqrt{t_k}$ for fixed $C,A\geq0$. If
$x_k\bmod\Gamma=r_0$ and $u(t_k,x_k)\to\theta\in(0,1)$, then
\begin{equation}\label{eq:residue-limit}
 u(t_k+s,x_k+z)\to
 \Phi_*(z_1-c_*s+\xi_{\theta,r_0},z+r_0),\qquad
 \Phi_*(\xi_{\theta,r_0},r_0)=\theta,
\end{equation}
uniformly for $s$ in compact intervals and $z$ in finite subsets of
$\Z^d$.
\end{corollary}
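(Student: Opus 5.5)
The plan is to combine Proposition~\ref{prop:profile-limits} with the uniqueness in Theorem~\ref{thm:critical-uniqueness}, and then use strict monotonicity of $\Phi_*$ to pin down the phase. Uniqueness of the limit will give convergence of the whole sequence.

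First, take an arbitrary subsequence of $(t_k,x_k)$. The hypotheses $|j_k-m_d(t_k)|\le C$ and $|y_k|\le A\sqrt{t_k}$ are exactly those of Proposition~\ref{prop:profile-limits}. It yields a further subsequence along which $u(t_k+s,x_k+z)$ converges locally uniformly to a critical pulsating front $V$ for $F^{[r_0]}$ in direction $e_1$ with speed $c_*$. The residue is already fixed equal to $r_0$ by hypothesis. Write $V(s,z)=\Psi(z_1-c_*s,z)$ with $\Psi$ $\Gamma$-periodic in $z$. Then $\widetilde\Psi(\rho,x):=\Psi(\rho-r_{0,1},x-r_0)$ is a critical pulsating profile for the original reaction $F$, since translating back by $r_0$ is legitimate (as noted at the end of the proof of Proposition~\ref{prop:profile-limits}). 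Its upper tail $C(1+\rho)\ee^{-\lambda_*\rho}$ comes from Theorem~\ref{thm:upper}, with the bounded shift absorbed into the constant. Hence $\widetilde\Psi$ lies in the class \eqref{eq:front-critical-upper}. Theorem~\ref{thm:critical-uniqueness}(ii) then gives $\widetilde\Psi(\rho,x)=\Phi_*(\rho+h,x)$ for some $h\in\R$. Unwinding the translation, $V(s,z)=\Phi_*(z_1-c_*s+\xi,z+r_0)$ with $\xi=h+r_{0,1}$. This uses periodicity of $\Phi_*$ in its second variable and the convention that the first argument of the profile is $x\cdot e_1-c_*t$ in the original coordinates; I will check this bookkeeping carefully against the way Theorem~\ref{thm:selection} writes $\Phi_*(j-m_d(t)+\zeta,(j,y))$.

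Second, evaluate at $(s,z)=(0,0)$. Pointwise convergence gives $\Phi_*(\xi,r_0)=\lim u(t_k,x_k)=\theta$. By Theorem~\ref{thm:critical-uniqueness}(ii), $\Phi_*(\cdot,r_0)$ is strictly decreasing, continuous, and has end states $1$ and $0$. So for $\theta\in(0,1)$ there is exactly one $\xi_{\theta,r_0}$ with $\Phi_*(\xi_{\theta,r_0},r_0)=\theta$, and therefore $\xi=\xi_{\theta,r_0}$. The subsequential limit is thus independent of the subsequence. Since every subsequence has a further subsequence converging to the same function, uniformly on compact $s$-intervals and finite $z$-sets, the full sequence converges, which proves \eqref{eq:residue-limit}.

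The main obstacle is the first step: transferring a front for the translated reaction $F^{[r_0]}$ back to the original medium with the correct phase offset, and verifying that it belongs to the uniqueness class. The continuity in the first variable of $\Phi_*$ needed for the inverse is immediate from the profile equation. Everything else is a standard subsequence argument.
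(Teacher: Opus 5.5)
Your proposal is correct and follows the paper's route. Proposition~\ref{prop:profile-limits} and the uniqueness in Theorem~\ref{thm:critical-uniqueness} identify every subsequential limit as $\Phi_*(z_1-c_*s+\xi,z+r_0)$, strict decrease of $\Phi_*(\cdot,r_0)$ forces $\xi=\xi_{\theta,r_0}$, and the subsequence principle gives convergence of the whole sequence; your bookkeeping $\xi=h+r_{0,1}$ and the check that the translated-back profile lies in the class \eqref{eq:front-critical-upper} just spell out what the paper does implicitly, as in the proof of Theorem~\ref{thm:selection}.
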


For a level sequence with $j_k\geq0$ and bounded $y_k$, the
displacement bound in the corollary follows from
$u(t_k,j_k,y_k)\to\theta\in(0,1)$.

\begin{proof}
Proposition~\ref{prop:profile-limits} and
Theorem~\ref{thm:critical-uniqueness} make every entire subsequential limit a front
$\Phi_*(z_1-c_*s+\xi,z+r_0)$.
Its value at $(0,0)$ is $\theta$, so strict decrease fixes
$\xi=\xi_{\theta,r_0}$. All subsequences have the same limit, proving
\eqref{eq:residue-limit}.
If $j_k\geq0$, $y_k$ is bounded and
$u(t_k,j_k,y_k)\to\theta\in(0,1)$, the upper envelope excludes
$j_k\geq m_d(t_k)+C$, and the fixed-width rear estimate excludes
$j_k\leq m_d(t_k)-C$, for a sufficiently large fixed $C$.
Thus the required displacement bound holds automatically in this case.
At transverse distances of order $\sqrt{t_k}$, we use the displacement
bound stated in Proposition~\ref{prop:profile-limits}.
\end{proof}

Strict increase of the limiting front and convergence of the time
derivative ensure that each level crosses a sufficiently remote site
once. They also determine the time separating two levels or two sites
one spatial period apart.

For
$x=(j,y)$ and $\theta\in(0,1)$, define
\begin{equation}\label{eq:arrival-definition}
 T_\theta(j,y)=\inf\{t\geq0:u(t,j,y)\geq\theta\},
 \qquad \inf\varnothing=+\infty.
\end{equation}
Recall that $\xi_{\theta,r}$ is the unique real number satisfying
$\Phi_*(\xi_{\theta,r},r)=\theta$ for each residue $r\in\mathcal C$.

\begin{corollary}\label{cor:arrival-times}
Assume the hypotheses of Theorem~\ref{thm:axial}. Fix $B\geq0$ and
$0<\theta_-<\theta_+<1$.
\begin{enumerate}
\item There are $\delta>0$ and $t_0$ such that
\begin{equation}\label{eq:temporal-transversality}
 u_t(t,j,y)\geq\delta
 \quad\hbox{if }t\geq t_0,\ j\geq0,\ |y|\leq B,
                  \ \theta_-\leq u(t,j,y)\leq\theta_+.
\end{equation}
For all sufficiently large $j$, every $\theta\in[\theta_-,\theta_+]$
is attained exactly once at $(j,y)$, uniformly for $|y|\leq B$.
\item Uniformly for these sites and levels,
\begin{equation}\label{eq:arrival-logarithm}
 T_\theta(j,y)=\frac{j}{c_*}
       +\frac{d+2}{2\lambda_*c_*}\log j+O(1)
 \qquad(j\to\infty).
\end{equation}
\item Writing $r(j,y)=(j,y)\bmod\Gamma$, one has
\begin{align}
 T_{\theta_2}(j,y)-T_{\theta_1}(j,y)
  &=\frac{\xi_{\theta_1,r(j,y)}-\xi_{\theta_2,r(j,y)}}{c_*}+o(1),
       \label{eq:arrival-level-gap}\\
 T_\theta(j+L_1,y)-T_\theta(j,y)&=\frac{L_1}{c_*}+o(1),
       \label{eq:arrival-cell-gap}
\end{align}
uniformly as $j\to\infty$, where
$\theta_-\leq\theta_1\leq\theta_2\leq\theta_+$.
\end{enumerate}
\end{corollary}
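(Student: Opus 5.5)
The plan is to derive all three parts from Theorem~\ref{thm:selection}, Corollary~\ref{cor:residue-limit} and Theorem~\ref{thm:axial}, arguing by subsequences and contradiction throughout.

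\emph{Part (i).} Suppose \eqref{eq:temporal-transversality} fails. Then there are $t_k\to\infty$ and sites $(j_k,y_k)$ with $j_k\geq0$, $|y_k|\leq B$, $\theta_-\leq u\leq\theta_+$ and $u_t\to0$ (or $\liminf u_t\leq0$). The upper envelope of Theorem~\ref{thm:upper} and Remark~\ref{rem:fixed-transverse-rear} force $|j_k-m_d(t_k)|\leq C$. After fixing the residue $r_0$ and letting $u\to\theta\in[\theta_-,\theta_+]$, Corollary~\ref{cor:residue-limit} gives $C^1$-local convergence to $\Phi_*(z_1-c_*s+\xi_{\theta,r_0},z+r_0)$. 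Hence $u_t(t_k,x_k)\to-c_*\partial_1\Phi_*(\xi_{\theta,r_0},r_0)$. By Theorem~\ref{thm:rigidity} applied to the front, $V_t>0$, so this limit is strictly positive. Compactness of $[\theta_-,\theta_+]$ and finiteness of the cell then give a uniform $\delta$, which is a contradiction.

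For single crossing at large $j$, I would first make sure that the time $t$ at which $u(t,j,y)\in[\theta_-,\theta_+]$ satisfies $t\geq t_0$. This follows because $u(t,j,y)\leq Ce^{-\lambda_*(j-c_*t)}(\cdots)$ is small for bounded $t$ and large $j$ by \eqref{eq:upper-main}. Next, on any interval where $u\in[\theta_-,\theta_+]$, the function $u$ strictly increases with slope at least $\delta$. So once $u$ exceeds a level $\theta$ it cannot come back down to $\theta$: a return would require passing through the band with negative derivative. Attainment itself follows from $u(0,j,y)=0$ for large $j$ and the inner bound $u\geq\theta_+$ eventually, from Remark~\ref{rem:fixed-transverse-rear}.

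\emph{Part (ii).} At $t=T_\theta(j,y)$ we have $u=\theta$, so as above $|j-m_d(t)|\leq C$ uniformly. I then invert $j=c_*t-\frac{d+2}{2\lambda_*}\log t+O(1)$. This gives $t=j/c_*+O(\log j)$, hence $\log t=\log j+O(1)$, and therefore $t=j/c_*+\frac{d+2}{2\lambda_*c_*}\log j+O(1)$. The only care needed is that $T_\theta\to\infty$ uniformly, which comes from the upper envelope.

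\emph{Part (iii).} For \eqref{eq:arrival-level-gap}, set $t_j=T_{\theta_1}(j,y)$. Then $u(t_j,j,y)=\theta_1$, and Corollary~\ref{cor:residue-limit} gives $u(t_j+s,j,y)\to\Phi_*(-c_*s+\xi_{\theta_1,r},r)$ uniformly for $s$ in compacts. Setting $s^*=(\xi_{\theta_1,r}-\xi_{\theta_2,r})/c_*\geq0$, the limit equals $\theta_2$ at $s^*$ with strictly positive slope. So for any $\epsilon>0$, eventually $u<\theta_2$ at $s^*-\epsilon$ and $u>\theta_2$ at $s^*+\epsilon$. Unique crossing from part (i) then localizes $T_{\theta_2}-t_j$ within $\epsilon$. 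Uniformity in levels, $y$ and residue is obtained by a subsequence contradiction.

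For \eqref{eq:arrival-cell-gap}, I use the same limit at $z=L_1e_1$. The residue of $(j+L_1,y)$ is the same $r$, so the limiting value is $\Phi_*(L_1-c_*s+\xi_{\theta,r},r+L_1e_1)$, which equals $\theta$ at $s=L_1/c_*$ by periodicity in the second variable. The same crossing argument then applies.

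The main obstacle is the uniformity in parts (i) and (iii). It rests on the transversality bound and on the a priori localization of level sites within $O(1)$ of $m_d(t)$, which I take from the envelope and Remark~\ref{rem:fixed-transverse-rear}.
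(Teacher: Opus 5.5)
Your proposal is correct, but it is organized differently from the paper's proof.

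\textbf{The paper's route.} The paper works entirely through Theorem~\ref{thm:selection}. Part (i) comes from the time-derivative approximation \eqref{eq:profile-time-derivative}, together with a positive minimum of $-c_*\partial_1\Phi_*$ on a compact set of first arguments. Parts (ii) and (iii) both come from one phase identity at a crossing,
$j-c_*T_\theta+b_d\log T_\theta+\zeta(T_\theta)=\xi_{\theta,r(j,y)}+o(1)$ with $b_d=(d+2)/(2\lambda_*)$. Boundedness of $\zeta$ then gives (ii). Subtracting two such identities and using the slow variation of $\zeta$ over bounded time gaps gives (iii).

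\textbf{Your route.} You never use the global phase $\zeta$.
\begin{enumerate}
\item You localize level sites within $O(1)$ of $m_d(t)$ using only Theorem~\ref{thm:upper} and Remark~\ref{rem:fixed-transverse-rear}. This already gives (ii) without any profile selection.
\item You obtain (i) and (iii) from Corollary~\ref{cor:residue-limit}. Its limit is pinned by the level value and the residue, so no phase bookkeeping is needed.
\item You locate the time gaps using the strict monotonicity of the limiting front and the single-crossing property. The cell gap uses $\Gamma$-periodicity of $\Phi_*$ in its second variable.
\end{enumerate}

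\textbf{What each buys.} The paper's argument is shorter once Theorem~\ref{thm:selection} is available, and it derives (ii) and (iii) from a single formula. Yours is more local. It also shows that the logarithmic arrival law (ii) follows from the axial position estimates alone.

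\textbf{Two minor repairs.}
\begin{enumerate}
\item \eqref{eq:upper-main} is only stated for $t\geq4$. To keep remote sites below $\theta_-$ on all of $[0,t_0]$, use \eqref{eq:global-exponential-bound}, as the paper does.
\item Corollary~\ref{cor:residue-limit} states only uniform local convergence. Convergence of $u_t$ at the base point still follows, because by the lattice equation $u_t$ is a continuous function of finitely many neighboring values. Translation compactness gives the same conclusion.
\end{enumerate}
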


\begin{proof}
Choose $0<\eta_-<\theta_-<\theta_+<\eta_+<1$.
For large $t$, if $\theta_-\leq u(t,j,y)\leq\theta_+$ in the
specified half-tube, the approximating profile lies in
$[\eta_-,\eta_+]$. The inverse profiles over the finite cell confine
its first argument to a compact interval $I$.
The proof of Theorem~\ref{thm:rigidity} gives
$-c_*\partial_1\Phi_*>0$. Continuity and compactness therefore give
$\min_{I\times\mathcal C}(-c_*\partial_1\Phi_*)>0$.
Equation~\eqref{eq:profile-time-derivative} then proves
\eqref{eq:temporal-transversality}. The same argument with slightly
wider level bounds gives a positive derivative throughout a small
open neighborhood of $[\theta_-,\theta_+]$ for all sufficiently
large times.

For a fixed $t_0$, \eqref{eq:global-exponential-bound} gives, with any
$\lambda>0$,
$$
 \sup_{0\leq t\leq t_0,\ |y|\leq B}u(t,j,y)
 \leq C_\lambda\ee^{c_\lambda t_0-\lambda j}\to0
 \qquad(j\to\infty).
$$
Thus remote sites remain below $\theta_-$ before $t_0$.
At each fixed site, \eqref{eq:heat-comparison} supplies a positive
value at time one, and Lemma~\ref{lem:amplification} gives
$u(t,j,y)\to1$. Every level in the prescribed interval is therefore
attained after $t_0$. At every such attainment the derivative is
positive, which prevents a return from above. Hence each level is
attained exactly once, at the time in \eqref{eq:arrival-definition}.
The early-time bound is uniform over the sites and levels under
consideration, and also shows that these arrival times tend uniformly
to infinity as $j\to\infty$.

Put $b_d=(d+2)/(2\lambda_*)$.
At a crossing, the profile approximation and the uniformly continuous
inverse profiles on the compact level interval give
\begin{equation}\label{eq:arrival-phase-identity}
 j-c_*T_\theta+b_d\log T_\theta+\zeta(T_\theta)
        =\xi_{\theta,r(j,y)}+o(1),
\end{equation}
uniformly in the stated sites and levels as $j\to\infty$.
Boundedness of the phase and of the inverse profiles yields
$j=c_*T_\theta-b_d\log T_\theta+O(1)$.
It follows that $T_\theta\sim j/c_*$ and
$\log T_\theta=\log j-\log c_*+o(1)$, which proves
\eqref{eq:arrival-logarithm} with a uniform bounded remainder.

For two levels at the same site, this arrival estimate first bounds
$T_{\theta_2}-T_{\theta_1}$ uniformly.
Subtracting \eqref{eq:arrival-phase-identity}, the logarithmic
difference tends to zero, as does the phase difference by slow
variation over bounded time intervals. This gives
\eqref{eq:arrival-level-gap}, uniformly for both levels in the
specified compact interval.
For $(j+L_1,y)$ and $(j,y)$, the residues and inverse profile values
coincide, and the arrival-time difference is again bounded.
Subtracting the two phase identities now gives
\eqref{eq:arrival-cell-gap}.
\end{proof}

\ifshownumerics
\section{Numerical illustrations of front delay and periodic waveforms}
\label{sec:numerics}
We illustrate the axial position estimate in Theorem~\ref{thm:axial}
and the periodic profile description in Theorem~\ref{thm:selection}.
In \eqref{eq:main}, take $d=2$, $(L_1,L_2)=(1,2)$ and
$$
 F((j,y),s)=a_\perp(y)s(1-s),\qquad
 a_\perp(2k)=1.5,\quad a_\perp(2k+1)=0.5.
$$
The transverse cell matrix is
$\left(\begin{smallmatrix}-0.5&2\\2&-1.5\end{smallmatrix}\right)$:
each site has two transverse neighbors of the opposite parity.
Its principal eigenvalue is $a_{\mathrm{eff}}=\sqrt{4.25}-1$.
Minimizing $H(\lambda e_1)/\lambda$, with
$H(\lambda e_1)=2(\cosh\lambda-1)+a_{\mathrm{eff}}$, gives
$$
 \lambda_*\approx0.9301678,\qquad
 c_*\approx2.1404469,\qquad
 \frac{2}{\lambda_*}\approx2.1501497.
$$
These spectral quantities are computed independently of the simulated
fronts. We compare two compact initial populations: density $0.8$ at
the origin only, and density $0.8$ on the nine sites with
$|j|\le1$, $|y|\le1$. All other initial values are zero.

The lattice equations are integrated by the classical fourth-order
Runge--Kutta method to $t=1600$, with time step $0.025$.
A window of $321\times449$ sites follows the right-moving front by
integer translations, with zero values outside the window.
The window follows $c_*t$ to retain the front in its interior; it does
not prescribe the front position. We denote by $X_\theta(t,y)$ the
rightmost level position obtained by cubic interpolation between
lattice sites. It differs from the corresponding discrete numerical
level position by at most one site. Arrival times are interpolated
within each time step.
For both initial populations, comparison with time step $0.05$ changes
the recorded front positions by less than $2.5\times10^{-3}$ sites.
For the point initial population, a separate enlargement to
$461\times561$ sites at time step $0.05$ changes the axial front
positions by less than $4.9\times10^{-7}$ sites.

\subsection{Linear advance with a logarithmic delay}
The position law predicted by Theorem~\ref{thm:axial} is
$X_{1/2}(t,0)=c_*t-(2/\lambda_*)\log t+O(1)$.
Figure~\ref{fig:logarithmic-delay} separates its linear and logarithmic
parts. On the scale of the total distance traveled, both fronts closely
follow $c_*t$. For this numerical comparison, write their lag as
$D(t)=c_*t-X_{1/2}(t,0)$. It keeps increasing and is nearly linear
when plotted against $\log t$.
The larger initial population advances the front by about two sites,
while the subsequent growth of the lag is similar for both initial data.

\begin{figure}[!htbp]
\centering
\includegraphics[width=\linewidth]{figures/front_logarithmic_delay.pdf}
\caption{Linear advance and logarithmic delay for two compact initial
populations. (a) Half-height front positions and the spectral reference
$c_*t$; the inset resolves their late-time separation.
(b) The lag $D(t)=c_*t-X_{1/2}(t,0)$ on a logarithmic time axis.
(c) Lag accumulated after $t=400$, compared with
$(2/\lambda_*)\log(t/400)$. The speed and logarithmic coefficient are
fixed by the cell eigenvalue, with no regression fit.}
\label{fig:logarithmic-delay}
\end{figure}

To compare this growth without fitting an additive constant, panel~(c)
subtracts each measured lag at $t=400$ and plots $D(t)-D(400)$ against
the prescribed curve $(2/\lambda_*)\log(t/400)$.
Between $t=400$ and $1600$, the measured increases are $2.8511$ and
$2.8538$, compared with the theoretical logarithmic increment $2.9807$.
The remaining finite-time difference is consistent with the bounded
remainder in Theorem~\ref{thm:axial}; the theorem does not require the
anchored curves to coincide. The calculation displays a growing delay
whose size remains small relative to the total propagation distance.

\FloatBarrier
\subsection{A common speed with different local waveforms}
For the point initial population, we compare the favorable row $y=0$
with the less favorable row $y=1$. To measure propagation over a
distance much larger than one cell, define
$$
 V_{400}(j,y)=
 \frac{400}{T_{1/2}(j+400,y)-T_{1/2}(j,y)}.
$$
The two observed speeds in Figure~\ref{fig:periodic-waveforms}(a)
are nearly indistinguishable and approach $c_*$ from below.
For example, over the interval from $j=3000$ to $3400$, both are
approximately $2.13906$. This behavior agrees with the arrival-time
consequences of the selected front in Corollary~\ref{cor:arrival-times}.

\begin{figure}[!htbp]
\centering
\includegraphics[width=\linewidth]{figures/periodic_local_waveforms.pdf}
\caption{A common macroscopic speed and different local waveforms for
the point initial population. (a) Speeds measured over $400$ sites in
the two transverse rows, with the independently computed $c_*$.
(b) Time profiles at $j=3000$, both referred to $T_{1/2}(3000,0)$;
no separate time shift is applied to either row.
(c) Differences $T_\theta(j,1)-T_\theta(j,0)$ for three levels.
The unequal delays reveal a difference in local shape as well as
arrival time.}
\label{fig:periodic-waveforms}
\end{figure}

The common speed coexists with local differences within the periodic
cell. Panel~(b) compares $u(t,3000,0)$ and $u(t,3000,1)$ using the
same time origin $T_{1/2}(3000,0)$, as suggested by
\eqref{eq:residue-limit}. The less favorable row reaches half-height
about $0.2492$ time units later. Panel~(c) shows that the delay between
the two rows also depends on the population level: at $j=3000$, it is
$0.2155$, $0.2492$ and $0.2796$ for levels $1/4$, $1/2$ and $3/4$,
respectively. A single time translation therefore cannot account for
the difference between the two local curves. Their passage times from
$1/4$ to $3/4$ are $2.2120$ and $2.2760$, respectively.
These observations illustrate why the periodic cell remains part of
the selected profile even when both rows share the same spreading speed.

\FloatBarrier
\fi

\section{A criterion for the general periodic lower bound}\label{sec:extension}
The axial analysis gives matching bounds when the linear growth rate is
independent of the propagation coordinate. The remaining question is
whether the same logarithmic correction holds when growth varies in
every coordinate and propagation is observed in an arbitrary direction.
We give a sufficient criterion for this lower bound. It requires a
linear solution that stays positive near the predicted front and a
uniform bound on its accumulated nonlinear loss. An exact time change
identifies the needed linear kernel estimate; a positive quotient turns
the two bounds into a nonlinear subsolution. We prove this reduction
and then discuss how the direction of heterogeneity and other periodic
settings affect its application.

For a lattice formulation which does not require an irrational ray to
contain lattice sites, fix $e\in\mathbb S^{d-1}$ and $\rho\geq\sqrt d$,
and put
\begin{equation}\label{eq:tube-level-definition}
 R_{\theta,e,\rho}(t)=\sup\{x\cdot e:x\in\Z^d,\ x\cdot e\geq0,
       \ |x-(x\cdot e)e|\leq\rho,\ u(t,x)\geq\theta\}.
\end{equation}
For any fixed lattice point $x$ with $x\cdot e\ge0$ and
$|x-(x\cdot e)e|\le\rho$, strict positivity at positive times followed by
Lemma~\ref{lem:amplification} implies $u(t,x)\ge\theta$ for all sufficiently
large $t$; hence the set in \eqref{eq:tube-level-definition} is eventually
nonempty. Its supremum is finite because \eqref{eq:global-exponential-bound}
implies $u(t,x)<\theta$ whenever $|x|_1$ exceeds a constant multiple of
$t+1$.
\begin{conjecture}\label{conj:general-periodic}
Under \eqref{eq:periodic-kpp}--\eqref{eq:u0}, for every fixed
$\theta\in(0,1)$, $e\in\mathbb S^{d-1}$ and $\rho\geq\sqrt d$,
\begin{equation}\label{eq:general-target}
 R_{\theta,e,\rho}(t)=w(e)t-
               \frac{d+2}{2p_e\cdot e}\log t+O(1).
\end{equation}
\end{conjecture}
Theorem~\ref{thm:upper} proves the upper bound in this conjecture.
Theorem~\ref{thm:axial} supplies the matching lower bound when $e$ is
a signed coordinate axis and the linear growth rate is independent of
that coordinate.


Fix $e$, and abbreviate $p=p_e$, $h=H(p)=p\cdot v_e$ and
$\kappa=\kappa_d$. Choose $T>\kappa/h$ and $b\in\R$. Define
\begin{equation}\label{eq:moving-Dirichlet-domain}
 D(t)=\left\{x\in\Z^d:p\cdot x>
       ht-\kappa\log\frac{t+T}{T}-b\right\}.
\end{equation}
The threshold is strictly increasing. More generally, for a decreasing
domain, assign each $x\in D(0)$ a deletion time
$\tau_x\in(0,\infty]$ and set
$D(t)=\{x\in D(0):t<\tau_x\}$. Values at deletion times are
their post-deletion values. The zero-exterior evolution
$\mathcal U_D(t,s)$ solves the linear equation with
$L=\Delta_{\Z^d}+a(x)$, keeps all values outside $D(t)$ equal to zero,
and resets a vertex to zero when it leaves the domain.

For every decreasing domain with this convention, the evolution
can be constructed by finite-box exhaustion. The
construction also applies to \eqref{eq:moving-Dirichlet-domain} when the
normal has irrational coordinate ratios.

For $N\in\mathbb N$ set $D_N(t)=D(t)\cap[-N,N]^d$. Between the finitely many
deletions occurring in $[-N,N]^d$, solve the finite-dimensional linear ODE
with zero values on $\Z^d\setminus D_N(t)$; when a vertex leaves $D_N(t)$,
reset that coordinate to zero and continue the ODE. Denote the resulting
operator by $\mathcal U_{D_N}(t,s)$ and extend its output by zero to all of
$\Z^d$. Since $D_N(t)\subset D_{N+1}(t)$, apply
Lemma~\ref{lem:fixed-domain-comparison} between consecutive deletion times;
at a deletion both evolutions have either the same value reset to zero or the
larger-domain evolution retains a nonnegative value. Iterating over the
finitely many deletion times in the box gives
$\mathcal U_{D_N}(t,s)f\le\mathcal U_{D_{N+1}}(t,s)f$ for every
bounded $f\ge0$. Applying Lemma~\ref{lem:fixed-domain-comparison} to each finite-box
solution and the whole-space solution of $v_t=Lv$ gives
$0\le\mathcal U_{D_N}(t,s)f\le e^{(t-s)L}f$ for $f\ge0$. Hence the
monotone limit is finite on bounded time intervals. We define
$\mathcal U_D(t,s)f=\lim_{N\to\infty}\mathcal U_{D_N}(t,s)f$ for
bounded $f\ge0$. For bounded signed $f$, define the evolution by
$\mathcal U_D(t,s)f=\mathcal U_D(t,s)f_+-\mathcal U_D(t,s)f_-$; both
terms are finite on bounded time intervals by the preceding whole-space
bound. For $s<r<t$, the finite-box evolutions satisfy
$\mathcal U_{D_N}(t,r)\mathcal U_{D_N}(r,s)=\mathcal U_{D_N}(t,s)$.
If $f\ge0$ is a source bounded on each finite time slab, their
inhomogeneous solutions satisfy
$z_N(t)=\int_s^t\mathcal U_{D_N}(t,\rho)f(\rho)\,d\rho$.
All terms are nonnegative and increase with $N$, so monotone convergence gives
$$
 \mathcal U_D(t,r)\mathcal U_D(r,s)=\mathcal U_D(t,s),\qquad
 z(t)=\int_s^t\mathcal U_D(t,\rho)f(\rho)\,d\rho.
$$
These are the composition and variation-of-constants identities used below. On each bounded time interval the finite-box solutions are uniformly
bounded by the whole-space bound. Their coordinate derivatives before
deletion are consequently uniformly bounded by $(4d+a_{\max})$ times
that bound. Thus at each vertex the limits are locally Lipschitz before
its deletion, even if a neighbor is deleted, and have their zero
post-deletion value at and after that time. Passing through the integral
equations proves $v_t=Lv$ almost everywhere before deletion. This also
proves right-continuity. A fixed vertex and its $2d$ neighbors have at
most finitely many deletion times in total; the equation is classical
on intervals avoiding them.

A bounded, right-continuous function $w$, extended by zero outside the
moving domain, is understood to be a subsolution if, for every vertex
$x$ and $0\leq s<t$,
$$
 w(t,x)-w(s,x)\leq
 \int_s^t\bigl[\Delta_{\Z^d}w(r,x)+F(x,w(r,x))\bigr]\,\dd r.
$$
If $w(\cdot,x)$ is locally absolutely continuous between the deletion
times of $x$ and its neighbors, satisfies
$w_t\le\Delta_{\Z^d}w+F(x,w)$ almost everywhere on those intervals, and
has only nonpositive jumps when a vertex is deleted, then integrating the
differential inequality on each interval and summing across the deletion
times gives the displayed integral inequality. This is the precise form used
when passing from finite boxes to the full lattice.

The time substitution below is the lattice counterpart of the one in
\cite[Section~3.2.1]{Shabani}. A continuous spatial translation would
change both the lattice and the coefficient phases, and is not used.
Now take $D(t)$ from \eqref{eq:moving-Dirichlet-domain} and
$0\leq v_0\leq u_0$ supported in $D(0)$. For
$v(t)=\mathcal U_D(t,0)v_0$, define
\begin{equation}\label{eq:exact-time-map}
 \sigma(t)=t-\frac\kappa h\log\frac{t+T}{T},\qquad
 \mathfrak t=\sigma^{-1},\qquad
 q(\tau,x)=\frac{\ee^{p\cdot x-h\mathfrak t(\tau)}}{\phi_p(x)}
                        v(\mathfrak t(\tau),x).
\end{equation}
\begin{lemma}\label{lem:exact-time-change}
The map $\sigma$ is a strictly increasing bijection of $[0,\infty)$,
and $q$ satisfies, with the same deletion convention,
\begin{equation}\label{eq:time-changed-Dirichlet}
 (1-\beta(\tau))q_\tau=Q_pq
 \quad\hbox{on }\{x:p\cdot x>h\tau-b\},\qquad
 \beta(\tau)=\frac\kappa{h(\mathfrak t(\tau)+T)}.
\end{equation}
As $\tau\to\infty$,
$\beta(\tau)=\kappa/(h\tau)+O(\log\tau/\tau^2)$ and
$\beta'(\tau)=O(\tau^{-2})$.
\end{lemma}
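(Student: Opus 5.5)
Everything here is a direct computation, so the plan is to verify the three assertions in order: the properties of $\sigma$, the transformed equation, and the asymptotics of $\beta$.

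For the properties of $\sigma$, note that $\sigma(0)=0$ and $\sigma'(t)=1-\kappa/(h(t+T))$. Since $T>\kappa/h$, this derivative is at least $1-\kappa/(hT)>0$ for all $t\geq0$. Moreover $\sigma(t)\to\infty$ because the logarithm is sublinear. Hence $\sigma$ is a strictly increasing $C^\infty$ bijection of $[0,\infty)$, and $\mathfrak t$ is smooth with
$$\mathfrak t'(\tau)=\frac1{\sigma'(\mathfrak t(\tau))}=\frac1{1-\beta(\tau)}.$$
The domain also transforms correctly. For $t=\mathfrak t(\tau)$ we have $ht-\kappa\log((t+T)/T)=h\sigma(t)=h\tau$, so $D(\mathfrak t(\tau))=\{x:p\cdot x>h\tau-b\}$. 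The deletion times transform monotonically under $\sigma$, so the deletion convention carries over unchanged.

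For the equation, I set $w(t,x)=\ee^{p\cdot x-ht}v(t,x)/\phi_p(x)$ and compute $w_t=Q_pw$ on the moving domain. This uses the identity $Q_p=\phi_p^{-1}(B_p-H(p))\phi_p$ from the proof of Lemma~\ref{lem:spectral}. Concretely, conjugating $\Delta_{\Z^d}+a$ by the weight $\ee^{p\cdot x}/\phi_p$ produces the rates $q_p(x,z)$ and the diagonal $-\sum_zq_p$; the leftover constant is $H(p)=h$, which the factor $\ee^{-ht}$ cancels. This is the same computation as \eqref{eq:weighted-evolutions}, here without the nonlinear term. Zero exterior values remain zero after weighting. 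Then $q(\tau,x)=w(\mathfrak t(\tau),x)$ satisfies $q_\tau=\mathfrak t'(\tau)\,Q_pq$, which rearranges to $(1-\beta)q_\tau=Q_pq$. This holds almost everywhere before deletion, at each vertex, by the chain rule for locally Lipschitz functions composed with the smooth increasing map $\mathfrak t$.

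For the asymptotics, write $t=\mathfrak t(\tau)$, so that $t=\tau+(\kappa/h)\log((t+T)/T)$. Then $t=\tau+O(\log\tau)$, and
$$\beta=\frac{\kappa}{h(\tau+O(\log\tau))}=\frac{\kappa}{h\tau}+O\!\left(\frac{\log\tau}{\tau^2}\right).$$
For the derivative, $\beta'(\tau)=-\kappa\,\mathfrak t'(\tau)/(h(t+T)^2)$. Since $\mathfrak t'\leq(1-\kappa/(hT))^{-1}$ is bounded, this gives $\beta'=O(\tau^{-2})$.

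The only delicate point is the deletion convention: I must check that the subsolution and post-deletion conventions are preserved under the monotone reparametrization. That holds because $\sigma$ is a homeomorphism and the weight is continuous and positive.
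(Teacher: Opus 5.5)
Your proposal is correct and follows essentially the same route as the paper. It uses positivity of $\sigma'$ from $T>\kappa/h$, the linearization of the boundary through $h\sigma(t)$, the conjugation $w_t=Q_pw$ followed by the chain rule with $\mathfrak t'=(1-\beta)^{-1}$, and the asymptotics from $\mathfrak t-\tau=(\kappa/h)\log((\mathfrak t+T)/T)$. The one step worth making explicit is that $\mathfrak t(\tau)=\tau+O(\log\tau)$ needs an a priori bound $\mathfrak t(\tau)\le C(1+\tau)$; this follows from the boundedness of $\mathfrak t'$ that you already note, and the paper uses exactly this argument.
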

\begin{proof}
One has $\sigma'(t)=1-\kappa/(h(t+T))>0$, $\sigma(0)=0$ and
$\sigma(t)\to\infty$. The identity
$h\sigma(t)=ht-\kappa\log((t+T)/T)$ makes the boundary linear.
Before the time substitution, the exponential and eigenfunction
conjugation gives $w_t=Q_pw$, exactly as in
\eqref{eq:weighted-evolutions} with zero nonlinear loss. Thus
$q_\tau=\mathfrak t'(\tau)Q_pq$ and
$1/\mathfrak t'(\tau)=1-\beta(\tau)$, proving the equation.
The identities are first valid in each finite box between deletions
and pass to the positive exhaustion limit. From \eqref{eq:exact-time-map},
$\mathfrak t-\tau=(\kappa/h)\log((\mathfrak t+T)/T)$; since
$\mathfrak t'(\tau)=(1-\kappa/[h(\mathfrak t+T)])^{-1}$ is bounded for
$\tau\ge0$, this identity first gives
$\mathfrak t(\tau)=\tau+O(\log(2+\tau))$. Substituting this estimate in
$\beta=\kappa/[h(\mathfrak t+T)]$ yields
$\beta(\tau)=\kappa/(h\tau)+O(\log\tau/\tau^2)$, and differentiating
the explicit formula gives $\beta'(\tau)=O(\tau^{-2})$.
\end{proof}

For comparison with \cite[(2.13)--(2.17)]{HNRRPeriodic}, define
\begin{equation}\label{eq:time-change-amplitude}
 \mathcal A(\tau)=\ee^{h(\mathfrak t(\tau)-\tau)}
       =\left(\frac{\mathfrak t(\tau)+T}{T}\right)^\kappa,
 \qquad
 v(\mathfrak t(\tau),x)=
 \ee^{-p\cdot x+h\tau}\phi_p(x)\mathcal A(\tau)q(\tau,x).
\end{equation}
Then $\mathcal A'/\mathcal A=h\beta/(1-\beta)$.
Here $\beta$ corresponds to the coefficient $\omega$ in that reference,
where $\beta$ denotes a smaller remainder. Formula
\eqref{eq:time-change-amplitude} retains both the time-dependent factor
and the growing amplitude introduced by the change of variables.

At the radial position
\begin{equation*}
 m_e^T(t)=w(e)t-\frac\kappa{\alpha_e}\log\frac{t+T}{T},
\end{equation*}
the inverse exponential weight in \eqref{eq:exact-time-map} contributes
$((t+T)/T)^\kappa$. Hence a bound
$q(\tau,x_\tau)\ge c\tau^{-d/2-1}$ at sites satisfying
$0< p\cdot x_\tau-h\tau+b\le R$ would produce, after undoing the
weight, a positive linear seed bounded below independently of $t$. This
calculation gives the coefficient $\kappa_d/\alpha_e$ of $\log t$; it
does not prove that lower bound for $q$.

For $0\le\tau_0<\tau$, let $\mathsf K_p^\beta(\tau_0,\tau;x,y)$ denote the zero-exterior
evolution kernel of \eqref{eq:time-changed-Dirichlet}, with the convention
$$
 q(\tau,x)=\sum_y\mathsf K_p^\beta(\tau_0,\tau;x,y)q(\tau_0,y).
$$
Thus $x$ is the spatial point at which the solution is evaluated at time
$\tau$, while $y$ indexes the data at time $\tau_0$. Define the operational
time by
\begin{equation*}
 \Theta(\tau_0,\tau)=\int_{\tau_0}^\tau\frac{dr}{1-\beta(r)}.
\end{equation*}
The identity $\mathfrak t'=(1-\beta)^{-1}$ gives
$\Theta(\tau_0,\tau)=\mathfrak t(\tau)-\mathfrak t(\tau_0)$.
For the unperturbed equation $\beta\equiv0$,
Lemma~\ref{lem:killed-kernel-upper} proves the upper power
$(t-s)^{-d/2-1}$ with one factor of the signed normal coordinate
\eqref{eq:normal-coordinate} at each endpoint. A Shabani-type lower
estimate sufficient for the present reduction would be the following:
for each $0<d_-<d_+<\infty$ and $A<\infty$, find $c>0$ and $T_0$ such
that, uniformly in the endpoint residues modulo $\Gamma$,
\begin{equation}\label{eq:target-killed-lower}
 \mathsf K_p^\beta(\tau_0,\tau;x,y)\ge
 c\,\Theta(\tau_0,\tau)^{-d/2-1},
\end{equation}
whenever $\Theta(\tau_0,\tau)\ge T_0$,
$$
 d_-\le p\cdot x-h\tau+b\le d_+,\qquad
 d_-\le p\cdot y-h\tau_0+b\le d_+,
$$
and
$$
 |y-x+\Theta(\tau_0,\tau)v_p|\le
 A\sqrt{\Theta(\tau_0,\tau)}.
$$
To see how this estimate would supply the linear seed, start the evolution at time zero and put
$\tau=\sigma(t)$. Choose a fixed initial site $y_0$ with positive
data, and sites $x_t=\tau v_p+O(1)$ whose boundary distances lie in a
fixed positive interval. Since $\Theta(0,\tau)=t$, their displacement
from the mean is $O(\log t)$, within the diffusive window. The factor
$((t+T)/T)^\kappa$ obtained on undoing the exponential weight cancels
the kernel power $t^{-\kappa}$, giving $v(t,x_t)\geq m>0$.

The displacement condition is a diffusive neighborhood of the mean
$Q_p$-displacement in operational time. Shabani's
\cite[Proposition~3.2]{Shabani} transformed continuous-space estimate
concerns a specified initial datum. Here we require a kernel bound
uniform in both endpoint residues and in the starting time.
Grama, Lauvergnat and Le Page~\cite{GramaLauvergnatLePage} developed a
discrete-time conditioned local limit theory for finite-state additive
functionals. It provides background for the expected boundary-distance
power. The present kernel also involves continuous time, a vector
displacement, periodic residues, and the factor $1-\beta(\tau)$; these
features require a separate lower estimate.

The following comparison applies to every decreasing domain with the
deletion convention above. For $0\leq v_0\leq u_0$ supported in $D(0)$,
put $v(t)=\mathcal U_D(t,0)v_0$ and define
\begin{equation}\label{eq:quadratic-Dirichlet-potential}
 J_v(t,x)=\int_0^t
    \bigl[\mathcal U_D(t,s)(v(s,\cdot)^2)\bigr](x)\,\dd s.
\end{equation}

\begin{theorem}\label{thm:Dirichlet-quotient}
Assume \eqref{eq:periodic-kpp}--\eqref{eq:u0}, and choose
\begin{equation}\label{eq:quotient-K}
 K\geq\max\left\{1,\max_{x\in\mathcal C}a(x),K_F\right\}.
\end{equation}
For every decreasing domain and initial datum described above,
\begin{equation}\label{eq:Dirichlet-quotient-bound}
 u(t,x)\geq\frac{v(t,x)^2}{v(t,x)+KJ_v(t,x)}
       \qquad(t\geq0,\ x\in\Z^d),
\end{equation}
where the quotient is zero when its denominator vanishes.
\end{theorem}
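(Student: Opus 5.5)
We will show that $w=v^2/(v+KJ_v)$, extended by zero outside $D(t)$ and wherever the denominator vanishes, is a subsolution of \eqref{eq:main} in the integral sense for decreasing domains introduced above. The comparison with $u$ then follows from the finite-box exhaustion, using Lemma~\ref{lem:fixed-domain-comparison} between deletion times. At time zero, $J_v(0)=0$, so $w(0)=v_0\le u_0$. At a deletion, $w$ is reset to zero, which is a nonpositive jump. Throughout, we work first with the finite-box evolutions $\mathcal U_{D_N}$, where $v$ and $J$ are genuine ODE solutions. We then pass to the monotone limit; the integral inequality is preserved because every term is bounded on finite slabs and converges monotonically or dominatedly.

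\textbf{Inputs.} Between deletion times we have $v_t=Lv$ and $J_t=LJ+v^2$, by the variation-of-constants identity for $\mathcal U_D$. Both functions are nonnegative, and $v\le \ee^{tL}v_0$.

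\textbf{The residual.} Write $D=v+KJ$, so that $D_t=LD+Kv^2$. The quotient $w=v^2/D$ is the discrete analogue of the quotient used for KPP lower bounds. We compute
$$
w_t=\frac{2v\,Lv}{D}-\frac{v^2(LD+Kv^2)}{D^2}.
$$
We must show that this is at most $\Delta w+aw-Kw^2$, since $F(x,s)\ge a(x)s-K_Fs^2\ge a(x)s-Ks^2$ on $[0,1]$ by \eqref{eq:quadratic-kpp-loss} and \eqref{eq:quotient-K}. The potential terms cancel exactly: $2av^2/D-av^2/D=aw$. The quadratic term gives $-Kv^4/D^2=-Kw^2$ exactly. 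It therefore remains to prove the discrete Laplacian inequality
$$
\frac{2v\Delta v}{D}-\frac{v^2\Delta D}{D^2}\le\Delta\Bigl(\frac{v^2}{D}\Bigr).
$$
This is the step I expect to be the main obstacle. Expanding over edges, with $v'$ and $D'$ denoting values at a neighbor, the inequality reduces edgewise to
$$
\frac{v'^2}{D'}\ge\frac{2vv'}{D}-\frac{v^2D'}{D^2}.
$$
This holds because
$$
\frac{v'^2}{D'}-\frac{2vv'}{D}+\frac{v^2D'}{D^2}=D'\Bigl(\frac{v'}{D'}-\frac{v}{D}\Bigr)^2\ge0.
$$
So the function $(v,D)\mapsto v^2/D$ is jointly convex, which is exactly what the discrete Laplacian needs. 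When the neighbor has $D'=0$, the neighbor lies outside the domain or has $v'=0$, and the inequality is trivial since then the right side is at most $0$ (using $v'\ge0$).

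\textbf{Range.} We also need $w\le1$ in order to use the KPP bound on $[0,1]$. We have $w\le v$, and $v\le \ee^{tL}v_0$, which may exceed one. However, $w\le v^2/(v+KJ)$, and I would try to prove $w\le1$ directly. The simplest route is to argue that the differential inequality implies $w$ is a subsolution of the extended reaction $\widetilde F$, since $\widetilde F(x,s)\ge a s-Ks^2$ also holds for $s>1$ when $K\ge\max a$ and $K\ge1$: indeed $F_s(x,1)(s-1)\ge -L_F(s-1)$, and $as-Ks^2\le a s-\max(a)s^2\le0$, which yields the bound after checking constants, possibly enlarging $K$. Comparison for the extended equation, whose solution is $u\in[0,1]$, then gives $w\le u$ without separately bounding $w$. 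If that check fails, I would instead bound $J$ from below to obtain $w\le1$.
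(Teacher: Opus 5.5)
\textbf{Verdict: the residual step matches the paper; the range step has a genuine but fixable gap.}

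Your residual computation is exactly the paper's. The edgewise remainder $D'(v'/D'-v/D)^2$ is its Lemma~\ref{lem:quotient-edge-identity}, and the potential and quadratic terms cancel in the same way.

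The gap is in the \textbf{Range} step, which you leave unverified. Your sketch for $\widetilde F(x,s)\ge a(x)s-Ks^2$ when $s>1$ does not work. Knowing that $-L_F(s-1)$ and $as-Ks^2$ are both nonpositive does not order them. Under only $K\ge\max a$ and $K\ge1$, the inequality is in fact false. Take $F(s)=as(1-s)(1+bs)$ with $a\ge1$ and $0<b\le1$. It satisfies the KPP hypotheses and has $F_s(1)=-a(1+b)$. With $K=a$, one gets $\widetilde F(s)<as-Ks^2$ for $1<s<1+b$. You also may not ``enlarge $K$''. The quotient $v^2/(v+KJ_v)$ decreases in $K$, so a bound proved for a larger $K$ is strictly weaker than the theorem, which is claimed for every admissible $K$. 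Your fallback of bounding $J_v$ from below is not specified.

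The step can be closed in either of two ways.

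\emph{Within your route.} Use the hypothesis $K\ge K_F$, which your sketch never invokes. One has $F_s(x,1)=a(x)+\int_0^1F_{ss}(x,r)\,\dd r\ge a(x)-2K_F$. Hence, for $s>1$, $\widetilde F(x,s)-(a(x)s-Ks^2)\ge Ks^2-2K_Fs+2K_F-a(x)=:g(s)$. Here $g(1)=K-a(x)\ge0$ and $g'(s)\ge2(K-K_F)\ge0$ on $[1,\infty)$. So $w_N$ is a subsolution of the globally Lipschitz extended equation and compares with $u$.

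\emph{The paper's route.} The paper avoids the extension altogether. It compares $w_N$ with the zero-exterior solution $z_N$ of $z_t=\Delta z+a(x)z-Kz^2$ on the same moving box domain, with $z_N(0)=v_{0,N}$. Lemma~\ref{lem:fixed-domain-comparison} only needs this reaction to be Lipschitz on a bounded range, which holds because $w_N\le v_N$ is bounded on finite slabs. Next, $0\le z_N\le1$, since $K\ge\max a$ makes the constant $1$ a supersolution. Finally $z_N\le u$, because $a(x)r-Kr^2\le F(x,r)$ on $[0,1]$. This gives $w_N\le z_N\le u$, and $w_N\le1$ follows a posteriori.

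Two smaller points are needed to apply the comparison lemma, and the paper records both. First, $w_N=v_N^2/S_N$ is locally Lipschitz in time, because $f(v,S)=v^2/S$ is Lipschitz on the cone $0\le v\le S$. Second, the differential inequality holds trivially at vertices where $v_N+KJ_N=0$.
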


This estimate converts a positive linear seed with controlled
accumulated loss into a nonlinear lower bound.
Proposition~\ref{prop:general-lower-criterion} below applies it near
the predicted front.

Hamel et al.~\cite[Section~2.1.2]{HNRRPeriodic} use time-dependent
damping to turn a linear Dirichlet solution into a nonlinear
subsolution. Positive Dirichlet comparisons also appear in
\cite[Section~4.1]{Shabani}. In the lattice problem, multiplication by a
small constant alone gives
\begin{equation*}
 (\partial_t-\Delta)(\varepsilon v)-F(x,\varepsilon v)
       =a(x)\varepsilon v-F(x,\varepsilon v)\geq0
       \quad(0\leq\varepsilon v\leq1).
\end{equation*}
The quotient $w=v^2/(v+KJ_v)$ incorporates the nonlinear loss through
the quadratic potential $J_v$. Its residual has the required
nonpositive sign, as the following edge identity shows.

For the choice \eqref{eq:quotient-K}, one has
$F(x,r)\geq a(x)r-Kr^2$ for $0\leq r\leq1$.
Although no bound uniform in $t$ is assumed, the linear equation
$v_t=\Delta v+a(x)v$ with zero exterior values and $a(x)\le a_{\max}$ gives
$$
 \|v(t)\|_\infty\le e^{a_{\max}t}\|v_0\|_\infty
 \qquad(0\le t\le T_0),
$$
by applying Lemma~\ref{lem:fixed-domain-comparison} to
$e^{a_{\max}t}\|v_0\|_\infty$. Hence one may take
$M_{T_0}=e^{a_{\max}T_0}\|v_0\|_\infty$.
For $0\le s\le t\le T_0$ one has
$v(s,y)^2\le M_{T_0}v(s,y)$ at every site. Applying the positive operator
$\mathcal U_D(t,s)$ and using the composition identity
$\mathcal U_D(t,s)v(s)=v(t)$ gives
$\mathcal U_D(t,s)(v(s)^2)(x)\le M_{T_0}v(t,x)$. Integration in $s$
yields $J_v(t,x)\le M_{T_0}t\,v(t,x)$.
All quantities below are thus finite on bounded time intervals.

\begin{lemma}\label{lem:quotient-edge-identity}
For $0\leq v\leq S$ and $0\leq\widetilde v\leq\widetilde S$,
with $S,\widetilde S>0$, the function $f(v,S)=v^2/S$ satisfies
\begin{equation*}
\begin{split}
 f(\widetilde v,\widetilde S)-f(v,S)
   -2\frac vS(\widetilde v-v)
   +\frac{v^2}{S^2}(\widetilde S-S)
   =\widetilde S
      \left(\frac{\widetilde v}{\widetilde S}-\frac vS\right)^2.
\end{split}
\end{equation*}
With $f(0,0)=0$, this function is locally Lipschitz on the closed cone
$0\leq v\leq S$.
\end{lemma}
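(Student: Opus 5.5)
The identity is pure algebra. The plan is to expand the left-hand side and match it against the expanded square on the right; the Lipschitz claim is then a separate elementary check.

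Put $\rho=v/S$ and $\widetilde\rho=\widetilde v/\widetilde S$, so $f(v,S)=\rho^2S$ and $f(\widetilde v,\widetilde S)=\widetilde\rho^{\,2}\widetilde S$. The left-hand side becomes
\begin{equation*}
 \widetilde\rho^{\,2}\widetilde S-\rho^2S-2\rho(\widetilde\rho\widetilde S-\rho S)+\rho^2(\widetilde S-S).
\end{equation*}
The terms in $S$ cancel, since $-\rho^2S+2\rho^2S-\rho^2S=0$. What remains is $\widetilde S(\widetilde\rho^{\,2}-2\rho\widetilde\rho+\rho^2)=\widetilde S(\widetilde\rho-\rho)^2$, which is the right-hand side. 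Conceptually, $f$ is the perspective of $v\mapsto v^2$, hence $1$-homogeneous and convex. The first-order terms $2\rho$ and $-\rho^2$ are exactly the partial derivatives $\partial_vf$ and $\partial_Sf$ at $(v,S)$, and the identity is the exact Bregman remainder of this perspective function. No assumption beyond $S,\widetilde S>0$ is needed here; the cone condition matters only for the second claim.

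For the Lipschitz statement, I would note that on the open cone $\{0\le v\le S,\ S>0\}$ the gradient $(2v/S,-v^2/S^2)=(2\rho,-\rho^2)$ has $\rho\in[0,1]$. The gradient is therefore bounded by a constant independent of $S$, with norm at most $\sqrt{5}$. The cone is convex, so the segment between any two of its points stays in the cone. If both points have $S>0$, the mean value theorem along that segment gives a global Lipschitz constant $\sqrt5$. If the segment passes through the vertex, or one endpoint is the vertex $(0,0)$, I would use the bound $0\le f(v,S)=\rho v\le v\le S$ together with continuity at $(0,0)$, approximating the endpoint from inside the cone. This extends the same bound to the closed cone with $f(0,0)=0$, and in fact yields a global Lipschitz property, stronger than local.

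There is no real obstacle in this lemma. The only point needing care is the vertex $S=0$, where $f$ is defined by convention; the estimate $f\le v$ handles it. The substantive use of the identity comes later, in Theorem~\ref{thm:Dirichlet-quotient}. There one applies it with $(v,S)=(v(x),v(x)+KJ_v(x))$ and $(\widetilde v,\widetilde S)$ at a neighbor, so that the discrete Laplacian of $w=f(v,v+KJ_v)$ dominates the chain-rule expression $2\rho\Delta v-\rho^2\Delta S$ edge by edge. That is the source of the correct residual sign.
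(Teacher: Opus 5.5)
Your proof is correct and follows the paper's argument. The identity comes from direct expansion; your substitution $\rho=v/S$, $\widetilde\rho=\widetilde v/\widetilde S$ just organizes it. The Lipschitz property comes from $|2v/S|\le 2$ and $|v^2/S^2|\le 1$, integrated along segments in the convex cone, with the vertex handled by a limit; your bound $0\le f\le v$ makes that step explicit and gives a global constant.
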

\begin{proof}
Expansion of the square gives the identity. On the interior of the
cone, the two partial derivatives are $2v/S$ and $-v^2/S^2$, bounded
in absolute value by two and one. Integrating the derivative along
line segments in the cone and then taking limits at its origin proves
the Lipschitz assertion.
\end{proof}

\begin{proof}[Proof of Theorem~\ref{thm:Dirichlet-quotient}]
First intersect the moving domain with the box $[-N,N]^d$ and impose zero
data on both the moving boundary and the box boundary. Let $v_N,J_N$ denote
the resulting finite-dimensional Dirichlet solution and quadratic potential. Between deletion times they satisfy
$$
 (\partial_t-L)v_N=0,\qquad
 (\partial_t-L)J_N=v_N^2,\qquad J_N(0)=0.
$$
Put $S_N=v_N+KJ_N$ and
$w_N=v_N^2/S_N$, with zero value when $S_N=0$.
For $S_N(t,x)>0$, write $R_N=v_N/S_N$.
Lemma~\ref{lem:quotient-edge-identity}, together with the degree-one
homogeneity of $f$, gives the exact residual
\begin{equation}\label{eq:quotient-nonlinear-residual}
\begin{split}
 &(\partial_t-\Delta)w_N(t,x)-a(x)w_N(t,x)+Kw_N(t,x)^2\\
 &\qquad=-\sum_{|z|_1=1}S_N(t,x+z)
                  \bigl(R_N(t,x+z)-R_N(t,x)\bigr)^2\leq0.
\end{split}
\end{equation}
A summand with $S_N(t,x+z)=0$ is interpreted as zero.
Indeed the extra source in the equation for $S_N$ is $Kv_N^2$;
its contribution through $f_S=-v_N^2/S_N^2$ is exactly $-Kw_N^2$.
The linear potential contributes $a(x)f$ by homogeneity, and the
neighbor remainders are the nonnegative squares in the lemma.

The quotient is locally absolutely continuous between deletions by
the Lipschitz assertion. On the set where $w_N(t,x)=0$, its time derivative is zero for almost every
such time, while $\Delta w_N(t,x)=\sum_{|z|_1=1}w_N(t,x+z)\ge0$ and
$a(x)w_N-Kw_N^2=0$. Hence
$(w_N)_t-\Delta w_N-a(x)w_N+Kw_N^2\le0$ there as well. At a deletion the quotient is reset
to zero, a nonpositive jump. Initially $w_N=v_{0,N}\leq u_0$.
Let $z_N$ solve the zero-exterior Dirichlet equation
$(z_N)_t=\Delta z_N+a(x)z_N-Kz_N^2$ on the same moving finite domain, with
$z_N(0)=v_{0,N}$. Between deletion times, the residual
\eqref{eq:quotient-nonlinear-residual} says that $w_N$ is a subsolution of
this logistic equation. Lemma~\ref{lem:fixed-domain-comparison} applies between deletions,
and resetting both values to zero preserves the order. Hence
$w_N\le z_N$ for all times. Since $K\ge\max a$, the constants $0$ and
$1$ are a subsolution and a supersolution for this logistic equation,
so $0\le z_N\le1$. On this interval,
$a(x)r-Kr^2\le F(x,r)$. Comparison with $u$, whose exterior values are
nonnegative, therefore gives
$0\le w_N\le z_N\le u\le1$; the order is again preserved at every
deletion.

Because $D_N(t)\subset D_{N+1}(t)$, applying
Lemma~\ref{lem:fixed-domain-comparison} between each pair of consecutive
deletion times, and observing that every reset is nonpositive, gives
$v_N\le v_{N+1}$. For nonnegative data the kernel inequality
$\mathcal U_{D_N}(t,s)\le\mathcal U_{D_{N+1}}(t,s)$ holds pointwise. Hence
$v_N(s)^2\le v_{N+1}(s)^2$ and, using the defining integral for $J_N$,
$$
 J_N(t)=\int_0^t\mathcal U_{D_N}(t,s)(v_N(s)^2)\,ds
 \le\int_0^t\mathcal U_{D_{N+1}}(t,s)(v_{N+1}(s)^2)\,ds=J_{N+1}(t).
$$ Their monotone limits are therefore $v$ and $J_v$ by the
definitions of $\mathcal U_D$ and \eqref{eq:quadratic-Dirichlet-potential}.
The bound $J_N(t)\leq M_{T_0}t\,v_N(t)$ controls the limit for
$t\leq T_0$.
Where $v>0$ the quotients converge to the stated quotient. Where $v=0$
the bound $0\leq w_N\leq v_N$ gives convergence to zero.
Thus \eqref{eq:Dirichlet-quotient-bound} holds, including at deletion
times. To justify the subsolution property without differentiating a
pointwise limit, first integrate the finite-box inequality between any
two times $s<t$, including the nonpositive deletion jumps. Since
$0\leq w_N\leq1$, the integrands are uniformly bounded on $[s,t]$;
the Laplacian at a fixed vertex contains only finitely many neighbors.
Dominated convergence therefore passes this integral inequality to the
limiting quotient. The resulting integral subsolution takes values in
$[0,1]$ and has the prescribed post-deletion values. Taylor's lower
bound $a(x)r-Kr^2\leq F(x,r)$ on this interval gives precisely the
subsolution inequality for the original equation.
\end{proof}

For a spatially constant scalar problem the square remainder vanishes
and the quotient is the exact logistic solution. If the initial datum
of the linear problem is multiplied by $\varepsilon>0$, then $v$ is
multiplied by $\varepsilon$ and $J_v$ by $\varepsilon^2$. In particular,
the normalized quantity $J_v/v^2$ is independent of that amplitude.
This is the ratio used in the criterion below.

The quotient also contains the scalar damping construction. If
$E_v(s)\geq\sup_y v(s,y)$ is locally integrable, positivity and the
composition law give
\begin{equation*}
 J_v(t,x)\leq v(t,x)\int_0^t E_v(s)\,\dd s,
 \qquad
 \frac{v(t,x)^2}{v(t,x)+KJ_v(t,x)}
 \geq\frac{v(t,x)}{1+K\int_0^t E_v(s)\,\dd s}.
\end{equation*}
The inequalities include $v(t,x)=0$ under the zero-quotient convention.
Indeed, apply $v(s)^2\leq E_v(s)v(s)$ inside the positive evolution and
use $\mathcal U_D(t,s)v(s)=v(t)$ before integrating. When
$E_v\in L^1(0,\infty)$, the multiplier on the right stays positive
uniformly in time, as in \cite[Section~2.1.2]{HNRRPeriodic}.
By contrast, the order-one seeds required below force every such
envelope to satisfy $E_v(s)\geq m$ for all sufficiently large $s$.
Thus integrability of a global envelope is not available for that
logarithmically shifted problem. The bound $\sup_t J_v(t,x_t)/v(t,x_t)^2<\infty$ must therefore still be
assumed in Proposition~\ref{prop:general-lower-criterion}.

To express this loss more precisely, write
$\mathcal K_D(t,s;x,y)$ for the kernel of $\mathcal U_D(t,s)$.
The normalized kernel below describes which earlier sites contribute
to the linear population at a fixed front site. It expresses the
accumulated loss in terms of those contributions.
For $v(t,x)>0$ and $0\leq s\leq t$, define
\begin{equation}\label{eq:Dirichlet-bridge-marginal}
 \mathfrak b_{t,x}(s,y)=
       \frac{\mathcal K_D(t,s;x,y)v(s,y)}{v(t,x)}.
\end{equation}
The backward transition kernels are obtained by replacing $(t,x)$
with $(s_2,y_2)$ at an earlier time $s_1<s_2$, using only positive
denominators. Terms with $v(s,y)=0$ are zero.
\begin{lemma}\label{lem:Dirichlet-bridge}
The weights $\mathfrak b_{t,x}(s,\cdot)$ sum to one and are consistent
with these backward kernels. Moreover,
\begin{equation}\label{eq:normalized-potential-identity}
 \frac{J_v(t,x)}{v(t,x)^2}
  =\int_0^t\sum_y\mathfrak b_{t,x}(s,y)
                   \frac{v(s,y)}{v(t,x)}\,\dd s.
\end{equation}
\end{lemma}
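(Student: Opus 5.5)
The plan is to read all three assertions off the composition identity $\mathcal U_D(t,s)\mathcal U_D(s,0)=\mathcal U_D(t,0)$ and its kernel form. This identity was proved above by monotone finite-box exhaustion, and all kernels are nonnegative.

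\emph{Step 1 (normalization).} Write $v(t,x)=\sum_y\mathcal K_D(t,s;x,y)v(s,y)$ for $0\le s\le t$. This is the composition identity applied to $v_0$: $v(t)=\mathcal U_D(t,s)\mathcal U_D(s,0)v_0$. Dividing by $v(t,x)>0$ gives $\sum_y\mathfrak b_{t,x}(s,y)=1$. Terms with $v(s,y)=0$ vanish on both sides, so the convention is consistent.

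\emph{Step 2 (consistency).} For $s_1<s_2\le t$, write the kernel Chapman--Kolmogorov relation
\[
\mathcal K_D(t,s_1;x,y_1)=\sum_{y_2}\mathcal K_D(t,s_2;x,y_2)\mathcal K_D(s_2,s_1;y_2,y_1).
\]
This is the composition identity tested on indicator data, which is legitimate since $\mathcal U_D$ is defined on bounded nonnegative data. Multiply by $v(s_1,y_1)/v(t,x)$. Then insert the factor $v(s_2,y_2)/v(s_2,y_2)$ whenever $v(s_2,y_2)>0$. This yields
\[
\mathfrak b_{t,x}(s_1,y_1)=\sum_{y_2}\mathfrak b_{t,x}(s_2,y_2)\,\mathfrak b_{s_2,y_2}(s_1,y_1).
\]
The one point needing care is the sites where $v(s_2,y_2)=0$. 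There $\sum_{y_1}\mathcal K_D(s_2,s_1;y_2,y_1)v(s_1,y_1)=v(s_2,y_2)=0$, so every term $\mathcal K_D(s_2,s_1;y_2,y_1)v(s_1,y_1)$ vanishes by nonnegativity. Hence those $y_2$ contribute nothing to the left side, and restricting to positive denominators loses nothing. This marginal consistency is what ``consistent with these backward kernels'' means.

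\emph{Step 3 (potential identity).} Expand the definition \eqref{eq:quadratic-Dirichlet-potential} in kernel form:
\[
J_v(t,x)=\int_0^t\sum_y\mathcal K_D(t,s;x,y)v(s,y)^2\,\dd s.
\]
Dividing by $v(t,x)^2$ and factoring $v(s,y)^2=v(s,y)\cdot v(s,y)$ gives exactly $\int_0^t\sum_y\mathfrak b_{t,x}(s,y)\,v(s,y)/v(t,x)\,\dd s$. To justify writing the operator as an integral against the kernel, apply monotone convergence from the finite boxes. Tonelli handles the interchange of $\int\dd s$ and $\sum_y$, since every term is nonnegative. Finiteness follows from the bound $J_v\le M_{T_0}t\,v$.

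The main obstacle is modest: making sure the kernel representation and the Chapman--Kolmogorov relation hold for $\mathcal U_D$ on infinite data, including deletion times. I would obtain both from the finite-box kernels, where they are matrix identities, and pass to the limit by monotone convergence. This uses $\mathcal K_{D_N}\uparrow\mathcal K_D$ pointwise, which follows from the proven order $\mathcal U_{D_N}\le\mathcal U_{D_{N+1}}$ applied to indicators.
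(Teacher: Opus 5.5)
Your proposal is correct and follows the paper's proof. Normalization comes from $v(t)=\mathcal U_D(t,s)v(s)$; backward consistency comes from the composition law, where the factors $v(s_2,y_2)$ cancel; and the potential identity follows by inserting the definition of $\mathfrak b_{t,x}$ into the kernel form of $J_v$ and applying Tonelli. You also supply details the paper leaves implicit: the vanishing of terms at sites with $v(s_2,y_2)=0$, and the kernel representation and Chapman--Kolmogorov relation obtained from the finite-box kernels by monotone convergence.
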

\begin{proof}
The equality $v(t)=\mathcal U_D(t,s)v(s)$ proves that the sum is one.
For $s_1<s_2<t$, multiply the two normalized kernels at times
$s_1<s_2<t$. The factors $v(s_2,y_2)$ cancel, and summation over $y_2$
gives
$\mathcal K_D(t,s_1;x,y_1)v(s_1,y_1)/v(t,x)
=\mathfrak b_{t,x}(s_1,y_1)$ by the composition law.
Thus the normalized kernels satisfy the stated backward consistency. Inserting
\eqref{eq:Dirichlet-bridge-marginal} into
\eqref{eq:quadratic-Dirichlet-potential} proves the integral identity
by Tonelli's theorem. Finiteness on bounded time intervals was checked
above, including for the finite-box exhaustion limit.
\end{proof}

Keep the domain \eqref{eq:moving-Dirichlet-domain} and let
$v(t)=\mathcal U_D(t,0)v_0$, where $v_0$ is nonzero and finitely
supported, $0\leq v_0\leq u_0$, and $\supp v_0\subset D(0)$.
\begin{proposition}\label{prop:general-lower-criterion}
Assume \eqref{eq:periodic-kpp}--\eqref{eq:u0}. Suppose that, for some
$m,C_*,R,t_0>0$, every $t\geq t_0$ admits $x_t\in D(t)$ with
\begin{equation}\label{eq:criterion-seed-inputs}
 |x_t-m_e^T(t)e|\leq R,\qquad v(t,x_t)\geq m,\qquad
 J_v(t,x_t)\leq C_*v(t,x_t)^2.
\end{equation}
For every $\theta\in(0,1)$ and finite $\rho\geq0$, there are
$C,t_1>0$ such that
\begin{equation}\label{eq:criterion-inner-tube}
 u(t,x)\geq\theta
 \quad\hbox{if }t\geq t_1,\quad |x-re|\leq\rho,\quad
                     0\leq r\leq m_e^T(t)-C.
\end{equation}
Consequently, \eqref{eq:general-target} holds for $\rho\geq\sqrt d$.
\end{proposition}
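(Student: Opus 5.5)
The plan is to feed the seed inputs \eqref{eq:criterion-seed-inputs} into the positive Dirichlet quotient of Theorem~\ref{thm:Dirichlet-quotient}, then to raise the resulting order-one value to $\theta$ with Lemma~\ref{lem:amplification}, mirroring the integer-arrival argument in the proof of Theorem~\ref{thm:axial}. First, for $t\ge t_0$, \eqref{eq:Dirichlet-quotient-bound} together with $J_v(t,x_t)\le C_*v(t,x_t)^2$ gives
\begin{equation*}
 u(t,x_t)\ge\frac{v(t,x_t)}{1+KC_*v(t,x_t)}\ge\frac{m}{1+KC_*m}=:\eta>0,
\end{equation*}
because $r\mapsto r/(1+KC_*r)$ is increasing. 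Here $x_t$ lies within distance $R$ of the point $m_e^T(t)e$.

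Next we spread this seed across a tube of width $\rho$ and along the ray. Fix a lattice site $x$ with $|x-re|\le\rho$ and $r\ge0$. The function $m_e^T$ is continuous and strictly increasing for large $t$, since its derivative is $w(e)-\kappa/(\alpha_e(t+T))$, and it tends to infinity. Whenever $r$ is large, we can therefore pick a time $s_r$ with $m_e^T(s_r)=r$; the corresponding seed site satisfies $|x_{s_r}-x|\le R+\rho$. To pass from a value at $x_{s_r}$ to a value at $x$, we use a bounded-distance transfer. By comparison with the linear heat evolution from $u(s_r)\ge\eta\1_{\{x_{s_r}\}}$, and using $F\ge0$, we get $u(s_r+1,x)\ge\eta\,\pheat_d(1,x-x_{s_r})\ge\eta'>0$, where $\eta'$ depends only on $R+\rho$. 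This is the one-time-unit step already used in Lemma~\ref{lem:amplification}. That lemma, applied with the level $\eta'$ and the target $\theta$, then gives $u(t,x)\ge\theta$ for all $t\ge s_r+1+S$. Because $m_e^T$ is increasing, the condition $t\ge s_r+1+S$ is equivalent to $r\le m_e^T(t-1-S)$. Moreover $m_e^T(t)-m_e^T(t-1-S)\le w(e)(1+S)$, so \eqref{eq:criterion-inner-tube} holds for all large $r\le m_e^T(t)-C$ once $C\ge w(e)(1+S)$. The remaining sites, those with bounded $r$ in the tube, form a finite set. On that set, positivity at time one from \eqref{eq:heat-comparison} followed by one further application of Lemma~\ref{lem:amplification} gives a common time after which $u\ge\theta$; we take $t_1$ to be the larger of the two times.

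For the consequence, note that $m_e^T(t)=w(e)t-(\kappa_d/\alpha_e)\log t+O(1)$ and $\kappa_d/\alpha_e=(d+2)/(2p_e\cdot e)$. The tube with $\rho\ge\sqrt d$ contains a lattice point within distance $\sqrt d$ of every point $re$, so \eqref{eq:criterion-inner-tube} yields $R_{\theta,e,\rho}(t)\ge m_e^T(t)-C'$. The matching upper bound comes from \eqref{eq:ray-upper} in Theorem~\ref{thm:upper}: a site $x$ in the tube has $|x-(x\cdot e)e|\le\rho$, so it is covered with $r=x\cdot e$.

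I expect the main obstacle to be the bookkeeping, which should be routine rather than deep. Four points need care: the monotone inversion of $m_e^T$; uniformity of the heat-kernel transfer over the bounded offset $x-x_{s_r}$; the fact that Lemma~\ref{lem:amplification} is uniform in the site, so no residue dependence enters; and the additive constants linking $m_e^T$ with $\log t$ (the term $\log T$ and the difference between $\log(t+T)$ and $\log t$), which are absorbed into $O(1)$. The genuinely hard inputs, namely the seed and the loss bound, are assumed in the hypotheses.
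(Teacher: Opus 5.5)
Your proposal is correct and follows the paper's proof step for step. Both arguments use the quotient bound of Theorem~\ref{thm:Dirichlet-quotient} at the seed sites, a one-unit heat-kernel transfer over the bounded offset $|x-x_s|\le R+\rho$, and Lemma~\ref{lem:amplification}, then invert the increasing function $m_e^T$ with $C\ge w(e)(1+S)$, treat the finitely many remaining sites separately, and combine rounding to a lattice point with Theorem~\ref{thm:upper} for \eqref{eq:general-target}.
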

\begin{proof}
Theorem~\ref{thm:Dirichlet-quotient} and
\eqref{eq:criterion-seed-inputs} give, for every $s\geq t_0$,
$u(s,x_s)\geq\eta:=\min\{1/2,(m^{-1}+KC_*)^{-1}\}>0$.
There is no assumption on the size of $v$ away from these seed sites.
Restart \eqref{eq:heat-comparison} at time $s$. Since
$u(s,x_s)\ge\eta$ and the heat semigroup is positive,
$$
 u(s+1,x)\ge(e^{\Delta_{\Z^d}}u(s))(x)
 \ge\eta\pheat_d(1,x-x_s).
$$
For $|x-m_e^T(s)e|\le\rho$ and
$|x_s-m_e^T(s)e|\le R$, the displacement $x-x_s$ belongs to the finite set
$\{z\in\Z^d:|z|\le R+\rho\}$. Hence
$\eta_\rho:=\eta\min_{|z|\le R+\rho}\pheat_d(1,z)>0$ and the displayed
bound becomes $u(s+1,x)\ge\eta_\rho$.
The last minimum is over the finite set of lattice displacements with
length at most $R+\rho$, and is positive. The amplification lemma
then gives $u(t,x)\geq\theta$ whenever $t\geq s+1+S_\rho$, with
$S_\rho$ uniform in $s,x$.

Since $\alpha_e w(e)=h$, the derivative of $m_e^T$ is positive for
our choice of $T$ and is bounded above by $w(e)$. For every sufficiently
large $r$ there is a time $s\geq t_0$ with $m_e^T(s)=r$. If
$r\leq m_e^T(t)-C$, where $C>w(e)(1+S_\rho)$, then
$s\leq t-1-S_\rho$. Applying Lemma~\ref{lem:amplification} to the seed produced at time $s+1$
gives \eqref{eq:criterion-inner-tube} for these $r$. The omitted bounded range of $r$ contains only finitely many lattice
vertices in the tube. At time $1$, \eqref{eq:heat-comparison} is strictly
positive at each of these vertices; taking the minimum over this finite set
produces a positive seed level. Lemma~\ref{lem:amplification} applied to that
minimum supplies one time after which all of these vertices exceed
$\theta$.

For $\rho\geq\sqrt d$, round the coordinates of
$(m_e^T(t)-C-2\sqrt d)e$ to the nearest integers. The rounded vertex
is within $\sqrt d/2$ of this point, belongs to the tube in
\eqref{eq:tube-level-definition}, and has projection
$m_e^T(t)-O(1)$. Hence
$R_{\theta,e,\rho}(t)\geq m_e^T(t)-O(1)$. Theorem~\ref{thm:upper} gives
$R_{\theta,e,\rho}(t)\leq m_e^T(t)+O(1)$. Finally
$m_e^T(t)=w(e)t-(\kappa_d/\alpha_e)\log t+O(1)$ for fixed $T$.
Combining the two bounds proves \eqref{eq:general-target}.
\end{proof}

The reduction separates the remaining work into two estimates.
\begin{enumerate}
\item A positive near-boundary seed: the killed-kernel lower bound
\eqref{eq:target-killed-lower}, for the time-changed equation
\eqref{eq:time-changed-Dirichlet}, would supply an order-one linear
seed near the predicted front. Both endpoint distances must stay in a
fixed positive interval, and the displacement must remain within a
diffusive neighborhood of its mean. The upper killed-kernel bound and
whole-space Bloch estimates established above do not give this lower
bound.
\item A bound on accumulated loss: at those seed sites, one must also
control $J_v(t,x_t)/v(t,x_t)^2$ uniformly in time. Formula
\eqref{eq:normalized-potential-identity} makes the required integral
explicit. The estimate
$J_v(t,x)\leq t\sup_{s\leq t,y}v(s,y)\,v(t,x)$ grows with time and
does not provide that control.
\end{enumerate}
These are distinct requirements. Proposition~\ref{prop:general-lower-criterion}
proves that together they yield the matching lower bound. The
transverse-periodic case already satisfies the desired conclusion by a
different construction: the cancellation in
\eqref{eq:transverse-factor-equation} combines with the scalar pulse
residual in \eqref{eq:pulse-nonlinear-residual}.

The direction of heterogeneity determines which matching bounds are
already available. In the transverse setting, the eigenvalue is at
least the cell average
$\bar a_\perp$. Indeed, testing its Rayleigh quotient with the constant
vector gives
\begin{equation*}
 a_{\mathrm{eff}}=
 \max_{\zeta\ne0}
 \frac{\displaystyle\sum_{\mathcal C_\perp}a_\perp\zeta^2
       -\frac12\sum_{y\in\mathcal C_\perp}\sum_{|b|_1=1}
                      (\zeta(y+b)-\zeta(y))^2}
      {\displaystyle\sum_{\mathcal C_\perp}\zeta^2}
 \geq\bar a_\perp.
\end{equation*}
Equality holds only for a constant $a_\perp$. Indeed, equality would
make the constant test vector a maximizer of the Rayleigh quotient,
hence an eigenvector of the symmetric cell matrix. Its eigenvalue
equation then forces $a_\perp(y)=a_{\mathrm{eff}}$ everywhere.
Writing $c(a)$ and $\lambda(a)$ for the scalar critical pair, implicit
differentiation gives $\lambda'(a)=1/(2\lambda(a)\cosh\lambda(a))>0$
and $c'(a)=1/\lambda(a)>0$. Thus, at a fixed transverse cell average,
a nonconstant linear growth rate strictly increases the axial speed
and decreases the coefficient $\kappa_d/\lambda$ relative to the
homogeneous coefficient with that average. This is a comparison of
the spectral parameters, not a pointwise comparison of solutions.

Let $d=2$ and let $a_\perp(y)$ take the values $a_0$ and $a_1$ on even
and odd $y$, respectively. On the two-site transverse cell,
\begin{equation*}
 a_{\mathrm{eff}}=-2+\frac{a_0+a_1}{2}
                  +\sqrt{\left(\frac{a_0-a_1}{2}\right)^2+4}.
\end{equation*}
If $a_0\ne a_1$, this is strictly larger than their arithmetic mean.
Theorem~\ref{thm:axial} applies along $e_1$ with the critical pair
computed from this eigenvalue. This example allows a nonconstant linear growth rate while retaining
the scalar longitudinal construction.

For comparison, place the same alternation in the propagation
coordinate $x_1$, independently of the other coordinates. Direct
diagonalization of the full two-cell tilted matrix gives
\begin{equation*}
 H(p)=2\sum_{i=2}^d(\cosh p_i-1)-2+\frac{a_0+a_1}{2}
       +\sqrt{\left(\frac{a_0-a_1}{2}\right)^2+4\cosh^2p_1}.
\end{equation*}
The off-diagonal entries are $2\cosh p_1$ and the diagonal entries
are $a_i-2+2\sum_{k=2}^d(\cosh p_k-1)$. This model is covered by
the upper theorem in every direction and by the matching axial theorem
along $e_2,\ldots,e_d$, but, when $a_0\ne a_1$, not by that axial theorem along $e_1$.
The distinction is the direction of linear heterogeneity, not whether
the full nonlinear reaction is spatially constant.

Another extension concerns models with a nonconstant positive periodic
equilibrium. Such an equilibrium can be normalized to one, but the
normalization changes the dispersal operator. Consider
$u_t=\Delta_{\Z^d}u+G(x,u)$ with equilibrium $\varpi>0$, satisfying
$\Delta_{\Z^d}\varpi+G(x,\varpi)=0$.

Substitution of $u=\varpi v$ gives exactly
\begin{equation*}
 \begin{split}
 v_t(t,x)
 &=\sum_{|z|_1=1}\frac{\varpi(x+z)}{\varpi(x)}
                         \bigl(v(t,x+z)-v(t,x)\bigr)\\
 &\quad+\frac{G(x,\varpi(x)v(t,x))-v(t,x)G(x,\varpi(x))}
                  {\varpi(x)}.
 \end{split}
\end{equation*}
The resulting jump rates are positive and periodic. They equal the
unit nearest-neighbor rates when $\varpi$ is constant; otherwise the
normalization leads to a periodic dispersal operator. This calculation
identifies the operator that must be analyzed to extend the lower
comparison to the equilibria considered in
\cite[Section~1.1]{HNRRPeriodic}.

Time-dependent environments raise a further question. The time almost
periodic theories in
\cite{HuangShen,ShenVariational,ShenDiscrete,CaoShenLattice} provide
a natural setting for further critical-front questions. For a reaction
$F(t,x,u)$, the finite-cell linearization becomes a nonautonomous system
$v_t=A_p(t)v$. The fixed eigenvectors and time-homogeneous tilted walk
used here must then be replaced by time-dependent spectral and kernel
estimates. Resolving a logarithmic displacement in that setting requires
estimates uniform in time. The leading spreading theory alone does not
provide them, and replacing the growth rate by its temporal mean would
require a separate justification.

\section{Conclusions}
This work links the position, shape, and passage times of invasion
fronts in spatially periodic lattice habitats. The general directional
upper bound identifies how periodic spectral geometry determines the
logarithmic displacement. When linear growth depends only on the
transverse coordinates, the matching lower bound gives the front position to
bounded error and describes its leading edge over a diffusive window.

The critical-tail expansion identifies the selected pulsating profile
and proves uniqueness within the stated tail class. Profile selection
also determines how distant sites pass through prescribed population
levels. The limiting time gaps retain the periodic cell information,
even though the absolute phase is only known to remain bounded and to
vary increasingly slowly.

\ifshownumerics
The numerical examples make these two scales visible: the front advances
at the common leading speed while accumulating a logarithmic lag, and
the local passage times retain differences between sites within the
periodic cell.
Two compact initial populations produce different front offsets but
similar subsequent growth of the delay.
\fi

For general periodic growth, the positive Dirichlet quotient offers a
separate route to nonlinear lower bounds. It accounts explicitly for
accumulated nonlinear loss and reduces the matching logarithmic lower
bound to two concrete linear estimates. This reduction, together with
the calculation for nonconstant periodic equilibria, gives a precise
starting point for further periodic dispersal problems.

\section*{Acknowledgments}
This work was partially supported by the National Natural Science Foundation
of China (12201184 and 12301641) and the Strategic Priority Research Program
of the Chinese Academy of Sciences (XDB0900100).


\begin{thebibliography}{99}
\setlength{\itemsep}{2pt}
\bibitem{AlfaroGilettiXiao}
M. Alfaro, T. Giletti and D. Xiao,
The Bramson correction in the Fisher--KPP equation: from delay to advance,
\emph{Math. Ann.} \textbf{392} (2025), 5275--5316.
\href{https://doi.org/10.1007/s00208-025-03231-5}{doi:10.1007/s00208-025-03231-5}.

\bibitem{BFRZ}
C. Besse, G. Faye, J.-M. Roquejoffre and M. Zhang,
The logarithmic Bramson correction for Fisher--KPP equations on the
lattice $\Z$, \emph{Trans. Amer. Math. Soc.} \textbf{376} (2023),
8553--8619. \href{https://doi.org/10.1090/tran/9007}{doi:10.1090/tran/9007}.

\bibitem{Billingsley}
P. Billingsley,
\emph{Probability and Measure}, 3rd ed., Wiley, New York, 1995.

\bibitem{Bramson}
M. Bramson,
Convergence of solutions of the Kolmogorov equation to travelling waves,
\emph{Mem. Amer. Math. Soc.} \textbf{44} (1983), no.~285, iv+190 pp.
\href{https://doi.org/10.1090/memo/0285}{doi:10.1090/memo/0285}.

\bibitem{CaoShenLattice}
F. Cao and W. Shen,
Spreading speeds and transition fronts of lattice KPP equations in time
heterogeneous media, \emph{Discrete Contin. Dyn. Syst.}
\textbf{37} (2017), 4697--4727.
\href{https://doi.org/10.3934/dcds.2017202}{doi:10.3934/dcds.2017202}.

\bibitem{Doetsch}
G. Doetsch,
\emph{Introduction to the Theory and Application of the Laplace Transformation},
Springer-Verlag, Berlin--Heidelberg, 1974.
\href{https://doi.org/10.1007/978-3-642-65690-3}{doi:10.1007/978-3-642-65690-3}.

\bibitem{DingDuLiang}
W. Ding, Y. Du and X. Liang,
Spreading in space--time periodic media governed by a monostable equation
with free boundaries, Part 2: Spreading speed,
\emph{Ann. Inst. H. Poincar\'e C Anal. Non Lin\'eaire}
\textbf{36} (2019), 1539--1573.
\href{https://doi.org/10.1016/j.anihpc.2019.01.005}{doi:10.1016/j.anihpc.2019.01.005}.

\bibitem{DuGuoPeng}
Y. Du, Z. Guo and R. Peng,
A diffusive logistic model with a free boundary in time-periodic environment,
\emph{J. Funct. Anal.} \textbf{265} (2013), 2089--2142.
\href{https://doi.org/10.1016/j.jfa.2013.07.016}{doi:10.1016/j.jfa.2013.07.016}.

\bibitem{DuLiang}
Y. Du and X. Liang,
Pulsating semi-waves in periodic media and spreading speed determined by
a free boundary model, \emph{Ann. Inst. H. Poincar\'e C Anal. Non Lin\'eaire}
\textbf{32} (2015), 279--305.
\href{https://doi.org/10.1016/j.anihpc.2013.11.004}{doi:10.1016/j.anihpc.2013.11.004}.

\bibitem{FangYuZhao}
J. Fang, X. Yu and X.-Q. Zhao,
Traveling waves and spreading speeds for time--space periodic monotone systems,
\emph{J. Funct. Anal.} \textbf{272} (2017), 4222--4262.
\href{https://doi.org/10.1016/j.jfa.2017.02.028}{doi:10.1016/j.jfa.2017.02.028}.

\bibitem{Giletti}
T. Giletti,
Convergence to pulsating traveling waves with minimal speed in some KPP
heterogeneous problems, \emph{Calc. Var. Partial Differential Equations}
\textbf{51} (2014), 265--289.
\href{https://doi.org/10.1007/s00526-013-0674-9}{doi:10.1007/s00526-013-0674-9}.

\bibitem{GramaLauvergnatLePage}
I. Grama, R. Lauvergnat and E. Le Page,
Conditioned local limit theorems for random walks defined on finite Markov chains,
\emph{Probab. Theory Related Fields} \textbf{176} (2020), 669--735.
\href{https://doi.org/10.1007/s00440-019-00948-8}{doi:10.1007/s00440-019-00948-8}.

\bibitem{GuoHamel}
J.-S. Guo and F. Hamel,
Front propagation for discrete periodic monostable equations,
\emph{Math. Ann.} \textbf{335} (2006), 489--525.
\href{https://doi.org/10.1007/s00208-005-0729-0}{doi:10.1007/s00208-005-0729-0}.

\bibitem{GuoWu}
J.-S. Guo and C.-C. Wu,
Uniqueness and stability of traveling waves for periodic monostable
lattice dynamical system, \emph{J. Differential Equations}
\textbf{246} (2009), 3818--3833.
\href{https://doi.org/10.1016/j.jde.2009.03.010}{doi:10.1016/j.jde.2009.03.010}.

\bibitem{HNRRPeriodic}
F. Hamel, J. Nolen, J.-M. Roquejoffre and L. Ryzhik,
The logarithmic delay of KPP fronts in a periodic medium,
\emph{J. Eur. Math. Soc.} \textbf{18} (2016), 465--505.
\href{https://doi.org/10.4171/JEMS/595}{doi:10.4171/JEMS/595}.

\bibitem{HuangShen}
J. Huang and W. Shen,
Speeds of spread and propagation for KPP models in time almost and space
periodic media, \emph{SIAM J. Appl. Dyn. Syst.}
\textbf{8} (2009), 790--821.
\href{https://doi.org/10.1137/080723259}{doi:10.1137/080723259}.

\bibitem{Kato}
T. Kato,
\emph{Perturbation Theory for Linear Operators}, 2nd ed., reprint,
Classics in Mathematics,
Springer-Verlag, Berlin, 1995.
\href{https://doi.org/10.1007/978-3-642-66282-9}{doi:10.1007/978-3-642-66282-9}.

\bibitem{Kuchment}
P. Kuchment,
\emph{Floquet Theory for Partial Differential Equations}, Operator Theory:
Advances and Applications, vol.~60, Birkh\"auser, Basel, 1993.
\href{https://doi.org/10.1007/978-3-0348-8573-7}{doi:10.1007/978-3-0348-8573-7}.

\bibitem{LiLiangShenI}
F. Li, X. Liang and W. Shen,
Diffusive KPP equations with free boundaries in time almost periodic
environments: I. Spreading and vanishing dichotomy,
\emph{Discrete Contin. Dyn. Syst.} \textbf{36} (2016), 3317--3338.
\href{https://doi.org/10.3934/dcds.2016.36.3317}{doi:10.3934/dcds.2016.36.3317}.

\bibitem{LiLiangShenII}
F. Li, X. Liang and W. Shen,
Diffusive KPP equations with free boundaries in time almost periodic
environments: II.\ Spreading speeds and semi-wave solutions,
\emph{J. Differential Equations} \textbf{261} (2016), 2403--2445.
\href{https://doi.org/10.1016/j.jde.2016.04.035}{doi:10.1016/j.jde.2016.04.035}.

\bibitem{LiangYiZhao}
X. Liang, Y. Yi and X.-Q. Zhao,
Spreading speeds and traveling waves for periodic evolution systems,
\emph{J. Differential Equations} \textbf{231} (2006), 57--77.
\href{https://doi.org/10.1016/j.jde.2006.04.010}{doi:10.1016/j.jde.2006.04.010}.

\bibitem{LiangZhao}
X. Liang and X.-Q. Zhao,
Spreading speeds and traveling waves for abstract monostable evolution systems,
\emph{J. Funct. Anal.} \textbf{259} (2010), 857--903.
\href{https://doi.org/10.1016/j.jfa.2010.04.018}{doi:10.1016/j.jfa.2010.04.018}.

\bibitem{LubetzkyThornettZeitouni}
E. Lubetzky, C. Thornett and O. Zeitouni,
Maximum of branching Brownian motion in a periodic environment,
\emph{Ann. Inst. Henri Poincar\'e Probab. Stat.}
\textbf{58} (2022), no.~4, 2065--2093.
\href{https://doi.org/10.1214/21-AIHP1219}{doi:10.1214/21-AIHP1219}.

\bibitem{NeyNummelin}
P. Ney and E. Nummelin,
Markov additive processes I. Eigenvalue properties and limit theorems,
\emph{Ann. Probab.} \textbf{15} (1987), no.~2, 561--592.
\href{https://doi.org/10.1214/aop/1176992159}{doi:10.1214/aop/1176992159}.

\bibitem{Norris}
J. R. Norris,
\emph{Markov Chains}, Cambridge Series in Statistical and Probabilistic
Mathematics, vol.~2, Cambridge University Press, Cambridge, 1997.
\href{https://doi.org/10.1017/CBO9780511810633}{doi:10.1017/CBO9780511810633}.

\bibitem{Seneta}
E. Seneta,
\emph{Non-negative Matrices and Markov Chains}, 2nd ed., revised reprint, Springer Series in
Statistics, Springer, New York, 2006.
\href{https://doi.org/10.1007/0-387-32792-4}{doi:10.1007/0-387-32792-4}.

\bibitem{Shabani}
B. Shabani,
Logarithmic Bramson correction for multi-dimensional periodic Fisher--KPP
equations, preprint, \href{https://arxiv.org/abs/1910.08178v1}{arXiv:1910.08178v1},
17 October 2019.

\bibitem{ShenDiscrete}
W. Shen,
Spreading and generalized propagating speeds of discrete KPP models in
time varying environments, \emph{Front. Math. China}
\textbf{4} (2009), 523--562.
\mbox{\href{https://doi.org/10.1007/s11464-009-0032-6}{doi:10.1007/s11464-009-0032-6}}.

\bibitem{ShenVariational}
W. Shen,
Variational principle for spreading speeds and generalized propagating
speeds in time almost periodic and space periodic KPP models,
\emph{Trans. Amer. Math. Soc.} \textbf{362} (2010), 5125--5168.
\href{https://doi.org/10.1090/S0002-9947-10-04950-0}{doi:10.1090/S0002-9947-10-04950-0}.

\bibitem{ShenGeneralized}
W. Shen,
Existence, uniqueness, and stability of generalized traveling waves in
time dependent monostable equations,
\emph{J. Dynam. Differential Equations} \textbf{23} (2011), 1--44.
\href{https://doi.org/10.1007/s10884-010-9200-3}{doi:10.1007/s10884-010-9200-3}.

\bibitem{ShenVo}
Z. Shen and H.-H. Vo,
Nonlocal dispersal equations in time-periodic media: Principal spectral
theory, limiting properties and long-time dynamics,
\emph{J. Differential Equations} \textbf{267} (2019), 1423--1466.
\href{https://doi.org/10.1016/j.jde.2019.02.013}{doi:10.1016/j.jde.2019.02.013}.

\bibitem{VoClimate}
H.-H. Vo,
Persistence versus extinction under a climate change in mixed environments,
\emph{J. Differential Equations} \textbf{259} (2015), 4947--4988.
\href{https://doi.org/10.1016/j.jde.2015.06.014}{doi:10.1016/j.jde.2015.06.014}.

\bibitem{WuChenHsu}
S.-L. Wu, G.-S. Chen and C.-H. Hsu,
Exact asymptotic behavior of pulsating traveling waves for a periodic
monostable lattice dynamical system, \emph{Proc. Amer. Math. Soc.}
\textbf{149} (2021), 1697--1710.
\href{https://doi.org/10.1090/proc/15369}{doi:10.1090/proc/15369}.

\end{thebibliography}
\end{document}